\documentclass[a4paper,twoside,11pt]{amsart}
\pdfoutput=1

%

\usepackage{amsfonts}
\usepackage{amsmath}
\usepackage{amssymb}
\usepackage{amsthm}
\usepackage{mathrsfs}
\usepackage{eucal}
\usepackage{verbatim}
\usepackage{enumerate}
\usepackage{graphicx}
\usepackage{color}

\ifx\XeTeXversion\undefined
\usepackage[english]{babel}
\else
\usepackage{microtype}
\usepackage{fontspec}
\setmainfont[Ligatures={
  TeX}]{Linux Libertine}
\usepackage{polyglossia}
\setmainlanguage{english}
\fi

\numberwithin{equation}{section}


\newcommand{\naturals}{\mathbb{N}}
\newcommand{\integers}{\mathbb{Z}}
\newcommand{\reals}{\mathbb{R}}

\newcommand{\sone}{\mathbb{T}^1}
\newcommand{\torus}{\mathbb{T}}

\newcommand{\fa}{\forall\,}
\newcommand{\ex}{\exists\,}
\newcommand{\eps}{\varepsilon}
\newcommand{\defeq}{\doteqdot}
\newcommand{\deh}{\textup{d}}
\newcommand{\dpar}[2]{\frac{\partial #1}{\partial #2}}
\newcommand{\de}[2]{\frac{\textup{d} #1}{\textup{d} #2}}
\newcommand{\st}{\textrm{ s.t. }}
\newcommand{\bigo}[1]{\mathcal{O}(#1)}

\newcommand{\const}{C_\#}
\newcommand{\intr}{\textup{int }}
\newcommand{\inv}{^{-1}}


\newtheorem{rmk}{Remark}[section]
\newtheorem{thm}[rmk]{Theorem}
\newtheorem{lemma}[rmk]{Lemma}
\newtheorem{prp}[rmk]{Proposition}
\newtheorem{cor}[rmk]{Corollary}
\newtheorem{definition}[rmk]{Definition}

\newtheorem{conj}[rmk]{Conjecture}

\newtheorem*{mthm}{Main Theorem}

\newcommand{\eref}[1]{(\ref{#1})}

\newcommand{\averaged}{\mathscr{A}}

\newcommand{\leb}[1]{{\rm Leb}(#1)}
\newcommand{\prob}{\mathbb{P}}
\newcommand{\expec}[2]{\mathbb{E}_{#1}\left(#2\right)}

\title{Fermi Acceleration in anti-integrable limits of the standard map}
\author{Jacopo De Simoi}
\address{Jacopo De Simoi\\
  Dipartimento di Matematica\\
  II Universit\`{a} di Roma (Tor Vergata)\\
  Via della Ricerca Scientifica, 00133 Roma, Italy.}
\urladdr{\href{http://www.mat.uniroma2.it/~desimoi/redirect.php?refer=gamma2}{http://www.mat.uniroma2.it/~desimoi}}
\email{\href{mailto:desimoi@mat.uniroma2.it}{\texttt{desimoi@mat.uniroma2.it}}}

\begin{document}
\maketitle
\begin{abstract}
  We consider a dynamical system on the semi-infinite cylinder which models the high energy
  dynamics of a family of mechanical models.  We provide conditions under which we ensure
  that the set of orbits undergoing Fermi acceleration has measure zero.
\end{abstract}
\section{Introduction and results}
In this work we study dynamical properties of a family of exact area-preserving twist maps
on the semi-infinite cylinder. This family describes an approximation of the high energy
dynamics of a class of generalizations \cite{Pust1,Pust2,Ortega1,Ortega2,Dima} of the
Fermi-Ulam ping-pong \cite{Ulam}. Among other examples, the dynamics of some $n$-body
problems, such as the Sitnikov three-body problem (see \cite{Si,GK11}) can also be
described by means of maps which are similar to the ones we consider in this work.

One of the most remarkable differences between finite and infinite measure dynamical
systems is that the latter are possibly lacking the \emph{recurrence} property, which is
guaranteed by Poincar\'e theorem in the former situation. If the set of \emph{wandering}
(that is, non-recurrent) points has positive measure, we say that the system is
\emph{dissipative}, otherwise we say it is \emph{conservative}. Conservativity is a very
desirable property for infinite measure systems and it is the starting point to prove (and
even to define in a satisfactory manner) ergodic and statistical properties such as bounds
for the decay of correlations (see for instance \cite{Lenci}). In our framework,
conservativity has also a very concrete physical interpretation for the mechanical systems
which our maps relate to; in our case, in fact, if a point belongs to the wandering set,
then, necessarily, its energy will tend to infinity with time; they are in fact orbits
which undergo so-called \emph{Fermi acceleration} (see \cite{F49,F54}).  In this paper we
show that, if suitable conditions of the parameters are satisfied, the maps of our family
are conservative; consequently, the mechanical systems which can be modeled by the above
choice of parameters, allow only a null set of Fermi accelerated orbits.

Another quite interesting feature of the family under consideration is its affinity with
the Chirikov-Taylor standard map (see \cite{Chirikov}). For instance (see e.g.\
\cite{Pust1}) the Fermi-Ulam ping-pong system is well described, for large values of the
non-cyclic variable (i.e.\ for high energies), by the dynamics of the standard map for
small coupling parameter, i.e. in the quasi-integrable regime. On the other hand, the
family of maps under our consideration are such that, for large values of the non-cyclic
variable, the dynamics is described by the standard map for large coupling
parameters, that is, far away from the integrable regime. In this sense our maps can be
regarded as anti-integrable limits of the standard map. For this reason, we believe that
our family shares part of the ``universality'' properties of the standard map and that
this study can indeed be useful for a variety of different situations. Additionally, most
of the difficulties we will encounter in our work will be directly related to
corresponding issues for the standard map and we can expect the techniques employed in the
present work to be successfully applied also to that more challenging study.

\input{definitions.p} Let $\sone\defeq\reals/2\pi\integers$ and
$\phspace\defeq\sone\times\reals_{+}$ be the semi-infinite cylinder. Let $\phi$ be a
smooth real-valued function on $\sone$; for definiteness we assume
\begin{equation}
  \phi(\theta)=\sin(\theta),
  \label{e_definitionPhi}
\end{equation}
and for $A>0$ we let $\phi_A\defeq A \phi$. For $\hat\T,\gamma\in(0,\infty)$ fix $\T_\gamma$ to
be a smooth orientation-preserving diffeomorphism of $\reals_{+}$ given by:
\begin{equation}
  \T_\gamma(\cv{})=\hat{\T}\cdot \cv{}^\gamma.\label{e_definitionY}
\end{equation}
Finally, let $F_{A,\gamma}:\phspace\to\phspace$ be the area-preserving map given by the following formula:
\begin{equation}
F_{A,\gamma}:\left(\begin{array}{c}\ct{}\\\cv{}\end{array}\right)
\mapsto
\left(\begin{array}{c}\ct{}+\T_\gamma(\cv{})\\\cv{}+2\dot\phi_A(\ct{}+\T_\gamma(\cv{}))\end{array}\right),
\label{e_mainDefinition}
\end{equation}
if $y>L$ for some $L$ large enough, and continued to a smooth map on $0\leq y \leq L$; the
exact form of the continuation is irrelevant for our statements, since it only influences
the dynamics on a compact region of the phase space. We can always assume $\hat{\T}=1$,
otherwise we let $\cv{}\mapsto\hat{\T}^{1/\gamma}\cv{}$ and
$A\mapsto\hat{\T}^{1/\gamma}A$. We fix once and for all the values of $A$ and $\gamma$ and
then study the dynamics of $F_{A,\gamma}$; for sake of simplicity we will therefore drop
all subscripts $A$ and $\gamma$ appearing in the definitions since this will not be source
of confusion.  Furthermore, introduce the convenient notation $(\coo{k})=F^k(\coo{0})$; we
define the \emph{escaping set} $\escaping$ as follows:
  \[
  \escaping\defeq\{(\coo{0})\st \lim_{n\to\infty} \cv{n}=\infty \}.
  \]
In our setting the escaping set coincides with the wandering set; however, since we find the word
``escaping'' more descriptive, we prefer it over our other option.  Our interest is to
provide results on the largeness of the set $\escaping$ depending on the values of $A$ and
$\gamma$.

The map given by \eref{e_mainDefinition} can be obtained (see \cite{ghianda} for a
detailed derivation) as an high energy approximation (the so called \emph{static wall
  approximation}) of a suitable Poincar\'e map of the dynamics of a particle bouncing on a
periodically oscillating infinitely heavy plate while subject to a potential force given
by a power law with exponent $2/(\gamma+1)$. In this model the $x$ variable corresponds to
the time of a collision with the moving plate, while $y$ corresponds to the
post-collisional velocity; some results, which have been originally obtained for the
mechanical problem, can indeed be adapted to our situation; we list a selection of them,
which are directly related to this work.
\begin{thm}[Pustylnikov \cite{Pust1}]\label{t_pust}
  If $\gamma = 1$, then the set of escaping orbits contains an open set for an open set of
  values of $A$.
\end{thm}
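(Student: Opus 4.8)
The plan is to produce, for an open set of parameters $A$, an elliptic ``accelerator island'' of the kind familiar from the Chirikov standard map. When $\gamma=1$ we have $\T_1(\cv{})=\cv{}$ and $\dot\phi_A=A\cos$, so on the region $\{\cv{}>L\}$ the map reads $F(\ct{},\cv{})=(\ct{}+\cv{},\,\cv{}+2A\cos(\ct{}+\cv{}))$; the shift $\ct{}\mapsto\ct{}+\pi/2$ conjugates it there to the standard map with coupling $K=2A$. One checks immediately that $F|_{\{\cv{}>L\}}$ commutes with the translation $(\ct{},\cv{})\mapsto(\ct{},\cv{}+2\pi)$, hence descends to a real-analytic map $\bar F$ of the torus $\torus^2=\sone\times(\reals/2\pi\integers)$ given by the same formula, and the projection $\pi(\ct{},\cv{})=(\ct{},\cv{}\bmod 2\pi)$ semiconjugates $F|_{\{\cv{}>L\}}$ to $\bar F$. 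The point of this reduction is that the velocity increment $\cv{n+1}-\cv{n}=2A\cos(\ct{n}+\cv{n})$ depends only on $\pi(\ct{n},\cv{n})$: consequently, if an $\bar F$-orbit stays in a region of $\torus^2$ on which $\cos(\ct{}+\cv{})\geq c>0$, then the corresponding $F$-orbit started with $\cv{0}>L$ satisfies $\cv{n}\geq\cv{0}+2Acn\to\infty$; in particular it never exits $\{\cv{}>L\}$, so the semiconjugacy remains valid and the orbit lies in $\escaping$.

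The region in question will be a stability island around a suitable fixed point of $\bar F$. Fix an integer $m\geq 1$ and $A>\pi m$, and set $\ct{}^\ast\defeq\arccos(\pi m/A)\in(0,\pi/2)$. Since $2A\cos\ct{}^\ast=2\pi m\equiv 0\pmod{2\pi}$, the point $(\ct{}^\ast,0)$ is a fixed point of $\bar F$; a direct computation of $DF$ at a lift of it gives $\det DF=1$ and $\Tr DF=2-2A\sin\ct{}^\ast=2-2\sqrt{A^2-\pi^2m^2}$. Thus $(\ct{}^\ast,0)$ is an elliptic fixed point precisely for $A$ in the open window $\mathcal W_m\defeq(\pi m,\sqrt{\pi^2m^2+4})$, and its linear rotation number is a strictly monotone analytic function of $A\in\mathcal W_m$ taking every value in $(0,\tfrac12)$. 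Moreover $\cos(\ct{}+\cv{})$ equals $\pi m/A>0$ at $(\ct{}^\ast,0)$, hence stays above $\pi m/(2A)$ on a small enough neighbourhood of it in $\torus^2$.

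Now I would apply Moser's invariant-curve (twist) theorem to the real-analytic area-preserving map $\bar F$: if the rotation number at $(\ct{}^\ast,0)$ is Diophantine and the first Birkhoff twist coefficient there is non-zero, the fixed point is surrounded by invariant curves arbitrarily close to it. Choosing $A_0\in\mathcal W_m$ for which these two conditions hold — they can fail only on a subset of $\mathcal W_m$ with empty interior — we obtain an invariant curve $\Gamma$ bounding a disk $D\subset\torus^2$ with $(\ct{}^\ast,0)\in D$, $\bar F(D)=D$, and $\cos(\ct{}+\cv{})>\pi m/(2A_0)$ throughout $D$; by the first paragraph the nonempty open set $\{(\ct{},\cv{})\st\cv{}>L,\ \pi(\ct{},\cv{})\in\intr D\}\subset\phspace$ then lies in $\escaping$. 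To upgrade this to an open set of parameters I would use the parametrised, fixed-frequency version of KAM: since $\bar F$ depends analytically on $A$, the Diophantine invariant curve persists with unchanged rotation number — still bounding an invariant disk on which $\cos(\ct{}+\cv{})$ is bounded below by a positive constant — for all $A$ in a neighbourhood of $A_0$, and each such $A$ therefore carries an open set of escaping orbits.

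The only non-formal ingredient, and hence the main obstacle, is the non-degeneracy hypothesis of Moser's theorem: one must check that the Birkhoff normal form at $(\ct{}^\ast,0)$ is genuinely twisting for at least one $A\in\mathcal W_m$ — whence, by analyticity in $A$, for an open set of such $A$ — and that one stays clear of the low-order resonances $\tfrac13,\tfrac14$ at which ellipticity alone does not yield stability. Both facts are classical for the standard map, to which our problem is conjugate; granting them, the rest of the argument is the explicit computation above combined with off-the-shelf KAM theory.
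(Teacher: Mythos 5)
Your proposal is correct and follows essentially the same route as the paper's sketch: for $\gamma=1$ the map descends to the standard map on $\torus^2$, and an elliptic accelerator mode (in your case a fixed point with drift $2\pi m$), stabilized via Moser's theorem for an open set of $A$ in each window $(\pi m,\sqrt{\pi^2m^2+4})$, yields an open island whose lift consists of escaping orbits. The only differences are cosmetic — the paper phrases escape through the exact relation $F^{N}(\ct{},\cv{})=(\ct{},\cv{}+\nu)$ on the lifted island of a period-$N$ accelerator orbit, while you use positivity of the kick on a sufficiently small invariant disk — and the twist/non-resonance verifications you defer are likewise left implicit in the paper, which attributes the result to Pustylnikov.
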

\begin{thm}[Ortega \cite{Ortega1}]\label{t_ort}
  If $\gamma=0$ and certain resonance conditions are satisfied, then the set of escaping orbits $\escaping$ contains an open set of the phase space.
\end{thm}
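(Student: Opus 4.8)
\emph{Proof proposal.} The plan is to make the hypothesis $\gamma=0$ explicit in \eref{e_mainDefinition}. For $\gamma=0$ the return time $\T_0(\cv{})=\hat\T\,\cv{}^{0}\equiv\hat\T$ no longer depends on the velocity variable, and, since $\phi=\sin$ gives $\dot\phi_A(\theta)=A\cos\theta$, on the region $\{\cv{}>L\}$ the dynamics reduces to the skew product
\begin{equation*}
  F(\coo{})=\bigl(\ct{}+\hat\T,\ \cv{}+2A\cos(\ct{}+\hat\T)\bigr)
\end{equation*}
over the rigid rotation $\ct{}\mapsto\ct{}+\hat\T$ of $\sone$. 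The angular coordinate then evolves autonomously, $\ct{n}=\ct{0}+n\hat\T$, and iterating the second coordinate gives
\[
  \cv{n}=\cv{0}+2A\sum_{k=1}^{n}\cos(\ct{0}+k\hat\T).
\]
Since $\bigl|\sum_{k=1}^{n}e^{ik\hat\T}\bigr|\le 2/|e^{i\hat\T}-1|$ whenever $e^{i\hat\T}\neq1$, these partial sums stay bounded unless $\hat\T\in 2\pi\integers$; this integer resonance between the (now velocity-independent) collision time and the period of $\phi$ is the resonance condition referred to in the statement (and, by the same computation, the only one producing escape when $\gamma=0$, since for $\hat\T=2\pi p/q$ with $q\ge 2$ the sum over a full period vanishes).

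Granting $\hat\T\in 2\pi\integers$, the angle becomes a constant of motion, $\ct{n}\equiv\ct{0}\pmod{2\pi}$, and the velocity obeys $\cv{n+1}=\cv{n}+2A\cos\ct{0}$, hence $\cv{n}=\cv{0}+2An\cos\ct{0}$. Set
\[
  U\defeq\{(\coo{0})\in\phspace\ :\ \cos\ct{0}>0,\ \cv{0}>L\},
\]
a nonempty open subset of the phase space. If $(\coo{0})\in U$ then $(\cv{n})_{n\ge0}$ is strictly increasing, so it never leaves $\{\cv{}>L\}$ and the explicit formula \eref{e_mainDefinition} does govern the whole forward orbit; moreover $\cv{n}=\cv{0}+2An\cos\ct{0}\to\infty$. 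Therefore $U\subseteq\escaping$, which proves the theorem.

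I do not expect a genuine obstacle here: the entire content is the observation that the $\gamma=0$ limit kills the twist — in the mechanical picture the oscillator becomes isochronous, so the free-flight time stops depending on the energy — and that at a full resonance the frozen angular variable makes the velocity kicks add up coherently, with a fixed sign over half of the cylinder. The only point requiring a (trivial) check is that the forward orbit never leaves the region where \eref{e_mainDefinition} applies, which is immediate from monotonicity of $(\cv{n})$. It is worth noting that this is precisely the mechanism that breaks down for $\gamma>0$: there the genuine twist makes $\ct{n}$ drift, the kicks decorrelate, and one is led to the measure-zero phenomenon pursued in the rest of the paper.
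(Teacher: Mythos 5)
Your argument is correct for the map as the paper defines it, but note that the paper itself offers no proof of this statement: Theorem~\ref{t_ort} is quoted from Ortega's work on the underlying mechanical model, and the text only adds the one-line heuristic that for $\gamma=0$ the function $\T$ is constant, so a resonance between $\T$ and the period of $\phi$ plausibly produces escape. What you have done is turn that heuristic into a complete elementary proof for the static-wall map: with $\gamma=0$ the twist disappears, the base dynamics is the rigid rotation by $\hat\T$ (and the normalization $\hat\T=1$ used elsewhere in the paper is unavailable here, as you implicitly noticed, since it requires $\gamma>0$), and at the full resonance $\hat\T\in2\pi\integers$ the kicks add coherently, giving the open escaping set $\{\cos\ct{0}>0,\ \cv{0}>L\}$; the monotonicity of $\cv{n}$ correctly guarantees the orbit never enters the region $\{y\leq L\}$ where the continuation of $F$ is unspecified. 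Two caveats are worth keeping in mind. First, your side claim that integer resonance is the \emph{only} mechanism of escape is fine for orbits starting high up (bounded geometric sums, exact cancellation over a period when $\hat\T=2\pi p/q$, $q\geq2$), but it says nothing about orbits that fall into $\{y\leq L\}$, where the dynamics is arbitrary; this does not affect the theorem, which only asserts existence of an open escaping set. Second, Ortega's actual theorem concerns the genuine Fermi--Ulam type system, where the ``certain resonance conditions'' are more delicate than the clean condition $\hat\T\in2\pi\integers$ that your computation isolates in the idealized $\gamma=0$ limit; so your proof establishes the statement for the paper's model, which is all that is asked, but it is not a proof of Ortega's original result.
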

\begin{thm}[Dolgopyat \cite{Dima}]\label{t_dimaKAM}
  If $\gamma\in(0,1)$, then the set of escaping orbits $\escaping$ is empty.
\end{thm}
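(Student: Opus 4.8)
The plan is to run a KAM argument that produces, at arbitrarily large heights, rotational invariant curves for $F$; such a curve bounds a forward invariant region with compact closure, so every orbit is trapped and $\escaping$ is forced to be empty. Fix $A$ and $\gamma\in(0,1)$ once and for all, and for a large parameter $N$ consider the annulus $\aset_N\defeq\{\,|\cv{}-N|\le 2N^{1-\gamma}\,\}$, which lies in the region where $F$ is given by \eref{e_mainDefinition} as soon as $N$ is large. I would introduce a rescaled action $I$ by $\cv{}=N+\gamma\inv N^{1-\gamma}I$, so that on $\aset_N$ the variable $I$ ranges over a fixed interval, and set $\omega_0\defeq N^\gamma$. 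Using $(1+u)^\gamma=1+\gamma u+\bigo{u^2}$ one checks that in the coordinates $(\ct{},I)$ the map reads
\begin{equation*}
  \ct{}'=\ct{}+\omega_0+I+\varphi_N(\ct{},I),\qquad
  I'=I+\psi_N(\ct{},I),
\end{equation*}
where $\varphi_N$ does not depend on $\ct{}$, $\psi_N(\ct{},I)=2A\gamma N^{\gamma-1}\cos(\ct{}')$, and, for every fixed $k$, $\|\varphi_N\|_{\continuous k{\aset_N}}=\bigo{N^{-\gamma}}$ and $\|\psi_N\|_{\continuous k{\aset_N}}=\bigo{N^{\gamma-1}}$. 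Thus, in these coordinates, $F$ is a smooth, exact area-preserving (in particular intersection-property) perturbation --- of size $\bigo{N^{-\min(\gamma,1-\gamma)}}$ in every fixed $\mathscr{C}^k$ norm --- of the integrable positive-twist map $(\ct{},I)\mapsto(\ct{}+\omega_0+I,\,I)$, whose twist is identically $1$ up to $\bigo{N^{-\gamma}}$. This is precisely where the hypothesis $\gamma<1$ enters: the velocity increment stays bounded by $2A$ while the natural action scale on $\aset_N$ grows like $N^{1-\gamma}$, so the relative perturbation vanishes at infinity.

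With $k$ fixed large enough I would then invoke Moser's invariant-curve theorem. On the integrable curve $\{I=I_0\}$ the rotation number is $\omega_0+I_0$ modulo $2\pi$, and as $I_0$ varies it sweeps out an arc of $\reals/2\pi\integers$ of a fixed positive length. The delicate point --- and the main obstacle --- is that the perturbation size admitted by the theorem depends on the Diophantine type of the prescribed rotation number, which must therefore be chosen uniformly in $N$: fixing $\tau>2$, the set of numbers that fail to be $(\kappa,\tau)$-Diophantine has measure $\bigo{\kappa}$ in $\reals/2\pi\integers$, so for $\kappa$ small enough (depending only on $\gamma$) every arc of the above length contains a $(\kappa,\tau)$-Diophantine point; selecting such a point $\omega_N^{\ast}$ in the relevant arc, the threshold supplied by Moser's theorem depends only on $\kappa,\tau,k$, hence not on $N$. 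Since the perturbation is $\bigo{N^{-\min(\gamma,1-\gamma)}}\to 0$, for all $N$ large the hypotheses hold and we obtain a smooth rotational invariant curve $\Gamma_N\subset\aset_N$; in the original coordinates, $\Gamma_N=\{\cv{}=g_N(\ct{})\}$ with $g_N=N+\bigo{N^{1-\gamma}}$, so $\min_{\ct{}}g_N\ge N/2$ once $N$ is large.

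To conclude: since $F$ is a homeomorphism onto its image and $F(\Gamma_N)=\Gamma_N$, the open connected set $F(U_N)$, where $U_N\defeq\{\,0\le\cv{}<g_N(\ct{})\,\}$, is disjoint from $\Gamma_N$ and hence lies entirely in one of the two components of $\phspace\setminus\Gamma_N$; evaluating at a point of $U_N$ with $\cv{}$-coordinate at most $N/4$, whose image has $\cv{}$-coordinate at most $N/4+2A<\min g_N$, rules out the upper component, so $U_N$ is forward invariant and every forward orbit starting in it stays in $\sone\times[0,\max g_N]$, in particular is bounded. Given now an arbitrary $(\coo{0})\in\phspace$, choose $N$ large enough for the construction above and with $\min g_N>\cv{0}$; then $(\coo{0})\in U_N$, so $\cv{n}\not\to\infty$. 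As $(\coo{0})$ was arbitrary, $\escaping=\emptyset$.
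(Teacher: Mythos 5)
Your argument is correct and follows exactly the route the paper indicates for this cited result of Dolgopyat: exhibit KAM (Moser) invariant rotational curves at arbitrarily large heights, which trap every orbit and force $\escaping=\emptyset$. The rescaling to action size $N^{1-\gamma}$, the uniform-in-$N$ choice of Diophantine rotation number, and the confinement argument below the invariant curve are precisely the standard implementation of that sketch, so there is nothing genuinely different to compare.
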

The proof of Theorem~\ref{t_pust} is indeed quite simple in our case; in fact, if
$\gamma=1$, then the map $F$ for large $y$ is given by the unfolding of the standard map
on the cylinder. In this case it is easy to prove that, for an open set of parameters, we
can find a periodic orbit on $\mathbb{T}^2$ given by centers of a chain of elliptic
islands of period $N$ for the standard map on the torus $\mathbb{T}^2$ which lifts to a
non-periodic orbit on the cylinder such that $F^{N}:(\coo{})\mapsto(\ct{},\cv{}+\nu)$,
where $\nu$ is a positive integer. This implies that the lift of any elliptic island in
the chain is a subset of the escaping set, which consequently has positive, hence
infinite, measure.  The statement of Theorem~\ref{t_ort} is also not surprising, in fact
if $\gamma=0$, then the function $\T$ is constant; then if we have a resonance condition
between $\T$ and the period of $\phi$, it is plausible that escaping orbits can indeed
arise. Finally, Theorem~\ref{t_dimaKAM} follows from showing the existence of KAM tori for
large values of $y$; in fact their presence prevents diffusion and hence implies the
result.  On the one hand, for values of $\gamma$ greater than $1$, the set of escaping
orbits $\escaping$ is non-empty; in fact it was proved in \cite{ghianda} that the set has
full Hausdorff dimension.  On the other hand, the following result holds:
\begin{thm}[Dolgopyat \cite{Dima}]\label{t_dima}
  If $\gamma>5$, then the set of escaping orbits $\escaping$ has zero measure.
\end{thm}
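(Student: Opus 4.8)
The plan is to show that for $\gamma>5$ the escaping set $\escaping$ has zero measure by exploiting the fact that, for large $y$, the map $F$ is strongly expanding in the $x$-direction: the rotation number $\T_\gamma(y)=y^\gamma$ grows polynomially, and more importantly $\T_\gamma'(y)=\gamma y^{\gamma-1}$ becomes enormous. This means that the cylinder behaves, at energy level $y$, like a heavily stretched horseshoe-type map in the fiber direction, and the candidate invariant structures that would trap an escaping orbit (analogues of the elliptic islands of Theorem~\ref{t_pust}) are destroyed. I would first set up a sequence of energy scales $y\in[R_n,R_{n+1}]$ and, on each such annular region, estimate the derivative cocycle $DF^k$ along orbits that remain in the region; the key inequality is that the expansion rate in $x$ per iterate, which is of order $\T_\gamma'(y)\sim y^{\gamma-1}$, dominates the perturbation $\dot\phi_A$ of size $O(A)$ uniformly once $y$ is large, giving a genuine hyperbolic splitting with cone fields that are invariant at all sufficiently high energies.

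Next I would pass from this local hyperbolicity to a statement about the escaping set. The standard mechanism is a Borel–Cantelli / induced-map argument: decompose $\escaping$ according to the sequence of annuli the orbit visits, and show that on each annulus the set of points whose forward orbit eventually climbs to the next annulus without ever returning has measure that can be summed. Because the map is area-preserving, an orbit that escapes must have positive "upward drift," and the expansion estimate lets one control the conditional measure of the set of initial conditions in a given fiber strip that are consistent with monotone escape: the crucial point is that, thanks to $\gamma>5$, the geometric series formed by these conditional measures over successive scales converges. Here the exponent $5$ presumably enters through a competition between the polynomial growth of $\T_\gamma$, the Jacobian-type loss coming from pulling back thin strips through $DF$, and the number of "bad" intervals in $x$ created by the critical points of $\phi_A$ (which is fixed to $\sin$, so two critical points per period but with quadratic tangencies that cost a square-root factor at each scale).

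The main obstacle, as in the standard map, will be the handling of the critical set $\{\dot\phi_A=0\}$, i.e.\ the curves where $DF$ fails to be uniformly hyperbolic: near these the twist degenerates and one cannot naively iterate the cone estimate. I would deal with this by a large-deviation / distortion argument showing that orbit segments spend a controlled fraction of time near the critical curves, so that the accumulated loss of expansion is subexponential compared to the gain $y^{\gamma-1}$ per step; this is exactly where the threshold on $\gamma$ is forced, since one needs the per-step gain to beat both the square-root critical loss and the measure-counting overhead, and $\gamma>5$ is the value for which all these competing powers balance. A secondary technical point will be uniformity in the energy scale: the constants in the hyperbolicity and distortion estimates must be shown to be eventually independent of $n$ (possibly after rescaling $y\mapsto y/R_n$ on each annulus, which converts $\T_\gamma$ into a self-similar family), so that the Borel–Cantelli sum is genuinely summable rather than merely finite on each scale. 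Once these estimates are in place, the conclusion $\leb{\escaping}=0$ follows from the convergence of the series together with area preservation.
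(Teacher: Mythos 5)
There is a genuine gap, both in the mechanism you propose and in where the threshold $\gamma>5$ comes from. Your starting point --- a ``genuine hyperbolic splitting with cone fields invariant at all sufficiently high energies'' --- is false for this family: near the curves where the adapted slope degenerates (a neighborhood of the zeros of $\ddot\phi$, the critical set $\critical{1}$ in the paper's language) the expansion in $x$ is lost at \emph{every} energy level, and elliptic islands persist at arbitrarily high $y$ (for $\gamma>1$ the escaping set even has full Hausdorff dimension, and the measure of elliptic islands in the critical set is infinite for $\gamma\le 4/3$). For the same reason your proposed repair --- a large-deviation/distortion argument showing that orbits spend only a controlled fraction of time near the critical curves --- cannot work: orbits trapped in islands inside the critical set spend \emph{all} their time there, so no almost-sure bound on the time fraction near the critical set is available, and any argument that upgraded the dynamics to uniform (or even nonuniform a.e.) hyperbolicity at high energy would prove too much.

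The actual argument behind Theorem~\ref{t_dima} is measure-theoretic rather than hyperbolic-plus-Borel--Cantelli. One shows: (i) outside a critical set $\critical{}$ the expansion along standard curves yields equidistribution of the $x$-coordinate, which gives an almost-zero (in fact slightly downward-biased) drift for $y$ and allows a comparison with a one-dimensional biased random walk, so that almost every orbit eventually \emph{enters} $\critical{}$; (ii) the condition $\gamma>5$ enters precisely as the integrability condition making $\leb{\critical{}}<\infty$ (the width of the critical strips decays like a negative power of $\T'(y)$, and the resulting integral in $y$ converges only for $\gamma$ large); (iii) the first-return map to the finite-measure set $\critical{}$ then satisfies Poincar\'e recurrence, so almost every point of $\critical{}$ returns infinitely often and hence cannot have $y_n\to\infty$; combining with (i) gives $\leb{\escaping}=0$. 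Your proposal never uses the finiteness of the measure of the critical set nor recurrence, which are the heart of the proof, and it attributes $\gamma>5$ to a balance between expansion gain and square-root losses at the critical points --- but that balance is not where the exponent comes from. Also, your Borel--Cantelli decomposition over annuli presupposes a ``monotone escape with positive upward drift'' structure that escaping orbits need not have and that the equidistribution estimates in fact contradict (the per-step drift of $y$ outside the critical set is $o(1)$, not uniformly positive for escapers), so the series you want to sum is not the right object.
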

In the same article it is indeed conjectured that
\begin{conj}\label{conj_dima}
  If $\gamma>1$, then the set of escaping orbits $\escaping$ has zero measure.
\end{conj}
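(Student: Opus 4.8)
The plan is to establish $\leb{\escaping}=0$ for $\gamma>1$ by a \emph{quantitative hyperbolicity plus Borel--Cantelli} scheme, in the spirit of the argument behind Theorem~\ref{t_dima} but organized so that the relevant error series converge already for $\gamma>1$. The structural fact driving everything --- and what separates this regime from those of Theorems~\ref{t_dimaKAM} and~\ref{t_pust} --- is that the twist $Y'(y)=\gamma y^{\gamma-1}$ tends to $+\infty$ as $y\to\infty$: on the energy shell $\mathcal S_k\defeq\{2^k\le y<2^{k+1}\}$ the map $F$ is, after an affine rescaling of the energy variable, a bounded perturbation of the standard map with coupling $K_k\sim A\gamma\,2^{k(\gamma-1)}\to\infty$, so no invariant curves survive at high energy and one cannot argue as in Theorem~\ref{t_dimaKAM}. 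Since $\escaping$ coincides with the wandering set, it suffices to find, for Lebesgue-a.e.\ initial condition, infinitely many returns of the orbit to a fixed compact region; concretely I would reduce the statement to a summable bound $\sum_k\leb{\mathscr B_k}<\infty$, where $\mathscr B_k\subseteq\phspace$ is the set of phase points whose forward orbit has a window of consecutive iterates along which $y$ ascends from below $2^k$ to above $2^{k+1}$ ``with a macroscopic bias'' (defined below). Every escaping orbit lies in $\mathscr B_k$ for all large $k$, so Borel--Cantelli then yields $\leb{\escaping}=0$.

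To make the bias quantitative, observe that along any orbit segment the energy increments are $y_{n+1}-y_n=2A\cos(x_{n+1})$, so crossing $\mathcal S_k$ in $N$ steps forces $\sum_n\cos(x_{n+1})\ge c\,2^k/A$; for a \emph{fast} crossing, $N\le\const\,2^k/A$, this means the empirical mean $N^{-1}\sum_n\cos(x_{n+1})$ is bounded away from $0$, i.e.\ the angles $x_{n+1}$ are macroscopically non-equidistributed on $\sone$ (a \emph{slow} crossing is an anomalously long sojourn in the compact shell in the face of the expansion of $DF$ there, itself a rare event, which I would absorb into $\mathscr B_k$). To bound $\leb{\mathscr B_k}$ I would run a standard-pair/fragmentation argument adapted to the shell: outside the critical strips $\mathcal C(y)\defeq\{|\sin(x+Y(y))|<c_0/K(y)\}$ the differential $DF$ has a strictly invariant expanding cone with expansion comparable to $K(y)$ --- this is exactly where $\gamma>1$ enters, since then $\leb{\mathcal C(y)}\to0$ while the expansion $\to\infty$. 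One represents a chunk of Lebesgue measure by a family of standard pairs (short curves tangent to the expanding cone carrying smooth densities), pushes them forward, and subdivides into new standard pairs whenever an image meets a critical strip or grows too long; after boundedly many iterates each surviving sub-pair is stretched uniformly across $\sone$, so along it $\cos(x_{n+1})$ is, up to exponentially small distortion errors, a weakly dependent mean-zero sequence. A martingale/large-deviation estimate then bounds the conditional Lebesgue measure of sub-pairs on which $\sum_n\cos(x_{n+1})$ exceeds $\eps N$ while $y$ stays in $\mathcal S_k$ by $\exp(-c\eps^2 N)$ plus the cumulative measure shed to the critical strips; with $N\sim 2^k/A$ the deviating part is super-exponentially small in $k$, so everything reduces to showing the per-shell fragmentation loss --- of order $\leb{\mathcal C(y)}$ per step, i.e.\ $\bigo{N_k/K_k}$ up to logarithmic factors --- is summable in $k$, which it is, as $\bigo{2^{-k\delta}}$ for some $\delta>0$, precisely once $\gamma>1$ (the coarser accounting behind Theorem~\ref{t_dima} wastes powers of $K_k$ and needs $\gamma>5$).

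The main obstacle --- where essentially all the difficulty lies, and where I expect the ``conditions on the parameters'' announced in the abstract to enter --- is the behaviour of standard pairs near the critical strips. At $x+Y(y)\equiv 0,\pi$ the expanding and contracting cones of $DF$ collapse onto one another, so the image of a standard pair can develop near-tangencies with the contracting direction there and momentarily lose its uniform expansion; this is exactly the obstruction that still prevents a proof that the standard map with large coupling has positive metric entropy. To push the bound down to $\gamma>1$ one must exploit the extra structure peculiar to the present family: the forcing acts once per step and depends only on the new angle $x_{n+1}$, so the dangerous set is a single curve transverse to the twist; the coupling $K(y)$ is strictly monotone in $y$, so a pair cannot remain in a near-critical configuration across a whole shell --- its geometry is effectively reset at the finer scale $K_{k+1}$ of the next shell; and a non-resonance (Diophantine-type) assumption on $A$, ruling out the chains of elliptic islands which, by the mechanism of Theorem~\ref{t_pust}, would otherwise persist for $\gamma$ slightly above $1$ and carry positive escaping measure. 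Making these three points quantitative, so as to obtain summable fragmentation errors for all $\gamma>1$, and only then invoking Borel--Cantelli, gives $\leb{\escaping}=0$.
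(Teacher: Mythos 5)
This statement is the open Conjecture~\ref{conj_dima}: the paper itself only proves it for $\gamma>2$ (the Main Theorem), and what you have written is a strategy outline, not a proof --- the step you yourself flag as ``where essentially all the difficulty lies'' (control of standard pairs near the critical strips) is exactly the content that would have to be supplied, and it is not. Moreover the quantitative bookkeeping you do give is wrong at the decisive point. If the strip around $\{\ddot\phi(x_1)=0\}$ is taken of width comparable to $\T'(\cv{})^{-1/2}$ --- which is what is needed so that the expansion off the strip is at least $\const\,\T'(\cv{})^{1/2}$ and your equidistribution/large-deviation input survives --- then the measure shed while crossing the shell $\{2^k\le y<2^{k+1}\}$ in $N_k\sim 2^k/A$ steps is of order $2^k\cdot 2^{-k(\gamma-1)/2}=2^{k(3-\gamma)/2}$, summable only for $\gamma>3$; this is precisely why the one-step scheme behind Theorem~\ref{t_dima} stalls at $\gamma>3$ (cf.\ item $(c_1)$ of Lemma~\ref{l_lebesgueCritical}). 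With your narrower strip of width $1/K(y)$ the formal loss $N_k/K_k\sim 2^{k(2-\gamma)}$ is summable only for $\gamma>2$, not ``precisely once $\gamma>1$'' as you assert, and in that regime the expansion outside the strip is merely $O(1)$, so the hyperbolicity on which the whole martingale estimate rests is gone. The paper's actual gain from $3$ to $2$ comes from \emph{not} discarding mass at $\critical{1}$ but analyzing a second iterate inside it and discarding only at $\critical{2}$, whose measure is finite exactly for $\gamma>2$, and then concluding by Poincar\'e recurrence on $\hcritical{2}$ rather than by Borel--Cantelli over shells.

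There is also a structural obstruction that your plan cannot circumvent: as recalled in Section~2 (citing \cite{ghianda}), the elliptic islands contained in the critical set have infinite total measure once $\gamma\le 4/3$. Orbits in those islands neither equidistribute nor can they be shed into a summable error, so no refinement of the ``discard near the critical strips'' accounting, of any finite order, reaches all $\gamma>1$. Your proposed remedy --- a Diophantine condition on $A$ destroying the island chains --- is both unsubstantiated (no mechanism is given by which it kills or thins the islands) and proves, at best, a statement weaker than the conjecture, which concerns every $A>0$. Finally, the Borel--Cantelli reduction itself has a gap: you need every escaping orbit to perform \emph{fast} (macroscopically biased) crossings of all large shells, and slow crossings are not obviously rare --- you merely say they can be ``absorbed''; the paper instead controls long sojourns through the random-walk comparison of Section~\ref{s_reduction} combined with recurrence on the finite-measure set $\hcritical{2}$. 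As it stands, the proposal is a sketch of the known strategy with the genuinely hard estimates missing and with the shell accounting mis-stated, so it does not prove the conjecture.
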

The main result of this work is an improvement of Theorem~\ref{t_dima} and constitutes a
step towards the proof of the conjecture.
\begin{mthm}\label{t_main}
  If $\gamma>2$, then the set of escaping orbits $\escaping$ has zero measure.
\end{mthm}
As it will be clear once the proof will be explained, our strategy for the proof does not
work if $\gamma\leq 2$. By performing some numerical simulations, it seems likely that
this is not merely a technical problem; indeed, proving our Main Theorem for
the value $\gamma=2$ seems to require a somewhat different approach, and it is still out
of reach at the moment.

Additionally, the techniques developed in order to prove our Main Theorem are of
independent interest for the study of statistical properties of non-uniformly hyperbolic
systems which appear to present coexistence of elliptic and hyperbolic behavior; as will
be made clear later, one can regard the condition on $\gamma$ as a stipulation on how
fast the expansion rate of the map along the cyclic coordinate $x$ can grow along with the
non-cyclic coordinate $y$: if the growth rate is strong enough, then we can conclude that
the system is conservative.  Indeed, a part from the classical example of the standard
map, we believe that our techniques could be employed in more general analyses (e.g. the
one provided in \cite{DimaRR}).
\subsection*{Acknowledgments}
This paper constitutes a natural evolution of the most substantial part of my
Ph.D. dissertation. I am thus most thankful to my thesis advisor Prof. Dmitry Dolgopyat
for his strong and constant encouragement before and after my defense.  This work has been
partially supported by the European Advanced Grant Macroscopic Laws and Dynamical Systems
(MALADY) (ERC AdG 246953) and by the Fondation Sciences Math\'ematiques de Paris.


\input{definitions.p}

\section{Proof of our Main Theorem }

The set $\escaping$ is invariant, and thus, by Poincar\'e recurrence Theorem, has
either zero or infinite measure. Additionally, if $(\coo{})\in\escaping$, then for any
$y_*>0$ there exists a $k_*$ such that if $k>k_*,\,y_k\geq y_*$, therefore if we let
\[
\escaping_*=\{(\coo{0})\in\escaping\st y_k\geq y_*\text{ for }k\geq 0\}
\]
we conclude that $\escaping=\bigcup_{k\geq0} F^{-k}\escaping_*$, thus, in order to prove
our Main Theorem, it suffices to prove that there exists a $\cv{*}>0$ such that
$\escaping_*$ has zero measure.  It is convenient to introduce the notation
$\phspace_*=\{y\geq y_*\}$; we will need to choose $y_*$ very large in order to satisfy a
number of requirements, which will be stated in due course; in particular we will always
assume $y_*>L$ where $L$ is the constant introduced in the previous section.

The proof is based on the idea used for the proof of Theorem~\ref{t_dima}, however some
substantial improvements are required.  The proof of Theorem~\ref{t_dima} is based on
equidistribution on $\sone$ of $\ct{}$-components of almost all trajectories which do not
leave $\phspace_*$ in the future; this ultimately follows from proving that the expansion
along so-called \emph{standard curves} provides enough uniformity to deliver
equidistribution outside of a \emph{critical set} $\critical{}$ which has finite measure
for large enough $\gamma$. On the other hand, on $\critical{}$ the dynamics is recurrent
by Poincar\'e's Theorem and, therefore, orbits which return to $\critical{}$ infinitely many
times cannot escape. Equidistribution on the complement of $\critical{}$ allows then to
set up a comparison with a random walk which ultimately is used to prove that almost every
trajectory will eventually land in $\critical{}$.  By sharpening some estimates, we can
push this strategy to work up to $\gamma>3$ but it ultimately fails for any smaller
$\gamma$, since the measure of the critical set will necessarily be infinite in this
case. To obtain the result for $\gamma>2$, we do need to study the dynamics of the map
inside the critical set. In particular, the main strategy to improve the condition on
$\gamma$ is to recover some hyperbolicity of the dynamics inside $\critical{}$ by
considering successive iterations of the map; by doing so we obtain a smaller critical set
$\critical{*}\subset\critical{}$ whose measure will be finite also for smaller values of
$\gamma$. In \cite{ghianda} we proved that the measure of elliptic islands in the critical
set is infinite if $\gamma\leq4/3$; this implies that the above strategy cannot be used to
fully prove Conjecture~\ref{conj_dima}. The present work shows, however, that the strategy
can be successfully employed to prove our Main Theorem, for which it is sufficient to
consider a critical set obtained with a two-iterate scheme (critical set of order $2$);
however, in order to obtain the necessary equidistribution estimates, one needs to
consider several iterates; the number of iterates in fact tends to $\infty$ as $\gamma\to
2$.  Thus, in a sense, our strategy is optimal when using critical sets of order $2$.
\subsection{Standard pairs}
We will study equidistribution properties of the dynamics employing the technique of
\emph{standard pairs}. A curve $\Gamma\subset\phspace_*$ is said to be a \emph{basic
  curve} if it is a graph of a smooth function $\psi:I\to\reals$ where $I\subset\sone$ is
an interval. A \emph{basic pair} $\ell$ is then given by a basic curve $\Gamma_\ell$ and a
strictly positive smooth probability density $\rho_\ell$ on $I$: we write
$\ell=(\Gamma_\ell,\rho_\ell)$ where $\Gamma_\ell=(\ct{},\psi_\ell(\ct{}))$ for $\ct{}\in
I_\ell$. A basic pair defines a measure as follows: for any real valued Borel measurable
function $\averaged(\coo{})$ we define:
\[
\expec{\ell}{\averaged}\defeq\int_{\Gamma_\ell}\averaged\rho_\ell\,\deh\ct{} = \int_{I_\ell}\averaged(\ct{},\psi_\ell(\ct{}))\rho_\ell(\ct{})\deh \ct{},
\]
and for any Borel measurable set $E$:
\[
\prob_\ell(E)\defeq\expec{\ell}{1_{E}}.
\]
We introduce, for convenience, the function $\T_\ell(\ct{})=\T(\psi_\ell(\ct{}))$ and similarly $\T'_\ell(x)=\T'(\psi_\ell(\ct{}))$ and $\T''_\ell=\T''(\psi_\ell(\ct{}))$.  We denote by $\slope{\ell}$ the \emph{slope} of the basic curve $\Gamma_\ell$, i.e. $\slope{\ell}(\ct{})\defeq\dot\psi_\ell(\ct{})$, where the dot denotes differentiation with respect to the variable $\ct{}$. It is also useful to introduce the \emph{adapted slope} function $\tslope{\ell}(\ct{})\defeq\slope{\ell}(\ct{})+1/\T'_\ell(\ct{})$ and the \emph{local expansion rate} $\LL_\ell(\ct{})=\tslope{\ell}(\ct{})\T'_\ell(\ct{})$. Notice that definition \eqref{e_mainDefinition} implies that, if $(\coo{})\in\Gamma_\ell$ and $(\coop{})=F(\coo{})$:
\[
\LL_\ell(\ct{})=\left.\de{\ct{}'}{\ct{}}\right|_{\Gamma_\ell}(\ct{}).
\]
We denote by $\rr{\ell}(\ct{})\defeq\rho_\ell^{-1}(\ct{})\dot\rho_\ell(\ct{})$ the logarithmic derivative of $\rho_\ell$ . Finally, we define:
\[
\V_\ell\defeq\inf_{\ct{}\in I} \psi_\ell(\ct{}).
\]
\begin{lemma}\label{l_basicCurveIteration}
  Let $\ct{*}\in I_\ell$ such that $\tslope{\ell}(\ct{*})\not = 0$; then there exists $U\subset I_\ell$ a neigborhood of $\ct{*}$ such that:
  \begin{itemize}
  \item the curve $\Gamma'=F\Gamma|_U$ is the graph of a smooth function $\psi':I'\to\reals$;
  \item the pushforward $\rho'(\ct{}') = c'\rho(\ct{}(\ct{}'))/\LL_\ell(\ct{}(\ct{}'))$, where $\ct{}(\ct{}')=\pi_1 F^{-1}\Gamma'(\ct{}')$ and $c'$ is a normalizing constant, is a strictly positive smooth probability density on $I'$;
  \end{itemize}
  hence $\ell'=(\Gamma',\psi')$ is a basic pair. Moreover:
  \begin{subequations}\label{e_induced}
    \begin{align}
      \slope{\ell'}&= 2\ddot\phi(\ct{}')+\frac{1}{\T'_\ell}\left(1-\frac{1}{\LL_\ell}\right)\\
      \dslope{\ell'}&= 2\dddot\phi(\ct{}')+\frac{\dslope{\ell}}{\LL^3_\ell}-\frac{\T''_\ell}{\T'^3_\ell}\left(1-\frac{1}{\LL_\ell}\right)^3\\
      r_{\ell'}&= \frac{r_\ell}{\LL_\ell} - \frac{\dslope{\ell} \T'_\ell}{\LL^2_\ell}-\frac{\T''_\ell}{\T'^2_\ell}\left(1-\frac{1}{\LL_\ell}\right)^2,
    \end{align}
  \end{subequations}
  where all functions with subscript $\ell$ are evaluated at the point $\ct{}$ and all functions with subscript $\ell'$ are evaluated at the corresponding point $\ct{}'$.
\end{lemma}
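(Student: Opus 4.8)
The plan is to work purely locally near $\ct{*}$ and track how $F$ transforms the graph data $(\psi_\ell,\rho_\ell)$. First I would establish that $F\Gamma_\ell$ is again a graph near the image of $\ct{*}$. The first component of $F$ restricted to $\Gamma_\ell$ is $\ct{}\mapsto \ct{}'=\ct{}+\T_\ell(\ct{})$, whose derivative along $\Gamma_\ell$ is exactly $\LL_\ell(\ct{})=\tslope{\ell}(\ct{})\T'_\ell(\ct{})$ by the computation already recorded in the excerpt. Since $\T$ is an orientation-preserving diffeomorphism we have $\T'_\ell>0$, so the hypothesis $\tslope{\ell}(\ct{*})\neq 0$ gives $\LL_\ell(\ct{*})\neq 0$; by the inverse function theorem $\ct{}\mapsto\ct{}'$ is a diffeomorphism on some neighborhood $U$ of $\ct{*}$, and we may write $\ct{}=\ct{}(\ct{}')$ for its smooth inverse. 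Composing with the second component then defines $\psi'(\ct{}')$ as the $y$-coordinate of $F(\ct{}(\ct{}'),\psi_\ell(\ct{}(\ct{}')))$, a smooth function on the open interval $I'=\ct{}'(U)$, which proves the first bullet. The pushforward density formula in the second bullet is just the change-of-variables formula for densities under the diffeomorphism $\ct{}\mapsto\ct{}'$: the Jacobian is $|\de{\ct{}'}{\ct{}}|=|\LL_\ell|$, and positivity and smoothness of $\rho'$ follow from those of $\rho_\ell$ together with $\LL_\ell\neq 0$ on $U$ (shrinking $U$ if necessary so that $\LL_\ell$ keeps a constant sign there); the normalizing constant $c'$ makes it a probability density on $I'$.

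The substantive content is the system \eqref{e_induced}, and I would derive all three identities by differentiating the relation $\psi'(\ct{}')=\cv{}+2\dot\phi_A(\ct{}')$ — wait, more precisely the second component of \eqref{e_mainDefinition} reads $\cv{}'=\cv{}+2\dot\phi(\ct{}')$ with our normalization $A$ absorbed, where $\cv{}=\psi_\ell(\ct{})$ and $\ct{}'=\ct{}+\T_\ell(\ct{})$. To get $\slope{\ell'}=\de{\psi'}{\ct{}'}$ I differentiate $\cv{}'$ with respect to $\ct{}'$ using $\de{\ct{}}{\ct{}'}=1/\LL_\ell$ and $\de{\cv{}}{\ct{}'}=\slope{\ell}/\LL_\ell$:
\begin{equation*}
\slope{\ell'}=\de{\cv{}'}{\ct{}'}=2\ddot\phi(\ct{}')+\frac{\slope{\ell}}{\LL_\ell}.
\end{equation*}
One then checks $\slope{\ell}/\LL_\ell=\slope{\ell}/(\tslope{\ell}\T'_\ell)=(\tslope{\ell}-1/\T'_\ell)/(\tslope{\ell}\T'_\ell)=\frac{1}{\T'_\ell}(1-\frac{1}{\LL_\ell})$, using the definition $\tslope{\ell}=\slope{\ell}+1/\T'_\ell$; this is exactly the claimed formula for $\slope{\ell'}$. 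The formula for $\dslope{\ell'}$ comes from differentiating $\slope{\ell'}$ once more with respect to $\ct{}'$: the term $2\ddot\phi(\ct{}')$ contributes $2\dddot\phi(\ct{}')$ directly, and the term $\slope{\ell}/\LL_\ell$ requires the chain rule together with the derivative of $\LL_\ell=\tslope{\ell}\T'_\ell$ along $\Gamma_\ell$, which brings in $\dslope{\ell}$, $\T''_\ell$ (through $\de{\T'_\ell}{\ct{}}=\T''_\ell\cdot\slope{\ell}$ or rather through the appropriate combination), and powers of $\LL_\ell$; a careful but routine bookkeeping collapses it to the stated $2\dddot\phi(\ct{}')+\dslope{\ell}/\LL_\ell^3-(\T''_\ell/\T'^3_\ell)(1-1/\LL_\ell)^3$.

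For the density identity $r_{\ell'}=\rho_{\ell'}^{-1}\dot\rho_{\ell'}$, I would take the logarithm of the pushforward formula $\rho'(\ct{}')=c'\rho_\ell(\ct{}(\ct{}'))/\LL_\ell(\ct{}(\ct{}'))$, giving $\log\rho'(\ct{}')=\log c'+\log\rho_\ell(\ct{})-\log\LL_\ell(\ct{})$, and then differentiate with respect to $\ct{}'$ using $\de{\ct{}}{\ct{}'}=1/\LL_\ell$:
\begin{equation*}
r_{\ell'}=\frac{1}{\LL_\ell}\left(r_\ell-\frac{(\log\LL_\ell)^{\boldsymbol{\cdot}}}{1}\right),
\end{equation*}
where the dot is $\ct{}$-differentiation along $\Gamma_\ell$. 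It then remains to expand $(\log\LL_\ell)^{\boldsymbol{\cdot}}=\dot\LL_\ell/\LL_\ell$ with $\LL_\ell=\tslope{\ell}\T'_\ell$, so $\dot\LL_\ell=\dslope{\ell}\T'_\ell+\tslope{\ell}\T''_\ell\slope{\ell}$ — here one must be careful that $\T'_\ell(\ct{})=\T'(\psi_\ell(\ct{}))$ so its $\ct{}$-derivative is $\T''_\ell\cdot\slope{\ell}$ — and substitute $\tslope{\ell}=\slope{\ell}+1/\T'_\ell$, regrouping so that the $\dslope{\ell}$-terms assemble into $-\dslope{\ell}\T'_\ell/\LL_\ell^2$ and the $\T''_\ell$-terms assemble into $-(\T''_\ell/\T'^2_\ell)(1-1/\LL_\ell)^2$. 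The main obstacle throughout is purely organizational: the three identities are tightly intertwined through the substitution $\slope{\ell}/\LL_\ell=\frac{1}{\T'_\ell}(1-\frac{1}{\LL_\ell})$ and its relatives, and one must resist expanding everything into a single mess, instead consistently re-expressing intermediate results in terms of $\LL_\ell$, $\tslope{\ell}$, $\T'_\ell$, $\T''_\ell$ to recognize the compact closed forms in \eqref{e_induced}. No genuine analytic difficulty arises — smoothness is inherited from $\phi$, $\T$, $\psi_\ell$, $\rho_\ell$ and the nonvanishing of $\LL_\ell$ — so once the algebra is arranged the lemma follows.
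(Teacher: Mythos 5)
Your argument follows the same route as the paper's: there the proof simply notes that $\tslope{\ell}(\ct{*})\neq 0$ yields a neighborhood on which $\tslope{\ell}$, hence $\LL_\ell=\tslope{\ell}\T'_\ell$, does not vanish, so that the graph and pushforward statements hold, and asserts that \eqref{e_induced} follows ``immediately from the definitions''; your inverse-function-theorem formulation and your chain-rule derivations of (\ref{e_induced}a) are exactly that computation carried out, and they are correct.

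There is, however, one concrete slip in your derivation of (\ref{e_induced}c): the displayed identity $\dot{\LL}_\ell=\dslope{\ell}\T'_\ell+\tslope{\ell}\T''_\ell\slope{\ell}$ treats the $\ct{}$-derivative of the adapted slope $\tslope{\ell}=\slope{\ell}+1/\T'_\ell$ as if it were $\dslope{\ell}$, forgetting the contribution of the $1/\T'_\ell$ term. The correct identity, most easily read off from $\LL_\ell=1+\slope{\ell}\T'_\ell$, is
\[
\dot{\LL}_\ell=\dslope{\ell}\T'_\ell+\slope{\ell}^2\T''_\ell .
\]
The difference matters: with your expression the $\T''_\ell$-contribution to $r_{\ell'}$ is $\tslope{\ell}\slope{\ell}\T''_\ell/\LL_\ell^2=(\T''_\ell/\T'^2_\ell)\left(1-1/\LL_\ell\right)$, i.e.\ only the first power, and (\ref{e_induced}c) would not come out; with the corrected $\dot{\LL}_\ell$ one gets $\slope{\ell}^2\T''_\ell/\LL_\ell^2=(\T''_\ell/\T'^2_\ell)\left(1-1/\LL_\ell\right)^2$, using $\slope{\ell}/\LL_\ell=(1/\T'_\ell)\left(1-1/\LL_\ell\right)$, and the stated formula follows. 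The same care is needed in the bookkeeping you left implicit for (\ref{e_induced}b), where $\dot{\LL}_\ell$ enters again through the derivative of $1-1/\LL_\ell$; with the corrected identity that computation also closes and reproduces the cube. So the structure of your proof is right and matches the paper, but this one line must be fixed for the claimed regroupings to be a proof rather than an assertion of the answer.
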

\begin{proof}
  Equations \eqref{e_induced} immediately follow from the definitions assuming $\LL_\ell\not = 0$; on the other hand, if $\tslope{\ell}(\ct{*})\not = 0$ we know that there necessarily exists a neighborhood $U$ such that $\tslope{\ell}(U)\not\ni 0$. Therefore, (\ref{e_induced}b) implies that the curve $\Gamma'$ is a graph of a smooth function and $\LL_\ell\not = 0$ implies that $\rho'$ is strictly positive.

  Notice moreover that even if $\rho'$ depends on the choice of $U$, equations \eqref{e_induced} are well-defined and independent of $U$.
\end{proof}

We will shortly introduce the notion of \emph{standard pairs}, which are given by a special class of basic pairs. First, define a class of basic pairs that we call \emph{reference pairs}: geometrically, reference pairs are given by pieces of the image of a vertical line which are not too short nor too long endowed with a uniform density. Standard pairs will in turn be defined as being appropriately close to reference pairs.

Fix once and for all a sufficiently small $\delta>0$; we require $\delta$ to be smaller than the minimum distance between two consecutive critical points of $\dot\phi$; in our case it suffices to take $\delta<\pi/4$. We say that an interval $I\subset\torus^1$ is a \emph{standard interval} if $\delta/4<|I|<\delta$.
\begin{definition}
  A basic curve $\Gamma=(x,\psi(x))$ with $\psi:I\to\reals$ is said to be a \emph{reference curve} if $I$ is a standard interval and
  \begin{align*}
    \psi_\ell(\ct{}) &= 2\dot\phi(\ct{})+\T^{-1}\left(c+\ct{}\right)
  \end{align*}
  for some $c>0$; a basic pair $\ell$ is said to be a \emph{reference pair} if $\Gamma_\ell$ is a reference curve and $\rho_\ell\equiv|I_\ell|\inv$.
\end{definition}
Define the following functions:
\begin{align}\label{e_referenceSection}
  \slope{1}(\coo{0})&=2\ddot\phi(\ct{0})+\frac{1}{\T'(\cv{-1})}&
  \dslope{1}(\coo{0})&=2\dddot\phi(\ct{0})-\frac{\T''(\cv{-1})}{\T'^3(\cv{-1})}.
\end{align}
Then if $\ell$ is a reference pair, we have:
\begin{align*}
  \slope{\ell}(\ct{})&=\slope{1}(\ct{},\psi_\ell(\ct{}))&
  \dslope{\ell}(\ct{})&=\dslope{1}(\ct{},\psi_\ell(\ct{})).
\end{align*}
It is also convenient to define the function $\tslope{1}(\coo{})=\slope{1}(\coo{})+1/\T'(\cv{})$; we will always require $\cv{*}$ to be so large that $\|\slope{1}\|_{\phspace_*}<3A$ and $\|\tslope{1}\|_{\phspace_*}<3A$.
\begin{definition}
Let $I\subset \torus^1$ be an interval and $\rho$ a probability density on $I$; we say
that $\rho$ is regular if $r(x)=\rho^{-1}(x)\dot\rho(x)$ satisfies $\|r\|_I<1$.
\end{definition}
\begin{lemma}\label{l_comparison}
  Let $I$ be a standard interval; then there exist $0<\mu_1<\mu_2$ such that if $\rho$ is
  a regular probability density on $I$, then for any measurable set $E\subset I$ we have
  \[
  \mu_1\leb{E}<\prob(E)<\mu_2\leb{E}
  \]
\end{lemma}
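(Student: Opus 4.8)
The plan is to estimate the density $\rho$ from above and below by constants depending only on the length $|I|$, using the regularity bound $\|r\|_I<1$, and then to observe that on a standard interval the length is comparable to $\delta$, so the constants can be made uniform. First I would integrate the logarithmic-derivative identity $\dot\rho/\rho=r$: for any two points $x,x'\in I$ we have
\[
\log\frac{\rho(x')}{\rho(x)}=\int_{x}^{x'}r(t)\,\deh t,
\]
and since $\|r\|_I<1$ and $x,x'\in I$ with $|I|<\delta<\pi/4<1$, the right-hand side is bounded in absolute value by $|I|<1$. Hence $e^{-|I|}\leq \rho(x')/\rho(x)\leq e^{|I|}$ for all $x,x'\in I$. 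In particular, denoting $m=\inf_I\rho$ and $M=\sup_I\rho$, we get $M\leq e^{|I|}m$, and the normalization $\int_I\rho\,\deh x=1$ forces $m|I|\leq 1\leq M|I|$, so that
\[
\frac{1}{e^{|I|}|I|}\leq \frac{e^{-|I|}}{|I|}\cdot\frac{1}{1}\leq m\leq M\leq \frac{e^{|I|}}{|I|}.
\]

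Next I would translate these pointwise bounds on $\rho$ into bounds on $\prob(E)$. For any measurable $E\subset I$,
\[
m\,\leb{E}\leq \prob(E)=\int_E\rho\,\deh x\leq M\,\leb{E},
\]
so combining with the previous display gives
\[
\frac{1}{e^{|I|}|I|}\,\leb{E}\leq \prob(E)\leq \frac{e^{|I|}}{|I|}\,\leb{E}.
\]
Finally I would use that $I$ is a standard interval, i.e. $\delta/4<|I|<\delta$, to replace the $|I|$-dependent factors by absolute constants: since $t\mapsto e^{-t}/t$ is decreasing and $t\mapsto e^{t}/t$ can be bounded on the compact range $[\delta/4,\delta]$, we may set $\mu_1\defeq e^{-\delta}/\delta$ and $\mu_2\defeq 4e^{\delta}/\delta$, which satisfy $0<\mu_1<\mu_2$ and depend only on $\delta$ (hence are fixed once and for all). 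This yields $\mu_1\leb{E}<\prob(E)<\mu_2\leb{E}$ as claimed; the strict inequalities come from the strict inequalities $\delta/4<|I|<\delta$ together with the fact that a nontrivial $E$ has $\leb{E}>0$ (and for $\leb{E}=0$ the statement is vacuous or can be stated with $\leq$).

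There is essentially no serious obstacle here: the only point requiring a little care is making sure the constants are genuinely uniform over all standard intervals and all regular densities, which is exactly what the two-sided constraint $\delta/4<|I|<\delta$ in the definition of a standard interval buys us, and what the normalization $\|r\|_I<1$ (rather than merely $\|r\|_I<\infty$) buys us — without the bound being below $1$ one would still get a bound, but keeping the constant $<1$ and $|I|<1$ makes the exponential factors harmless and keeps later estimates clean. The hard part, if any, is purely bookkeeping: this lemma will be invoked repeatedly, so one wants $\mu_1,\mu_2$ to be truly absolute, and the argument above delivers that.
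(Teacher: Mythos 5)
Your proof is correct and follows essentially the same route as the paper's: a Gr\"onwall-type bound obtained by integrating $\dot\rho/\rho=r$ with $\|r\|_I<1$, combined with the normalization $\int_I\rho=1$ and the constraint $\delta/4<|I|<\delta$, yielding exactly the constants $\mu_1=\delta^{-1}e^{-\delta}$ and $\mu_2=4\delta^{-1}e^{\delta}$. The only cosmetic difference is that the paper anchors the comparison at a point where $\rho$ attains its average value $|I|^{-1}$, while you use $\inf$/$\sup$ plus normalization, which amounts to the same estimate.
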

\begin{proof}
By the regularity condition $|r|<1$, applying Gr\"onwall lemma to $\rho$ we obtain, for every $\ct{},\bar{\ct{}}\in I$:
\[
\rho(\bar{\ct{}})e^{-|\ct{}-\bar{\ct{}}|}\leq\rho(\ct{})\leq\rho(\bar{\ct{}})e^{|\ct{}-\bar{\ct{}}|};
\]
by taking $\bar{\ct{}}$ such that $\rho(\bar{\ct{}})=\bar\rho$ the average density and by the definition of standard interval we obtain:
\[
\mu_1 = \delta^{-1}e^{-\delta}<\rho(\ct{})< 4\delta^{-1}e^{\delta} = \mu_2\qedhere
\]
\end{proof}
\begin{definition}\label{d_definitionStandardPair}
  Fix $D$ a constant to be defined later; let $\ell$ be a basic pair and define
  $\Delta\slope{\ell}\defeq\slope{\ell}(\ct{})-\slope{1}(\ct{},\psi_\ell(\ct{}))$ and
  correspondingly
  $\Delta\dslope{\ell}\defeq\dslope{\ell}(\ct{})-\dslope{1}(\ct{},\psi_\ell(\ct{}))$. Then
  $\ell$ is said to be a \emph{standard pair} if $I_\ell$ is a standard interval,
  $\rho_\ell$ is regular and $\Gamma_\ell$ is locally close to a reference curve in the
  following sense:
  \begin{subequations}\label{e_definitionStandardPair}
  \begin{align}
    |\Delta\slope{\ell}(\ct{})|&<D\inv\T'_\ell(\ct{})^{-3/2}\label{e_standardPairSlope}\\
    |\Delta\dslope{\ell}(\ct{})|&< A/10\label{e_standardPairDslope}
  \end{align}
\end{subequations}
\end{definition}
The next lemma ensures that standard curves are globally close to reference curves.
\begin{lemma}\label{l_refclose}
  Let $\ell$ be a basic pair and $\bar\ell$ a reference pair such that $I_\ell=I_{\bar\ell}=I$; assume there exists a $\ct{*}\in I$ such that $\psi_\ell(\ct{*})=\psi_{\bar\ell}({\ct{*}})$. Then:
\[
\fa \ct{}\in I\quad |\psi_\ell(\ct{})-\psi_{\bar\ell}(\ct{})|<2\|\Delta\slope{\ell}\| |\ct{}-\ct{*}|.
\]
\end{lemma}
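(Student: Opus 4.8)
The plan is to set $g\defeq\psi_\ell-\psi_{\bar\ell}$ and run a Gr\"onwall argument for $g$ anchored at $\ct{*}$, where $g(\ct{*})=0$ by hypothesis. First I would differentiate: since $I_\ell=I_{\bar\ell}$ and $\bar\ell$ is a reference pair, the identity recorded right after \eref{e_referenceSection} gives $\slope{\bar\ell}(\ct{})=\slope{1}(\ct{},\psi_{\bar\ell}(\ct{}))$, while by the very definition of $\Delta\slope{\ell}$ one has $\slope{\ell}(\ct{})=\slope{1}(\ct{},\psi_\ell(\ct{}))+\Delta\slope{\ell}(\ct{})$; subtracting,
\[
\dot g(\ct{})=\Delta\slope{\ell}(\ct{})+\Bigl(\slope{1}\bigl(\ct{},\psi_\ell(\ct{})\bigr)-\slope{1}\bigl(\ct{},\psi_{\bar\ell}(\ct{})\bigr)\Bigr).
\]
The point will be to absorb the parenthesis into a small multiple of $|g(\ct{})|$.

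Next I would estimate that parenthesis by the mean value theorem. A direct computation from \eref{e_referenceSection} gives $\partial_{\cv{}}\slope{1}(\coo{})=-\T''(\cv{-1})/\T'(\cv{-1})^{2}$, with $\cv{-1}=\cv{}-2\dot\phi(\ct{})$ the $\cv{}$-coordinate of $F^{-1}(\coo{})$. Both $\Gamma_\ell$ and $\Gamma_{\bar\ell}$ lie in $\phspace_*$, hence so does the vertical segment joining $\psi_{\bar\ell}(\ct{})$ to $\psi_\ell(\ct{})$, along which $\cv{-1}\ge\cv{*}-\const$; plugging in the power-law form of $\T$ then yields $|\partial_{\cv{}}\slope{1}|=\bigo{\cv{*}^{-\gamma}}$ uniformly on $\phspace_*$. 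I would therefore add to the standing requirements that $\cv{*}$ be so large that $\kappa\defeq\|\partial_{\cv{}}\slope{1}\|_{\phspace_*}$ satisfies $\kappa\delta<\log 2$; the mean value theorem then bounds the parenthesis by $\kappa|g(\ct{})|$, so that
\[
|\dot g(\ct{})|\le\|\Delta\slope{\ell}\|+\kappa\,|g(\ct{})|\qquad\text{for }\ct{}\in I.
\]

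Finally, since $g(\ct{*})=0$ and $|I|<\delta$, integrating from $\ct{*}$ and applying Gr\"onwall's lemma to $|g|$ — just as in the proof of Lemma~\ref{l_comparison} — gives, for every $\ct{}\in I$,
\[
|\psi_\ell(\ct{})-\psi_{\bar\ell}(\ct{})|\le\|\Delta\slope{\ell}\|\,|\ct{}-\ct{*}|\,e^{\kappa|\ct{}-\ct{*}|}\le\|\Delta\slope{\ell}\|\,|\ct{}-\ct{*}|\,e^{\kappa\delta}<2\,\|\Delta\slope{\ell}\|\,|\ct{}-\ct{*}|,
\]
which is the assertion (the last inequality being strict for $\ct{}\neq\ct{*}$ and trivial at $\ct{}=\ct{*}$). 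There is no real obstacle here; the one point that needs attention is the quantitative smallness of $\partial_{\cv{}}\slope{1}$ — this is what forces the first, and mildest, of the many largeness conditions on $\cv{*}$ alluded to at the start of the section — together with the elementary but necessary check that the strip swept out between the two graphs never leaves $\phspace_*$, so that $\cv{-1}$ stays bounded away from $0$ and the power-law estimate for $\T$ applies.
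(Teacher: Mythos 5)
Your proposal is correct and follows essentially the same route as the paper: both hinge on the differential inequality $\bigl|\tfrac{\deh}{\deh\ct{}}(\psi_\ell-\psi_{\bar\ell})\bigr|\le\|\Delta\slope{\ell}\|+\kappa\,|\psi_\ell-\psi_{\bar\ell}|$ with $\kappa=\sup|\partial_{\cv{}}\slope{1}|$ made small by taking $\cv{*}$ (resp.\ $\V$) large. The only difference is how the inequality is closed — you apply Gr\"onwall with the condition $\kappa\delta<\log 2$, while the paper runs a bootstrap on the connected component of $\{|\psi_\ell-\psi_{\bar\ell}|<2\|\Delta\slope{\ell}\|\}$ containing $\ct{*}$ — which is an inessential variation.
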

\begin{proof}
  Let $\V=\min\{\V_\ell,\V_{\bar\ell}\}$ and let
  \[
  \mu=\sup_{x\in\torus^1}\left|\dpar{\slope{1}}{\cv{}}(x,\V)\right|;
  \]
  it is immediate to check that for all $x\in\torus^1,\,\cv{}\geq\V$ we have $\left|\dpar{\slope{1}}{\cv{}}(\coo{})\right|\leq \mu= o(\V\inv)$; therefore we can write:
\begin{align*}
\left|\de{}{\ct{}}\left(\psi_\ell(\ct{})-\psi_{\bar\ell}(\ct{})\right)\right| &\leq \left|\slope{\ell}(\ct{}) - \slope{1}(\ct{},\psi_\ell(\ct{}))\right| + \left|\slope{1}(\ct{},\psi_\ell(\ct{})) - \slope{1}(\ct{},\psi_{\bar\ell}(\ct{}))\right|\\
&\leq \|\Delta\slope{\ell}\| + \mu|\psi_\ell(\ct{})-\psi_{\bar\ell}(\ct{})|.
\end{align*}
Let $J\subset I$ be the connected component of the set $\{|\psi_\ell(\ct{})-\psi_{\bar\ell}(\ct{})|<2\|\Delta\slope{\ell}\|\}$ containing $\ct{*}$; for all $\ct{}\in J$ and for large enough $\V$ we have:
\[
\left|\de{}{\ct{}}\left(\psi_\ell(\ct{})-\psi_{\bar\ell}(\ct{})\right)\right| \leq (1+2\mu)\|\Delta\slope{\ell}\|\leq 2\|\Delta\slope{\ell}\|,
\]
which in particular implies that $J=I$ and concludes the proof.
\end{proof}
Fix $K$ large and an interval $I\in\torus^{1}$ and let $S_I\in\phspace$ be the half-strip given by $I\times[K,\infty)$. We define \emph{adapted coordinates} on $S$ by straightening the foliation of $S_I$ given by reference curves. More precisely:
\begin{definition}
Fix $\bar{\ct{}}\in I$ and let
\begin{align*}
\kappa: \tilde{I}\times\reals^{+}&\to I\times\reals\\
        (\xi,\eta)&\mapsto\left(\xi+\bar{\ct{}},\psi_\eta(\xi+\bar{\ct{}})\right)
\end{align*}
where $\psi_\eta$ is a reference curve such that $\psi_\eta(\bar{\ct{}})=\eta$.
We define \emph{adapted coordinates} on $S_I$ by taking the restriction of $\kappa$ on $\kappa^{-1}S_I$
\end{definition}


\input{definitions.p}

\subsection{Critical sets}
We need to establish results regarding invariance properties of standard pairs; in order
to do so we need to obtain good geometrical and regularity bounds (to control $\slope{}$,
$\dslope{}$ and $\rr{}$) for the map $F$. Such bounds cannot be established everywhere;
points where this is not possible will belong to sets that we will call \emph{critical
  set}s. The definition of the critical sets depends on our requirements for a ``good''
bound, and therefore it is far from being unique. However, all critical sets need to
satisfy the following condition: every orbit that never visits the critical sets is
hyperbolic.
\begin{definition}
  Fix $K_1$, $K_2$ large; %
  we define $\critical{1}$ the \emph{critical set of order 1} and $\critical{2}$ the \emph{critical set of order 2} as follows:
  \begin{align*}
    \critical{1}&\defeq \left\{(\coo{0})\in\phspace_* \st |\tilde h_1(\coo{0})|<K_1 \T'(\cv{0})^{-1/2}\right\};\\
    \critical{2}&\defeq \left\{(\coo{0})\in\phspace_* \st |\tilde h_1(\coo{0})\tilde h_1(\coo{1})|<K_2 \T'(\cv{0})^{-1}\right\}\cap\critical{1};
  \end{align*}
\end{definition}%
Take $\bar K_2>4$ and define the set:
\[
  \corecritical{2}\defeq\left\{(\coo{0})\in\phspace_* \st |\tslope{1}(\coo{0})|<\bar{K}_2 \T'(\cv{0})^{-1}\right\}.
\]
We choose $K_2$ so large that $\corecritical{2}\subset \critical{2}$; the set $\corecritical{2}$ will be called the \emph{core} of the critical set $\critical{2}$. We furthermore assume $\cv{*}$ to be large enough so that $\{\ddot\phi(\ct{})=0\}\subset\corecritical{2}$.
Notice moreover that:
  \begin{equation*}
    \T'(\cv{k})=\T'(\cv{0})\left(1+\bigo{|k|\cv{0}^{-1}}\right)
  \end{equation*}
  which yields, for any given $k$:
  \begin{equation} \label{e_confuse}
    \fa\eps>0\ \ex\bar{\cv{}}\st\cv{0}>\bar{\cv{}}\Rightarrow(1-\eps)\T'(\cv{0})<\T'(\cv{k})<(1+\eps)\T'(\cv{0}).
  \end{equation}
  Thus we can choose $K_1$ large enough to ensure that $\critical{1}\cap F^{-1}\critical{1}\subset\critical{2}$.
We now proceed to define the \emph{augmented} critical sets, which are suitably defined neighborhood of the critical sets. Fix $\hat K_1>K_1$ to be determined later and define the following set:
\begin{equation}
  \hcritical{1}\defeq\{(\coo{0})\in\phspace_*\st |\tslope{1}(\coo{0})|<\hat K_1\T'( \cv{0})^{-1/2}\}.
  \label{e1_invc}
\end{equation}
We now extend $\critical{2}$ to $\hcritical{1}$:
\[
\critical{2}^{*}\defeq\left\{(\coo{0})\in\phspace_*\st |\tilde h_1(\coo{0})\tilde h_1(\coo{1})|<K_2 \T'(\cv{0})\inv\right\}\cap\hcritical{1};
\]
we furthermore require $K_2$ to be so large that the inclusion
$F^{-1}\critical{1}\cap\hcritical{1}\subset\critical{2}^*$ holds.  Then, fix $
\hat K_2>K_2$ also to be determined later and define:
\begin{align*}
  \hcritical{2}\defeq\{(\coo{})\in\phspace_* \st  |\tilde h_1(\coo{0})\tilde
  h_1(\coo{1})|<\hat K_2 \T'(\cv{0})\inv\}\cap \hcritical{1}
\end{align*}
We describe in a lemma the geometrical features of critical sets, which are sketched in
Figure~\ref{f_criticals}.
\begin{figure}[!ht]
  \def\svgwidth{10cm}
  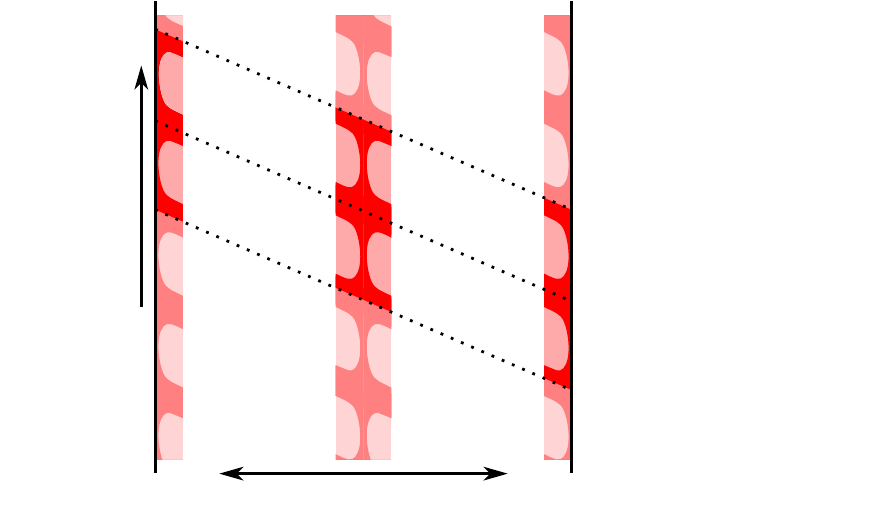
  \caption{Sketch of the geometry of $\critical1$ and $\critical2$ for large $\cv{}$:
    $\critical1$ is given by the vertical strips in light color; $\critical2$ is given by
    the darker region inside $\critical1$.  We highlight a ``fundamental domain'' of
    $\critical2$; the reader can check that this is an accurate depiction of $\critical1$
    and $\critical2$ by simple inspection of the definition.}
  \label{f_criticals}
\end{figure}
The proof of the lemma will be given in appendix~\ref{s_proofLemmaLebesgue}; for clarity,
let us first introduce the following natural notion: given a basic curve $\Gamma$ and a
point $(\coo{})\in\Gamma$, for any $r>0$, we let the \emph{$\Gamma$-ball of radius $r$
  around $(\coo{})$} be the set of points $(\ct{}',\cv{}')\in\Gamma$ such that
$|\ct{}'-\ct{}|<r$; this induces the corresponding notion of \emph{$\Gamma$-neighborhood of a
  subset of\,\,$\Gamma$}.
\begin{lemma}\label{l_lebesgueCritical}
  The critical sets enjoy the following properties:
  \begin{itemize}
  \item[($a_1$)] for fixed $\Delta_1>0$, we can choose $\hat K_1$ so large that, for any standard
    curve $\Gamma$, the $\Gamma$-neighborhood of radius $r=\Delta_1\V_\Gamma^{-\beta}$ of
    $\Gamma\cap\critical1$ is contained in $\hcritical1$;
  \item[($a_2$)] for fixed $\Delta_2>0$, we can choose $\hat K_2$ so large that, for any
    standard curve $\Gamma$, the intersection of the $\Gamma$-neighborhood of radius
    $r=\Delta_2\V_\Gamma^{-2\beta}$ of $\Gamma\cap\critical2^*$ with $\hcritical1$ is
    contained in $\hcritical2$.
  \item[($b_1$)] for any standard pair $\ell$ we have
      $\prob_\ell(\hcritical{1})=\bigo{\V_\ell^{-\beta}}$;
  \item[($b_2$)] for any standard curve $\Gamma$, the number of connected components of
    $\Gamma\cap\hcritical{2}$ is bounded uniformly in $\V_\Gamma$.
  \item[($c_1$)] the Lebesgue measure of $\hcritical{1}$ is finite if $\gamma>3$;
  \item[($c_2$)] the Lebesgue measure of $\hcritical{2}$ is finite if $\gamma>2$;
  \end{itemize}
\end{lemma}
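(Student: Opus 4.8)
\emph{The plan.} I would begin by deriving a closed expression for $\tslope{1}$: since the preimage height obeys $\cv{-1}=\cv{0}-2\dot\phi_A(\ct{0})$ at every point of $\phspace_*$, formula~\eqref{e_referenceSection} gives
\[
\tslope{1}(\ct{},\cv{})=2\ddot\phi_A(\ct{})+\frac1{\T'(\cv{}-2\dot\phi_A(\ct{}))}+\frac1{\T'(\cv{})}=-2A\sin\ct{}+\bigo{\T'(\cv{})^{-1}},
\]
together with $\partial_{\ct{}}\tslope{1}=-2A\cos\ct{}+\bigo{A\cv{}^{-\gamma}}$ and $\partial_{\cv{}}\tslope{1}=\bigo{\cv{}^{-\gamma}}$ (and the analogous bounds for $\slope{1}$). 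Here $\T(\cv{})=\cv{}^{\gamma}$, so $\T'(\cv{})\asymp\cv{}^{\gamma-1}$, $\T''(\cv{})/\T'(\cv{})^{2}=\bigo{\cv{}^{-\gamma}}$, and, writing $\beta=(\gamma-1)/2$, one has $\T'(\cv{})^{-1/2}\asymp\cv{}^{-\beta}$ and $\T'(\cv{})^{-1}\asymp\cv{}^{-2\beta}$. From this I would extract three facts, valid once $\cv{*}$ is large: \emph{(i)} on $\phspace_*$, $\tslope{1}$ vanishes only in a $\bigo{\cv{}^{-2\beta}}$–neighbourhood of $\{\ct{}\equiv 0\text{ or }\pi\pmod{2\pi}\}$, where $|\partial_{\ct{}}\tslope{1}|\asymp A$; hence the restriction of $\tslope{1}$ to any basic curve of bounded slope has at most one zero per standard interval, is monotone near it, and $\{|\tslope{1}|<c\,\cv{}^{-\beta}\}$ meets such a curve in a single arc of length $\asymp c\,A^{-1}\,\V^{-\beta}$; \emph{(ii)} on a standard curve $\Gamma$, $\psi_\Gamma$ is constant up to $\bigo1$ over $I_\Gamma$ (its slope being $\bigo A$), so $\T'_\Gamma\asymp\T'(\V_\Gamma)\asymp\V_\Gamma^{2\beta}$, and $\tslope{\Gamma}=\tslope{1}+\bigo{\V_\Gamma^{-3\beta}}$ by Definition~\ref{d_definitionStandardPair}; \emph{(iii)} writing $(\coo{1})=F(\coo{0})$, along $\Gamma$ both $\tfrac{d\ct{1}}{d\ct{0}}=\LL_\Gamma=\T'_\Gamma\tslope{\Gamma}$ and $\tfrac{d\cv{1}}{d\ct{0}}=\slope{\Gamma}+2\ddot\phi_A(\ct{1})\LL_\Gamma$ are $\bigo{\V_\Gamma^{\beta}}$ on $\hcritical{1}$.

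\emph{The soft claims $(a_1),(a_2),(b_1),(c_1)$.} For $(a_1)$: by (i), $\ct{0}\mapsto\tslope{1}(\coo{0})$ is Lipschitz along $\Gamma$ with constant $\le 3A+o(1)$, so a displacement of $\Delta_1\V_\Gamma^{-\beta}$ changes $|\tslope{1}(\coo{0})|$ by $\le C\Delta_1\V_\Gamma^{-\beta}$; since $\V_\Gamma^{-\beta}\asymp\T'(\cv{})^{-1/2}$ on $\Gamma$, a point within that $\Gamma$–distance of $\critical{1}$ lies in $\hcritical{1}$ provided $\hat K_1\ge C(K_1+\Delta_1)$. For $(a_2)$: on $\hcritical{1}$ one has $|\tslope{1}(\coo{0})|<\hat K_1\T'(\cv{})^{-1/2}$, and by (iii) — differentiating the smooth map $\ct{0}\mapsto(\coo{1})$, with no graph assumption on $F\Gamma$ needed — $\ct{0}\mapsto\tslope{1}(\coo{1})$ has derivative $\partial_{\ct{}}\tslope{1}\cdot\LL_\Gamma+\partial_{\cv{}}\tslope{1}\cdot\tfrac{d\cv{1}}{d\ct{0}}=\bigo{\V_\Gamma^{\beta}}$; hence $\ct{0}\mapsto\tslope{1}(\coo{0})\tslope{1}(\coo{1})$ has derivative $\bigo1$ on $\hcritical{1}$, a displacement of $\Delta_2\V_\Gamma^{-2\beta}$ changes this product by $\bigo{\Delta_2\V_\Gamma^{-2\beta}}$, and starting from $\critical{2}^{*}$ it stays below $\hat K_2\T'(\cv{})^{-1}$ once $\hat K_2$ is large. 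For $(c_1)$: by (i) the slice $\hcritical{1}\cap\{\cv{}=y\}$ consists of at most two arcs of length $\asymp y^{-\beta}$, so $\leb{\hcritical{1}}\asymp\int_{y_*}^{\infty}y^{-\beta}\,dy$, finite iff $\beta>1$, i.e.\ $\gamma>3$. For $(b_1)$: a standard pair $\ell$ meets $\hcritical{1}$ in a single arc of length $\asymp\V_\ell^{-\beta}$, on which $\rho_\ell<\mu_2$ by Lemma~\ref{l_comparison}, so $\prob_\ell(\hcritical{1})=\bigo{\V_\ell^{-\beta}}$.

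\emph{The claim $(b_2)$.} On the single arc $\Gamma\cap\hcritical{1}$ I would switch to the parameter $u=\tslope{1}(\coo{0})$, which by (i) is a diffeomorphism onto an interval $\ni 0$ of length $\asymp\V_\Gamma^{-\beta}$ with $\tfrac{du}{d\ct{0}}=c_A(1+o(1))$, $|c_A|\asymp A$. By (ii)–(iii), $\tfrac{d\ct{1}}{du}=\LL_\Gamma/\tfrac{du}{d\ct{0}}=\tfrac{\T'_\Gamma}{c_A}u(1+o(1))+\bigo{\V_\Gamma^{-\beta}}$, so $\ct{1}=\ct{1}(0)+\tfrac{\T'_\Gamma}{2c_A}u^{2}(1+o(1))+\bigo{\V_\Gamma^{-2\beta}}$ sweeps a $\bigo1$–interval in an essentially parabolic fashion, while $\tslope{1}(\coo{1})=-2A\sin(\ct{1}(u))+\bigo{\V_\Gamma^{-2\beta}}$. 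After the rescaling $\xi=\sqrt{\T'_\Gamma/(2|c_A|)}\,u$ (so that $|\xi|=\bigo1$), $\Gamma\cap\hcritical{2}$ becomes the sublevel set $\{\,|\xi\sin(c\pm\xi^{2}(1+o(1)))|<\eps\,\}$ with $c$ a constant and $\eps\asymp\V_\Gamma^{-\beta}\to0$; for $\V_\Gamma$ large this reduces to $\{|\xi\sin(c\pm\xi^{2})|<\eps\}$ on a bounded $\xi$–interval, and since both $\xi\sin(c\pm\xi^{2})$ and its derivative $\sin(c\pm\xi^{2})\pm2\xi^{2}\cos(c\pm\xi^{2})$ have a number of zeros there bounded uniformly in $c$ (hence in $\V_\Gamma$), the sublevel set — and therefore $\Gamma\cap\hcritical{2}$ — has a bounded number of components.

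\emph{The claim $(c_2)$, which is the crux.} Here I would work on a genuine horizontal slice $\{\cv{}=y\}$: now $\ct{1}=\ct{}+\T(y)$ with $\T(y)$ \emph{constant}, so $\tfrac{d\ct{1}}{d\ct{}}=1$ and $\tslope{1}(\coo{1})=-2A\sin(\ct{}+\T(y))+\bigo{\T'(y)^{-1}}$. On the $x$–arc $\{|\tslope{1}(\coo{0})|<\hat K_1\T'(y)^{-1/2}\}$ — of length $\asymp y^{-\beta}$, on which $|\tslope{1}(\coo{0})|\asymp A\,|\ct{}-x^{*}(y)|$ with $x^{*}(y)=k\pi+\bigo{\T'(y)^{-1}}$ its zero — the second factor $\tslope{1}(\coo{1})$ agrees, up to $\bigo{\T'(y)^{-1/2}}$, with the constant $v^{*}(y):=-2A\sin(x^{*}(y)+\T(y))=-2A\sin(y^{\gamma})+\bigo{\T'(y)^{-1}}$. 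Hence the slice of $\hcritical{2}$ has length $\asymp\min\!\big(\T'(y)^{-1}/|\sin y^{\gamma}|,\ \T'(y)^{-1/2}\big)$, and $\leb{\hcritical{2}}$ is its integral over $\{y\ge y_*\}$. I would estimate this over dyadic blocks $[2^{n},2^{n+1}]$, where $\T'(y)\asymp P:=2^{n(\gamma-1)}$ and, because $(y^{\gamma})'\asymp P$ is large, $y^{\gamma}$ equidistributes mod $\pi$ (change variables $\theta=y^{\gamma}$), giving $\big|\{y\in[2^{n},2^{n+1}]:|\sin y^{\gamma}|<\lambda\}\big|\asymp\lambda\,2^{n}$. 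Splitting the block's contribution by the dyadic size of $|\sin y^{\gamma}|$ — the part where $|\sin y^{\gamma}|\lesssim P^{-1/2}$ contributing $\asymp P^{-1/2}\cdot P^{-1/2}\cdot 2^{n}=P^{-1}2^{n}$, and each layer $|\sin y^{\gamma}|\asymp 2^{-j}$ contributing $\asymp P^{-1}2^{j}\cdot 2^{-j}2^{n}=P^{-1}2^{n}$ over $\bigo n$ layers — gives a block contribution $\bigo{nP^{-1}2^{n}}=\bigo{n\,2^{-n(\gamma-2)}}$, so $\leb{\hcritical{2}}<\infty$ iff $\gamma>2$. This last step is the real difficulty: the ``core'' of $\hcritical{2}$ has $x$–width only $\asymp\T'(y)^{-1}$, which just becomes integrable at $\gamma=2$, whereas near the zeros of $\sin y^{\gamma}$ the slice widens to $\asymp\T'(y)^{-1/2}$ — not integrable pointwise in $y$ — and must be tamed through the dispersion of $y^{\gamma}\bmod\pi$; a bound that is merely pointwise in $y$ would reach only $\gamma>3$, exactly the threshold of $(c_1)$.
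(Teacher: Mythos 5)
Your proposal is correct, and for most items it coincides with the paper's argument: the appendix proves precisely your derivative bounds, namely that $\frac{\deh}{\deh x}\tslope{1}\big|_\Gamma=\bigo1$ everywhere and $\frac{\deh}{\deh x}\bigl(\tslope{1}\cdot\tslope{1}\circ F\bigr)\big|_\Gamma=\bigo1$ on $\hcritical1$ (using $\LL_\Gamma=\bigo{\V_\Gamma^\beta}$ there), and reads off ($a_1$), ($a_2$), ($b_1$) from them, while ($c_1$) comes from the same two strips of width $\bigo{\T'(\cv{})^{-1/2}}$. For ($b_2$) you and the paper use the same geometric fact — the parabolic shape of $F(\Gamma\cap\hcritical1)$ — in different packaging: the paper counts intersections of $F(\Gamma\cap\hcritical1)$ with a vertical line via the curvature bound $4A\T'(\V_\Gamma)$ over an arc of width $\bigo{\V_\Gamma^{-\beta}}$, while you rescale and count zeros of $\xi\sin(c\pm\xi^2)$; these are equivalent, though your ``for $\V_\Gamma$ large this reduces to'' step should carry the $(1+o(1))$ and $\bigo{\V_\Gamma^{-2\beta}}$ perturbations through the Rolle-type zero count rather than discard them. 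The genuine divergence is ($c_2$): the paper cuts $\hcritical2$ into cells $\{\ct{}+\T(\cv{})\in[n/2,(n+1)/2]\}$ and on each cell changes variables $(\coo{0})\mapsto(\xi,\eta)=(\tslope{1}(\coo{0}),\tslope{1}(\coo{1}))$, whose Jacobian is of order $\T'(\V_n)$, so the region $\{|\xi\eta|<K_2\T'^{-1},\ |\xi|<\hat K_1\T'^{-1/2}\}$ has measure $\bigo{\V_n^{-4\beta}\log\V_n}$ and the sum over $n$ converges exactly for $\gamma>2$; you instead slice horizontally, note that on the $\hcritical1$-arc of a slice the second factor equals $-2A\sin\T(\cv{})$ up to $\bigo{\T'(\cv{})^{-1/2}}$, bound the slice length by $\min\bigl(\T'(\cv{})^{-1}/|\sin\T(\cv{})|,\ \T'(\cv{})^{-1/2}\bigr)$, and integrate in $\cv{}$ using equidistribution of $\T(\cv{})$ modulo $2\pi$ on dyadic blocks with a layer decomposition in $|\sin\T(\cv{})|$. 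Both routes are sound and land on the same threshold with a logarithmic margin: the paper's change of variables is slicker (no equidistribution, no layer sum, no near-constancy claim on slices to justify — which, as you implicitly use, requires $|\sin\T(\cv{})|\gg\T'^{-1/2}$ before the lower bound on the second factor is legitimate), whereas your slice argument is more elementary and makes the mechanism transparent — the core width $\T'^{-1}$ is exactly integrable at $\gamma=2$ and the widening near the zeros of $\sin\T(\cv{})$ is only controlled on average — the same heuristic the paper records when noting that $\corecritical{2}$ has infinite measure for $\gamma\le2$.
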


As we mentioned at the beginning of this subsection, on critical sets we lack good
geometrical and regularity estimates that can be achieved on the complementary set. In
particular, outside $\critical{1}$ standard pairs will be mapped to standard pairs; pieces
of standard pairs passing through the first critical set will possibly be mapped to
non-standard pairs. However, pieces of standard pairs that lie in $\hcritical{1}\setminus
\critical{2}^*$ are guaranteed to be standard after one more iteration. In the following
lemma we prove the previous statements and establish some expansion bounds which will be
crucial for proving equidistribution properties of $F$ along the horizontal direction.

\begin{definition}
  A \emph{standard partition} $\mathcal{J}$ of the circle $\sone$ is a partition$\mod 1$
  in a finite number of closed intervals $\mathcal{J}=\{J_\alpha\}$, $\alpha\in\aset$
  satisfying the following conditions:
  \[
  \intr J_\alpha\cap  \intr J_{\alpha'}=\emptyset\textrm{ if }\alpha\not = \alpha',\quad
  \sone=\bigcup_{\alpha\in\aset} J_\alpha,\quad \delta/4< |J_\alpha|< \delta/2.
  \]
  A basic pair $\ell$ is said to be $\mathcal{J}$\emph{-aligned} if  $I_\ell\in\mathcal{J}$.
\end{definition}

\begin{lemma}[Invariance]
  \label{l_invariance}
  Fix a standard pair $\ell=(\Gamma_\ell,\rho_\ell)$ and a standard partition $\mathcal{J}$; let $\V=\V_\ell$, $\T=\T(\V)$ and similarly for $\T'$. Then we can choose $\cv{*}$ large enough so that:
  \begin{enumerate}
  \item[(a)] the following estimates hold:
    \begin{subequations}
    \begin{align}
      \left|\left.\de{\ct{1}}{\ct{0}}\right|_{\Gamma_\ell}\right|&>\frac{1}{2}K_1{\T'}^{1/2}&\ {\rm if}\ (\coo{0})&\not\in \critical{1}\label{e_a1}\\
      \left|\left.\de{\ct{1}}{\ct{0}}\right|_{\Gamma_\ell}\right|&>\frac{1}{2}\bar K_2>1&{\rm if}\ (\coo{0})&\not\in \critical{2}\label{e_a2}\\
      \left|\left.\de{\ct{2}}{\ct{0}}\right|_{\Gamma_\ell}\right|&>\frac{1}{2}K_2 \T'&{\rm if}\ (\coo{0})&\in \hcritical{1}\setminus \critical{2}^*\label{e_a3}
    \end{align}
  \end{subequations}
\item[(b)] we can uniquely decompose $F\ell$ as follows:
  \begin{align}\label{e_invarianceDecomposition}
      F\ell &= \bigcup_{\alpha\in\mathcal{A}}\bigcup_{j}\ell_\alpha^j \cup \ell_+ \cup \ell_- \cup \bigcup_{j}\tilde\ell_j\cup Z
    \end{align}
    such that:
    \begin{itemize}
    \item each $\ell_\alpha^j$ is a $\mathcal{J}$-aligned standard pair and $I_{\ell_\alpha^j}=J_\alpha$
    \item $\ell_+$ and $\ell_-$ might be either empty or standard pairs such that $F\inv\ell_\pm\cap\critical{1}=\emptyset$.
    \item each $\tilde\ell_j$, which we call a \emph{stand-by pair}, is such that we have
      $F\tilde\ell_j=\bigcup_l\ell_{j,l}$ where $\ell_{j,l}$ are standard pairs;
    \item the number of stand-by pairs is bounded uniformly in $y$;
    \end{itemize}
    Moreover:
    \begin{equation}\label{e_invarianceInclusion}
      \ell\cap\critical{1}\subset F\inv\tilde\ell_j \subset \ell\cap\hcritical{1}\qquad
      \ell\cap\critical{2}\subset F\inv Z \subset \ell\cap\hcritical{2}
    \end{equation}
\end{enumerate}
\end{lemma}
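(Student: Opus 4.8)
The plan is to treat the quantitative expansion bounds (a) and the combinatorial decomposition (b) separately, fixing the constants in the hierarchy $\delta$, then $D$, then $K_1$ and $\bar K_2$, then $\hat K_1$ (through Lemma~\ref{l_lebesgueCritical}$(a_1)$), then $K_2$, then $\hat K_2$ (through $(a_2)$), and finally choosing $\cv{*}$ enormous depending on all of them; the inequalities needed between the constants will emerge from the estimates.

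For part (a) I would start from $\left.\de{\ct{1}}{\ct{0}}\right|_{\Gamma_\ell}=\LL_\ell(\ct{0})=\tslope{\ell}(\ct{0})\T'_\ell(\ct{0})$ together with $\tslope{\ell}=\tslope{1}+\Delta\slope{\ell}$ along $\Gamma_\ell$ (by \eqref{e_referenceSection}) and the standard-pair bound $|\Delta\slope{\ell}|<D\inv\T'^{-3/2}_\ell$ from \eqref{e_standardPairSlope}. Outside $\critical{1}$ one has $|\tslope{1}|\geq K_1\T'^{-1/2}$, which for $\cv{*}$ large dominates $D\inv\T'^{-3/2}$, so $|\tslope{\ell}|\geq\tfrac12 K_1\T'^{-1/2}$ and \eqref{e_a1} follows; outside $\critical{2}$ one is outside $\corecritical{2}$, hence $|\tslope{1}|\geq\bar K_2\T'\inv$, giving $|\LL_\ell|\geq\tfrac12\bar K_2>1$, i.e. \eqref{e_a2}. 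For \eqref{e_a3} I would write $\de{\ct{2}}{\ct{0}}=\LL_{\ell'}(\ct{1})\LL_\ell(\ct{0})$ with $\ell'=F\ell$ the induced pair of Lemma~\ref{l_basicCurveIteration}; from \eqref{e_induced} one gets the exact identity $\tslope{\ell'}(\ct{1})-\tslope{1}(\coo{1})=-\bigl(\T'_\ell\LL_\ell\bigr)\inv$, so $|\tslope{\ell'}(\ct{1})-\tslope{1}(\coo{1})|=\bigl(\T'^2_\ell|\tslope{\ell}(\ct{0})|\bigr)\inv$. On $\hcritical{1}\setminus\critical{2}^*$ one has simultaneously $|\tslope{1}(\coo{0})|<\hat K_1\T'^{-1/2}$ and $|\tslope{1}(\coo{0})\tslope{1}(\coo{1})|\geq K_2\T'\inv$, which with $\|\tslope{1}\|_{\phspace_*}<3A$ forces $|\tslope{\ell}(\ct{0})|>\tfrac{K_2}{4A}\T'\inv$; feeding these into $|\tslope{\ell'}(\ct{1})\tslope{\ell}(\ct{0})|\geq|\tslope{1}(\coo{1})\tslope{1}(\coo{0})|-(\text{errors})$ gives $|\tslope{\ell'}(\ct{1})\tslope{\ell}(\ct{0})|\geq\tfrac12 K_2\T'\inv$ for $\cv{*}$ large (the errors being of order $\T'^{-3/2}$ with a constant depending only on $D,\hat K_1,K_2$); multiplying by $\T'(\cv{0})\T'(\cv{1})$ and using \eqref{e_confuse} with $k=1$ yields \eqref{e_a3} up to an innocuous adjustment of the numerical constant.

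For part (b) I would first observe that along $\Gamma_\ell$ the map $\ct{0}\mapsto\ct{1}$ has derivative $\LL_\ell=\tslope{\ell}\T'_\ell$, and that $\tslope{\ell}$ — being $2\ddot\phi=-2A\sin$ up to the uniformly small terms $\Delta\slope{\ell}$ and $1/\T'$ — has at most one zero on the standard interval $I_\ell$ (a short Rolle's-theorem argument, using that $\delta<\pi/4$ is below the half-spacing of the zeros of $\ddot\phi$ and that $\dddot\phi$, hence $\dot{\tslope{\ell}}$ modulo $|\Delta\dslope{\ell}|<A/10$, is bounded away from $0$ near those zeros), that zero, if present, lying in the interior of $\Gamma_\ell\cap\critical{1}$. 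Hence $\Gamma_\ell\cap\critical{1}$ and $\Gamma_\ell\cap\hcritical{1}$ are subintervals of $I_\ell$; on each component of $I_\ell\setminus\critical{1}$ the map $\ct{0}\mapsto\ct{1}$ is a diffeomorphism with $|\LL_\ell|>\tfrac12 K_1\T'^{1/2}\to\infty$ by \eqref{e_a1}, so the images wind many times around $\sone$, and cutting them at preimages of the endpoints of $\mathcal{J}$ produces the $\mathcal{J}$-aligned pieces $\ell_\alpha^j$ together with at most one short leftover next to each endpoint of $I_\ell$, which I would absorb (possibly into an adjacent full piece) into $\ell_\pm$ — standard pairs with $F\inv\ell_\pm\cap\critical{1}=\emptyset$ since their preimages avoid $\critical{1}$. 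The leftover next to $\partial\critical{1}$ I would handle by taking $\Delta_1$ large in Lemma~\ref{l_lebesgueCritical}$(a_1)$, so that already inside $\hcritical{1}\setminus\critical{1}$ the image covers several $J_\alpha$, allowing the cut between the ``immediately standard'' region and the stand-by region to be placed at a $\mathcal{J}$-boundary strictly inside $\hcritical{1}$. The remaining portion $\Gamma_\ell\cap\hcritical{1}$ between those cuts has $\ct{1}$-image of extent bounded uniformly in $\cv{}$ — near the fold $\ct{0}^*$, $|\ct{1}-\ct{1}(\ct{0}^*)|\lesssim A|\ct{0}-\ct{0}^*|^2\T'\lesssim\hat K_1^2/A$ — so it winds around $\sone$ boundedly often and splits into boundedly many curve pieces; those over $\hcritical{2}$ constitute $Z$ (using Lemma~\ref{l_lebesgueCritical}$(a_2)$ and cutting at $\ct{1}$-$\mathcal{J}$-boundaries so that $\ell\cap\critical{2}\subset F\inv Z\subset\ell\cap\hcritical{2}$), and each of the remaining (boundedly many, by Lemma~\ref{l_lebesgueCritical}$(b_2)$) pieces is a stand-by pair $\tilde\ell_j$ whose preimage lies in $\hcritical{1}\setminus\hcritical{2}\subset\hcritical{1}\setminus\critical{2}^*$; together with $Z$ they account for $\ell\cap\critical{1}$, which is \eqref{e_invarianceInclusion}.

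Finally I would verify that the output curves really are standard pairs (resp., for stand-by pairs, map to standard pairs), and this is where I expect the main obstacle to lie. Using \eqref{e_induced}, a piece $\ell'$ coming from outside $\critical{1}$ has $\Delta\slope{\ell'}=-(\T'_\ell\LL_\ell)\inv$, while $\Delta\dslope{\ell'}$ and $r_{\ell'}$ are given by expressions each carrying a factor $\LL_\ell\inv$ or a decaying factor $\T''/\T'^2,\T''/\T'^3$; so $|r_{\ell'}|<1$ and $|\Delta\dslope{\ell'}|<A/10$ follow from $|\LL_\ell|>\tfrac12 K_1\T'^{1/2}\to\infty$, and the tight condition $|\Delta\slope{\ell'}|<D\inv\T'^{-3/2}_{\ell'}$ reduces (via \eqref{e_confuse} for $\T'_{\ell'}\asymp\T'_\ell$) to $|\tslope{\ell}(\ct{0})|\gtrsim D\T'^{-1/2}_\ell$, valid outside $\critical{1}$ precisely because $K_1$ was chosen large compared with $D$. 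For a stand-by pair the same computation is run for the second iterate: $\Delta\slope{\ell''}=-(\T'_{\ell'}\LL_{\ell'})\inv$, and the key point is that a large $r_{\ell'}$ (which occurs exactly when $\tslope{\ell}$ sits near the lower end of its range on $\hcritical{1}\setminus\critical{2}^*$) is compensated by a correspondingly large $\LL_{\ell'}$, because then $|\tslope{1}(\coo{1})|$ must be large; carrying this bookkeeping through, $|\Delta\slope{\ell''}|<D\inv\T'^{-3/2}_{\ell''}$ reduces to $K_2$ large compared with $D\hat K_1$, exactly the regime already fixed, while the $\ct{2}$-image of the stand-by region, of extent $\gtrsim K_2$ by \eqref{e_a3}, splits into the standard pairs $\ell_{j,l}$ upon cutting at $\mathcal{J}$-boundaries. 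The genuine difficulty is thus not any isolated estimate but the simultaneous consistency of this whole hierarchy: the tight geometric condition \eqref{e_standardPairSlope} must survive one iterate off $\critical{1}$ and two iterates off $\critical{2}^*$ (forcing $K_1\gg D$ and $K_2\gg D\hat K_1$), the critical sets must remain thin relative to $\cv{*}$, and the augmented sets must be wide enough (Lemma~\ref{l_lebesgueCritical}$(a_1),(a_2)$) to swallow the short leftover pieces — and one must check that the resulting decomposition is exhaustive with a uniformly bounded number of stand-by pairs.
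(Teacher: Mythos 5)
Your proposal is correct and takes essentially the same route as the paper: part (a) is obtained exactly as there, from the definitions of $\critical{1}$, $\critical{2}$, $\corecritical{2}$, $\critical{2}^*$ together with \eqref{e_standardPairSlope} and Lemma~\ref{l_basicCurveIteration}, and part (b) by cutting the image at $\mathcal{J}$-boundaries, absorbing short leftovers into adjacent pieces and into the augmented sets $\hcritical{1}$, $\hcritical{2}$ via Lemma~\ref{l_lebesgueCritical} ($a_1$), ($a_2$), ($b_2$), and verifying standardness through \eqref{e_induced} after one iterate off $\critical{1}$ and two iterates on $\hcritical{1}\setminus\critical{2}^*$, with the same hierarchy $K_1\gg D$, $K_2\gg D\hat K_1$ and $\cv{*}$ chosen last. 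Your only (harmless) refinement is using the exact identity $\Delta\slope{\ell'}=-\left(\T'_\ell\LL_\ell\right)^{-1}$ where the paper works with the corresponding one-step inequality.
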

\begin{proof}
  Recall that by definition:
  \begin{align*}
    \left.\de{\ct{1}}{\ct{0}}\right|_{\Gamma_\ell}(x)&=\LL_\ell(x)=\tslope{\ell}(x)\T'_\ell(x);
  \end{align*}
  moreover, if $(\coo{})\not\in\critical{1}$ we have that $|\tslope{1}(\coo{})|\geq K_1\T'(y)^{-1/2}$, and if $(\coo{})\not\in\critical{2}$ we have that $|\tslope{1}(\coo{})|\geq \bar K_2\T'(y)\inv$. Then, by \eqref{e_standardPairSlope}  we immediately obtain \eqref{e_a1} and \eqref{e_a2} provided that $\cv{*}$ is large enough.
  Additionally, we can conclude that outside $\corecritical{2}$ we have $\slope\ell(\ct{})\not = 0$, thus we can apply Lemma~\ref{l_basicCurveIteration} and using \eqref{e_a2}:
  \begin{equation}
    |\slope{\ell'}(\ct{1})-\slope{1}(\coo{1})|\leq 2(\bar K_2\T')^{-1};
    \label{e_firstIteration}
  \end{equation}
  hence, since $\ell$ is standard we obtain the following bound if $(\coo{0})\in\hcritical{1}\setminus\critical{2}^*$:
  \[
  |\tslope{\ell}(\ct{0})\tslope{\ell'}(\ct{1})|\geq 3/4 K_2\T'^{-1}
  \]
  which implies \eref{e_a3} since
  \begin{align*}
    \left.\de{\ct{2}}{\ct{0}}\right|_{\Gamma_\ell}&=\tslope{\ell}(\ct{0})\T'_\ell(\ct{0})\tslope{\ell'}(\ct{1})\T'_{\ell'}(\ct{1}).
  \end{align*}

  In order to prove part (b)
  first of all notice that if $(\coo{0})\not\in\critical{1}$ we can apply Lemma~\ref{l_basicCurveIteration} and part (a) obtaining:
  \begin{subequations}\label{e_singleIteration}
    \begin{align}
      |\slope{\ell'}(\ct{1})-\slope{1}(\coo{1})|&\leq 2K_1^{-1}{\T'}^{-3/2}\\
      |\dslope{\ell'}(\ct{1})-\dslope{1}(\coo{1})|&= \bigo{\T'^{-3/2}}\\
      |\rr{\ell'}(\ct{1})|&\leq 3 A\cdot 4K_1^{-2}+\bigo{\T'^{-1/2}}.
    \end{align}
  \end{subequations}
  Therefore, by taking $K_1$ sufficiently large and assuming $\cv{*}$ large enough, we can
  ensure that equations \eqref{e_definitionStandardPair} hold and that $\rho_{\ell'}$ is
  regular.

  On the other hand if $(\coo{0})\in\hcritical{1}\setminus\critical{2}^*$ we have, once more by Lemma~\ref{l_basicCurveIteration} and part (a):
    \begin{align*}
      |\slope{\ell'}(\ct{1})-\slope{1}(\coo{1})|&\leq 2\bar K_2^{-1}{\T'}^{-1}\\
      |\dslope{\ell'}(\ct{1})-\dslope{1}(\coo{1})|&= 3A\cdot 2\bar K_2^{-1} + \bigo{\T'^{-2}}\\
      |\rr{\ell'}(\ct{1})|&\leq 3 A\cdot \LL_\ell^{-2}\T'+\bigo{1},
    \end{align*}
  from which we obtain that $\tslope{\ell'}\not = 0$, and $|\LL_{\ell'}|>1/2K_1\T'^{1/2}$ so that we can apply Lemma~\ref{l_basicCurveIteration} to $\ell'$ and obtain:
  \begin{subequations}\label{e_doubleIteration}
    \begin{align}
      |\slope{\ell''}(\ct{2})-\slope{1}(\coo{2})|&\leq  2K_1^{-1}{\T'}^{-3/2}\\
      |\dslope{\ell''}(\ct{2})-\dslope{1}(\coo{2})|&= \bigo{\T'^{-3/2}}\\
      |\rr{\ell''}(\ct{2})|& \leq {3A\T'}{\LL_\ell^{-2}\LL_{\ell'}\inv} + 3A\cdot 2\bar K_2\inv\T'\LL_{\ell'}^{-2}  +\bigo{\T'^{-1/2}}\leq \label{e_doubleIterationR}\\
      & \leq 3A\cdot 4\bar K_2\inv (K_2\inv + 2 K_1^{-2})+\bigo{\T'^{-1/2}}
    \end{align}
  \end{subequations}
  which agree with equations \eqref{e_definitionStandardPair} and prove that
  $\rho_{\ell''}$ is regular provided we take large enough $K_1$, $K_2$ and $\cv{*}$.

  In order to conclude we need to carefully consider several possibilities: first assume
  that $\Gamma_\ell\cap\critical{1}=\emptyset$ and cut the image of $\Gamma_\ell$ in as
  many $\mathcal{J}$-aligned curves as possible; in doing so we might be left with two
  boundary curves, that we denote by $\ell_-^*$ and $\ell_+^*$. Consider for instance
  $\ell_-^*$: there are two possibilities; if $|I_{\ell_-^*}|>\delta/4$ we can simply let
  $\ell_-=\ell_-^*$; otherwise we let $\ell_-$ be the union of $\ell_-$ with the adjacent
  pair; since the latter is $\mathcal{J}$-aligned, we obtain that $\ell_-$ is a standard
  pair since $|I_{\ell_-}|<3/4\delta$; performing the same construction with $\ell_+^*$ we
  can conclude with:
  \[
  F\ell = \bigcup_{\alpha\in\mathcal{A}}\bigcup_{j}\ell_\alpha^j \cup \ell_+ \cup \ell_-,
  \]
  which concludes the proof of item (b) assuming that $\Gamma_{\ell}\cap\critical{1} =
  \emptyset$.

  Assume now that $\Gamma_\ell\cap\critical{1}\not = \emptyset$; then by our choice of
  $\delta$ we know that $\Gamma_\ell\setminus\critical{1}$ has at most two connected
  components, that we denote by $\Gamma_1$ and $\Gamma_2$; in turn let
  $\Gamma_{*}=\Gamma_{\ell}\cap\critical{1}$. We will consider $\Gamma_1$ and $\Gamma_2$
  separately; to fix ideas let us work with $\Gamma_1$. Assume first that $|I_1|>4\pi
  K_1\inv\T'^{-1/2}$; then, as before, we can cut the image of $\Gamma_{1}$ in as many
  $\mathcal{J}$-aligned curves as possible plus two boundary curves. One of them will not
  contain the image of $\partial\critical{1}$ whereas the other one will necessarily
  do. As before, we let the former to be $\ell_-$, joining it with the adjacent one if it
  turns out to be too short; the preimage of the latter will be instead joined to
  $\Gamma_*$; if, on the other hand $|I_1|\leq 4\pi K_1\inv\T'^{-1/2}$, then we join the
  whole $\Gamma_1$ to $\Gamma_{*}$. We do the same with the other connected component.
  Thus, as before we have
  \[
  F\ell = \bigcup_{\alpha\in\mathcal{A}}\bigcup_{j}\ell_\alpha^j \cup \ell_+ \cup \ell_-
  \cup F\ell_{*}
  \]
  and we are left with $\Gamma_*$ such that $|I_*|>3/2\delta K_1\inv\T'^{-1/2}$.  By
  taking $\hat K_1$ sufficiently large we can ensure that
  $\ell_*\subset\hcritical{1}$. Consider now $\Gamma_*\setminus\critical{2}^*$; by
  Lemma~\ref{l_lebesgueCritical}, this set has a uniformly bounded number of connected
  components; consider each connected component. If it is longer than $2
  K_2\inv{\T'}\inv$, we let its image be one of the $\tilde\ell_j$; by our previous
  arguments the image of $\tilde\ell_j$ can indeed be decomposed in standard pairs. We
  thus choose $\Delta$ so large that all short components will belong to $\hcritical2$,
  which allows us to conclude.
\end{proof}
We now introduce the notion of \emph{critical time}; for a fixed standard partition $\mathcal{J}$, for any standard pair $\ell$ the critical time of a point $p\in\Gamma_\ell$ is the largest number $\bar{n}$ such that, by iterating the decomposition in lemma \ref{l_invariance}, $F^{n}p$ belongs to a non-invalid curve for all $n\leq \bar n$.

\begin{definition}\label{d_criticalTime}
  Fix a standard partition $\mathcal{J}$ and let $\ell$ be a standard pair. We define the \emph{critical time} as a function $\tau_\ell:\Gamma_\ell\to\naturals\cup\{\infty\}$ obtained by means of the following recursive definition:
  let $p\in\Gamma_\ell$, then by item (b) of lemma \ref{l_invariance} we have three possibilities:
  \begin{itemize}
  \item $Fp$ belongs to a standard pair $\ell'$: we then define $\tau_\ell(p)=\tau_{\ell'}(Fp)+1$;
  \item $Fp$ belongs to a stand-by pair, hence $F^2p$ belongs to a standard pair $\ell''$: we define $\tau_\ell(p)=\tau_{\ell''}(F^2p)+2$;
  \item otherwise we define $\tau_\ell(p)=0$.
  \end{itemize}
\end{definition}
The following proposition is the crucial technical result of our work.
\begin{prp}\label{l_criticalTimeEstimate}
  If $\gamma>2$, for any standard pair $\ell$, we have $\prob_\ell(\tau_\ell<\infty)=1$.
\end{prp}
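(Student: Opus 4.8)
The plan is to set up a comparison between the critical-time dynamics and a biased random walk in a logarithmic coordinate, and to use the equidistribution/expansion estimates from Lemma~\ref{l_invariance} together with the measure bounds in Lemma~\ref{l_lebesgueCritical} to show that the walk is recurrent (in the sense of returning to the critical set), which is exactly the statement $\prob_\ell(\tau_\ell<\infty)=1$. Concretely, I would consider the quantity $\zeta_n = \log \T'(\cv{n})$ (equivalently, up to a multiplicative constant, $\log\cv{n}$) along an orbit that avoids the critical sets, i.e.\ on the set $\{\tau_\ell = \infty\}$. The key heuristic is that along a standard pair, outside $\critical1$ the map expands the $\ct{}$-direction by a factor $\gtrsim K_1\T'^{1/2}$ (by \eqref{e_a1}), so after $O(\log \T')$ iterates a standard pair of length $\sim\delta$ spreads to cover $\sone$ and equidistributes; the probability of then landing in $\hcritical1$ is $O(\V^{-\beta})$ by item $(b_1)$. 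The "bad" event $\{\tau_\ell=\infty\}$ therefore requires avoiding $\hcritical1$ (and, after the two-iterate scheme, $\hcritical2$) at every return, which should happen with probability zero once one shows that the $\cv{}$-coordinate cannot escape to infinity while continually dodging these sets.

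The steps I would carry out, in order, are as follows. First, I would fix a standard partition $\mathcal{J}$ and iterate the decomposition \eqref{e_invarianceDecomposition} of Lemma~\ref{l_invariance}, so that $F^n\ell$ is written as a countable union of $\mathcal{J}$-aligned standard pairs (together with a boundary part and the invalid part $Z$), keeping track of the holonomy densities; on the complement of the invalid curves the critical time is finite exactly on the invalid part. Second, I would prove a large-deviation / equidistribution estimate: given a standard pair $\ell$ with $\V_\ell$ large, for $n$ comparable to $\log\V_\ell$ the conditional measures on the $\mathcal{J}$-aligned descendants of $\ell$ are, by Lemma~\ref{l_comparison} and the regularity of the densities, comparable to Lebesgue, hence $\prob_\ell\big(F^n(\cdot)\in\hcritical1 \text{ for some } n\le N\big)$ is bounded below by a quantity of order $\V_\ell^{-\beta}$ times the number of effective trials. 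Third — this is the heart — I would control the growth of $\cv{}$ conditioned on not yet having hit the critical set: since $\cv{n+1}=\cv{n}+2\dot\phi_A(\ct{n+1})$ and the $\ct{}$-components equidistribute (outside the critical set) with respect to a measure comparable to Lebesgue, $\dot\phi_A$ has zero mean and the increments of $\zeta_n=\log\T'(\cv{n})$ behave like a martingale plus a small drift, of size $O(\cv{n}^{-1})$ per step from the $\T'(\cv{k})=\T'(\cv{0})(1+O(|k|\cv{0}^{-1}))$ expansion in \eqref{e_confuse}; summing the per-block probabilities $\sum_j \V_j^{-\beta}$ over the successive critical-time blocks and using that $\beta$ is chosen so that $\gamma>2$ forces this to diverge (this is precisely where $(c_2)$, finiteness of $\mathrm{Leb}(\hcritical2)$ for $\gamma>2$, enters, via a Borel–Cantelli argument), I conclude $\tau_\ell<\infty$ almost surely.

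More precisely, the Borel–Cantelli step runs as follows: on $\{\tau_\ell=\infty\}$ the orbit never enters the invalid set $Z$, but by \eqref{e_invarianceInclusion} this means it avoids $\critical2$ at every step while still having $\cv{n}\to\infty$ is not forced — rather, the orbit stays in $\phspace_*$ and, by the expansion estimates $(a)$ combined with the bounded number of stand-by pairs per step, every window of $O(\log\cv{})$ iterates contains an independent-enough trial of hitting $\hcritical1$ with probability $\gtrsim \cv{}^{-\beta}$. Since the number of such windows before $\cv{}$ doubles is of order $(\log\cv{})/(\log\cv{}) = O(1)$ but the hitting probability per window is $\cv{}^{-\beta}$, I would instead organize the estimate dyadically: letting $T_k$ be the first time $\cv{}$ enters the dyadic block $[2^k\cv{*},2^{k+1}\cv{*})$, conditioned on reaching block $k$, the chance of hitting $\hcritical1$ (hence eventually $Z$, since $F^{-1}\critical1\cap\hcritical1\subset\critical2^*$ after one more iterate, giving a stand-by/invalid curve) before leaving block $k$ is bounded below by a constant independent of $k$ — this uses that the expected number of returns to a standard-pair configuration while $\cv{}\in$ block $k$ is $\gtrsim 2^{\beta k}$ by the martingale behavior of $\zeta_n$, times the per-return probability $\asymp 2^{-\beta k}$. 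Hence the probability of surviving $N$ consecutive blocks decays geometrically, giving $\prob_\ell(\tau_\ell=\infty)=0$.

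The main obstacle I anticipate is the third step: turning the heuristic "$\zeta_n$ is a martingale plus negligible drift" into a rigorous statement for the \emph{conditioned} process. The conditioning on $\{\tau_\ell > n\}$ couples the $\ct{}$-dynamics to the $\cv{}$-dynamics in a way that can in principle bias the increments of $\dot\phi_A(\ct{n})$; controlling this requires the equidistribution to hold not just for $F^n\ell$ but for its restriction to the (still large) set of descendants with $\tau > n$, which in turn relies on the expansion being uniform enough that the conditioning only removes an exponentially small, well-distributed portion of each standard pair. This is exactly the point where one needs to iterate many times (the number growing as $\gamma\to 2$, as flagged in the introduction) and where the order-$2$ critical set — rather than the order-$1$ one — is essential, since $(c_1)$ only gives finiteness for $\gamma>3$ while $(c_2)$ pushes it to $\gamma>2$.
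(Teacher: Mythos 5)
Your proposal shares the general flavour of the paper's argument (dyadic levels in $y$, a random-walk comparison, equidistribution on standard pairs plus the measure estimates of Lemma~\ref{l_lebesgueCritical}), but it is not the paper's proof and, as written, has genuine gaps. The paper's route is: define truncated critical times and exit signs on dyadic blocks, prove Proposition~\ref{l_final} (an exponential tail $\prob_\ell(\tau_\ell^{[k]}\geq s)\leq C\vartheta^{s\V^{-2}_\ell}$ for the block-exit time, and a \emph{downward bias} $\prob_\ell(\xi_\ell^{[k]}=-1)\geq 0.6$), and then compare with a downward-biased walk which almost surely descends into the critical region $\{y\leq y_*\}$. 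Your route instead tries to get a per-block hitting probability bounded below and conclude by a Borel--Cantelli/geometric-decay argument; that is a different conclusion mechanism, and it could conceivably be organised, but the quantitative core you would need is the same one you have not supplied.

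Two concrete errors first. (i) Hitting $\hcritical{1}$ does \emph{not} terminate $\tau_\ell$: by \eqref{e_invarianceInclusion}, pieces of a standard pair in $\hcritical{1}\setminus\critical{2}^*$ become stand-by pairs whose images are again standard, and only landing in $Z$, i.e.\ essentially in $\critical{2}\subset\hcritical{2}$, ends the critical time. The inclusion $F^{-1}\critical{1}\cap\hcritical{1}\subset\critical{2}^*$ does not say what you use it for. Consequently the per-return probability relevant to your scheme is the trace of $\hcritical{2}$ on a standard curve, of order $\V^{-2\beta}$ (up to logarithms), not $\V^{-\beta}$, and your block bookkeeping ($2^{\beta k}$ returns versus $2^{-\beta k}$ per return) is based on the wrong set. (ii) The role you assign to $\gamma>2$ is misplaced: finiteness of $\leb{\hcritical{2}}$ (item $(c_2)$) is used only \emph{after} this proposition, in the Poincar\'e-recurrence step of the Main Theorem, and the summability claim is backwards --- for $\gamma>2$ one has $\beta>1/2$, so along a ballistically escaping orbit $\sum_n y_n^{-2\beta}$ \emph{converges}; divergence is not ``forced'' by $\gamma>2$, it is exactly what must be earned by excluding fast escape, and larger $\gamma$ makes the critical sets smaller, not larger. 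Where $\gamma>2$ actually enters this proposition is through $\beta>1/2$ making the multi-iterate cancellation of Section~\ref{s_equidistribution} work.

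The fatal gap is the one you yourself flag and then leave unresolved: to get either the paper's bias estimate or your uniform-in-$k$ lower bound on the termination probability per sojourn, one must control the drift of $\dot\phi_A(\ct{n})$ for the process \emph{conditioned on survival} (i.e.\ restricted to the surviving standard pairs), uniformly over the pairs produced by iterating Lemma~\ref{l_invariance}. Equation \eqref{e_confuse}, which you cite as the source of an $O(\cv{n}^{-1})$ drift, says only that $\T'$ varies slowly along orbits and gives no information about the mean of $\dot\phi_A(\ct{n})$; one-step equidistribution (Remark~\ref{r_clearEqui}) gives accuracy only $O(\V^{-\beta})$, and you never quantify what accuracy your scheme needs nor how to obtain it. In the paper this is precisely the content of the crucial estimates \eqref{e_crucialEstimates} inside the proof of Proposition~\ref{l_final}, which rest on the Equidistribution Lemma~\ref{l_equidistribution} with accuracy $o(\V_\ell^{-1})$, and whose proof is the entire Fourier/cancellation scheme of Section~\ref{s_equidistribution}, with the number of iterates $\nu\to\infty$ as $\gamma\to2$. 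Saying that ``the conditioning only removes an exponentially small, well-distributed portion of each standard pair'' is a restatement of the difficulty, not an argument; until that step is supplied, the proposal does not prove $\prob_\ell(\tau_\ell<\infty)=1$.
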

The proof will be given in Section~\ref{s_reduction}. We will now show how it implies our
Main Theorem; the argument is a trivial adaptation of the analogous one found in
\cite{Dima}; we give it here for completeness.
\begin{proof}[Proof of the Main Theorem]
  First of all notice that Lebesgue measure can be disintegrated in reference pairs,
  i.e. for any $E$ Borel measurable set:
  \[
  \leb{E}=\int\prob_{\ell_\alpha}(E)\deh\lambda_\alpha
  \]
  where $\deh\lambda_\alpha$ is some factor measure on reference pairs. Furthermore, notice that by definition of $\tau_\ell$ and by \eqref{e_invarianceInclusion}, Lemma~\ref{l_criticalTimeEstimate} immediately implies that:
  \[
  \prob_\ell(\{(\coo{0})\in\Gamma_\ell\st(\coo{n})\not\in\hcritical{2}\ \fa n\in\naturals\})=0.
  \]
 Hence we obtain:
  \begin{equation}\label{e_zeroLebMeasure}
    \leb{\{(\coo{0})\st(\coo{n})\not\in\hcritical{2}\ \fa n\in\naturals\}}=0.
  \end{equation}
  Define now $\hat F:\hcritical{2}\to\hcritical{2}$ as the first return map of $F$ on
  $\hcritical{2}$; $\hat F$ is well defined almost everywhere by \eqref{e_zeroLebMeasure};
  moreover, lemma \ref{l_lebesgueCritical} implies that $\leb{\hcritical{2}}<\infty$,
  consequently we can apply Poincar\'e recurrence theorem and conclude that almost every
  point in $\hcritical{2}$ is recurrent, which shows that
  $\leb{\escaping\cap\hcritical{2}}=0$.  This implies our Main Theorem since, using once
  more \eqref{e_zeroLebMeasure}, we know that the orbit of almost every point in
  $\phspace_*$ intersects $\hcritical{2}$.
\end{proof}

\input{definitions.p}

\newcommand{\ellref}[2]{\ell_{#1}^{#2}}
\newcommand{\onehalf}{\frac{1}{2}}
\newcommand{\lS}{\V}
\newcommand{\uS}{{\V^*}}
\newcommand{\iTheta}{\xi}
\newcommand{\thi}{\xi}
\newcommand{\baseSlope}[1]{\tslope{S}(#1)}
\newcommand{\Cneps}[1]{C_{#1,\eps}}
\newcommand{\kset}[1]{\mathcal{J}_{#1}}
\newcommand{\Kset}[1]{\mathbf{J}_{#1}}
\newcommand{\prt}[2]{\langle #1 \rangle_{#2}}

\section{Equidistribution}\label{s_equidistribution}
In this section we set up an induction scheme to prove equidistribution estimates on
standard pairs for a specific class of observables. The observables we consider are
sufficiently smooth function of the fast variable $\ct{}$ which are constant on the
$\cv{}$ direction.

In the sequel, we will often need to approximate integrals of such observables with
Riemann sums (or viceversa) over partitions which are highly non-uniform. Most element of
the partition will have small size compared to a much smaller portion of them which have
sizes that are order of magnitudes larger. The na\"\i ve bound on the Riemann sum, which
is optimal for uniform partitions, gives estimates which are not sufficient for our
purposes. The following lemma\footnote{The original proof of this lemma was substantially
  more involved; I am, again, indebted to D. Dolgopyat for providing me with the much more
  elegant argument which is used here.} will be systematically
used to obtain crucial estimates.
\begin{lemma} \label{l_E0} Let $(\Omega,\mu)$ be a finite measure space and
  $\RiemFunc:\Omega\to[0,1]$ a measurable function. Assume there exist real numbers
  $0<\lambda<1$, $C>0$ and $0<\alpha\leq 1$ such that for any $1\leq z \leq \lambda\inv$:
  \[
  \mu\{f>z\lambda\}\leq \mu(\Omega)Cz^{-\alpha}.
  \]
Then:
\[
\mu(f)\leq \mu(\Omega)(C+1)
\begin{cases}
  \frac1{1-\alpha}\lambda^{\alpha}&\text{if}\ \alpha<1\\
  \lambda|\log\lambda|&\text{if}\ \alpha=1.
\end{cases}
\]
\end{lemma}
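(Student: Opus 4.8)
The plan is to estimate $\mu(f)$ by the layer-cake formula $\mu(f)=\int_0^1 \mu\{f>t\}\,\deh t$ and split the range of integration at the threshold $t=\lambda$. On the lower range $[0,\lambda]$ we use only the trivial bound $\mu\{f>t\}\leq\mu(\Omega)$, which contributes at most $\mu(\Omega)\lambda$. On the upper range $[\lambda,1]$ we substitute $t=z\lambda$ with $z$ ranging over $[1,\lambda\inv]$ (this is precisely the range for which the hypothesis is available, which is why the split is made exactly at $\lambda$), so that $\deh t=\lambda\,\deh z$ and the hypothesis $\mu\{f>z\lambda\}\leq\mu(\Omega)Cz^{-\alpha}$ applies termwise. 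Thus
\[
\mu(f)\leq\mu(\Omega)\lambda + \mu(\Omega)C\lambda\int_1^{\lambda\inv} z^{-\alpha}\,\deh z.
\]
It remains to evaluate the elementary integral $\int_1^{\lambda\inv} z^{-\alpha}\,\deh z$ in the two cases and check that the resulting bound matches the claimed form.

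For $\alpha<1$ we have $\int_1^{\lambda\inv} z^{-\alpha}\,\deh z=\frac{1}{1-\alpha}\bigl(\lambda^{-(1-\alpha)}-1\bigr)\leq\frac{1}{1-\alpha}\lambda^{-(1-\alpha)}$, so the second term is at most $\mu(\Omega)\frac{C}{1-\alpha}\lambda^{\alpha}$; since $\lambda\leq\lambda^{\alpha}\leq\frac{1}{1-\alpha}\lambda^{\alpha}$ (using $\lambda<1$ and $0<\alpha<1$), the first term $\mu(\Omega)\lambda$ is absorbed, giving $\mu(f)\leq\mu(\Omega)(C+1)\frac{1}{1-\alpha}\lambda^{\alpha}$. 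For $\alpha=1$ we have $\int_1^{\lambda\inv} z\inv\,\deh z=\log(\lambda\inv)=|\log\lambda|$, so the second term is at most $\mu(\Omega)C\lambda|\log\lambda|$, and since $\lambda\leq\lambda|\log\lambda|$ whenever $\lambda$ is small enough that $|\log\lambda|\geq 1$ (and the statement is only of interest for small $\lambda$; in any case $C+1>C$ gives the needed slack), the first term is again absorbed, yielding $\mu(f)\leq\mu(\Omega)(C+1)\lambda|\log\lambda|$.

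There is essentially no obstacle here: the only point requiring a little care is the bookkeeping that lets the leftover $\mu(\Omega)\lambda$ from the trivial part of the integral be swallowed by upgrading the constant from $C$ to $C+1$, and making sure the threshold for the split is placed exactly where the hypothesis starts to be usable. The lemma is a purely measure-theoretic extraction statement, and the content — as the surrounding discussion indicates — is simply that a tail bound with exponent $\alpha$ gives an $L^1$ bound of order $\lambda^\alpha$ (with the expected logarithmic correction at the endpoint $\alpha=1$), which is what makes it effective against the highly non-uniform partitions arising later.
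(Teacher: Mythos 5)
Your proof is correct and is essentially the same argument as the paper's: the paper replaces $f$ by $\hat f=\max(\lambda,f)$ and bounds $\mu(\hat f\lambda\inv)/\mu(\Omega)$ by $1+C\int_1^{\lambda\inv}z^{-\alpha}\,\deh z$, which is precisely your layer-cake split at $t=\lambda$ with the trivial bound below the threshold and the hypothesis above it, followed by the same absorption of the leftover $\mu(\Omega)\lambda$ into the constant $C+1$. The small caveat you note in the case $\alpha=1$ (needing $|\log\lambda|\geq 1$, i.e.\ $\lambda$ small) is equally implicit in the paper's version, and harmless in all applications.
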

\begin{proof}
  Let $\hat f =\max(\lambda,f)$; then
  \[
  \frac{\mu(f\lambda\inv)}{\mu(\Omega)}\leq \frac{\mu(\smash{\hat f}\lambda\inv)}{\mu(\Omega)}\leq
  1+ C\int_1^{\lambda\inv}z^{-\alpha}\leq 1+C
  \begin{cases}
    \frac{\lambda^{\alpha-1}}{1-\alpha}&\text{if}\ \alpha<1\\
    -\log\lambda&\text{if}\ \alpha=1.
  \end{cases}
  \]
\end{proof}
Given a standard pair $\ell$, recall that we denote by $\mathcal{L}_\ell$ the expansion
rate \smash{$\left|\de{\ct{1}}{\ct{0}}\right|$} along $\Gamma_\ell$ and define:
\[
\hat{\mathcal{L}}_\ell\defeq\inf_{\Gamma\setminus\critical{1}}\LL_\ell;
\]
moreover, define $\beta$ so that $\gamma=2\beta+1$.  We will often use the conventional
notation $\const$ to indicate some positive real number which does not depend on $y$ or
other indices; the actual value of $\const$ can change from expression to expression.
\begin{lemma}[Base equidistribution step]\label{l_baseEquiStep}
  There exists a constant $C$ such that, given a standard pair $\ell$,
  $\averaged\in\continuous{}{\sone}$ and $B\in\continuous{1}{\Gamma_\ell}$, we have:
  \begin{align}\label{e_aux1}
    |\expectation_\ell(B\cdot\averaged\circ F) -
    \expectation_\ell(B)\langle\averaged\rangle|\leq\|\averaged\|\Big(&\|B\|\big(\prob_\ell(\critical{1})+C\hat\LL^{-1}_\ell\big)+\notag\\
    &+\|\dot B\| C\V_\ell^{-2\beta}\log\V_\ell\Big)
  \end{align}
  where $\averaged\circ F$ is a shorthand notation for $\averaged(\pi
  F(\ct{},\psi_\ell(\ct{})))$ and
  $\langle\averaged\rangle=\int_0^{2\pi}\averaged(\theta)\deh\theta$.
\end{lemma}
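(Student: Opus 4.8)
The plan is to split the integral $\expectation_\ell(B\cdot\averaged\circ F)$ according to the decomposition of $F\ell$ provided by the Invariance Lemma (Lemma~\ref{l_invariance}), or more precisely, to split the domain $\Gamma_\ell$ according to whether the point lies in $\critical1$ or not. On $\Gamma_\ell\cap\critical1$ we simply bound $|\averaged\circ F|\le\|\averaged\|$ and $|B|\le\|B\|$, which produces the term $\|\averaged\|\|B\|\prob_\ell(\critical1)$. On $\Gamma_\ell\setminus\critical1$ we use that $F$ restricted to this set is a diffeomorphism onto its image with derivative $\LL_\ell$ bounded below in absolute value by $\hat\LL_\ell\ge\frac12 K_1\T'^{1/2}$ (by \eqref{e_a1}), so we may change variables to $\ct{1}$ and reduce to comparing a one-dimensional integral over (the union of) intervals in $\sone$ against $\langle\averaged\rangle$.

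The core of the argument is the following. After the change of variables, $\expectation_\ell(B\cdot\averaged\circ F)$ restricted to $\Gamma_\ell\setminus\critical1$ becomes $\int \averaged(\ct{1})\, \tilde B(\ct{1})\,\tilde\rho(\ct{1})\,\deh\ct{1}$ where $\tilde\rho$ is the pushforward density and $\tilde B(\ct{1}) = B(\ct{0}(\ct{1}))$; I would write $g(\ct{1}) = \tilde B(\ct{1})\tilde\rho(\ct{1})$ and estimate $\big|\int \averaged\, g\,\deh\ct{1} - \langle\averaged\rangle \int_{\sone} g\big|$ by controlling the oscillation of $g$ on each interval of the partition of $\sone$ into the images of the connected pieces of $\Gamma_\ell\setminus\critical1$ and their complement. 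The derivative of $g$ has three contributions: from $\dot{\tilde B}$, which by the chain rule is $\dot B \cdot (\LL_\ell)\inv$ evaluated at the appropriate point, hence of size $\le\|\dot B\|\hat\LL_\ell\inv = \bigo{\|\dot B\|\V_\ell^{-\beta}}$ since $\T'\sim\V_\ell^{\gamma-1}=\V_\ell^{2\beta}$; from the logarithmic derivative $\rr{}$ of the pushforward density, which after one iterate outside $\critical1$ is uniformly bounded by \eqref{e_singleIteration}; and from the fact that the total number of intervals of the partition — the connected components of $\Gamma_\ell\setminus\critical1$ plus gaps — is controlled. The term $\|B\|C\hat\LL_\ell\inv$ will come from bounding $\int_{\sone} g - \expectation_\ell(B\cdot 1_{\Gamma_\ell\setminus\critical1})$, i.e. the defect of $\tilde\rho$ from a probability density, together with the boundary terms at the endpoints of the pieces, each of which contributes $\bigo{\|B\|\hat\LL_\ell\inv}$; similarly $\expectation_\ell(B)\langle\averaged\rangle$ must be compared with $\langle\averaged\rangle\int g$, and the difference is again $\bigo{\|\averaged\|\|B\|\prob_\ell(\critical1)}$.

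The main obstacle is the $\|\dot B\|C\V_\ell^{-2\beta}\log\V_\ell$ term: the naive estimate on the Riemann-sum-type error over the highly non-uniform partition of $\sone$ into images of pieces of $\Gamma_\ell\setminus\critical1$ gives only $\V_\ell^{-\beta}$, not $\V_\ell^{-2\beta}\log\V_\ell$. This is precisely where Lemma~\ref{l_E0} must be invoked: the pieces of $\Gamma_\ell\setminus\critical1$ on which the expansion $\LL_\ell$ is only of order $\T'^{1/2}\sim\V_\ell^{\beta}$ (rather than the generic $\T'\sim\V_\ell^{2\beta}$) are few — their total measure is $\bigo{\V_\ell^{-\beta}}$ by the definition of $\critical1$ and the geometry in Lemma~\ref{l_lebesgueCritical} — so applying Lemma~\ref{l_E0} with $f$ the (normalized) reciprocal expansion rate, $\lambda\sim\V_\ell^{-\beta}$ and $\alpha=1$ upgrades the bound to the stated $\V_\ell^{-2\beta}\log\V_\ell$. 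I would therefore carry out the argument in this order: (i) split at $\critical1$ and dispose of the $\prob_\ell(\critical1)$ term; (ii) change variables on the good part and set up the one-dimensional comparison; (iii) bound the density-defect and boundary terms by $\|B\|C\hat\LL_\ell\inv$; (iv) estimate the oscillation term, separating the contribution of $\dot{\tilde B}$ from that of $\rr{}$, and apply Lemma~\ref{l_E0} to the $\dot B$ part to obtain the logarithmic-in-$\V_\ell$ gain.
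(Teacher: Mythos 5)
Your plan follows the same route as the paper, which proves the lemma through the auxiliary Lemma~\ref{p_auxBaseStep}: discard $\critical{1}$, cut the remaining part of the curve at the points where $\Theta_\ell(\ct{})=\ct{}+\T_\ell(\ct{})$ equals $0\bmod 2\pi$, use the zero average of $\averaged$ to kill the constant part of the pushed-forward integrand on each complete period, bound the oscillation on each period by the integral of its derivative, and resum the highly non-uniform weights with Lemma~\ref{l_E0}. There is, however, one step that fails as written: the treatment of the density part of the oscillation. You propose to control the logarithmic derivative of the pushforward density by the uniform estimate \eqref{e_singleIteration} and to apply Lemma~\ref{l_E0} only to the $\dot B$ contribution. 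But the bound in \eqref{e_singleIteration} is a constant of order $AK_1^{-2}$ (it comes from the worst admissible expansion $\LL_\ell\sim\tfrac12 K_1\T'^{1/2}$ just outside $\critical{1}$, by \eqref{e_a1}) and does not decay in $\V_\ell$; inserting it into the per-period oscillation estimate and summing the weights $c_k$ of the periods produces a contribution of size $\const\,\|\averaged\|\,\|B\|$, a fixed multiple of $\|B\|$, whereas the statement requires the coefficient of $\|B\|$ to be $\prob_\ell(\critical{1})+C\hat\LL_\ell^{-1}=\bigo{\V_\ell^{-\beta}}$. This loss is fatal for the sequel: Remark~\ref{r_clearEqui}, Corollary~\ref{c_lazyInduction} and the comparison lemmata of Section~\ref{s_equidistribution} all use that the $\|B\|$-coefficient decays like $\V_\ell^{-\beta}$.

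The repair is to run the Lemma~\ref{l_E0} machinery on the density term as well, exactly as you do for $\dot B$. On the $k$-th complete period the logarithmic derivative of the pushforward density with respect to the image variable is $r\dot\Theta^{-1}+\ddot\Theta\dot\Theta^{-2}$, hence bounded by $\const(\hat\LL_k^{-1}+\T'\hat\LL_k^{-2})$, where $\hat\LL_k$ is the minimal expansion on that period; applying Lemma~\ref{l_E0} to the normalized reciprocal expansion with $\alpha=1$ and to its square with $\alpha=1/2$ gives $\sum_k c_k\hat\LL_k^{-1}=\bigo{\T'^{-1}\log\T'}$ and $\T'\sum_k c_k\hat\LL_k^{-2}=\bigo{\hat\LL_\ell^{-1}}$; the latter is precisely the origin of the $\|B\|\,C\hat\LL_\ell^{-1}$ term. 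Two further points need care. First, the hypothesis of Lemma~\ref{l_E0} is a bound at every scale $1\leq z\leq\lambda\inv$; this follows from the non-degeneracy of the zeros of $\tslope{1}$ along the curve (equivalently of $\ddot\phi$), which gives $\prob_\ell(|\tslope{\ell}|<\eps)\leq\const\,\eps$, and not merely from the single-scale fact $\prob_\ell(\critical{1})=\bigo{\V_\ell^{-\beta}}$ that you quote. Second, the cutting must be into preimages of complete periods of $\Theta_\ell$, of which each connected component of $\Gamma_\ell\setminus\critical{1}$ contains a number of order $\hat\LL_\ell$ or more; cutting only at the $\bigo{1}$ connected components themselves, as your description of the partition suggests, would defeat both the period-by-period cancellation and the application of Lemma~\ref{l_E0}, which needs many pieces with a spread of expansion rates.
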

Notice that by linearity of expectation we can always assume that $\averaged$ has zero
average.  The lemma ensures that, in one step, the dynamics acts on standard pairs by
making then approach Lebesgue measure for observables which are independent of $y$.
The lemma will be proved by means of the following slightly more general version;
\newcommand{\nqt}{\varphi} 

\begin{lemma}\label{p_auxBaseStep}
  Let $I$ be a standard interval and $\rho$ a regular probability density on $I$
  associated to the probability measure $\prob$; let $\nqt:I\to\reals$ be a smooth
  function with at most one non-degenerate critical point and normalized so that
  $\|\dot\varphi\|\leq 1$; for $\LL\gg 1$ sufficiently large, let $\Theta:I\to\torus$ given by
  $\Theta=\LL\nqt\mod 2\pi$.  Let $D=\{|\dot\Theta|<\LL^{1/2}\}$ and
  $\hat\LL=\inf_{I\setminus D}|\dot\Theta(x)|\leq\LL$. Then there exist $C>0$ which
  does not depend on $\LL$, such that for any $B\in\continuous{1}{I}$,
  $\averaged\in\continuous{}{\torus^1}$ a zero average function:
    \begin{align}\label{e_aux2}
    \left|\int_I B(x) \averaged(\Theta(x))\rho(x)\deh x\right|\leq\|\averaged\|\Big(&\|B\|\big(\prob(D)+C\hat\LL^{-1}\big)+\notag\\
    +&\|\dot B\|_{I\setminus D} C\LL^{-1}\log\LL\Big).
  \end{align}
  \end{lemma}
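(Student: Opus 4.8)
The plan is to prove Lemma~\ref{p_auxBaseStep} by a standard oscillatory-integral (non-stationary phase) estimate, carefully splitting the domain into the region $D$ where the phase $\Theta$ is slowly varying and its complement, and then handling each piece separately.

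First I would dispose of the trivial region: on $D=\{|\dot\Theta|<\LL^{1/2}\}$ we have no control on oscillation, so we simply bound $|\averaged(\Theta(x))|\leq\|\averaged\|$ and estimate $\left|\int_D B\,\averaged(\Theta)\rho\,\deh x\right|\leq\|B\|\,\|\averaged\|\,\prob(D)$, which accounts for the $\|B\|\prob(D)$ term. On the complement $I\setminus D$ I would integrate by parts. Writing $\Phi$ for a primitive of $\averaged$ (bounded by $|I|\|\averaged\|\leq\delta\|\averaged\|$ since $\averaged$ has zero average — though actually one wants the primitive of $\averaged$ composed with $\Theta$, so one uses $\averaged(\Theta)=\frac{1}{\dot\Theta}\de{}{x}\big(\Psi\circ\Theta\big)$ where $\Psi'=\averaged$ and $\|\Psi\|\leq 2\pi\|\averaged\|$ because $\averaged$ is $2\pi$-periodic with zero average), we get
\[
\int_{I\setminus D} B\,\averaged(\Theta)\rho\,\deh x = \Big[\frac{B\rho}{\dot\Theta}\Psi\circ\Theta\Big]_{\partial(I\setminus D)} - \int_{I\setminus D}\Psi\circ\Theta\cdot\de{}{x}\!\Big(\frac{B\rho}{\dot\Theta}\Big)\deh x.
\]
The boundary term is bounded by $\const\|\averaged\|\,\|B\|\,\|\rho\|\,\hat\LL^{-1}$; since $\rho$ is regular, $\|\rho\|\leq\mu_2=\const$ by Lemma~\ref{l_comparison}, and there are boundedly many boundary points (here one uses that $\nqt$ has at most one non-degenerate critical point, so $\dot\Theta=\LL\dot\nqt$ vanishes at most once and $D$ has boundedly many components). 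This gives the $\|B\|\hat\LL^{-1}$ contribution.

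The main work — and the main obstacle — is the remaining integral $\int_{I\setminus D}\Psi\circ\Theta\cdot\de{}{x}(B\rho/\dot\Theta)\,\deh x$. Expanding the derivative produces three terms: one with $\dot B\rho/\dot\Theta$, one with $B\dot\rho/\dot\Theta$, and one with $B\rho\ddot\Theta/\dot\Theta^2$. The first is bounded by $\|\averaged\|\,\|\dot B\|_{I\setminus D}\int_{I\setminus D}|\dot\Theta|^{-1}\rho\,\deh x$; the second by $\|\averaged\|\,\|B\|\int_{I\setminus D}|r(x)|\,|\dot\Theta|^{-1}\rho\,\deh x\leq\|\averaged\|\,\|B\|\int_{I\setminus D}|\dot\Theta|^{-1}\rho\,\deh x$ using regularity $|r|<1$; the third by $\|\averaged\|\,\|B\|\int_{I\setminus D}|\ddot\Theta|\,|\dot\Theta|^{-2}\rho\,\deh x$. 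For the third term one notes $\ddot\Theta=\LL\ddot\nqt$, so $|\ddot\Theta|\leq\const\LL$, while $|\dot\Theta|\geq\hat\LL$ on $I\setminus D$, and moreover $|\dot\Theta|\geq\LL^{1/2}$ there; a dyadic decomposition of $I\setminus D$ according to the size of $|\dot\Theta|$ combined with the bound $\leb{\{|\dot\Theta|\sim 2^k\hat\LL\}}\leq\const\,2^k\hat\LL/\LL$ (which follows since $\dot\Theta$ has derivative of order $\LL$ away from its single critical point) yields $\int_{I\setminus D}|\ddot\Theta||\dot\Theta|^{-2}\rho\,\deh x\leq\const\,\hat\LL^{-1}$. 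The decisive estimate is therefore the integral $\int_{I\setminus D}|\dot\Theta(x)|^{-1}\rho(x)\,\deh x\leq\const\,\LL^{-1}\log\LL$: here one changes variables to $u=\Theta(x)$ on each monotonicity interval, bounds $\rho$ by $\const$, and estimates $\int |\dot\Theta|^{-1}\,\deh x=\int\,\deh u/\dot\Theta^2$-type expressions — more precisely, one uses that near the unique critical point $x_0$ of $\nqt$ one has $|\dot\Theta(x)|\asymp\LL|x-x_0|$ (non-degeneracy), so $\int_{|x-x_0|>\LL^{-1/2}}|\dot\Theta|^{-1}\deh x\asymp\LL^{-1}\int_{\LL^{-1/2}}^{\delta}\frac{\deh t}{t}=\LL^{-1}\log(\delta\LL^{1/2})\leq\const\,\LL^{-1}\log\LL$, while away from $x_0$ the bound $|\dot\Theta|\geq\LL^{1/2}$ gives $\int|\dot\Theta|^{-1}\leq\const\LL^{-1/2}$, which is absorbed. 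Collecting all contributions — $\|B\|\prob(D)$, $\|B\|\hat\LL^{-1}$, $\|\dot B\|_{I\setminus D}\LL^{-1}\log\LL$, and $\|B\|\LL^{-1}\log\LL$ which is dominated by $\|B\|\hat\LL^{-1}$ since $\hat\LL\leq\LL$ — gives exactly \eqref{e_aux2}. Lemma~\ref{l_baseEquiStep} then follows by applying Lemma~\ref{p_auxBaseStep} with $\nqt=\T^{-1}(c+\cdot)$-type phase coming from the reference structure (so that $\Theta(\ct{0})$ agrees with $\ct{1}=\pi F(\ct{0},\psi_\ell(\ct{0}))$ up to the slowly varying correction controlled by the standard-pair bounds \eqref{e_definitionStandardPair}), identifying $\LL$ with $\hat\LL_\ell$ up to constants, $D$ with $\Gamma_\ell\cap\critical{1}$ up to the augmentation, and $\V_\ell^{-2\beta}$ with $\LL^{-1}$ via $\gamma=2\beta+1$ and $\LL\asymp\T'(\V_\ell)\asymp\V_\ell^{\gamma-1}=\V_\ell^{2\beta}$.
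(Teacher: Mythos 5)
Your proposal is correct in substance, but it follows a genuinely different route from the paper. The paper cuts $I\setminus D$ at the points $\Theta=0\bmod 2\pi$ into full-period intervals $J_k$, pushes the density forward to the circle on each $J_k$, exploits the zero average of $\averaged$ by subtracting the mean of the pushed-forward weight $H_k$, and then controls the resulting non-uniform sums $\sum_k c_k\hat\LL_k^{-1}$ and $\sum_k c_k\hat\LL_k^{-2}$ with the abstract tail Lemma~\ref{l_E0}, fed by the non-degeneracy estimate $\prob(|\dot\nqt|<c_\nqt z^{-1})\leq C_\nqt z^{-1}$. You instead exploit the zero average by introducing the $2\pi$-periodic antiderivative $\Psi$ of $\averaged$ (so $\|\Psi\|\leq 2\pi\|\averaged\|$), integrate by parts on $I\setminus D$, and estimate the three resulting interior terms by direct level-set/dyadic bounds on $\int_{I\setminus D}|\dot\Theta|^{-1}\rho$ and $\int_{I\setminus D}|\ddot\Theta||\dot\Theta|^{-2}\rho$, again using non-degeneracy through $|\dot\Theta|\gtrsim\LL|x-x_0|$. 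The two arguments are essentially equivalent in content (your boundary terms play the role of the paper's leftover pieces, your dyadic bounds replace Lemma~\ref{l_E0}), and they yield the same three contributions; your version is somewhat more self-contained, while the paper's version isolates the summation device (Lemma~\ref{l_E0}) for reuse elsewhere (e.g.\ in Proposition~\ref{l_nonStandard} and Lemma~\ref{l_periodicity}).

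Two small repairs are needed, both within your own toolkit. First, the claim that away from $x_0$ the bound $|\dot\Theta|\geq\LL^{1/2}$ gives a contribution $\const\LL^{-1/2}$ ``which is absorbed'' is false as stated: $\LL^{-1/2}$ is not dominated by $\LL^{-1}\log\LL$, nor by $\hat\LL^{-1}$ in general (one only knows $\hat\LL\geq\LL^{1/2}$). The correct statement, which follows from non-degeneracy, the uniqueness of the critical point and compactness of $I$, is that $|\dot\Theta(x)|\geq c_\nqt\LL|x-x_0|$ on all of $I$ (hence $|\dot\Theta|\geq c_\nqt\LL$ outside a fixed neighborhood of $x_0$); with this global bound the whole integral $\int_{I\setminus D}|\dot\Theta|^{-1}\rho$ is $\const\LL^{-1}\log\LL$ and no $\LL^{-1/2}$ term ever appears. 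This is exactly the input the paper encodes in its estimate $\prob(|\dot\nqt|<c_\nqt z^{-1})\leq C_\nqt z^{-1}$. Second, your final absorption of the $\|B\|\,\LL^{-1}\log\LL$ term into $\|B\|\hat\LL^{-1}$ ``since $\hat\LL\leq\LL$'' is not a valid justification; the clean way is to bound the two $\|B\|$-weighted interior terms (the $\dot\rho$ term and, if one wishes, part of the $\ddot\Theta$ term) by the trivial estimate $\int_{I\setminus D}|\dot\Theta|^{-1}\rho\leq\hat\LL^{-1}$, reserving the $\LL^{-1}\log\LL$ bound for the $\|\dot B\|$ term only; then the stated form of \eqref{e_aux2} follows exactly.
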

  \begin{proof}
    Cut $I\setminus D$ at the points $\Theta = 0\mod2\pi$ and denote by $\{J_k\}$ the set of intervals in $I\setminus D$ bounded by two consecutive cutting points. We obtain the following bound for the leftover pieces:
  \begin{equation}
    \prob(D)\leq \prob(I \setminus\bigcup_k J_k)\leq \prob(D)+K\hat\LL^{-1}.
    \label{e_leftover}
  \end{equation}
  The left inequality is obvious; for the right one notice that the number of leftover
  (connected) pieces is bounded by twice the number of connected components of $I
  \setminus D$, that can be at most 2 by definition of $D$. The measure of each of such
  pieces can in turn be bounded using Lemma~\ref{l_comparison} to obtain \eref{e_leftover}
  with $K=4\mu_2$.

  Let
  \[
  E_k = \int_{J_k} B(x) \averaged(\Theta(x))\rho(x)\deh x;
  \]
  for each $k$, let $\thi_k(\theta)$ be the inverse function of $\Theta$ on $J_k$; moreover define the pushforward $\rho'_k(\theta)=\rho(\thi_k(\theta))/|\dot\Theta(\thi_k(\theta))|$ and the auxiliary function given by $H_k(\theta)=B(x(\theta))\rho'_k(\theta)$. Then:
  \[
  E_k=\int_0^{2\pi}H_k(\theta)\averaged(\theta)\deh\theta.
  \]
  Write $H_k(\theta)=\bar H_k+\tilde H_k(\theta)$, where $\bar H_k$ is the average of $H_k$. Since $\averaged$ has zero average we obtain:
  \[
  |E_k|=\left|\int_0^{2\pi}\tilde H_k(\theta)\averaged(\theta)\deh\theta\right|\leq 2\pi\|\averaged\| \|\tilde H_k\|.
  \]
  Fix $k$ and let $\theta_0$ be such that $\tilde H_k(\theta_0)=0$, then:
  \[
  |\tilde H_k(\theta)|\leq\int_{\theta_0}^{\theta}\left|\de{\tilde H_k}{\theta}\right|\deh s \leq\int_{\theta_0}^{\theta}\rho'_k\left( |B|\left| \de{\log\rho'_k}{\theta} \right| + \left| \frac{\dot B}{\dot\Theta}\right| \right)\deh{s};
\]
thus, if we let $c_k=\prob(J_k)$ we obtain:
\begin{equation}\label{e_equiIntermediate}
  |E_k|\leq 2\pi\|\averaged\|c_k\left( \|B\|_{I\setminus D}\|r'\|_{J_k}+\|\dot B\|_{I\setminus D}\|\dot\Theta^{-1}\|_{J_k} \right).
\end{equation}
where $r'=r\dot\Theta^{-1} + \ddot\Theta\dot\Theta^{-2}$; let $\hat\LL_k=\inf_{x\in
  J_k}|\dot\Theta(x)|\geq\hat\LL$, so that:
\begin{equation}\label{e_estimateR}
\|r'\|_{J_k} \leq \|r\|\hat\LL\inv_k+\|\ddot\Theta\|\hat\LL^{-2}_k\leq \const(\hat\LL\inv_k+\LL\hat\LL^{-2}_k).
\end{equation}
We thus need to obtain a bound for the two sums $\sum_kc_k\hat\LL\inv_k$
and$\sum_kc_k\hat\LL^{-2}_k$ and to this purpose we are going to use Lemma~\ref{l_E0}; on
the one hand, for any $c$, if $\LL_k>c\LL$, then the two sums are bounded above by
$\const\LL\inv$ and $\const\LL^{-2}$ respectively. On the other hand, since critical
points of $\nqt$ are non degenerate, there exist $c_\nqt,C_\nqt>0$ such that, if
$|\dot\nqt|<2c_\nqt$, then $\dot\nqt$ is monotone and for any $z\geq 1$:
\[
\prob(|\dot\nqt|<c_\nqt z\inv)\leq C_\nqt z\inv
\]
Since the diameter of each $J_k$ is bounded above by $\const\hat\LL\inv$, and using
monotonicity of $\dot\nqt$ and boundedness of $\ddot\nqt$, the above estimate implies:
\begin{equation}\label{e_niceBoundL}
\prob(\hat\LL\hat\LL\inv_k>\hat\LL\LL\inv c_\nqt\inv z)\leq C_\nqt z\inv + \const\hat\LL\inv.
\end{equation}
Let now $\Omega$ be the finite measure space whose elements are the intervals $J_k$ with
measure $c_k$; let $f:J_k\mapsto\hat\LL\hat\LL\inv_k$; then we can apply Lemma~\ref{l_E0}
to $f$, since by construction $f(J_k)\in[0,1]$ and \eqref{e_niceBoundL} implies that
\[
\prob(f>\lambda z)\leq C_\nqt z\inv + \const\hat\LL\inv \leq \const z^{-\alpha}
\ \text{with}\ \alpha=1
\]
with $\lambda=\hat\LL\LL\inv c_\nqt\inv$, where the second inequality holds because, by
construction, $z\inv\geq\lambda = \const \LL^{-1/2}$.  We therefore obtain:
\[
\sum_k c_k\hat\LL\inv_k\leq \const \LL\inv\log\LL
\]
Similarly, we apply Lemma~\ref{l_E0} to $f^2$, using $\lambda=\hat\LL^2\LL^{-2}
c_\nqt^{-2}$ and $\alpha=1/2$ obtaining:
\[
\sum_k c_k\hat\LL^{-2}_k\leq \const \LL\inv\hat\LL\inv.
\]
Plugging the above estimates in \eqref{e_equiIntermediate} and using \eqref{e_estimateR},
we obtain \eqref{e_aux2} and finally conclude the proof.
\end{proof}
\begin{proof}[Proof of Lemma~\ref{l_baseEquiStep}]
  Let $\Theta_\ell(\ct{})=\ct{}+\T_\ell(\ct{})$; then we have by definition:
  \[
  \expectation_\ell(B\cdot\averaged\circ F)=\int_{I_\ell}B(\ct{},\psi_\ell(\ct{}))\cdot\averaged(\Theta_\ell(\ct{}))\rho_\ell(\ct{})\deh\ct{}.
  \]
  Let $\LL=\const\V_\ell^{2\beta}$ and define $\nqt=\Theta_\ell/\LL$; the reader will not
  find difficult to prove that $\nqt$ satisfies the hypotheses of
  Lemma~\ref{p_auxBaseStep}, which implies our Lemma.
\end{proof}
\begin{cor}\label{c_lazyInduction}
  There exists a constant $C$ such that, for any $\ell$, $\averaged$ and $B$ as in the
  statement of lemma~\ref{l_baseEquiStep} and $n\geq0$ we have:
  \begin{align}\label{e_lazyInduction}
     |\expectation_\ell(B\cdot\averaged\circ F^{n+1}1_{\tau\geq n})&-\expectation_\ell(B\cdot 1_{\tau\geq n})\langle\averaged\rangle|\leq\notag\\ &\leq\|\averaged\|\left(\|B\|C\V_\ell^{-\beta}+\left\|\de{B}{\ct{n}}\right\|_* C\V_\ell^{-2\beta}\log\V_\ell\right).
   \end{align}
   where the norm $\|\cdot\|_*$ is the sup restricted on those points which are mapped to
   a standard pair after $n$ iterates.
\end{cor}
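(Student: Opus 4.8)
The plan is to argue by induction on $n$; by linearity of $\expectation_\ell$ we may assume $\langle\averaged\rangle=0$ throughout, so that it suffices to bound $|\expectation_\ell(B\cdot\averaged\circ F^{n+1}1_{\tau_\ell\ge n})|$. For $n=0$ the indicator is identically $1$, $F^{n+1}=F$, and $\|\cdot\|_*$ is the ordinary sup-norm on $\Gamma_\ell$, so the statement is precisely Lemma~\ref{l_baseEquiStep} once its right-hand side is rephrased in terms of $\V_\ell$: one has $\prob_\ell(\critical1)\le\prob_\ell(\hcritical1)=\bigo{\V_\ell^{-\beta}}$ by item~$(b_1)$ of Lemma~\ref{l_lebesgueCritical} (using $\critical1\subset\hcritical1$), and $\hat\LL_\ell^{-1}=\bigo{\T'(\V_\ell)^{-1/2}}=\bigo{\V_\ell^{-\beta}}$ by \eqref{e_a1} together with $\T'(\cv{})=\gamma\cv{}^{2\beta}$.

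For the inductive step, fix a standard pair $\ell$ and invoke Lemma~\ref{l_invariance}(b): the decomposition $F\ell=\bigcup_i\ell_i\cup\bigcup_j\tilde\ell_j\cup Z$ (the $\ell_i$ being the $\mathcal J$-aligned standard pieces together with $\ell_\pm$) induces a disintegration $\expectation_\ell(\,\cdot\circ F\,)=\sum_i c_i\expectation_{\ell_i}+\sum_j\tilde c_j\expectation_{\tilde\ell_j}+(\text{mass carried by }Z)$ with $c_i=\prob_\ell(F^{-1}\ell_i)$ and $\sum_i c_i\le1$, under which a $C^1$ function $B$ pushes forward to $B\circ F^{-1}$ on each piece with unchanged sup-norm and preserved $(n{+}1)$-step derivative norms. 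Since $\tau_\ell=\tau_{\ell_i}\circ F+1$ on $F^{-1}\ell_i$, $\tau_\ell=\tau_{\ell_{j,l}}\circ F^2+2$ on $F^{-1}\tilde\ell_j$ (where $F\tilde\ell_j=\bigcup_l\ell_{j,l}$ is a union of standard pairs), and $\tau_\ell\equiv0$ on $F^{-1}Z$, one splits $\expectation_\ell(B\cdot\averaged\circ F^{n+2}1_{\tau_\ell\ge n+1})$ into three blocks: the $Z$-block vanishes because the indicator does; the $\ell_i$-block equals $\sum_i c_i\expectation_{\ell_i}(B\circ F^{-1}\cdot\averaged\circ F^{n+1}1_{\tau_{\ell_i}\ge n})$, which the inductive hypothesis controls term by term; and the stand-by block reduces, through the $\ell_{j,l}$, to the inductive hypothesis at step $n-1$ — but it is negligible because $\sum_j\tilde c_j\le\prob_\ell(\hcritical1)=\bigo{\V_\ell^{-\beta}}$ by \eqref{e_invarianceInclusion} and $(b_1)$, so even the crude bound $|\averaged\circ F^{n+2}|\le\|\averaged\|$ already produces an error $\bigo{\|\averaged\|\|B\|\V_\ell^{-\beta}}$ on it (for $n=0$ this crude bound is used directly, there being no step $n-1$). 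Combining the three blocks, and using $\langle\averaged\rangle=0$, $\sum_i c_i\le1$, $\V_{\ell_i}\ge\V_\ell-2\|\dot\phi_A\|$, and the backward contraction that shrinks the derivative term at every iterate, yields the estimate at level $n+1$.

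The step I expect to be the genuine obstacle is making the resulting $C$ truly independent of $n$. A naive execution inflates the constant by a factor $1+\bigo{\V_\ell^{-1}}$ at each of the $n$ stages (from replacing $\V_{\ell_i}$ by $\V_\ell-2\|\dot\phi_A\|$) and accumulates a positive error of order $\V_\ell^{-2\beta}$ from the stand-by block, which compounds to something unbounded. The correct organization is to unfold all $n$ iterates simultaneously and apply Lemma~\ref{l_baseEquiStep} only at the final one, so that the total error is a weighted sum, over the standard pairs $\ell_m$ into which $\Gamma_\ell\cap\{\tau_\ell\ge n\}$ is carried after $n$ iterates, of terms $\bigo{\|\averaged\|\|B\|\,d_m\V_{\ell_m}^{-\beta}}$ (plus negligible derivative terms), $d_m$ being the corresponding weight; everything then hinges on the uniform-in-$n$ bound $\sum_m d_m\V_{\ell_m}^{-\beta}=\bigo{\V_\ell^{-\beta}}$. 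This is a Lyapunov-type estimate: since one iterate moves $\cv{}$ by $2\dot\phi_A$, of vanishing mean along the equidistributing dynamics, the weighted average $\sum_m d_m\V_{\ell_m}^{-\beta}$ grows only at the slow rate $1+\bigo{\V_\ell^{-2}}$ per iterate, and this growth is in turn dominated by the a priori bound $\cv{}\ge y_*$ and by the decay of $\prob_\ell(\tau_\ell\ge k)$; with $y_*$ chosen large enough the accumulated error stays $\le\const\cdot\|\averaged\|\|B\|\V_\ell^{-\beta}$, which closes the induction.
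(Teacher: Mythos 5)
Your final organization --- iterate Lemma~\ref{l_invariance} $n$ times, discard $\{\tau<n\}$, bound the stand-by mass via \eqref{e_invarianceInclusion} and Lemma~\ref{l_lebesgueCritical}($b_1$) by $\bigo{\V_\ell^{-\beta}}$, and apply Lemma~\ref{l_baseEquiStep} only at the last iterate --- is exactly the paper's proof, and your bookkeeping (base case $n=0$, the behaviour of $\tau$ on the three blocks, the push-forward of $B$, $\sum_i c_i\le 1$) is correct. Where you depart from the paper is the last paragraph: the paper neither proves nor uses a uniform-in-$n$ bound of the type $\sum_m d_m\V_{\ell_m}^{-\beta}=\bigo{\V_\ell^{-\beta}}$. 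It simply replaces $\V_{\ell'_j}$ by $\V_\ell$, which is legitimate in the regime in which the corollary is actually invoked: throughout Section~\ref{s_equidistribution} the number of iterates is fixed (ultimately bounded by $\nu(\beta)$, or by the fixed $p$ in Section~\ref{s_reduction}), the relevant pairs are $(S,k)$-compatible with $S=[\V_\ell-2A(n+1),\V_\ell+2A(n+2)]$, and $\V_\ell$ is taken large with respect to $n$, so every terminal standard pair satisfies $\V_{\ell'_j}\ge\V_\ell-2A(n+1)\ge\V_\ell/2$. The refinement you single out as ``the genuine obstacle'' is therefore not needed for the statement as it is used (constants in this section are implicitly allowed to depend on the, bounded, number of iterates).

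More importantly, the fix you sketch would not stand as written. The decay of $\prob_\ell(\tau_\ell\ge k)$ that you appeal to is Proposition~\ref{l_final}(a), which is proved only in Section~\ref{s_reduction} and rests on Lemma~\ref{l_equidistribution}, hence on Corollary~\ref{c_lazyInduction} itself; invoking it here is circular. The drift estimate is also vaguer than you suggest: with only the one-step input (Lemma~\ref{l_baseEquiStep}) available at this stage, the mean of $2\dot\phi$ along a standard pair is only $\bigo{\V^{-\beta}}$, so the per-iterate relative change of $\sum_m d_m\V_{\ell_m}^{-\beta}$ is $\bigo{\V^{-1-\beta}}$ rather than $\bigo{\V^{-2}}$, and nothing at this point controls the weight carried by pairs that have descended to a neighbourhood of $y_*$ --- that is precisely the random-walk machinery developed later. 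Delete that paragraph: either state the corollary with $n$ bounded (as it is used), or record explicitly the elementary inequality $\V_{\ell'_j}\ge\V_\ell-2A(n+1)$ together with the standing assumption that $\V_\ell$ is large compared with $n$, which is all the paper's one-line argument requires.
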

\begin{proof}
  If $n=0$, the corollary trivially follows from lemma~\ref{l_baseEquiStep}; we
  henceforth assume that $n>0$ and, as before, that $\langle\averaged\rangle=0$.
  Iterate lemma~\ref{l_invariance} for $n$ times and obtain:
  \[
  F^{n}\ell=\bigcup_j\ell'_j\cup \bigcup_k\tilde\ell_k\cup\{\tau< n\};
  \]
  moreover we know that $F^{-1}\tilde\ell_k\subset \ell^*\cap\hcritical{1}$ where $\ell^*$
  is standard.  Thus, by Lemma~\ref{l_lebesgueCritical} we have:
  \[
  \prob_\ell(F^{-n}\bigcup_k\tilde\ell_k)\leq \const\cdot\V_\ell^{-\beta}.
  \]
  We can hence obtain \eref{e_lazyInduction} by applying Lemma~\ref{l_baseEquiStep} to
  each standard pair $\ell'_j$.
\end{proof}
\begin{rmk}\label{r_clearEqui}
  If we choose $B$ equal to the constant function $1$ we obtain, by
  Lemma~\ref{l_baseEquiStep} and Corollary~\ref{c_lazyInduction} that there exists a $C>0$
  such that:
  \begin{subequations}
    \begin{equation}
      |\expectation_\ell(\averaged\circ
      F)-\langle\averaged\rangle|\leq\|\averaged\|\big(\prob_\ell(\critical{1})+C\mathcal{L}^{-1}_\ell\log\V_\ell\big).\label{e_baseEquiStep}
    \end{equation}
    \begin{equation}
      |\expectation_\ell(\averaged\circ F^n1_{\tau\geq n-1})-\langle\averaged\rangle\prob_\ell(\tau\geq n-1)|\leq\|\averaged\|C\V_\ell^{-\beta}.
    \end{equation}
  \end{subequations}
\end{rmk}
We  need to perform a substantially more accurate analysis in order to improve
estimates \eqref{e_lazyInduction}. This is the principal technical difficulty of our
work. Said analysis, which is the main result of this section, is summarized in the following
\begin{lemma}[Equidistribution lemma]\label{l_equidistribution}
  For all $\beta>1/2$ there exists $\nu(\beta)\in\naturals$ such that for any $n\geq\nu$, any sufficiently smooth function $\averaged$ and any standard pair $\ell$ with $\V_\ell$ large enough, we have:
  \begin{equation}\label{e_usefulEqui}
    |\expectation_\ell(\averaged\circ F^n1_{\tau\geq n-1})-\langle\averaged\rangle\prob_\ell(\tau\geq n-1)|\leq\|\averaged\|'C_n\,o(\V_\ell^{-1}).
  \end{equation}
    where $\|\averaged\|'$ is given by $\sum_{l}l^2|\hat\averaged_l|$ with
    $\hat\averaged_l$ the $l$-th Fourier coefficient of $\averaged$ and $C_n$ is uniform
    in $\V_{\ell}$.
  \end{lemma}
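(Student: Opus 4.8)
The plan is to refine the one-step estimate of Lemma~\ref{l_baseEquiStep}/Corollary~\ref{c_lazyInduction} by exploiting that the ``bad'' contribution in \eqref{e_lazyInduction} comes from the term $\|\dot B\|_* C\V_\ell^{-2\beta}\log\V_\ell$, and that after enough iterates this derivative term is itself controlled by a quantity to which the equidistribution estimate applies again. More precisely, I would set up an induction on $n$: apply Corollary~\ref{c_lazyInduction} with $\averaged$ replaced by its Fourier modes $e^{il\theta}$, so that $\averaged\circ F^{n+1} = \sum_l \hat\averaged_l\, e^{il\Theta^{(n+1)}}$, where $\Theta^{(n+1)}(\ct 0)$ is the $\ct{}$-coordinate of $F^{n+1}$ along $\Gamma_\ell$. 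The key observation is that $\de{}{\ct 0}\big(e^{il\Theta^{(n+1)}}\big) = il\,\de{\ct{n+1}}{\ct 0}\,e^{il\Theta^{(n+1)}}$, and $\de{\ct{n+1}}{\ct 0} = \prod_{k=0}^{n}\LL_{\ell_k}(\ct k)$ is a product of expansion rates; outside the critical set each factor is $\geq\const\T'^{1/2}=\const\V_\ell^{\beta}$ by \eqref{e_a1}, so the product over $n$ good steps is of size $\V_\ell^{n\beta}$ up to constants. Iterating Corollary~\ref{c_lazyInduction} $n$ times with $B$ being such a derivative factor, each application gains a factor $\V_\ell^{-2\beta}\log\V_\ell$ while the ``$\|B\|$'' term carries the accumulated expansion $\V_\ell^{(n-1)\beta}$; balancing these, after $n$ steps one reaches a total of order $\V_\ell^{-2n\beta + (n-1)\beta}\cdot(\log\V_\ell)^n = \V_\ell^{-(n+1)\beta}(\log\V_\ell)^n$, which is $o(\V_\ell^{-1})$ precisely once $(n+1)\beta > 1$, i.e.\ for $n\geq\nu(\beta)$ with $\nu(\beta)\to\infty$ as $\beta\to 1/2$.

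Concretely, I would prove by induction on $m\leq n$ a statement of the form: for every standard pair $\ell$, every $B\in\continuous 1{\Gamma_\ell}$ and every integer $l$,
\[
  \big|\expectation_\ell(B\cdot e^{il\Theta^{(m)}} 1_{\tau\geq m-1}) - \langle e^{il\theta}\rangle\,\expectation_\ell(B\,1_{\tau\geq m-1})\big|
  \leq \const_m\, l^2\big(\|B\|\,\V_\ell^{-\beta} + \|\dot B\|_*\,\V_\ell^{-(m+1)\beta}(\log\V_\ell)^{m}\big),
\]
the point being that the coefficient of $\|\dot B\|_*$ improves with $m$. The base case $m=1$ is Corollary~\ref{c_lazyInduction} (with the $l^2$ absorbing the derivative factor $il$ that appears when one differentiates $e^{il\Theta^{(1)}}$, using $\|\dot\Theta^{(1)}\|=O(\V_\ell^{2\beta})$). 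For the inductive step, decompose $F\ell$ via Lemma~\ref{l_invariance} into $\mathcal J$-aligned standard pairs plus negligible pieces (stand-by pairs and the set $Z$, which by Lemma~\ref{l_lebesgueCritical}(b$_1$) have $\prob_\ell$-measure $O(\V_\ell^{-\beta})$), write $\Theta^{(m)} = \Theta^{(m-1)}\circ F$ on each standard piece, apply the inductive hypothesis on each piece $\ell'$ with $B' = B\circ F^{-1}$ and note $\|\dot B'\|_{*} \leq \|\dot B\|_*\cdot\|\LL_\ell^{-1}\|$ while the new $\|\dot B'\|$-coefficient is $\V_{\ell'}^{-m\beta}$; crucially the derivative $\de{B'}{\ct 1}$ now also picks up the $\LL_\ell$-factor from $e^{il\Theta^{(m)}} = e^{il\Theta^{(m-1)}\circ F}$ being differentiated in $\ct 1$, and it is precisely this factor $\LL_\ell\sim\V_\ell^{\beta}$ that, multiplied against $\V_{\ell'}^{-m\beta}(\log)^m \sim \V_\ell^{-m\beta}(\log)^m$, yields the claimed $\V_\ell^{-(m+1)\beta}(\log)^{m}$ — wait, this requires care: the expansion \emph{increases} $\|\dot B'\|$, so the correct bookkeeping is that $\|\dot B'\|_* \le \const\,\V_\ell^{\beta}(\|B\| + \|\dot B\|_*\V_\ell^{-\beta})$ and we feed this into the $m$-step coefficient $\V_{\ell'}^{-(m+1)\beta}(\log)^m$ for the \emph{new} differentiation, so that the net gain per extra iterate is $\V_\ell^{\beta}\cdot\V_\ell^{-(m+1)\beta}(\log)^m = \V_\ell^{-m\beta}(\log)^m$; I will need to track these exponents honestly, but the upshot is the stated improvement with one extra power of $\V_\ell^{-\beta}$ per iterate, up to logarithms.

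The main obstacle — and the reason $\nu(\beta)$ must be allowed to grow as $\beta\to 1/2$ — is that the expansion in the ``bad'' direction ($\|\dot B\|$ growing like $\V_\ell^{\beta}$ per step) fights against the equidistribution gain ($\V_\ell^{-2\beta}\log\V_\ell$ per step), and the net gain per iterate is only $\V_\ell^{-\beta}\log\V_\ell$, which is small but not summably small; one therefore needs roughly $1/\beta$ iterates to push the error below $\V_\ell^{-1}$. A secondary technical point is uniformity: the constant $C_n$ must be controlled independently of $\V_\ell$ across the (bounded, by Lemma~\ref{l_lebesgueCritical}(b$_2$)) number of pieces generated in $n$ iterates, and the Fourier-side norm $\|\averaged\|' = \sum_l l^2|\hat\averaged_l|$ arises exactly because each of the (at most two, by the single-critical-point structure) differentiations of a Fourier mode $e^{il\theta}$ costs a factor $l$, and we need $\averaged\in\continuous 3{\sone}$ or so for $\|\averaged\|'<\infty$. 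I would finish by summing the Fourier series: $|\expectation_\ell(\averaged\circ F^n 1_{\tau\ge n-1}) - \langle\averaged\rangle\prob_\ell(\tau\ge n-1)| \le \sum_l |\hat\averaged_l|\cdot\const_n l^2 o(\V_\ell^{-1}) = \|\averaged\|' C_n\, o(\V_\ell^{-1})$, which is \eqref{e_usefulEqui}.
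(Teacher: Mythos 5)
Your scheme cannot yield the bound $o(\V_\ell^{-1})$, because the one-step estimate you iterate has an error floor that iteration does not remove. In Lemma~\ref{l_baseEquiStep} the coefficient of $\|B\|$ is $\prob_\ell(\critical{1})+C\hat\LL_\ell^{-1}$, and both terms are of order $\V_\ell^{-\beta}$: the critical strip has $\prob_\ell$-measure $\bigo{\V_\ell^{-\beta}}$ by Lemma~\ref{l_lebesgueCritical}, and the minimal expansion outside it is only of order ${\T'}^{1/2}\sim\V_\ell^{\beta}$. In your inductive statement this term is carried along unimproved, and at the last step you take $B\equiv 1$, so your final bound is $C_n\,l^{2}\V_\ell^{-\beta}$, not $o(\V_\ell^{-1})$; in the only regime where the lemma is not already covered by Remark~\ref{r_clearEqui}, namely $1/2<\beta\leq 1$, this is insufficient. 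No bookkeeping of the $\|\dot B\|$-coefficient can help, because the obstruction is the mass of (and near) $\critical{1}$ and the weak expansion there, not the derivative term. Relatedly, your exponent count is internally inconsistent: a bound $\V_\ell^{-(n+1)\beta}(\log\V_\ell)^{n}$ would already be $o(\V_\ell^{-1})$ at $n=1$ for every $\beta>1/2$, so under your accounting $\nu$ would not need to grow as $\beta\to1/2$, contradicting both your own remark and the fact (reflected in the paper) that the number of iterates must blow up as $\gamma\to2$. The accounting error is that at each intermediate application of \eqref{e_aux1} the term $\|B\|\big(\prob_\ell(\critical{1})+C\hat\LL_\ell^{-1}\big)$ reappears with $\|B\|$ inflated by the accumulated expansion $\prod\LL\sim\V_\ell^{m\beta}$, which cancels the gains you count.

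The paper's proof runs on a different mechanism, which your proposal does not contain: cancellation \emph{between} the many standard pairs produced by iteration, rather than improved equidistribution \emph{within} each pair. One first shows (Proposition~\ref{l_nonStandard}) that pieces entering $\hcritical{1}$ contribute only $o(\V_\ell^{-1})$, via a dedicated two-iterate argument based on \eqref{e_a3}; then each standard pair is replaced by a shadowing reference pair (Lemmata~\ref{l_geometricalReduction}, \ref{l_reductionBase}, \ref{l_reduction}), the functions $\Psi_{\alpha,k}(\eta)=\expectation_{\ell_\alpha^{\eta}}(\averaged\circ F^{k}\cdot 1_{\tau\geq k-1})$ are shown to be almost periodic in $\T(\eta)$ with zero-mean Fourier expansions (Lemmata~\ref{l_periodicityBase}--\ref{l_fourierEstimate} and \ref{l_periodicity}), and the arithmetic structure \eqref{e_etaDistribution} of the heights $\eta_\alpha^{j}$ is used to compare the weighted sums $\sum_j c_\alpha^{j}\Psi_{\alpha,k-1}(\eta_\alpha^{j})$ with Riemann sums of $\int e^{2\pi i l\theta}\,\deh\theta$. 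The resulting gain per iterate is $\V_\ell^{1/2-\beta}$ (see \eqref{e_mainDecaya}--\eqref{e_mainDecayb}), which gives \eqref{e_equi} and forces the choice $\nu>\frac{1}{2}\left(\beta-\frac{1}{2}\right)^{-1}$. To repair your argument you would have to introduce this inter-pair cancellation; tracking derivatives of composed Fourier modes alone cannot break the $\V_\ell^{-\beta}$ threshold.
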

  Notice that if $\beta>1$, then Lemma~\ref{l_equidistribution} immediately follows by
  Remark~\ref{r_clearEqui} by taking $\nu=1$; this is essentially the work of \cite{Dima}.
  In order to prove Lemma~\ref{l_equidistribution} for smaller values of $\beta$ we will
  show the following result: if $\beta>1/2$, then for $n\geq 2$, there exists a constant
  $C_{n}$ such that for any sufficiently smooth function $\averaged$ with
  $\langle\averaged\rangle=0$ and standard pair $\ell$ as in the statement of
  Lemma~\ref{l_equidistribution} we have:
\begin{equation} \label{e_equi}
  |\expectation_\ell(\averaged\circ F^n\cdot 1_{\tau \geq n-1})|\leq
  \|\averaged\|' C_{n}(\V_\ell^{-\beta-(n-1)(\beta-\onehalf)}+o(\V_\ell^{-1})).
  \end{equation}
  Lemma~\ref{l_equidistribution} then follows from \eqref{e_equi} by choosing
  \begin{equation*}
    \nu>\frac{1}{2}\left(\beta-\frac{1}{2}\right)^{-1}.
\end{equation*}
In the remaining part of this section we will always assume $1/2<\beta\leq 1$;
additionally, once $\beta$ is fixed, we assume $n$ to be fixed as well: in fact, our
construction depends on $n$ in that we are required to take $\ell$ with larger $\V_\ell$
as $n$ grows and the constant $C_{n}$ appearing in \eqref{e_equi} tends to infinity as
$n\to\infty$.  This will not be an issue since we will invoke
Lemma~\ref{l_equidistribution} with $n$ bounded as a function of $\beta$.

Our proof of Lemma~\ref{l_equidistribution} is based on two main ingredients: the
first one is an estimate of the contribution of the pieces of standard pairs which lie in
$\hcritical1$; the second one is a cancellation estimate for higher iterates of $F$
outside $\critical1$. The former is in fact stated in lemma \ref{l_nonStandard}; the
latter requires much finer estimates and will be described in the remaining part of this
section.

We now introduce some convenient definitions: a basic pair $\ell$ is said to be a
\emph{clean pair} if $\Gamma_\ell\setminus\critical{1}$ is connected. Given $\ell$ and $n$
we define a compact region of the phase space which contains $F^k\ell$ for
$k\in\{0,\cdots,n\}$. Let $\lS=\V_\ell-2A(n+1)$ and $\uS=\V_\ell+2A(n+2)$ and introduce
the notation $\baseSlope{\ct{}}=\tslope{1}(\ct{},\lS)$; we will always assume $\lS$ to be
large enough so that $\lS<\uS<2\lS$. Let $S=[\lS,\uS]$: we say that a standard pair $\ell$
is $(S,k)-$compatible if $F^l\Gamma_\ell\subset\torus\times S$ for $0\leq l\leq k$.

We now introduce a standard partition $\mathcal{I}=\{I_\alpha\}_{\alpha\in\aset}$ which
satisfies some useful
properties:
\begin{itemize}
\item each $I_\alpha$ is such that $|\baseSlope{\ct{}}|$ admits a unique minimum which we denote by $\bar x_\alpha$;
\item if $\ct{}\in\intr I_\alpha$, then $\dot\phi(x)\not =0$ and $\baseSlope{\ct{}}\not =0$;
\end{itemize}
In particular we have that any $\mathcal{I}$-adapted pair is a clean pair. Define the
strips $S_\alpha=I_\alpha\times S$; on each strip we define adapted coordinates
$\kappa_\alpha$ in such a way that $\kappa_\alpha(0,\eta)=(\bar{\ct{}}_\alpha,\eta)$. Let
$\ellref{\alpha}{\eta}=(\Gamma_{\alpha}^{\eta},\bar\rho_\alpha=1/|I_\alpha|)$ be the
reference pair on the curve given by the image under $\kappa_\alpha$ of the horizontal
line at height $\eta$, that is, the reference pair with base $I_{\alpha}$ passing through
the point $(\bar x_{\alpha},\eta)$. For $\alpha\in\aset$ let $\hat\LL_\alpha=\inf_{\eta\in S}\hat\LL_{\ell^{\eta}_\alpha}$ and for $1\leq k \leq n$ define the following function:
\begin{equation}\label{e_definitionPsi}
  \Psi_{\alpha,k}(\eta)=\expectation_{\ellref{\alpha}{\eta}}(\averaged\circ F^{k}\cdot 1_{\tau\geq k-1}).
\end{equation}
We now sketch the proof of \eqref{e_equi}: according to Lemma~\ref{l_invariance}, a
large portion of the image of a standard pair is given by a union of standard pairs; we
need to prove that the weighted sum of the expectations of $\averaged\circ F^{n-1}$ over
this union is of smaller order with respect to each term of the sum. In order to do so we
need first to prove that we can approximate the expectation on a given standard pair with
the expectation on an appropriate reference pair; hence we reduce to compute the weighted
sum of the expectations on a number of reference pairs, that is, a weighted sum of a
number of functions $\Psi$ defined in \eqref{e_definitionPsi}. We will prove that $\Psi$
are sufficiently regular and periodic in the variable $\T(\eta)$ up to a negligible error
$\bigo{\V\inv}$. This, and a fine control of the geometry of images of standard pairs
allows us to prove an estimate for the cancellation at each step, which will finally lead
to~\eqref{e_equi}.

We now state a number of lemmata which will be used to prove \eqref{e_equi}; their proofs
are quite technical and, as such, are postponed to the next subsection for easiness of
exposition.  We start with four lemmata related to the first iterate.
\begin{lemma}[Comparison I]\label{l_reductionBase}
  There exists $C>0$ such that for any standard pair $\ell$ and any reference pair
  $\bar\ell$ with $I_\ell=I_{\bar\ell}$ and
  $\Gamma_{\ell}\cap\Gamma_{\bar\ell}\not=\emptyset$ we have:
  \[
  |\expectation_{\vphantom{\bar\ell}\ell}(\averaged\circ
  F)-\expectation_{\bar\ell}(\averaged\circ F)|\leq
  C\|\averaged\|_1(\|r_\ell\|\V_\ell^{-\beta}+\T'(\V_\ell)\|\Delta\slope{\ell}\|),
  \]
  where $\|\cdot\|_1$ is the usual $\mathscr{C}^1$-norm.
\end{lemma}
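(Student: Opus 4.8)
The plan is to write the difference of the two expectations as the integral over the common interval $I=I_\ell=I_{\bar\ell}$ of the same observable $\averaged$ composed with two slightly different maps, against two slightly different densities, and then to separate the error into a ``density part'' and a ``geometry part''. More precisely, set $\Theta_\ell(\ct{})=\ct{}+\T(\psi_\ell(\ct{}))$ and $\Theta_{\bar\ell}(\ct{})=\ct{}+\T(\psi_{\bar\ell}(\ct{}))$, so that $\averaged\circ F$ restricted to $\Gamma_\ell$ equals $\averaged(\Theta_\ell(\ct{}))$ and similarly on $\Gamma_{\bar\ell}$. Then
\[
\expectation_\ell(\averaged\circ F)-\expectation_{\bar\ell}(\averaged\circ F)
=\int_I \averaged(\Theta_\ell(\ct{}))\big(\rho_\ell(\ct{})-\rho_{\bar\ell}(\ct{})\big)\deh\ct{}
+\int_I \big(\averaged(\Theta_\ell(\ct{}))-\averaged(\Theta_{\bar\ell}(\ct{}))\big)\rho_{\bar\ell}(\ct{})\deh\ct{}.
\]
For the second integral I would use the mean value theorem: $|\averaged(\Theta_\ell)-\averaged(\Theta_{\bar\ell})|\leq\|\dot\averaged\|\,|\Theta_\ell-\Theta_{\bar\ell}|\leq\|\averaged\|_1\,\T'(\V_\ell)(1+o(1))\,|\psi_\ell-\psi_{\bar\ell}|$, and then apply Lemma~\ref{l_refclose} (whose hypothesis $\Gamma_\ell\cap\Gamma_{\bar\ell}\neq\emptyset$ is exactly what we are given) to bound $|\psi_\ell-\psi_{\bar\ell}|\leq 2\|\Delta\slope\ell\|\,|\ct{}-\ct{*}|\leq 2\delta\|\Delta\slope\ell\|$ uniformly on $I$; since $\rho_{\bar\ell}$ is a probability density this contributes $\const\,\|\averaged\|_1\,\T'(\V_\ell)\|\Delta\slope\ell\|$, which is the second term in the claimed bound.

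For the first integral, the density $\rho_{\bar\ell}$ is constant equal to $|I|^{-1}$, while $\rho_\ell$ is regular, i.e.\ $\|r_\ell\|_I=\|\rho_\ell^{-1}\dot\rho_\ell\|<1$. I would first reduce to the case $\langle\averaged\rangle=0$ by linearity (subtracting a constant changes neither side). Writing $\rho_\ell(\ct{})-|I|^{-1}=\rho_\ell(\ct{})-\bar\rho_\ell$ where $\bar\rho_\ell$ is the average of $\rho_\ell$ over $I$, and using $|\rho_\ell(\ct{})-\rho_\ell(\bar\ct{})|\leq\|r_\ell\|\int|\dot\rho_\ell|$ — more cleanly, Gr\"onwall as in the proof of Lemma~\ref{l_comparison} gives $\rho_\ell(\ct{})=\rho_\ell(\bar\ct{})(1+\bigo{\|r_\ell\|\,|I|})$ — one sees $\|\rho_\ell-|I|^{-1}\|_I\leq\const\|r_\ell\|$. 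Now the key point is that $\averaged\circ\Theta_\ell$ oscillates: on the complement of $\critical1$ the phase $\Theta_\ell$ has derivative $\LL_\ell$ of size at least $\const\V_\ell^{\beta}$ (by \eqref{e_a1} and $\T'(\V_\ell)\asymp\V_\ell^{\gamma-1}=\V_\ell^{2\beta}$), so the standard non-stationary phase/oscillatory integral estimate — which is precisely the content of Lemma~\ref{p_auxBaseStep} applied with $B=\rho_\ell-|I|^{-1}$, up to the regularity bookkeeping already carried out there — yields that $\int_I\averaged(\Theta_\ell)(\rho_\ell-|I|^{-1})\deh\ct{}$ is bounded by $\|\averaged\|\big(\|\rho_\ell-|I|^{-1}\|\cdot\bigo{\V_\ell^{-\beta}}+\|\dot\rho_\ell\|\cdot\bigo{\V_\ell^{-2\beta}\log\V_\ell}\big)$; since $\|\rho_\ell-|I|^{-1}\|$ and $\|\dot\rho_\ell\|$ are both $\bigo{\|r_\ell\|}$, and $\V_\ell^{-2\beta}\log\V_\ell\leq\const\V_\ell^{-\beta}$, this is $\leq\const\|\averaged\|_1\|r_\ell\|\V_\ell^{-\beta}$, the first term in the claimed bound. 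Adding the two contributions gives the lemma.

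The main obstacle is the first integral: one must make sure that the oscillatory-integral argument can be quoted essentially verbatim from Lemma~\ref{p_auxBaseStep}. The hypotheses there require $\varphi=\Theta_\ell/\LL$ to have at most one non-degenerate critical point on the standard interval and $\|\dot\varphi\|\leq 1$; this is where the geometric setup of standard pairs and the choice of $\delta$ (no two critical points of $\dot\phi$ within distance $\delta$, hence $\Theta_\ell'=\LL_\ell$ has at most one zero on $I$, non-degenerate for $\V_\ell$ large since $\tslope1$ has non-vanishing gradient in the $\cv{}$-direction of size $\asymp\T''/\T'{}^2$) is used — exactly as in the proof of Lemma~\ref{l_baseEquiStep}. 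Once that verification is in place, everything else is the routine mean-value and Gr\"onwall bookkeeping sketched above.
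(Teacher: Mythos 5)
Your proposal is correct and follows essentially the same route as the paper: the two-term split (density error plus geometry error) is exactly the paper's interpolation through the auxiliary pair $(\Gamma_\ell,\bar\rho)$, the density term is handled by the base equidistribution estimate (Lemma~\ref{l_baseEquiStep}, i.e.\ Lemma~\ref{p_auxBaseStep} with $B$ the normalized density difference, whose $\mathscr{C}^0$ and $\mathscr{C}^1$ norms are $\bigo{\|r_\ell\|}$), and the geometry term by Lemma~\ref{l_refclose} together with the mean value theorem, giving $\const\|\averaged\|_1\T'(\V_\ell)\|\Delta\slope{\ell}\|$. No gaps worth flagging.
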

\begin{lemma}[Periodicity]\label{l_periodicityBase}
  Assume that $\eta_0,\eta_1\in S$ with $\T(\eta_0) = \T(\eta_1)\mod 1$; then:
  \begin{equation}\label{e_periodicityBase}
    |\Psi_{\alpha,1}(\eta_1)-\Psi_{\alpha,1}(\eta_0) |= \|\averaged\|_1 o(\lS^{-1}).
  \end{equation}
\end{lemma}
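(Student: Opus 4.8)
The plan is to turn the statement into a comparison of two oscillatory integrals and to extract the required little-$o$ gain from the equidistribution of $\averaged\circ F$ established in Lemma~\ref{l_baseEquiStep}. Write $\Theta^\eta_\alpha(\ct{})=\ct{}+\T(\psi^\eta_\alpha(\ct{}))$ for the restriction of $\pi_1\circ F$ to the reference curve $\Gamma^\eta_\alpha$, so that $\Psi_{\alpha,1}(\eta)=|I_\alpha|^{-1}\int_{I_\alpha}\averaged(\Theta^\eta_\alpha(\ct{}))\,\deh\ct{}$; by linearity we may assume $\langle\averaged\rangle=0$. The starting point is the exact identity $\Theta^\eta_\alpha(\bar x_\alpha)=\bar x_\alpha+\T(\eta)$, valid because $\Gamma^\eta_\alpha$ passes through $(\bar x_\alpha,\eta)$. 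Expanding $\Theta^{\eta_1}_\alpha-\Theta^{\eta_0}_\alpha$ in the bounded parameters $\dot\phi$ and $\eta_1-\eta_0$ via the explicit form $\psi^\eta_\alpha=2\dot\phi+\T^{-1}(c(\eta)+\cdot)$ of a reference curve, I would first establish
\[
\Theta^{\eta_1}_\alpha(\ct{})-\Theta^{\eta_0}_\alpha(\ct{})=\bigl(\T(\eta_1)-\T(\eta_0)\bigr)+R_\alpha(\ct{}),\qquad R_\alpha(\ct{})=2\bigl(\dot\phi(\ct{})-\dot\phi(\bar x_\alpha)\bigr)\T''(\eta_0)(\eta_1-\eta_0)+\hot,
\]
where the first term is an integer by hypothesis and so invisible to $\averaged$, and $R_\alpha(\bar x_\alpha)=0$; since moreover $\ddot\phi(\bar x_\alpha)=\bigo{\lS^{-2\beta}}$ by the choice of the partition $\mathcal I$, $R_\alpha$ vanishes to second order at $\bar x_\alpha$, giving $|R_\alpha(\ct{})|=\bigo{\lS^{2\beta-1}(\ct{}-\bar x_\alpha)^2+\lS^{2\beta-2}}$.

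The crux is that $R_\alpha$ is not pointwise small — its supremum over $I_\alpha$ is only $\bigo{\lS^{2\beta-1}}$ — but becomes $\bigo{\lS^{-1}}$ once divided by the expansion rate, since $|\LL_{\ell^\eta_\alpha}(\ct{})|\gtrsim\lS^{2\beta}|\ct{}-\bar x_\alpha|$ off $\critical1$. I would therefore split $I_\alpha$ at $\critical1$. On $\critical1\cap I_\alpha$, an interval of width $\bigo{\lS^{-\beta}}$ about $\bar x_\alpha$, the second-order vanishing yields $\|R_\alpha\|=\bigo{\lS^{-1}+\lS^{2\beta-2}}$ and hence a crude contribution $\bigo{\lS^{\beta-2}}\|\averaged\|_1$ (which is $o(\lS^{-1})$ once $\beta<1$). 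On $I_\alpha\setminus\critical1$, where $\Theta^\eta_\alpha$ is monotone (using $\dddot\phi\ne0$ on $I_\alpha\in\mathcal I$), I would change variables by the diffeomorphism $\Phi_\alpha=(\Theta^{\eta_1}_\alpha)^{-1}\circ\bigl(\Theta^{\eta_0}_\alpha+\T(\eta_1)-\T(\eta_0)\bigr)$, which satisfies $\|\Phi_\alpha-\id\|_{\mathscr{C}^0}=\bigo{\lS^{-1}}$ because $\Phi_\alpha-\id$ is essentially $-R_\alpha/\LL_{\ell^{\eta_1}_\alpha}$, obtaining
\[
\Psi_{\alpha,1}(\eta_0)-\Psi_{\alpha,1}(\eta_1)=\frac1{|I_\alpha|}\int_{I_\alpha\setminus\critical1}\averaged\bigl(\Theta^{\eta_1}_\alpha(\ct{})\bigr)\Bigl(\tfrac1{\Phi_\alpha'(\Phi_\alpha^{-1}(\ct{}))}-1\Bigr)\deh\ct{}+(\text{boundary})+(\critical1\text{-part}).
\]
The boundary terms live near $\partial I_\alpha$ on a set of measure $\bigo{\lS^{-1}}$ on which $\averaged\circ\Theta^{\eta_1}_\alpha$ winds $\sim\lS^{2\beta-1}$ times, so, by the change-of-variables argument of Lemma~\ref{l_baseEquiStep}, they contribute $\bigo{\lS^{-2\beta}}\|\averaged\|_1=o(\lS^{-1})\|\averaged\|_1$.

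The remaining integral is $\expectation_{\ell^{\eta_1}_\alpha}(B\cdot\averaged\circ F)$ with $B=(\Phi_\alpha'\circ\Phi_\alpha^{-1})^{-1}-1$ and $\langle\averaged\rangle=0$, so Lemma~\ref{l_baseEquiStep} bounds it by $\|\averaged\|_1\bigl(\|B\|(\prob(\critical1)+C\hat\LL^{-1})+\|\dot B\|_{\Gamma\setminus\critical1}\,C\lS^{-2\beta}\log\lS\bigr)$. Here $\|B\|=\bigo{\lS^{\beta-1}}$ and $\|\dot B\|_{\Gamma\setminus\critical1}=\bigo{\lS^{\beta-1}}$ (since $\dot B$ is governed by $\Phi_\alpha''\sim(\LL_{\ell^{\eta_0}_\alpha}'-\LL_{\ell^{\eta_1}_\alpha}')/\LL_{\ell^{\eta_1}_\alpha}$), while $\prob(\critical1),\hat\LL^{-1}=\bigo{\lS^{-\beta}}$; thus the second term is $o(\lS^{-1})$ but the first is only $\bigo{\lS^{-1}}$. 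Matching the claimed $o(\lS^{-1})$ rather than $\bigo{\lS^{-1}}$ is the main obstacle, and it reappears in two places: the $\bigo{\lS^{2\beta-2}}$ constant part of $R_\alpha$ and this leftover $\bigo{\lS^{-1}}$. I expect both to be absorbed by a sharper treatment of the leftover pieces adjacent to $\critical1$ together with a direct stationary-phase comparison of the Fresnel-type contributions of $\critical1\cap I_\alpha$ to $\Psi_{\alpha,1}(\eta_0)$ and $\Psi_{\alpha,1}(\eta_1)$: on the leftover pieces $B$ vanishes at $\partial\critical1$ up to $\bigo{\|\dot B\|\lS^{-\beta}}=\bigo{\lS^{-1}}$, so their contribution is $\bigo{\lS^{-1-\beta}}$; and the Fresnel amplitudes are equal up to $\bigo{\lS^{-1-\beta}}$ because the critical points of $\Theta^{\eta_0}_\alpha$ and $\Theta^{\eta_1}_\alpha$ differ by $\bigo{\lS^{-2\beta-1}}$, their critical values by the integer $\T(\eta_1)-\T(\eta_0)$ up to an error one must check is $o(\lS^{-1})$, and their second derivatives by a factor $1+\bigo{\lS^{-1}}$ over an amplitude of size $\bigo{\lS^{-\beta}}$. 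Carrying out this bookkeeping uniformly in $\alpha$ and in the position of $\eta$ within $S$ is the technical heart of the argument.
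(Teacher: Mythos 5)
Your reduction reproduces part of the paper's setup (same integrand, same observation that the phase discrepancy is only useful when measured against the expansion rate $\dot\Theta\sim\T'\tslope{}$), but it does not close, and you say so yourself: after the change of variables $\Phi_\alpha$ and an application of Lemma~\ref{l_baseEquiStep} with $B=(\Phi_\alpha'\circ\Phi_\alpha^{-1})^{-1}-1$, the term $\|B\|\big(\prob(\critical1)+C\hat\LL^{-1}\big)=\bigo{\lS^{\beta-1}}\cdot\bigo{\lS^{-\beta}}=\bigo{\lS^{-1}}$ is exactly at the critical order, and the constant part of $R_\alpha$ of size $\lS^{2\beta-2}$ is likewise not $o(\lS^{-1})$ for $\beta>1/2$. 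The concluding appeal to a ``sharper treatment of the leftover pieces'' and a ``Fresnel-type stationary-phase comparison'' is not an argument; and the obstruction is structural, not bookkeeping: any comparison of the two integrals at fixed $(\eta_0,\eta_1)$ that uses only pointwise smallness of the phase discrepancy relative to $\dot\Theta$ lands at $\bigo{\lS^{-1}}$. (A secondary inaccuracy: the partition $\mathcal I$ only makes $\bar x_\alpha$ the minimizer of $|\baseSlope{\cdot}|$ on $I_\alpha$; it does not give $\ddot\phi(\bar x_\alpha)=\bigo{\lS^{-2\beta}}$, hence no second-order vanishing of $R_\alpha$, except for the few $\alpha$ whose interval meets the critical strip. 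Likewise nothing in the construction guarantees $\dddot\phi\ne0$ on $I_\alpha$; monotonicity of $\Theta$ off $\critical1$ comes from $|\dot\Theta|=\T'|\tslope{}|$ being bounded below there.)

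The missing idea, which is how the paper gains the extra factor, is a second averaging in an interpolation parameter. The paper sets $\Theta_0(x)=x+\T_0(x)$, $\Theta_1(x)=x+\T_1(x)+\T(\eta_0)-\T(\eta_1)$ (so $\Theta_0(a)=\Theta_1(a)$ at $a=\bar x_\alpha$), puts $\Theta_\lambda=(1-\lambda)\Theta_0+\lambda\Theta_1$ and writes $\Psi_{\alpha,1}(\eta_1)-\Psi_{\alpha,1}(\eta_0)=\int_0^1\frac{\deh\tilde\Psi_{\alpha,1}}{\deh\lambda}\deh\lambda$. For each fixed $\lambda$ one integrates by parts in $x$: the interior term has amplitude $B=\delta\dot\Theta/\dot\Theta_\lambda-\delta\Theta\,\ddot\Theta_\lambda/\dot\Theta_\lambda^2$ with $\|B\|=\bigo{\lS^{-1}}$, $\|\dot B\|=\bigo{\lS^{-1+\beta}}$, so Lemma~\ref{p_auxBaseStep} makes it $o(\lS^{-1})$; the only $\bigo{\lS^{-1}}$ contribution is the boundary term $\averaged(\Theta_\lambda(b))\,\delta\Theta(b)/\dot\Theta_\lambda(b)$. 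The point is that this term still carries the free variable $\lambda$: as $\lambda$ runs over $[0,1]$, $\Theta_\lambda(b)$ sweeps monotonically across roughly $|\delta\Theta(b)|$ periods of the zero-mean function $\averaged$, and since $\dot\Theta_\lambda(b)^{-1}$ is monotone in $\lambda$ the $\lambda$-integral is bounded by $\|\averaged\|\,|\delta\Theta(b)/\dot\Theta_0(b)|\,\bar\lambda$ with $\bar\lambda=\min(1,\bigo{\delta\Theta(b)^{-1}})$, i.e.\ by $\const|\dot\Theta_0(b)|^{-1}=\bigo{\lS^{-2\beta}}=o(\lS^{-1})$ for $\beta>1/2$. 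This cancellation in $\lambda$ (together with the key estimates $|\delta\Theta|\lesssim\dot\Theta_\lambda^2\,\T'^{-1}|\eta_1-\eta_0|\lS^{-1}$ and $|\delta\dot\Theta|\lesssim\dot\Theta_\lambda|\eta_1-\eta_0|\lS^{-1}$) is precisely what your one-shot comparison cannot produce, so as it stands the proposal has a genuine gap at the decisive step.
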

\begin{lemma}[Differentiability]\label{l_lipshitzBase}
  Assume that $\eta\in S$; then there exists $C>0$ satisfying:
  \begin{equation}\label{e_lipschitzBase}
   \left|\de{\Psi_{\alpha,1}(\eta)}{\eta}\right|\leq C\|\averaged\|_1\,\T'(\eta)\hat\LL_{\alpha}^{-1}.
  \end{equation}
\end{lemma}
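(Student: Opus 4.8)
The indicator $1_{\tau\geq 0}$ is identically $1$, so $\Psi_{\alpha,1}(\eta)=\expec{\ellref{\alpha}{\eta}}{\averaged\circ F}$. Write $\psi^\eta$ for the (reference) function whose graph is $\Gamma_\alpha^\eta$ and $\Theta^\eta(x)=x+\T(\psi^\eta(x))$, so that $\averaged\circ F=\averaged\circ\Theta^\eta$ along $\Gamma_\alpha^\eta$, $\de{\Theta^\eta}{x}=\LL_{\ellref{\alpha}{\eta}}$, and $\Psi_{\alpha,1}(\eta)=\bar\rho_\alpha\int_{I_\alpha}\averaged(\Theta^\eta(x))\,\deh x$; we differentiate this under the integral sign in $\eta$. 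From the explicit form $\psi^\eta(x)=2\dot\phi(x)+\T\inv(c_\alpha(\eta)+x)$, with $c_\alpha(\eta)$ pinned by $\psi^\eta(\bar x_\alpha)=\eta$, together with the power-law shape of $\T$ and $\lS<\uS<2\lS$, we get $\partial_\eta\psi^\eta=1+\bigo{\lS\inv}$; hence $|\partial_\eta\Theta^\eta|=\T'(\psi^\eta)\,|\partial_\eta\psi^\eta|\leq C\T'(\eta)$ and, differentiating once more in $x$, $|\partial_x\partial_\eta\Theta^\eta|\leq C\T'(\eta)\lS\inv$ (each surviving term carries a factor $\T''/\T'=\bigo{\lS\inv}$). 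Thus $\de{\Psi_{\alpha,1}(\eta)}{\eta}=\bar\rho_\alpha\int_{I_\alpha}\dot\averaged(\Theta^\eta(x))\,\partial_\eta\Theta^\eta(x)\,\deh x$, and it remains to estimate this integral after splitting $I_\alpha$ according to whether $(x,\psi^\eta(x))$ lies in $\critical1$ or not.

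On the part inside $\critical1$ we bound the integrand trivially by $C\|\dot\averaged\|\,\T'(\eta)$. Since $\bar\rho_\alpha$ is constant and $\V_{\ellref{\alpha}{\eta}}^{-\beta}\asymp\T'(\eta)^{-1/2}$, Lemma~\ref{l_lebesgueCritical}$(b_1)$ (applied via $\critical1\subset\hcritical1$) bounds the length of this part by $C\T'(\eta)^{-1/2}$, so its contribution is at most $C\|\dot\averaged\|\,\T'(\eta)^{1/2}$. This contribution is non-zero only if $\Gamma_\alpha^\eta$ meets $\critical1$, in which case $\Gamma_\alpha^\eta$ also meets $\partial\critical1$, where $|\LL_{\ellref{\alpha}{\eta}}|=|\tslope1|\,\T'(\psi^\eta)$ drops to order $K_1\T'(\eta)^{1/2}$; hence $\hat\LL_\alpha\leq\hat\LL_{\ellref{\alpha}{\eta}}\leq CK_1\T'(\eta)^{1/2}$, so $\T'(\eta)^{1/2}\leq CK_1\,\T'(\eta)\hat\LL_\alpha\inv$, and the critical part is $\bigo{\|\dot\averaged\|\,\T'(\eta)\hat\LL_\alpha\inv}$.

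Outside $\critical1$ we have $|\LL_{\ellref{\alpha}{\eta}}|\geq\hat\LL_{\ellref{\alpha}{\eta}}\geq\hat\LL_\alpha>0$ (by \eqref{e_a1}), so on each of the at most two connected components $J$ of this part we write $\dot\averaged(\Theta^\eta)=\LL_{\ellref{\alpha}{\eta}}\inv\,\de{}{x}\bigl[\averaged(\Theta^\eta)\bigr]$ and integrate by parts. The boundary terms are $\leq 2\|\averaged\|\,\bar\rho_\alpha\sup_J\bigl|\partial_\eta\Theta^\eta\,\LL_{\ellref{\alpha}{\eta}}\inv\bigr|\leq C\|\averaged\|\,\T'(\eta)\hat\LL_\alpha\inv$, while the remaining integral is at most $\|\averaged\|\,\bar\rho_\alpha\int_J\bigl|\de{}{x}\bigl(\partial_\eta\Theta^\eta\,\LL_{\ellref{\alpha}{\eta}}\inv\bigr)\bigr|\,\deh x$. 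Expanding by the product rule, the piece containing $\partial_x\partial_\eta\Theta^\eta$ is $\leq C\T'(\eta)\lS\inv\hat\LL_\alpha\inv|I_\alpha|\leq C\T'(\eta)\hat\LL_\alpha\inv$, and the piece containing $\de{}{x}\bigl(\LL_{\ellref{\alpha}{\eta}}\inv\bigr)$ is $\leq C\T'(\eta)\,\mathrm{Var}_J\bigl(\LL_{\ellref{\alpha}{\eta}}\inv\bigr)$. The crux is the estimate $\mathrm{Var}_J\bigl(\LL_{\ellref{\alpha}{\eta}}\inv\bigr)\leq C\hat\LL_\alpha\inv$: it holds because $\LL_{\ellref{\alpha}{\eta}}=\tslope1(\cdot,\psi^\eta(\cdot))\,\T'(\psi^\eta(\cdot))$ has a uniformly bounded number of monotonicity intervals on $I_\alpha$ — indeed $\partial_x\LL_{\ellref{\alpha}{\eta}}$ has the same sign as $2\dddot\phi(x)=-2\cos x$ up to a relative perturbation $\bigo{\lS\inv}$, and since $\phi=\sin$ and $\dot\phi$ does not vanish on $\intr I_\alpha$ while $|I_\alpha|<\delta/2<\pi/8$, the function $-2\cos$ changes sign at most once and transversally on $I_\alpha$ — whence $\mathrm{Var}_J\bigl(\LL_{\ellref{\alpha}{\eta}}\inv\bigr)\leq C\sup_J\bigl|\LL_{\ellref{\alpha}{\eta}}\inv\bigr|\leq C\hat\LL_\alpha\inv$.

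Summing the contributions of the (at most two) components outside $\critical1$ together with the part inside gives $\bigl|\de{\Psi_{\alpha,1}(\eta)}{\eta}\bigr|\leq C(\|\averaged\|+\|\dot\averaged\|)\,\T'(\eta)\hat\LL_\alpha\inv=C\|\averaged\|_1\,\T'(\eta)\hat\LL_\alpha\inv$, as claimed. The one genuinely delicate point is the bounded-variation estimate for $\LL_{\ellref{\alpha}{\eta}}\inv$; this is precisely where we use that the partition $\mathcal{I}$ has been built so that on each $I_\alpha$ the base slope $\baseSlope{\cdot}$ — hence the expansion rate along the reference curves foliating $S_\alpha$ — has controlled monotonicity, with its only zero buried inside $\critical1$. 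Everything else reduces to the trivial bound on $\critical1$, a single integration by parts outside it, and slow-variation estimates for $\T$ on the strip $S$.
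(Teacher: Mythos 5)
Your argument has the same skeleton as the paper's: you differentiate $\Psi_{\alpha,1}$ under the integral sign and compute $\partial_\eta\Theta$, recovering exactly \eqref{e_parEtaTheta} together with the bounds $\|\partial_\eta\Theta\|\leq C\T'(\eta)$ and $\|\partial_x\partial_\eta\Theta\|\leq C\T'(\eta)\lS\inv$ (incidentally, the paper's proof states these with the roles of $B$ and $\dot B$ apparently interchanged; your version is the one consistent with \eqref{e_parEtaTheta}). Where you genuinely diverge is the final oscillatory estimate: the paper simply applies Lemma~\ref{p_auxBaseStep} to the observable $\averaged'$ (which has zero mean on $\sone$) with weight $B=\partial_\eta\Theta_\alpha$, so the cutting at $\Theta=0\bmod 2\pi$, the use of zero average, and the treatment of the non-uniform partition via Lemma~\ref{l_E0} are all packaged there; you instead redo the estimate by hand, splitting $I_\alpha$ at $\critical{1}$, bounding the critical piece via Lemma~\ref{l_lebesgueCritical}($b_1$) together with the observation that $\Gamma_\alpha^\eta$ meeting $\critical{1}$ forces $\hat\LL_\alpha\leq CK_1\T'(\eta)^{1/2}$, and integrating by parts once outside $\critical{1}$. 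Your route is more elementary (no Fourier/zero-average input, no Lemma~\ref{l_E0}) and has the merit of making explicit the passage from the raw oscillatory bound to the stated $\hat\LL_\alpha\inv$, a step the paper leaves implicit; what the paper's route buys is that the non-uniformity of the induced partition is handled once and for all in Lemma~\ref{p_auxBaseStep}, and in your argument the corresponding delicacy resurfaces as the bounded-variation claim for $\LL\inv$.

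That claim is true, but its justification is the one loose point: near the transversal zero of $\dddot\phi$ (i.e.\ near $x=\pm\pi/2$, which may be an endpoint of $I_\alpha$) a ``relative $\bigo{\lS\inv}$ perturbation'' statement is vacuous, since $-2\cos x$ itself is small there. You need either (i) that the perturbation of $2\dddot\phi$ in $\T'(\psi)^{-1}\partial_x\LL$ is $\bigo{\lS\inv}$ in $\mathscr{C}^1$, which together with $\ddddot\phi\neq0$ at that zero gives at most one sign change of $\partial_x\LL$ on $I_\alpha$ and hence the monotonicity count you use; or (ii) the cheaper remark that on such an $I_\alpha$ one has $|\tslope{1}|\geq c>0$, hence $|\LL|\geq c\T'$ and the crude bound $\mathrm{Var}_J(\LL\inv)\leq\sup_J|\partial_x\LL|\cdot\sup_J\LL^{-2}\cdot|I_\alpha|=\bigo{\T'(\eta)^{-1}}\leq C\hat\LL_\alpha\inv$ already suffices, reserving the monotonicity argument for the intervals near the zeros of $\ddot\phi$, where $\dddot\phi$ is bounded away from zero. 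With that point tightened, your proof delivers exactly \eqref{e_lipschitzBase}.
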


\begin{lemma}[Fourier components]\label{l_fourierEstimate}
  There exists $C>0$ and a sequence $\{\hat\Psi_{\alpha,1}^{(l)}\}_{l\in\integers}$ such
  that, for all $\eta\in S$:
\begin{equation}\label{e_globalFourierEstimate}
    \Psi_{\alpha,1}(\eta)=\sum_{l\in\integers}\hat\Psi_{\alpha,1}^{(k)}e^{-2\pi
      i\,l\T(\eta)} + \|\averaged'\|_1o(\lS^{-1}).
  \end{equation}
  where $\hat\Psi_{\alpha,1}^{(0)}=0$ and if $k\not = 0$:
    \begin{equation}\label{e_fourierEstimate}
    |\hat\Psi_{\alpha,1}^{(k)}|\leq C|\hat{\averaged}_k|\lS^{-\beta}
  \end{equation}
  \end{lemma}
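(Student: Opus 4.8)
The plan is to evaluate $\Psi_{\alpha,1}$ explicitly on the reference family $\ellref{\alpha}{\eta}$ and to read off its $\T(\eta)$‑dependence by a stationary/non‑stationary phase analysis. Since $\tau\ge 0$ identically we have $\Psi_{\alpha,1}(\eta)=\expectation_{\ellref{\alpha}{\eta}}(\averaged\circ F)=\int_{I_\alpha}\averaged(\Theta_\eta(\ct{}))\bar\rho_\alpha\deh\ct{}$ with $\Theta_\eta(\ct{})\defeq\ct{}+\T(\psi_\eta(\ct{}))$, and we may assume $\langle\averaged\rangle=0$, i.e.\ $\hat\averaged_0=0$. First I would insert the explicit reference curve $\psi_\eta(\ct{})=2\dot\phi(\ct{})+\T\inv(c+\ct{})$ with $\T\inv(c+\bar x_\alpha)=\eta-2\dot\phi(\bar x_\alpha)$, Taylor‑expand $\T\inv$ around $\eta-2\dot\phi(\bar x_\alpha)$ and then $\T$ around $\eta$, retaining terms up to an order $m=m(\gamma)$ chosen (using $\gamma>2$ and $\T^{(j+1)}=\bigo{\lS\inv\T^{(j)}}$) so that the remainder is genuinely $o(\lS\inv)$. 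This produces, uniformly for $\ct{}\in I_\alpha$ and $\eta\in S$,
\[
\Theta_\eta(\ct{})=\T(\eta)+\Xi_\alpha(\ct{};\eta)+o(\lS\inv),
\]
where $\Xi_\alpha$ is an explicit smooth function of $\ct{}$ that depends on $\eta$ only through $\T'(\eta),\dots,\T^{(m)}(\eta)$; in particular $\partial_{\ct{}}\Xi_\alpha$ coincides with $\LL_{\ellref{\alpha}{\eta}}$ up to a relative $\bigo{\lS\inv}$ and is therefore $\asymp\T'(\eta)\baseSlope{\ct{}}$, while $|\partial_\eta\Xi_\alpha|=\bigo{\T''(\eta)}$ with $\partial_\eta\Xi_\alpha$ vanishing at $\ct{}=\bar x_\alpha$.

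Next I would expand $\averaged(\theta)=\sum_l\hat\averaged_l e^{il\theta}$, use $|e^{il\Theta_\eta}-e^{il\T(\eta)}e^{il\Xi_\alpha}|\le l\,o(\lS\inv)$ and sum against the rapidly decaying coefficients $\hat\averaged_l$ to obtain
\[
\Psi_{\alpha,1}(\eta)=\sum_{l\in\integers}\hat\averaged_l\,e^{il\T(\eta)}\,Q_{\alpha,l}(\eta)+\|\averaged'\|_1\,o(\lS\inv),\qquad Q_{\alpha,l}(\eta)\defeq\int_{I_\alpha}e^{il\Xi_\alpha(\ct{};\eta)}\bar\rho_\alpha\deh\ct{}.
\]
I would then set $\hat\Psi_{\alpha,1}^{(l)}\defeq\hat\averaged_l\,Q_{\alpha,l}(\lS)$, which makes $\hat\Psi_{\alpha,1}^{(0)}=\hat\averaged_0=0$ since $Q_{\alpha,0}\equiv1$, and dispose of the residual $\eta$‑dependence of $Q_{\alpha,l}$: differentiating under the integral, $\partial_\eta Q_{\alpha,l}=il\int_{I_\alpha}\partial_\eta\Xi_\alpha\,e^{il\Xi_\alpha}\bar\rho_\alpha\deh\ct{}$ has an amplitude vanishing at the stationary point $\bar x_\alpha$, so the phase estimates below yield $|\partial_\eta Q_{\alpha,l}|=\bigo{\T''(\eta)/\T'(\eta)}=\bigo{\lS\inv}$; combined with the differentiability bound of Lemma~\ref{l_lipshitzBase} and the periodicity bound of Lemma~\ref{l_periodicityBase} this lets one replace $Q_{\alpha,l}(\eta)$ by $Q_{\alpha,l}(\lS)$ throughout $S$ at the cost of $o(\lS\inv)$, which is \eqref{e_globalFourierEstimate}.

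The core estimate is $|Q_{\alpha,l}(\eta)|\le C\lS^{-\beta}l^{-1/2}$, giving \eqref{e_fourierEstimate}. The $\ct{}$‑derivative of the phase is $l\,\partial_{\ct{}}\Xi_\alpha\asymp l\,\T'(\eta)\baseSlope{\ct{}}$, and by construction of the partition $\mathcal{I}$ the function $|\baseSlope{\ct{}}|$ has on $I_\alpha$ a single minimum, at $\bar x_\alpha$, with $\baseSlope{\ct{}}$ strictly monotone (in $\ct{}$) on either side of it. If $\baseSlope{\ct{}}$ stays bounded away from $0$ for $\ct{}\in I_\alpha$, then non‑stationary phase — van der Corput with a first‑derivative bound, using monotonicity for the endpoint terms — gives the stronger bound $|Q_{\alpha,l}|\le C(l\T'(\eta))\inv$. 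Otherwise $\baseSlope{\ct{}}$ vanishes simply at $\ct{}=\bar x_\alpha$; this happens only for the at most two intervals abutting each zero of $\baseSlope{\ct{}}$, all of which sit within $\bigo{\T'(\lS)\inv}$ of a zero of $\ddot\phi$, and there $\partial_{\ct{}}^2\Xi_\alpha\asymp\T'(\eta)$ near $\bar x_\alpha$; splitting $I_\alpha$ at $\bar x_\alpha$ and applying van der Corput with a second‑derivative bound gives the worst case $|Q_{\alpha,l}|\le C(l\T'(\eta))^{-1/2}\le C\lS^{-\beta}l^{-1/2}$, since $\T'(\eta)\asymp\T'(\lS)\asymp\lS^{2\beta}$.

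I expect the main obstacle to be precisely this last step: extracting the sharp $\lS^{-\beta}$ decay uniformly in $\alpha$, $\eta$ and $l$ on the degenerate intervals near the zeros of $\ddot\phi$, where one must use that $\mathcal{I}$ has isolated those intervals so that on them the phase is a uniformly non‑degenerate quadratic perturbation, while everywhere else the faster $(l\T'(\eta))\inv$ decay holds and is amply sufficient. The remaining difficulties are organizational: fixing the truncation order $m$ in the phase expansion so that the remainder is genuinely $o(\lS\inv)$ and checking that each retained term of $\Xi_\alpha$ depends on $\eta$ only through the slowly varying $\T^{(j)}(\eta)$, and the freezing argument of the third paragraph, where Lemmas~\ref{l_periodicityBase} and~\ref{l_lipshitzBase} enter.
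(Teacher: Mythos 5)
Your plan is essentially sound and follows the same skeleton as the paper's proof --- freeze the reference family at the base height, read off the coefficients as oscillatory integrals along that frozen curve (your $\hat\averaged_l Q_{\alpha,l}(\lS)$ is, up to a unimodular phase, exactly the paper's $\hat\Psi_{\alpha,1}^{(k)}=\bar\rho_\alpha\int_{I_\alpha}\hat\averaged_k e^{2\pi i k\Theta_\alpha(\ct{},\eta(\bar\T))}\deh\ct{}$), and use Lemma~\ref{l_periodicityBase} to globalize --- but you fill in the two technical steps differently. For the decay \eref{e_fourierEstimate} the paper does no new oscillatory work: it simply applies Lemma~\ref{l_baseEquiStep} (with $B\equiv1$) to the pure modes $\theta\mapsto\hat\averaged_k e^{2\pi i k\theta}$, so the $\lS^{-\beta}$ comes from $\prob_\ell(\critical1)+\hat\LL_\ell^{-1}=\bigo{\lS^{-\beta}}$, i.e.\ from the machinery of Lemma~\ref{p_auxBaseStep}/Lemma~\ref{l_E0}, which is the paper's substitute for your van der Corput analysis; your direct stationary-phase argument reproves this (with a harmless extra $l^{-1/2}$) at the cost of redoing the degenerate-interval discussion that the partition $\mathcal{I}$ and Lemma~\ref{p_auxBaseStep} already encapsulate. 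For the residual $\eta$-dependence the paper does not differentiate the coefficient integrals: after the periodicity reduction it writes $\Theta_\alpha(\ct{},\eta(\bar\T+\theta))=\Theta_\alpha(\ct{},\eta(\bar\T))+\theta+\mu$ with $\|\mu\|_1=\bigo{\lS^{-1}}$, Taylor-expands $\averaged$ and kills the linear term by one more application of Lemma~\ref{p_auxBaseStep}, gaining the factor $\lS^{-\beta}\log\lS$ that turns $\bigo{\lS^{-1}}$ into $o(\lS^{-1})$; your route instead bounds $\partial_\eta Q_{\alpha,l}$, which works because $\partial_\eta\Xi_\alpha$ vanishes exactly at $\bar x_\alpha$ (this is \eref{e_parEtaTheta}, so your Taylor truncation of the phase is actually unnecessary --- you may take $\Xi_\alpha\defeq\Theta_\eta-\T(\eta)$ exactly).

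One point must be made explicit, because as written it is the only place your argument could fail: the bound $|\partial_\eta Q_{\alpha,l}|=\bigo{\lS^{-1}}$ applied over all of $S$ (whose length is $\bigo{1}$) gives only $\bigo{\lS^{-1}}$, which is \emph{not} the $o(\lS^{-1})$ demanded by \eref{e_globalFourierEstimate}. You must first invoke Lemma~\ref{l_periodicityBase} to move an arbitrary $\eta\in S$ to an $\eta'$ with $\T(\eta')=\T(\eta)\bmod 1$ lying in a single period around the base height --- an $\eta$-interval of length $\bigo{\T'(\lS)^{-1}}=\bigo{\lS^{-2\beta}}$ --- and only then freeze, so the freezing cost is $\bigo{\lS^{-1-2\beta}}=o(\lS^{-1})$; Lemma~\ref{l_lipshitzBase}, which you also cite, plays no real role here. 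With the order of operations fixed in this way (periodicity first, freezing second, exactly as in the paper), your proposal closes the lemma.
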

We proceed with three analogous lemmata related to higher iterates.
\begin{lemma}\label{l_lipshitz}
 For all $2\leq k \leq n$, there exists a constant $C_{k}$ such that for all $\eta_1,\eta_2\in S$ with $|\T(\eta_1)-\T(\eta_2)|<1$ we have:
  \begin{equation}\label{e_lipshitz}
    |\Psi_{\alpha,k}(\eta_1)-\Psi_{\alpha,k}(\eta_2)|\leq\|\averaged\|_1\, C_{k} o(\lS^{-1}).
  \end{equation}
\end{lemma}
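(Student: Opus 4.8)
The plan is to prove Lemma~\ref{l_lipshitz} by induction on $k$, carried out simultaneously with the companion comparison and Fourier estimates for higher iterates (the higher--order analogues of Lemmas~\ref{l_reductionBase} and~\ref{l_fourierEstimate}); the first--iterate package --- Lemmas~\ref{l_reductionBase}, \ref{l_periodicityBase}, \ref{l_lipshitzBase}, \ref{l_fourierEstimate} --- serves as the base case, so that $k=2$ is the first genuinely new step. Fix $\eta\in S$, set $\ell=\ellref{\alpha}{\eta}$, and apply the Invariance Lemma~\ref{l_invariance} once: $F\ell$ splits into $\mathcal J$--aligned standard pieces $\ellref{\beta}{j}$, boundary pieces $\ell_\pm$, stand--by pieces $\tilde\ell_j$ and a leftover $Z$. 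Since $k\geq 2$, on $F\inv Z\subset\ell\cap\hcritical{2}$ the critical time vanishes (Definition~\ref{d_criticalTime}), so $1_{\tau\geq k-1}$ vanishes there and $Z$ drops out; peeling off one iterate on the standard pieces and two on the stand--by pieces and using the density transformation of Lemma~\ref{l_basicCurveIteration}, one obtains
\[
\Psi_{\alpha,k}(\eta)=\sum_{\ell'}\prob_\ell(F\inv\ell')\,\expectation_{\ell'}(\averaged\circ F^{k-1}1_{\tau\geq k-2})+R_{\mathrm{sb}}(\eta),
\]
the sum running over $\ell'\in\{\ellref{\beta}{j},\ell_\pm\}$ and $R_{\mathrm{sb}}$ collecting the stand--by contributions, which are controlled by Lemma~\ref{l_nonStandard}.

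Next I would pass from each standard piece $\ell'$ (with base $J_\beta$) to a reference pair, replacing $\expectation_{\ell'}(\averaged\circ F^{k-1}1_{\tau\geq k-2})$ by $\Psi_{\beta,k-1}(\eta_{\ell'}(\eta))$ --- where $\eta_{\ell'}(\eta)$ is the height over $\bar x_\beta$ of the reference curve through $\Gamma_{\ell'}$ --- via the higher--iterate analogue of Lemma~\ref{l_reductionBase}; by~\eqref{e_singleIteration} the cost of this replacement is $\|\averaged\|_1\,\bigo{\V_\ell^{-\beta}}$ per piece. The geometric point, which follows from area preservation together with the expansion bound~\eqref{e_a1}, is that over an $\eta$--window on which $\T(\eta)$ varies by less than $1$, each $\eta_{\ell'}$ stays in a $\T$--window of width $\bigo{\hat\LL_\alpha\inv}=\bigo{\V_\ell^{-\beta}}$ --- the transverse direction being contracted by exactly the factor by which the horizontal one is expanded --- while the weights $\prob_\ell(F\inv\ell')$ depend on $\eta$ with derivative $\bigo{\T'(\eta)\hat\LL_\alpha\inv}$, as in the proof of Lemma~\ref{l_lipshitzBase}. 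Thus $\Psi_{\alpha,k}(\eta)$ becomes $\sum_{\ell'}\prob_\ell(F\inv\ell')\,\Psi_{\beta,k-1}(\eta_{\ell'}(\eta))$ plus remainders. For the difference at $\eta_1,\eta_2$ I would split each term into a weight--difference part and a $\Psi$--difference part: the latter is handled by the induction hypothesis (Lemma~\ref{l_lipshitz} at level $k-1$ if $k\geq 3$, or Lemma~\ref{l_lipshitzBase} if $k=2$) applied on the tiny $\T$--window of $\eta_{\ell'}$, which for $k=2$ yields $\|\averaged\|_1\,\bigo{\hat\LL_\beta\inv\,\V_\ell^{-\beta}}=\|\averaged\|_1\,o(\V_\ell\inv)$ precisely because $\beta>\onehalf$; the former is estimated using the companion Fourier estimate --- the $\Psi_{\beta,k-1}$ being mean zero in $\T(\eta)$ with small Fourier coefficients --- so that $\sum_{\ell'}\prob_\ell(F\inv\ell')\,\Psi_{\beta,k-1}(\eta_{\ell'}(\cdot))$ is a Riemann sum over a highly non--uniform partition of $\Gamma_\ell$ approximating the vanishing integral of a mean--zero almost periodic function, and the resulting cancellation is quantified exactly as in the proof of Lemma~\ref{p_auxBaseStep}, by applying Lemma~\ref{l_E0} to that partition graded by interval length.

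The main obstacle is this last point, together with the control of $R_{\mathrm{sb}}$ and of the total comparison remainder $R_{\mathrm{cmp}}$: each of these quantities is individually only of size $\|\averaged\|\,\bigo{\V_\ell^{-\beta}}$, which is \emph{larger} than the target $o(\V_\ell\inv)$, so a crude bound is useless and one must show that their variation over a $\T$--period is $o(\V_\ell\inv)$. This forces two ingredients: first, that the remainders are themselves $\T(\eta)$--periodic up to $o(\V_\ell\inv)$, so that only a within--one--period estimate is needed (here Lemma~\ref{l_nonStandard} and~\eqref{e_confuse} govern the stand--by geometry); and second, a careful accounting of how the endpoints of $\{\tau\geq k-2\}$ and of the critical strips move with $\eta$, exploiting that every one of the $k-1$ ``good'' iterates contributes an expansion factor $\geq\const\V_\ell^{\beta}$, so that $\eta$--derivatives of objects built from those iterates gain a compensating power of $\V_\ell^{-\beta}$. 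It is exactly the competition between these gains and the factor $\T'(\eta)=\bigo{\V_\ell^{2\beta}}$ coming from the outermost $\eta$--derivative that forces $\beta>\onehalf$; when $\beta\leq\onehalf$ the bookkeeping no longer closes, in agreement with the hypothesis of the Main Theorem.
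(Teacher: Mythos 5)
Your skeleton is the paper's: one application of the invariance lemma to the reference pair, reduction of each standard piece to a reference pair over a common index set, and a split of each term into a weight--difference and a $\Psi$--difference part, with the $\Psi$--difference handled by Lemma~\ref{l_lipshitzBase} (for $k=2$) or the induction hypothesis (for $k\geq3$) together with the closeness of the heights; this is exactly Proposition~\ref{l_baseInductionStep}(b) and \eqref{e_etaCloseness}. However, your treatment of the weight--difference part is a genuine gap. You propose to control it through the mean--zero/Fourier cancellation of $\Psi_{\alpha',k-1}$, viewing $\sum_j c_{\alpha'}^j\Psi_{\alpha',k-1}(\eta_{\alpha'}^j(\cdot))$ as a Riemann sum of a mean--zero almost periodic function ``quantified exactly as in Lemma~\ref{p_auxBaseStep}''. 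This fails on two counts. Quantitatively, that cancellation mechanism bounds $|\Psi_{\alpha,k}(\eta)|$ itself, and the best it can give is of order $\lS^{-\beta-(k-1)(\beta-\onehalf)}$ (this is \eqref{e_explicitDecay}, the content of Lemma~\ref{l_periodicity}); for the small $k$ at which Lemma~\ref{l_lipshitz} must hold ($k=2,3,\dots$, not only $k\geq\nu$) this is far larger than the target $o(\lS^{-1})$. Structurally, such discrete sums are not an instance of Lemma~\ref{p_auxBaseStep}: the cancellation requires the distribution law \eqref{e_etaDistribution} for the heights $\eta_{\alpha'}^j$ and the $\Upsilon$-type grading into complete periods, i.e.\ the hardest part of the paper, which you are implicitly assuming. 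What the Lipschitz estimate actually needs is much less: the pointwise bound $|c_{\alpha',1}^j-c_{\alpha',2}^j|\leq c_{\alpha',1}^j\,\|\ddot\Theta_2/\dot\Theta_2^2\|\,|\T(\eta_1)-\T(\eta_2)|$, multiplied by the crude bound $|\Psi_{\alpha',k-1}|=\|\averaged\|\bigo{\lS^{-\beta}}$ from Corollary~\ref{c_lazyInduction}, and then Lemma~\ref{l_E0}; no oscillation of $\Psi_{k-1}$ across $j$ is used, nor needed.

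Your ``main obstacle'' paragraph also misidentifies the difficulty. You claim that the stand-by remainder and the total comparison remainder are only $\bigo{\V^{-\beta}}$, hence must be handled by proving they are periodic in $\T(\eta)$ and estimating their within-period variation; neither step is carried out, and bounding the $\eta$-variation of the stand-by contribution by $o(\V^{-1})$ is essentially as hard as bounding the contribution itself. In fact the issue does not arise: Proposition~\ref{l_nonStandard} bounds the entire contribution of non-aligned, stand-by and critical pieces by $\|\averaged\|C_k\,o(\lS^{-1})$ in absolute value (using that $\averaged$ has zero average and the two-step expansion \eqref{e_a3}), and the per-piece comparison cost is proportional to $\|r\|_{\ell_{\alpha'}^j}$ and $\T'\|\Delta\slope{}\|_{\ell_{\alpha'}^j}$, which decay with the local expansion, so that the weighted sum \eqref{e_boundn2} is $\bigo{\lS^{-2\beta}\log\lS}=o(\lS^{-1})$ by Lemma~\ref{l_E0} --- not $\bigo{\lS^{-\beta}}$. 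Replace the periodicity-of-remainders plan by these absolute bounds and the Fourier-cancellation step by the direct weight-difference estimate above, and your argument becomes the paper's proof.
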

\begin{lemma}[Comparison II]\label{l_reduction}
   For all $2\leq k \leq n$ there exists $C_{k}$ such that for any $\ell$ clean standard pair $(S,n-k)$-compatible, there exists a reference pair $\bar\ell$ such that $I_\ell= I_{\bar\ell}$ satisfying
  \begin{align}
    |\expectation_\ell(\averaged\circ F^k\cdot 1_{\tau\geq k-1})&-\expectation_{\bar\ell}(\averaged\circ F^k\cdot 1_{\tau\geq k-1})|\leq\notag\\
    &\leq\|r_\ell\|\expectation_{\bar\ell}(\averaged\circ F^k\cdot 1_{\tau\geq k-1})+\|\averaged\|C_{k} o(\lS^{-1}).\label{e_reductionEstimate}
  \end{align}
\end{lemma}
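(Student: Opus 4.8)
The plan is to prove \eqref{e_reductionEstimate} by recursion on $k$, the base case $k=1$ being exactly Comparison~I (Lemma~\ref{l_reductionBase}); the engine of the recursive step is a \emph{single} iterate of the clean pair $\ell$, after which one re-references, so that the geometric mismatch between the two orbits does not compound in an uncontrolled way. For the reference pair I would take $\bar\ell$ to be the reference pair with base $I_\ell$ whose curve meets $\Gamma_\ell$ (the free parameter $c$ in the definition of a reference curve makes this possible); since $\ell$ is standard, Lemma~\ref{l_refclose} then gives $\|\psi_\ell-\psi_{\bar\ell}\|\leq\const\,\T'(\V_\ell)^{-3/2}$, i.e.\ the two curves are $O(\V_\ell^{-3\beta})$-close. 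Throughout write $g_j\defeq\averaged\circ F^j\cdot 1_{\tau\geq j-1}$.

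Because $\ell$ is clean, $\Gamma_\ell$ lies outside $\critical1$ apart from at most two boundary arcs, so by \eqref{e_a1} the image $F\Gamma_\ell$ is expanded by a factor $\gtrsim\const\,\T'(\V_\ell)^{1/2}\sim\V_\ell^{\beta}$; Lemma~\ref{l_invariance} then yields $F\ell=\bigcup_{\alpha,j}\ell_\alpha^j\cup\ell_\pm\cup\bigcup_j\tilde\ell_j\cup Z$ with $\sim\V_\ell^{\beta}$ standard pieces $\ell_\alpha^j$ tiling the partition $\mathcal{I}$ (recall every $\mathcal{I}$-aligned pair is clean), plus short boundary and stand-by pieces and an invalid set; the same decomposition applied to $\bar\ell$ gives $F\bar\ell=\bigcup_{\alpha,j}\bar\ell_\alpha^j\cup\bar\ell_\pm\cup\bigcup_j\tilde{\bar\ell}_j\cup\bar Z$. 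Since $3\beta>\beta$, the $O(\V_\ell^{-3\beta})$-closeness of the curves is far below the $\Gamma$-neighborhood radii in items $(a_1)$–$(a_2)$ of Lemma~\ref{l_lebesgueCritical}, so the two decompositions are in natural bijection, piece against piece over the same base interval, and by \eqref{e_invarianceInclusion} the cutoff $1_{\tau\geq k-1}$ on $\ell$ matches $1_{\tau\geq k-2}$ on each standard child, $1_{\tau\geq k-3}$ on each grandchild of a stand-by child, and kills the $Z$-piece as $k\geq 2$. Hence, writing $m\leftrightarrow\bar m$ for the matched pieces and $w_m,\bar w_m$ for their $\prob_\ell,\prob_{\bar\ell}$-weights,
\begin{align*}
  \expectation_\ell(g_k)-\expectation_{\bar\ell}(g_k)&=\sum_{m}(w_m-\bar w_m)\,\expectation_m(g_{k-1})\\
  &\quad+\sum_m\bar w_m\big(\expectation_m(g_{k-1})-\expectation_{\bar m}(g_{k-1})\big)+(\text{leftover}),
\end{align*}
where the leftover collects the (uniformly boundedly many) contributions of $\ell_\pm,\bar\ell_\pm$ and of the stand-by pieces; the stand-by pieces carry mass $O(\V_\ell^{-\beta})$ by Lemma~\ref{l_lebesgueCritical}$(b_1)$ and, after one further iterate into standard pairs, get exactly the same treatment as the $\ell_\alpha^j$ below — this is the point where the number of iterates needed grows as $\beta\to\tfrac12$.

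For the second sum I would associate to each standard child $m$ a \emph{fresh} reference pair over its base at the height of $m$ itself and apply the recursion hypothesis (Comparison~II at level $k-1$, or Comparison~I when $k-1=1$) to compare $\expectation_m(g_{k-1})$ with it, and likewise for $\bar m$; the regularity of the functions $\Psi_{\alpha,k-1}$ — Lemma~\ref{l_lipshitz} together with the periodicity/Fourier information of Lemmas~\ref{l_periodicityBase}–\ref{l_fourierEstimate} — then lets me collapse all these reference expectations onto $\Psi_{\alpha,k-1}$ at a single reference height, turning $\sum_m\bar w_m\expectation_{\bar m}(g_{k-1})$ back into $\expectation_{\bar\ell}(g_k)$ up to $\|\averaged\|'C_k\,o(\V_\ell^{-1})$. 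For the first sum I would split the weight mismatch as $|w_m-\bar w_m|\leq(\const\|r_\ell\|+\const\V_\ell^{-2\beta})\,w_m$: the $\V_\ell^{-2\beta}$ comes from $|\LL_\ell-\LL_{\bar\ell}|=O(\V_\ell^{-\beta})$ weighed against $\LL\gtrsim\V_\ell^{\beta}$, and is $o(\V_\ell^{-1})$ precisely because $\beta>\tfrac12$; the $\|r_\ell\|$ comes from the purely relative gap between $\rho_\ell$ and the uniform density $\rho_{\bar\ell}$, controlled by Gr\"onwall exactly as in Lemma~\ref{l_comparison}. Finally the $\|r_\ell\|$-part is reorganized with the Fubini-type cancellation of Lemma~\ref{p_auxBaseStep}: pairing the slowly varying log-density ratio (Lipschitz constant $<1$, oscillation $O(\|r_\ell\|)$) against the fast-oscillating $\averaged\circ F^{k}$ isolates a single resonant term equal to $\expectation_{\bar\ell}(g_k)$ times the mean of the log-density — which is $O(\|r_\ell\|)$ — and leaves a remainder $\|\averaged\|'C_k\,o(\V_\ell^{-1})$. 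Collecting the three contributions yields \eqref{e_reductionEstimate}.

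The main obstacle is the error bookkeeping across the (at most $n$) iterates: one must verify that the geometric mismatch between the two orbits, which \emph{grows} under iteration, is nonetheless always dominated — thanks to $\beta>\tfrac12$, to the $(S,n-k)$-compatibility which freezes $\T'\asymp\T'(\V_\ell)$ over the whole stretch, and to the strong expansion of clean pairs — by a quantity summing to $o(\V_\ell^{-1})$; that the critical-set structures of the two orbits never de-synchronize (so that boundary, stand-by and invalid pieces genuinely cancel in matched pairs); and, most delicately, that the density error is tracked by true cancellation rather than the triangle inequality, so that it emerges in the sharp form $\|r_\ell\|\expectation_{\bar\ell}(g_k)$ rather than the useless $\|r_\ell\|\|\averaged\|$. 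This last point is exactly why the statement carries the self-referential term $\|r_\ell\|\expectation_{\bar\ell}(\averaged\circ F^k\cdot 1_{\tau\geq k-1})$ on the right-hand side rather than a cruder one.
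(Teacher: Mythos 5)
Your overall architecture --- pick a reference pair through $\Gamma_\ell$, iterate once, match the standard children of $\ell$ and $\bar\ell$ over the same bases, split into a weight--mismatch sum plus an expectation--mismatch sum, and recurse to level $k-1$ --- is essentially the paper's (which routes the comparison through the intermediate pair $\ell^*=(\Gamma_\ell,\bar\rho)$ and the shadowing reference pair of Lemma~\ref{l_geometricalReduction}, so that via Lemma~\ref{l_baseInductionStep} both expectations become weighted sums of the \emph{same} values $\Psi_{\alpha,k-1}(\eta_\alpha^j)$; your Lipschitz step is then only needed within each matched pair, where \eqref{e_etaCloseness} gives $\T$-distance $\bigo{\lS^{-2\beta}}$ --- note Lemma~\ref{l_lipshitz} does \emph{not} let you collapse different children onto one height, since their heights differ by $\approx j$ in $\T$-units and $\Psi_{\alpha,k-1}$ genuinely oscillates across them). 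The first genuine gap is in the recursion errors: applying Comparison~II at level $k-1$ to each child $m$ (and $\bar m$) returns, for each child, an error $\|r_m\|\,\expectation(\averaged\circ F^{k-1}1_{\tau\geq k-2})$, and you give no mechanism to sum these. This is not innocuous: children whose preimages lie near $\partial\critical{1}$ have $\|r_m\|$ of constant order (see \eqref{e_singleIteration}), so the crude bound $\|r_m\|\leq 1$ leaves an error of order $\lS^{-\beta}$ at every level, which is not $o(\lS^{-1})$ for $\beta\leq1$ and destroys the recursion. The missing idea is the paper's two-factor estimate: Lemma~\ref{l_E0} gives $\sum_m w_m\|r_m\|=\bigo{\lS^{-\beta}}$, while Corollary~\ref{c_lazyInduction} gives $\sup_m|\expectation_m(\averaged\circ F^{k-1}1_{\tau\geq k-2})|=\bigo{\|\averaged\|\lS^{-\beta}}$ for zero-average $\averaged$, so the total is $\bigo{\lS^{-2\beta}}=o(\lS^{-1})$ precisely because $\beta>\tfrac12$.

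The second gap is your extraction of the term $\|r_\ell\|\expectation_{\bar\ell}(\averaged\circ F^k 1_{\tau\geq k-1})$ by ``pairing the log-density against the fast-oscillating $\averaged\circ F^k$'' via Lemma~\ref{p_auxBaseStep}. That lemma is a one-iterate statement requiring $\Theta=\LL\varphi$ with $\varphi$ having at most one non-degenerate critical point; for $k\geq2$ the function $\averaged\circ F^k$ along $\Gamma_\ell$ oscillates on scale $\lS^{-2k\beta}$ with many critical points, and controlling such sums is exactly what the whole induction (Lemma~\ref{l_baseInductionStep} together with Lemma~\ref{l_periodicity}) is built to do, so you cannot invoke it directly. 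Worse, the claimed ``resonant term equal to $\expectation_{\bar\ell}$ times the mean of the log-density'' is vacuous: $\rho_\ell$ and $\bar\rho$ are both probability densities on $I_\ell$, so the mean of $(\rho_\ell-\bar\rho)/\bar\rho$ against $\bar\rho$ is zero. The paper obtains the $\|r_\ell\|$-term prosaically: once both expectations are reduced to $\sum_{\alpha,j}c_\alpha^j\Psi_{\alpha,k-1}(\eta_\alpha^j)$ over identical heights, the termwise Gr\"onwall-type bound $|c_\alpha^j-c_\alpha^{j*}|\leq\const\|r_\ell\|c_\alpha^{j*}$ produces it, while the purely geometric mismatch contributes the relative $\bigo{\lS^{-2\beta}}$ you correctly identified. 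So the skeleton of your argument matches the paper, but without the Lemma~\ref{l_E0}/Corollary~\ref{c_lazyInduction} bookkeeping and with the cancellation step replaced by a mechanism that does not apply, the proof does not close.
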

\newcommand{\asets}{\aset^*}
\newcommand{\as}{{\alpha^*}}
\newcommand{\we}{\omega}
\begin{lemma}[Periodicity II]\label{l_periodicity}
  There exist a subset $\asets\subset\aset$, constants $\we_\as$ where $\as\in \asets$,
  constants $C_k$ where $2 \leq k \leq n$ and sequences of coefficients
  $\hat\Psi_{\alpha,n}^{(\as,l)}$, where $l\in\integers$ and $\as\in\asets$ such that for
  all $\eta\in S$ we have:
  \begin{equation}\label{e_periodicity}
    \Psi_{\alpha,k}(\eta)=\sum_{l,\as}\hat\Psi_{\alpha,k}^{(\as,l)}e^{2\pi i
      l\we_\as\T(\eta)} + \|\averaged\|'o(\V^{-1}).
  \end{equation}
  where there exists $C'_k$ such that for all $l\in\integers$:
    \begin{subequations}
      \begin{align}
        |\hat\Psi_{\alpha,2}^{(\as,l)}|&\leq C'_2\left(\V^{\onehalf-\beta}+l\cdot\V^{1-2\beta}\log\V\right)\max_{\alpha'\in\aset}|\hat\Psi_{\alpha',1}^{(l)}|;\label{e_mainDecaya}\\
        |\hat\Psi_{\alpha,k}^{(\as,l)}|&\leq C_{k}'\left(\V^{\onehalf-\beta}+l\cdot\lS^{-1}\right)\max_{\alpha'\in \aset}|\hat\Psi_{\alpha',k-1}^{(\as,l)}|\quad \fa 2<k\leq n;\label{e_mainDecayb}
      \end{align}
    \end{subequations}
    this implies that:
    \begin{equation}\label{e_explicitDecay}
      |\hat\Psi_{\alpha,k}^{(\as,l)}|\leq C_k|\hat\averaged_l|l\cdot(\lS^{-\beta-(k-1)(\beta-\onehalf)}+o(\lS^{-1})).
    \end{equation}
    Moreover $\we_\as$ are of order $\lS\inv$, that is there exist $C$ such that
    \begin{equation}\label{e_upperlowerWe}
    C\inv\lS\inv < \we_\as < C\lS\inv.
    \end{equation}
  \end{lemma}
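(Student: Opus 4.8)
The plan is to prove \eqref{e_periodicity} together with the coefficient estimates \eqref{e_mainDecaya}--\eqref{e_upperlowerWe} by induction on $k$, $2\le k\le n$, using the comparison and regularity lemmas \ref{l_reductionBase}, \ref{l_periodicityBase}, \ref{l_lipshitzBase}, \ref{l_fourierEstimate} for the first iterate and \ref{l_lipshitz}, \ref{l_reduction} for the higher ones. The common step is a reduction of $\Psi_{\alpha,k}$ to a weighted sum of $\Psi_{\alpha',k-1}$. Fix $\eta\in S$ and decompose $F\ellref{\alpha}{\eta}$ according to Lemma~\ref{l_invariance} relative to the partition $\mathcal I$; since $\mathcal I$-aligned pairs are clean, $F\ellref{\alpha}{\eta}$ is, modulo the stand-by pairs and the remainder $Z$, a union of $\mathcal I$-aligned standard pairs $\ell_{\alpha',j}$, one for each of the $\bigo{\T'}$ passages of $F\Gamma_{\ellref{\alpha}{\eta}}$ over the interval $I_{\alpha'}$ (the winding number is $\asymp\|\ddot\phi\|\,\T'$). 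Since $1_{\tau\ge k-1}$ pulled back from $\ell_{\alpha',j}$ equals $1_{\tau\ge k-2}$, Comparison~II (Lemma~\ref{l_reduction}), or Comparison~I (Lemma~\ref{l_reductionBase}) when $k=2$, lets me replace $\expectation_{\ell_{\alpha',j}}(\averaged\circ F^{k-1}1_{\tau\ge k-2})$ by $\Psi_{\alpha',k-1}(\eta_{\alpha',j})$, where $\eta_{\alpha',j}=\eta_{\alpha',j}(\eta)$ is the reference height on $I_{\alpha'}$ through the $j$-th intersection of $F\Gamma_{\ellref{\alpha}{\eta}}$ with the line $\ct{1}=\bar x_{\alpha'}$; the stand-by and $Z$ contributions are disposed of by further decomposition, Lemma~\ref{l_lebesgueCritical}, and $\prob_{\ellref{\alpha}{\eta}}(\hcritical1)=\bigo{\V^{-\beta}}$. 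This gives
\[
\Psi_{\alpha,k}(\eta)=\sum_{\alpha'\in\aset}\sum_j w_{\alpha',j}(\eta)\,\Psi_{\alpha',k-1}(\eta_{\alpha',j}(\eta))+\|\averaged\|'C_k\,o(\V^{-1}),
\]
with $w_{\alpha',j}(\eta)=\prob_{\ellref{\alpha}{\eta}}(\{\ct{1}\in I_{\alpha'},\ j\text{-th passage}\})$, of size $|\LL_{\ellref{\alpha}{\eta}}|^{-1}=\bigo{\T'^{-1}}$ off $\critical1$.

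The core of the argument is the geometry of $\eta\mapsto\eta_{\alpha',j}(\eta)$ and $\eta\mapsto w_{\alpha',j}(\eta)$. The passage point $\ct{0}^{(j)}=\ct{0}^{(j)}(\eta)$ solves $\ct{0}+\T(\psi_\eta(\ct{0}))\equiv\bar x_{\alpha'}\pmod{2\pi}$ and $\eta_{\alpha',j}=\psi_\eta(\ct{0}^{(j)})+2\dot\phi(\bar x_{\alpha'})$, where $\psi_\eta(\ct{0})=2\dot\phi(\ct{0})+\T^{-1}(c_\eta+\ct{0})$ and $c_\eta=\T(\eta-2\dot\phi(\bar x_\alpha))-\bar x_\alpha$. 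Expanding $\T$ and $\T^{-1}$ around the large value $\approx\eta$ and using the implicit function theorem (licit off $\critical1$, where $|\ddot\phi(\ct{0}^{(j)})|$ is bounded below) one finds that $\de{\eta_{\alpha',j}}{\eta}$ is of size $\T'^{-1}$, so $\eta_{\alpha',j}(\eta)$ is nearly constant in $\eta$ and, more precisely, $\de{}{\T(\eta)}\T(\eta_{\alpha',j}(\eta))$ is a function of $\ct{0}^{(j)}$ modulo $o(\cdot)$. The passages then split into finitely many families, indexed by $\as\in\asets$ --- essentially according to which critical zone of $\psi_\eta$ the point $\ct{0}^{(j)}$ sits in, paired with $\bar x_{\alpha'}$ --- with a common effective slope $\we_\as$ of order $\lS^{-1}$ (this is \eqref{e_upperlowerWe}); the remaining passages, notably the $\bigo{1}$ clusters abutting $\critical1$, will contribute only to the error terms. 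At the same time the weights $w_{\alpha',j}$ depend on $\eta$ only through a correction $o(\V^{-1})$ --- this is the content of Lemmas~\ref{l_periodicityBase}, \ref{l_lipshitzBase} and their higher-iterate counterparts~\ref{l_lipshitz}. Substituting the inductive Fourier expansion $\Psi_{\alpha',k-1}(\eta')=\sum_{\as,l}\hat\Psi_{\alpha',k-1}^{(\as,l)}e^{2\pi i l\we_\as\T(\eta')}+\|\averaged\|'o(\V^{-1})$ --- for $k=2$ this is \eqref{e_globalFourierEstimate}--\eqref{e_fourierEstimate}, with integer frequencies --- and using that $\T(\eta_{\alpha',j}(\eta))$ is affine in $\T(\eta)$ with slope $\we_{\as(j)}$ up to $o(1)$, each phase $e^{2\pi il\we_\as\T(\eta_{\alpha',j}(\eta))}$ factors as $e^{2\pi il\we_\as\T(\eta)}$ times a bounded, slowly varying term; collecting summands by $\as$ produces \eqref{e_periodicity} and identifies $\hat\Psi_{\alpha,k}^{(\as,l)}$ as a weighted exponential sum $\sum_j w_{\alpha',j}(\cdots)\hat\Psi_{\alpha',k-1}^{(\as,l)}$ over the passages with $\as(j)=\as$.

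The decay bounds come from estimating these exponential sums. For $k=2$ the coefficient is a sum of $\bigo{\T'}$ terms each of size $\bigo{\T'^{-1}|\hat\Psi^{(l)}_{\alpha',1}|}$; consecutive passage heights have $\T(\eta_{\alpha',j})$ separated by a fixed non-integer amount, so the sum cancels except near the $\bigo{1}$ clusters of passages close to the critical points of $\psi_\eta$, i.e.\ close to $\critical1$. Estimating the clustered part by a stationary-phase/Lemma~\ref{l_E0} argument (tails decaying like $z^{-1/2}$ in the distance to the fold) yields the $\V^{1/2-\beta}$ term, while the non-stationary part together with the residual $\eta$-dependence of the weights yields the $l\cdot\V^{1-2\beta}\log\V$ term, which is \eqref{e_mainDecaya}. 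For $k>2$ the identical argument applies with $\Psi_{\alpha',k-1}$ in place of $\Psi_{\alpha',1}$; since this inner function already oscillates slowly (frequencies $\we_\as\asymp\lS^{-1}$), the heights enter only through a variation of size $\bigo{\lS^{-1}}$, producing the sharper second term $l\cdot\lS^{-1}$ of \eqref{e_mainDecayb}. Finally \eqref{e_explicitDecay} follows by unwinding: starting from $|\hat\Psi^{(l)}_{\alpha',1}|\le C|\hat\averaged_l|\lS^{-\beta}$ (\eqref{e_fourierEstimate}) and applying \eqref{e_mainDecaya}--\eqref{e_mainDecayb} $(k-1)$ times, the dominant factor gained at each step is $\V^{1/2-\beta}=\lS^{-(\beta-1/2)}$, whence $|\hat\Psi^{(\as,l)}_{\alpha,k}|\lesssim|\hat\averaged_l|\lS^{-\beta-(k-1)(\beta-1/2)}$ (the extra $l$ in \eqref{e_explicitDecay} being a harmless overestimate) and the lower-order pieces combine into $o(\lS^{-1})$.

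The main obstacle is precisely the geometric analysis of the second paragraph: showing that $\T(\eta_{\alpha',j})$ is affine in $\T(\eta)$ with slope $\asymp\lS^{-1}$ up to controlled errors, and carrying out the exponential-sum cancellation over the $\asymp\T'$ passages in the presence of the clusters near $\critical1$. This is where the $\V^{1/2}$ loss --- hence the restriction $\gamma>2$ rather than $\gamma>1$ --- enters, and where the technical lemmas \ref{l_periodicityBase}, \ref{l_lipshitzBase}, \ref{l_lipshitz}, together with Lemma~\ref{l_E0}, do the heavy lifting.
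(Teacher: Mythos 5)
Your outline reproduces the paper's overall architecture (induction on $k$ via the base induction step, comparison with reference pairs, exponential-sum cancellation over the passages, unwinding the recursion for \eref{e_explicitDecay}), but the two mechanisms on which the lemma actually rests are mis-stated, and as written they would not deliver the conclusion. First, the frequencies: in the paper $\we_\as$ is not attached to ``which critical zone of $\psi_\eta$ the passage point sits in'' but to the \emph{target} interval alone, $\we_{\alpha'}=\T'(\lS+2\dot\phi(\bar x_{\alpha'}))/\T'(\lS)-1$, and $\asets$ is the set of $\alpha'$ whose reference curves meet $\hcritical1$ (for all other $\alpha'$ the base equidistribution step gives $\Psi_{\alpha',1}=\|\averaged\|o(\lS^{-1})$, and only this choice yields the lower bound in \eref{e_upperlowerWe}, since on $\asets$ the quantity $|\dot\phi(\bar x_\as)|$ is bounded below — your indexing gives no argument for $\we_\as>C\inv\lS\inv$). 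Moreover $\T(\eta_{\alpha'}^0(\eta^*))$ is affine in $\T(\eta)$ with slope $1+\we_{\alpha'}$, not $\we_{\as}$; the unit part disappears only because at step $k=2$ the inner expansion (Lemma~\ref{l_fourierEstimate}) has \emph{integer} frequencies and the fractional part of $\T(\eta^*)$ at a passage is fixed, independent of $\eta$. Without this periodicity observation the claimed factorization $e^{2\pi i l\we_\as\T(\eta)}\times(\text{slowly varying})$ — i.e.\ the conversion of integer frequencies into the small frequencies $\we_\as$, which is the whole point of the lemma — is not established.

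Second, the cancellation. Consecutive passage heights are \emph{not} ``separated by a fixed non-integer amount'': by \eref{e_etaDistribution} they are separated by $1-(\theta_{\alpha'}^{j+1}-\theta_{\alpha'}^{j})+\bigo{\lS\inv}$, i.e.\ by $1$ plus a small increment that varies with $j$ (of order $\lS^{-2\beta}$ away from the fold, up to order $\lS^{-\beta}$ near it), and for integer $l$ only this varying increment contributes. The cancellation therefore comes from the cumulative drift of the phase $\mu_{\alpha'}(j-\theta_{\alpha'}^j+\theta_{\alpha'}^0)-\theta_{\alpha'}^j+\theta_{\alpha'}^0$, driven by $\mu'_{\alpha'}\approx\we_{\alpha'}\asymp\lS^{-1}$ over $\asymp\lS^{2\beta}$ passages; the paper must (i) discard the indices where $|\baseSlope{\cdot}|\lesssim\lS^{\onehalf-\beta}$, (ii) partition the rest into complete periods of $\mu_{\alpha'}$ and compare each with a Riemann sum, and (iii) control the non-uniform weights and the intra-period variation of $\theta_{\alpha'}^j$ by two separate applications of Lemma~\ref{l_E0} (with $\alpha=1/2$ and $\alpha=1$). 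These three sources are exactly what produce the $\lS^{\onehalf-\beta}$ and $|l|\lS^{1-2\beta}\log\lS$ terms of \eref{e_estimateUpsilon1}, hence \eref{e_mainDecaya}, and what forces $\beta>\onehalf$; the analogous bookkeeping with the already-small frequencies $\we_\as$ gives \eref{e_claimUpsilon2} and \eref{e_mainDecayb} for $k\geq3$. Your ``fixed separation / stationary phase near the fold'' heuristic asserts the final bounds but skips this bookkeeping, and with the phase structure as you describe it the estimates would not come out in the stated form.
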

  We now show how, given the above lemma, we can obtain \eqref{e_equi}; in fact by
  Lemma~\ref{l_reduction} we have that:
  \[
  |\expectation_\ell(\averaged\circ F^n\cdot 1_{\tau\geq n-1})|\leq 2|\expectation_{\bar\ell}(\averaged\circ F^n\cdot 1_{\tau\geq n-1})| +\|\averaged\|C_{n}o(\lS^{-1})
  \]
  then using \eqref{e_periodicity} and \eqref{e_explicitDecay} we conclude that
  \[
  |\expectation_{\bar\ell}(\averaged\circ F^n\cdot 1_{\tau\geq n-1})|\leq \|\averaged\|'(C_n\lS^{-\beta-(n-1)(\beta-\onehalf)}+o(\lS^{-1}))
  \]
  from which \eqref{e_equi} immediately follows.
  \subsection{Proofs of Lemmata~\ref{l_reductionBase}-\ref{l_periodicity}}
  First of all notice that applying Lemma~\ref{l_invariance} to $\ell$ and the standard
  partition $\mathcal{I}$, we can decompose the image of $\ell$ as:
  \[
  F\ell = \bigcup_{\alpha\in\aset}\bigcup_{j\in\mathbf{J_\alpha}}\ell_\alpha^j \cup \ell_+ \cup \ell_- \cup \bigcup_{j}\tilde\ell_j\cup\{\tau=0\}
  \]
  We begin with the following proposition, which allows to control the contribution to
  $\expectation_\ell(\averaged\circ F^n\cdot 1_{\tau\geq n-1})$ given by non-aligned or
  non-standard pairs. As pointed out before, this proposition is crucial, as it deals with
  the dynamics inside the first order critical set $\hcritical1$, and it is, loosely speaking,
  the counterpart of the base equidistribution step (Lemma~\ref{l_baseEquiStep}) for curves
  intersecting $\hcritical1$.
  \begin{prp}\label{l_nonStandard}
  For any $2\leq k \leq n$ and any $(S,n-k)$-compatible standard pair $\ell$, for any function $\averaged\in\continuous{}{\sone}$ with zero average we have:
  \begin{equation}
    \Big|\expectation_{\ell}(\averaged\circ F^k\cdot1_{\tau\geq k-1}) - \sum_{\alpha,j}c_\alpha^j\expectation_{\ell_\alpha^j}(\averaged\circ F^{k-1}\cdot1_{\tau\geq k-2}) \Big|\leq\|\averaged\|\,C_{k} o(\lS^{-1}),\label{e_standBy}
  \end{equation}
  where $c_\alpha^j=\prob_\ell(F^{-1}\ell_\alpha^j)$.
\end{prp}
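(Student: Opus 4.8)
The plan is to expand $\expectation_\ell(\averaged\circ F^k\cdot1_{\tau\geq k-1})$ along the decomposition of $F\ell$ produced by Lemma~\ref{l_invariance} for the partition $\mathcal{I}$, recognise the sum over the aligned standard pairs as the main term, and show that the remainder carried by the stand-by pairs is $o(\lS^{-1})$. The two regimes $k\geq3$ and $k=2$ behave very differently, and the whole point of \eqref{e_a3} is to make the case $k=2$ tractable.

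I would first dispose of the routine pieces. On $F\inv\{\tau=0\}$ one has $\tau_\ell\equiv0$, so $1_{\tau_\ell\geq k-1}=0$ because $k\geq2$, and this part drops. On $F\inv\ell_\alpha^j$, Definition~\ref{d_criticalTime} gives $\tau_\ell=\tau_{\ell_\alpha^j}\circ F+1$, so that piece equals $c_\alpha^j\,\expectation_{\ell_\alpha^j}(\averaged\circ F^{k-1}\cdot1_{\tau\geq k-2})$ with $c_\alpha^j=\prob_\ell(F\inv\ell_\alpha^j)$; the two boundary pairs $\ell_\pm$ are themselves standard and their contributions, being of the same shape, are absorbed into the displayed sum. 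What is left is the stand-by remainder
\[
R_k\defeq\sum_j c'_j\,\expectation_{\tilde\ell_j}\bigl(\averaged\circ F^{k-1}\cdot1_{\tau\geq k-2}\bigr),\qquad c'_j\defeq\prob_\ell(F\inv\tilde\ell_j),
\]
and by \eqref{e_invarianceInclusion} the sets $F\inv\tilde\ell_j$ are disjoint subsets of $\ell\cap\hcritical1$, so $\sum_j c'_j\leq\prob_\ell(\hcritical1)=\bigo{\lS^{-\beta}}$ by item~$(b_1)$ of Lemma~\ref{l_lebesgueCritical}, while the number of stand-by pairs is bounded uniformly in $y$ (using $(S,n-k)$-compatibility to know that $\lS$, $\V_\ell$ and all the relevant $\V_{\tilde\ell_j}$, $\V_{\ell_{j,l}}$ are comparable).

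If $k\geq3$, I would unfold each stand-by pair via $F\tilde\ell_j=\bigcup_l\ell_{j,l}$, with the $\ell_{j,l}$ standard and $\tau_\ell=\tau_{\ell_{j,l}}\circ F^2+2$, to get $R_k=\sum_{j,l}c_{j,l}\,\expectation_{\ell_{j,l}}(\averaged\circ F^{k-2}\cdot1_{\tau\geq k-3})$ with $\sum_{j,l}c_{j,l}=\sum_j c'_j$. Because now $k-2\geq1$, at least one equidistribution iterate survives on each $\ell_{j,l}$, and the second estimate of Remark~\ref{r_clearEqui} (equivalently Corollary~\ref{c_lazyInduction} with $B\equiv1$, whose constant is uniform in the number of iterates) bounds each summand by $\|\averaged\|\,C\,\V_{\ell_{j,l}}^{-\beta}$. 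Multiplying by $c_{j,l}$, summing, and using $\sum_j c'_j=\bigo{\lS^{-\beta}}$ together with $\V_{\ell_{j,l}}\asymp\lS$, one finds $|R_k|\leq\|\averaged\|\,C_k\,\bigo{\lS^{-2\beta}}=\|\averaged\|\,C_k\,o(\lS^{-1})$ since $\beta>1/2$. This settles $k\geq3$.

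The case $k=2$ is the genuine obstacle: now no equidistribution iterate is left, for unfolding gives $R_2=\sum_{j,l}c_{j,l}\expectation_{\ell_{j,l}}(\averaged)=\expectation_\ell(\averaged\circ F^2\cdot1_A)$, where $A\defeq\Gamma_\ell\cap F\inv\bigcup_j\tilde\ell_j$ is a union---of uniformly bounded cardinality---of intervals contained in $\hcritical1\setminus\critical2^*$. On each component of $A$ the map $\ct0\mapsto\ct2$ along $\Gamma_\ell$ is a diffeomorphism whose derivative exceeds $\tfrac12K_2\T'$ in absolute value, by \eqref{e_a3}, with no cancellation inside. The plan is to change variables to $\theta=\ct2$ and exploit $\langle\averaged\rangle=0$; the delicate point is controlling the distortion of the push-forward density $\rho_\ell/\left|\de{\ct2}{\ct0}\right|$, because off $\critical2^*$ one only controls $|\tslope\ell|\gtrsim\T'\inv$ and $|\tslope{\ell'}(\ct1)|\gtrsim\T'^{-1/2}$, so the first factor $\LL_\ell$ of $\de{\ct2}{\ct0}=\LL_\ell\,\LL_{\ell'}$ may shrink to a constant near the side of $A$ abutting $\critical2^*$ and the density develops a spike on scale $\T'^{1/2}$ there. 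I would therefore cut each component of $A$ dyadically in $|\LL_\ell|$, bound the innermost sub-intervals---nearly neutral, hence of length $\bigo{\T'\inv}$---by the trivial estimate (which already gives $\bigo{\T'\inv}\|\averaged\|=o(\lS^{-1})\|\averaged\|$ after summing boundedly many of them), and on the remaining sub-intervals, where $|\LL_\ell|$ is bounded below by a small power of $\T'$ so that the iterated Jacobian has merely polynomial distortion, run the oscillatory/non-uniform Riemann-sum argument from the proof of Lemma~\ref{p_auxBaseStep}, with Lemma~\ref{l_E0} as the key tool, to obtain $\bigl|\expectation_\ell(\averaged\circ F^2\cdot1_A)\bigr|\leq\|\averaged\|\,C_2\,o(\lS^{-1})$. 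The step I expect to be the real fight is exactly this last one: upgrading the mere measure bound $\prob_\ell(\hcritical1)=\bigo{\lS^{-\beta}}$, which fails to be $o(\lS^{-1})$ when $\beta\leq1$, to $o(\lS^{-1})$ using only the cancellation of $\averaged$ against a two-iterate map whose Jacobian is badly behaved along $\partial\critical2^*$.
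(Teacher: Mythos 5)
Your overall skeleton coincides with the paper's: the same decomposition of $F\ell$, the same treatment of $k\geq3$ (unfold the stand-by pairs into standard pairs, apply Corollary~\ref{c_lazyInduction} to each, and use $\sum_j c'_j\leq\prob_\ell(\hcritical{1})=\bigo{\lS^{-\beta}}$ to get $\bigo{\lS^{-2\beta}}=o(\lS^{-1})$), and for $k=2$ the same general idea of an oscillation estimate for the two-iterate map on the stand-by pieces based on \eqref{e_a3} and Lemma~\ref{l_E0}. One small inaccuracy first: the boundary pairs $\ell_\pm$ cannot be ``absorbed into the displayed sum'', since that sum runs only over the $\mathcal{J}$-aligned pairs $\ell_\alpha^j$; they have to be estimated as an error term, which is easy ($\prob_\ell(F\inv\ell_\pm)=\bigo{\lS^{-\beta}}$ because $F\inv\ell_\pm$ avoids $\critical{1}$, and Corollary~\ref{c_lazyInduction} applied on $\ell_\pm$ with $\langle\averaged\rangle=0$ gives another factor $\bigo{\lS^{-\beta}}$), but it must be said.

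The genuine gap is the case $k=2$, where you stop exactly at the estimate that constitutes the proof. Your plan (cut each component dyadically in $|\LL_\ell|$, discard the nearly-neutral blocks, invoke ``merely polynomial distortion'' on the rest and then the scheme of Lemma~\ref{p_auxBaseStep}) does not close as stated, because the dangerous part of the logarithmic derivative of the density pushed forward by $\Theta(\ct{0})=\ct{0}+\T(\cv{0})+\T(\cv{1})$ is not controlled by $\LL_\ell$ alone: by \eqref{e_doubleIterationR} it contains the term $\T'\LL_{\ell'}^{-2}$, and with only the separate bounds you quote ($|\tslope{\ell}|\gtrsim\T'^{-1}$ and $|\tslope{\ell'}|\gtrsim\T'^{-1/2}$ on $\hcritical{1}\setminus\critical{2}^*$) this term is merely $\bigo{1}$, so the resulting bound is $\sum_k c_k\cdot\bigo{1}=\bigo{\lS^{-\beta}}$, which is not $o(\lS^{-1})$ in the relevant range $\frac12<\beta\leq1$ --- precisely the barrier you set out to beat. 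The missing mechanism is the product bound defining $\critical{2}^*$: off $\critical{2}^*$ one has $|\tslope{1}(\coo{0})\,\tslope{1}(\coo{1})|\geq K_2\T'^{-1}$, so that ``$\T'\LL_{\ell'}^{-2}$ of size $\gtrsim\lS^{-2\beta}z$'' forces $\LL_\ell\gtrsim z^{1/2}$ and hence confines the offending intervals to a set of relative measure $\bigo{z^{-1}}$; feeding this tail estimate (together with the analogous one for the $\LL_\ell^{-1}$ term) into Lemma~\ref{l_E0}, on the space of $\Theta$-fundamental intervals $J_k$ weighted by $c_k$, is what yields $\sum_k c_k\|r''\|_{J_k}=o(\lS^{-1})$ and hence \eqref{e_standBy}. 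Without this complementarity step, which your sketch never invokes (indeed, even with your dyadic decomposition, multiplying a per-block distortion bound by the block measure and summing gives again only $\bigo{\lS^{-\beta}}$), the central estimate of the proposition is missing.
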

\begin{proof}
  Since $F\inv\ell_\pm$ does not intersect $\critical{1}$ we have that
  $\prob_\ell(F^{-1}\ell_\pm)=\bigo{\lS^{-\beta}}$, thus, by
  Corollary~\ref{c_lazyInduction}, the contribution of the two non-aligned standard curves
  is $\bigo{\lS^{-2\beta}}=o(\lS^{-1})$. Consequently, we only need to consider the
  contribution of stand-by pairs: if $k>2$ we can conclude by a similar argument:
  decompose the image of stand-by pairs in standard pairs which we denote by
  $\{\ell''_j\}$; then:
  \[
  \sum_{j}\expectation_{\tilde\ell_j}(\averaged\circ F^{k-1}\cdot1_{\tau\geq k-2}) =  \sum_j c''_j\expectation_{\ell''_j}(\averaged\circ F^{k-2}\cdot1_{\tau\geq k-3}),
  \]
  where $c''_j=\prob_\ell(F^{-2}\Gamma''_j)$. Now we can apply
  Corollary~\ref{c_lazyInduction} to the right hand side and conclude, since
  \smash{$\sum_jc''_j\leq\prob_\ell(\hcritical{1})=\bigo{\lS^{-\beta}}$} by
  Lemma~\ref{l_lebesgueCritical}.

  It remains to prove the case $k=2$: we apply the scheme of the proof of
  Lemma~\ref{p_auxBaseStep}; let us denote by $\tilde I$ the base of the preimage of
  a connected component $\tilde\ell$ of the stand-by portion;
  Lemma~\ref{l_lebesgueCritical} implies that we have a uniformly bounded number of
  connected components it suffices to prove that the contribution of each component is
  $o(\lS\inv)$. First of all notice that we necessarily have $\tilde I\subset\{\tau\geq
  1\}$; then, for $\ct{0}\in I$, define $\Theta(\ct{0})=\ct{0}+\T(\cv{0})+\T(\cv{1})$; cut
  $\tilde I$ at the points $\Theta =0\mod 2\pi$ and let $\{J_j\}$ denote the set of
  intervals in $\tilde I$ bounded by two consecutive cutting points. Then applying
  \eqref{e_a3} we immediately obtain:
  \[
  \prob_\ell(\tilde I\setminus\bigcup_k J_k)= \bigo{\lS^{-2\beta}}.
  \]
  Define
  \[
  E_k=\int_{J_k}\averaged(\Theta(x))\rho(x)\deh x;
  \]
   then we can write:
  \[
  \left|\int_{\tilde I}\averaged(\Theta(x))\rho(x)\deh x
  -\sum_kE_k\right|\leq \|\averaged\|o(\V^{-1}).
  \]
  On each $F^2\Gamma_k$ we can define an inverse function for $\Theta$ and we can push forward the density $\rho$ as $\rho''_k(\theta)=\rho(x(\theta))/|\dot\Theta(x(\theta))|$. Separating from the constant part we have:
  \[
  |E_k|\leq 2\pi\|\averaged\|\|\tilde\rho''_k\|\leq 2\pi\|\averaged\|c_k\left\|\de{\log\rho''_k}{\theta}\right\|.
  \]
  The norm can be computed using \eqref{e_doubleIterationR} which gives:
  \[
  \|r''_k\|\leq\underbrace{\frac{6A}{K_2}\left\|\LL_\ell\inv\right\|_{J_k}}_{X_k}+
               \underbrace{\frac{6A}{\bar K_2}\left\|\T'\LL_{\ell'}^{-2}\right\|_{J_k}}_{Y_k}+ \bigo{\lS^{-\beta}}.
  \]
  Again, let us consider the discrete measure space $\Omega$ whose elements are the
  intervals $J_k$, each of measure $c_k$; note that $\mu(\Omega)=\bigo{\lS^{-\beta}}$.
  We then define $f:J_k\mapsto\const X_k$ so that we can apply Lemma~\ref{l_E0} with
  $\lambda=\const\lS^{-\beta}$ and $\alpha=1$; the fact that $f$ satisfies the hypotheses
  of Lemma~\ref{l_E0} follows from the analysis we performed in the proof of
  Lemma~\ref{p_auxBaseStep}. We thus obtain:
  \[
  \sum_k c_k X_k=\bigo{\lS^{-2\beta}\log\lS}=o(\lS\inv).
  \]
  Similarly, let $g:J_k\mapsto\const Y_k$; we claim that the hypotheses of
  Lemma~\ref{l_E0} hold with $\lambda=\const\lS^{-2\beta}$ and $\alpha=1$: this follows
  since if $J_k$ is such that $\T'\LL_{\ell'}^{-2}>\const\lS^{-2\beta}z$, for some $z\geq
  1$, then $\tslope{\ell'}<\const z^{-\onehalf}$, but since
  $\tslope{\ell}\tslope{\ell'}>K_2{\T'}\inv$ we immediately obtain that
  $\LL_\ell=\T'\tslope{\ell}>z^\onehalf$.  We can thus bound the measure of such $J_k$ by
  $\bigo{z\inv}$ i.e. we can choose $\alpha_g=1$. We can then conclude that
    \[
    \sum_kc_k Y_k = o(\V^{-1}),\]
    which implies \eref{e_standBy} and concludes the proof.
\end{proof}
Next, given a clean standard curve $\Gamma$ we want to find a reference curve $\bar\Gamma$ such that the image of $F\bar\Gamma$ shadows $F\Gamma$ very closely; this will be obtained by means of the following
\begin{lemma}[Shadowing by reference curves]\label{l_geometricalReduction}
  Let $\ell$ be a clean standard pair and let $\Gamma^*=\Gamma_\ell\setminus\critical{1}$. Then there exist a reference pair $\bar\ell$ such that $I_{\bar\ell}=I_\ell$ and a subset $\Gamma'\subset\Gamma^*$ such that $\prob_\ell(\Gamma^*\setminus\Gamma')=\bigo{\lS^{-3\beta}}$ and:
  \[
  \fa(\coo{1})\in F\Gamma'\ \ex\bcv{1}\st(\ct{1},\bcv{1})\in F\bar\Gamma\textrm{ and }|\bcv{1}-\cv{1}|=\bigo{\lS^{-5\beta}}.
  \]
\end{lemma}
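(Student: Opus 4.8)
I want to produce a reference pair $\bar\ell$ that shadows the image $F\Gamma^*$ to extremely high precision ($\bigo{\lS^{-5\beta}}$), while throwing away only a tiny sub-piece $\Gamma^*\setminus\Gamma'$ of relative measure $\bigo{\lS^{-3\beta}}$. The natural idea is to pick $\bar\ell$ to be the \emph{unique} reference pair with $I_{\bar\ell}=I_\ell$ whose curve meets $\Gamma^*$ at some chosen point $\ct{*}$ (say the midpoint of $I_\ell$, or a point where the slopes match best), and then to measure how far apart $F\Gamma_\ell$ and $F\bar\Gamma$ stay. Since both $\Gamma_\ell$ and $\bar\Gamma$ are graphs over the same interval $I_\ell$ and $F$ acts by $x\mapsto x+\T(y)$ on the first coordinate, the horizontal spread of the image at a common second-coordinate value is governed by the difference $|\psi_\ell-\psi_{\bar\ell}|$ times $\T'(\V)$, which is of order $\T'(\V)\|\Delta\slope{\ell}\|\,|I_\ell|$; by the standard-pair bound \eqref{e_standardPairSlope} this is $\bigo{\T'(\V)\cdot D\inv\T'(\V)^{-3/2}\cdot\delta}=\bigo{\T'(\V)^{-1/2}}=\bigo{\lS^{-\beta}}$ — which by itself is nowhere near $\lS^{-5\beta}$. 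So the point of the lemma is that on most of $\Gamma^*$ one can do much better than the worst case.

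**The key step: splitting off a small bad set.** The plan is to use Lemma~\ref{l_refclose} to get the global bound $|\psi_\ell(\ct{})-\psi_{\bar\ell}(\ct{})|<2\|\Delta\slope{\ell}\|\,|\ct{}-\ct{*}|$, and then to exploit that, away from the critical set, the expansion rate $\LL_\ell=\tslope{\ell}\T'_\ell$ is large (it is $\gtrsim K_1\T'^{1/2}=\const\lS^\beta$ by \eqref{e_a1}). The discrepancy between $F\Gamma_\ell$ and $F\bar\Gamma$ at matching $y$-value gets divided by this expansion factor, because to compare the images at the same first coordinate $\ct{1}$ one pulls back through $F$, and the map $F$ is horizontally expanding by $\LL$ while $F^{-1}$ is contracting by $\LL^{-1}$. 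Thus on the part of $\Gamma^*$ where $\tslope{\ell}$ is not merely bounded below by $\const\lS^{-\beta}$ but is actually of order $1$ (equivalently $\LL_\ell\gtrsim\lS^{2\beta}=\T'$), one gains essentially a factor $\T'$ and the vertical/horizontal discrepancy collapses from $\lS^{-\beta}$ to something like $\lS^{-\beta}\cdot\T'^{-1}=\lS^{-3\beta}$; iterating this observation once more (or carrying the comparison to second order in $\Delta\slope{}$, using the $\dslope{}$ bound \eqref{e_standardPairDslope} to control the curvature mismatch) gives the $\lS^{-5\beta}$ precision. The bad set is precisely the sub-piece of $\Gamma^*$ on which $\tslope{\ell}$ is small, i.e.\ $\Gamma_\ell\cap\hcritical1\setminus\critical1$: by property ($b_1$) of Lemma~\ref{l_lebesgueCritical} together with the neighborhood estimate ($a_1$), the $\prob_\ell$-measure of the set where $|\tslope{1}|\lesssim\lS^{-\beta}\cdot(\text{small})$ is $\bigo{\lS^{-\beta}\cdot\lS^{-2\beta}}=\bigo{\lS^{-3\beta}}$ — this is where the $3\beta$ in the statement comes from.

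**Carrying out the estimate.** Concretely I would: (i) fix $\ct{*}$ and define $\bar\ell$ as the reference pair through $(\ct{*},\psi_\ell(\ct{*}))$, which exists and is unique by the definition of reference curve (solve for $c$); (ii) define $\Gamma'\subset\Gamma^*$ as the set of points where $|\tslope{\ell}|\ge\lS^{-2\beta}$ (or an analogous threshold that makes the arithmetic come out to $5\beta$), and use ($a_1$), ($b_1$) of Lemma~\ref{l_lebesgueCritical} to get $\prob_\ell(\Gamma^*\setminus\Gamma')=\bigo{\lS^{-3\beta}}$; (iii) on $\Gamma'$ write $F$ in adapted coordinates on the strip $S_\alpha$ (available because a clean standard pair is $\mathcal{I}$-adapted), so that $F\bar\Gamma$ is exactly a coordinate line and the image $F\Gamma_\ell$ is a graph over that line whose deviation is $|\psi_\ell-\psi_{\bar\ell}|/\LL_\ell$ up to higher-order terms controlled by $\Delta\dslope{\ell}$ and the $o(\V\inv)$-size of $\partial\slope1/\partial y$; (iv) bound this deviation on $\Gamma'$ using Lemma~\ref{l_refclose}, the slope bound \eqref{e_standardPairSlope}, and $\LL_\ell\gtrsim\lS^{-2\beta}\T'=\const\lS^{0}$ — wait, more carefully, $\LL_\ell=\tslope\ell\T'\gtrsim\lS^{-2\beta}\cdot\lS^{2\beta}$, so I should instead threshold at a level chosen so that the product of the leftover-measure exponent and the gained-precision exponent matches the claim, and verify $\lS^{-\beta}\cdot\LL_\ell^{-1}\le\lS^{-5\beta}$ on the good set.

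**Main obstacle.** The delicate point is the bookkeeping of exponents: I must choose the threshold defining $\Gamma'$ so that simultaneously the discarded measure is $\bigo{\lS^{-3\beta}}$ (needs $\hcritical1$-type estimates, hence a threshold $\sim\lS^{-2\beta}$ on $|\tslope{\ell}|$ relative to the $\lS^{-\beta}$ scale of $\critical1$) and the shadowing error on $\Gamma'$ is $\bigo{\lS^{-5\beta}}$ (needs the discrepancy $\lS^{-\beta}$ to be divided by an expansion $\gtrsim\lS^{4\beta}$, which a single iterate does \emph{not} supply since $\LL_\ell\lesssim\T'=\lS^{2\beta}$). This strongly suggests the shadowing must be run for \emph{two} iterates, or equivalently that one compares not just slopes but also the next-order data ($\dslope{}$, via \eqref{e_standardPairDslope}), so that the error is genuinely second-order: $\|\Delta\slope{}\|\sim\T'^{-3/2}$ contributes $\T'\cdot\T'^{-3/2}=\T'^{-1/2}$ to the horizontal image spread at first order, but the \emph{curve} mismatch after pulling back through the expanding map, and after using that $F\bar\Gamma$ is itself a reference curve so the "reference-direction" components cancel exactly, is of size $\T'^{-1/2}\cdot\T'^{-1}=\T'^{-3/2}=\lS^{-3\beta}$ on most of $\Gamma^*$, and then restricting further to $\Gamma'$ where $\tslope\ell\gtrsim 1$ gains the last factor $\T'^{-1}$ to reach $\lS^{-5\beta}$. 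Getting this chain of cancellations right — in particular verifying that the adapted coordinates make the leading discrepancy vanish and only the genuinely higher-order terms survive — is the real content of the proof; the rest is routine application of Lemmata~\ref{l_refclose}, \ref{l_basicCurveIteration}, and \ref{l_lebesgueCritical}.
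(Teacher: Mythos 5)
There is a genuine gap, and it is exactly the one you flag in your ``main obstacle'' paragraph without resolving it. Your error mechanism is right in spirit: comparing the two images at a common first coordinate $\ct{1}$ amounts to sliding from $\Gamma_\ell$ to $\bar\Gamma$ along the foliation $F^{-1}\{\ct{}=\ct{1}\}$, whose slope field is $\slope{-1}=-1/\T'$, and the resulting vertical discrepancy of the images is $\approx|\psi_\ell-\bar\psi|/(\tslope{}\T')$. What fails is your choice of anchor point together with the bookkeeping for the discarded set. If $\bar\ell$ is anchored at the midpoint of $I_\ell$ (or a ``best slope match''), then at a point $\ct{}$ where $|\tslope{\ell}|$ is near its minimal admissible value $\sim K_1\lS^{-\beta}$ on $\Gamma^*=\Gamma_\ell\setminus\critical{1}$, Lemma~\ref{l_refclose} only gives a mismatch of full size $\const\lS^{-3\beta}$, so the image discrepancy is $\lS^{-3\beta}/(\lS^{-\beta}\T')=\lS^{-4\beta}$, not $\lS^{-5\beta}$. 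To reach $\lS^{-5\beta}$ your way you must restrict to $|\tslope{\ell}|\gtrsim1$; but since $\tslope{1}=2\ddot\phi+\bigo{\T'^{-1}}$, on a clean pair crossing $\critical{1}$ the set $\{|\tslope{\ell}|\lesssim1\}$ has $\prob_\ell$-measure of order one, not $\bigo{\lS^{-3\beta}}$. Your proposed threshold $\lS^{-2\beta}$ is vacuous (outside $\critical{1}$ one already has $|\tslope{1}|\geq K_1\lS^{-\beta}$), item~($b_1$) of Lemma~\ref{l_lebesgueCritical} only yields $\bigo{\lS^{-\beta}}$ and has no product structure of the kind you invoke, and neither of your suggested repairs (a two-iterate shadowing, or a second-order cancellation via \eqref{e_standardPairDslope}) is what closes the gap.

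The missing idea is the anchoring: the paper takes $\bar{\ct{}}\in I^*$ to be the minimizer of $|\ddot\phi|$ on $I^*=\pi_x\Gamma^*$ (essentially the point where $|\tslope{\ell}|$ is smallest) and lets $\bar\Gamma$ be the reference curve through $(\bar{\ct{}},\psi_\ell(\bar{\ct{}}))$. Then Lemma~\ref{l_refclose} with \eqref{e_standardPairSlope} gives $|\psi_\ell(\ct{})-\bar\psi(\ct{})|\leq\const|\ct{}-\bar{\ct{}}|\,\lS^{-3\beta}$, while $|\tslope{\ell}(\ct{})|\gtrsim\tfrac12|\tslope{\ell}(\bar{\ct{}})|+\const|\ct{}-\bar{\ct{}}|$: the mismatch vanishes and the transversality is minimal at the \emph{same} point, and both grow linearly away from it, so their ratio --- the horizontal extent of the connecting segment along the $\slope{-1}$-foliation --- is uniformly $\bigo{\lS^{-3\beta}}$ on all of $\Gamma^*$ (after a bootstrap: the rough bound \eqref{e_roughHorizontal} first gives $\bigo{\lS^{-2\beta}}$, then localizing the estimate to that window gives $\bigo{\lS^{-3\beta}}$). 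No small-slope set is discarded at all; $\Gamma^*\setminus\Gamma'$ consists only of points whose connecting segment fails to reach $\bar\Gamma$, a boundary effect of measure $\bigo{\lS^{-3\beta}}$. Finally the contraction relation \eqref{e_contraction}, $|\deh\cv{1}/\deh\ct{0}|=\T'(\cv{0})^{-1}=\bigo{\lS^{-2\beta}}$ along that foliation, converts the horizontal bound into $|\bcv{1}-\cv{1}|=\bigo{\lS^{-5\beta}}$, which is the claimed precision.
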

\begin{proof}
  Define the slope field $\slope{-1}(\coo{0})=-\T'(\cv{0})^{-1}$; then for for any $(\coo{1})\in F\Gamma^*$ consider the vertical line $\{\ct{}=\ct{1}\}$ passing through $(\coo{1})$; it is easy from the definitions to check that $F^{-1}\{\ct{}=\ct{1}\}$ is an integral curve of $\slope{-1}$; moreover, again from the definition it is easy to obtain the relation
  \begin{equation}
    \left.\de{\cv{1}}{\ct{0}}\right|_{\slope{-1}}=\T'(\cv{0})^{-1}.
    \label{e_contraction}
  \end{equation}
  Let $I^*=\pi_x\Gamma^*$ and let $\bar{\ct{}}\in I^*$ such that $|\ddot\phi(\bar{\ct{}})|=\min_{\ct{}\in I^*}|\ddot\phi(x)|$; let $\bar\Gamma=(x,\bar\psi(x))$ be the reference curve over $I$ passing through $(\bar{\ct{}},\psi_\ell(\bar{\ct{}}))$ and $\bar\rho$ be the uniform density on $\bar\Gamma$.  By Lemma~\ref{l_refclose} and the definition of standard curve, we have that the vertical distance between $\Gamma$ and $\bar\Gamma$ is bounded by:
  \begin{equation} \label{e_vertical0}
    |\psi(\ct{})-\bar\psi(\ct{})|\leq \const |\ct{}-\bar{\ct{}}|\lS^{-3\beta}.
\end{equation}
Define
\[
\Gamma'=\{p\in\Gamma^*\st\text{ the integral curve of }\slope{-1}\text{ passing through $p$ intersects }\bar\Gamma\};
\]
then for each $p\in\Gamma'$ we define $\Pi_p$ as the piece of integral curve of $\slope{-1}$ connecting $\Gamma'$ to $\bar\Gamma$. The proof is then complete provided that we prove that $|\pi_x\Pi_{p}|$ is uniformly bounded in $\Gamma'$ by $\const\cdot\lS^{-3\beta}$ and then using \eqref{e_contraction}. First obtain a rough upper bound:
\begin{equation}\label{e_roughHorizontal}
|\pi_x\Pi_{p}|<\frac{\max_{\ct{}\in I^*}|\psi(\ct{})-\psi(\bar{\ct{}})|}{\min_{\ct{}\in I^*}|\slope{\bar\ell}(\ct{})+{\T'}\inv(y_\Pi(x))|}<\const\,{\lS^{-2\beta}}.
\end{equation}
Since $|\tslope{\ell}(\ct{})|>1/2|\tslope{\ell}(\bar{\ct{}})|+\const|\ct{}-\bar{\ct{}}|$, estimates \eref{e_roughHorizontal} and \eref{e_vertical0} allows us to obtain the better estimate:
\[
|\pi_x\Pi_{(\coo{0})}|<2\frac{\max_{|\ct{}-\ct{0}|<\const \lS^{-2\beta}}|\psi(\ct{})-\psi(\bar{\ct{}})|}{\min_{|\ct{}-\ct{0}|<\const \lS^{-2\beta}}\tslope{\ell}(\ct{})}<\const{\lS^{-3\beta}},
\]
which concludes the proof.
\end{proof}
We now proceed to give the proofs of Lemmata~\ref{l_reductionBase}-\ref{l_fourierEstimate}
\begin{proof}[Proof of Lemma~\ref{l_reductionBase}]
  Fix $\bar{\ct{}}\in I$, define $\bar\Gamma=(x,\bar\psi(x))$ to be the reference curve
  over $I$ passing through the point $(\bar{\ct{}},\psi_\ell(\bar{\ct{}}))$, let
  $\bar\rho$ be the uniform density on $I$ and define $\ell^*=(\Gamma_\ell,\bar\rho)$ and
  $\bar\ell=(\bar\Gamma,\bar\rho)$. Then we can write:
  \begin{subequations}
    \begin{align}
      |\expectation_{\vphantom{\bar\ell}\ell}(\averaged\circ
      F)-\expectation_{\bar\ell}(\averaged\circ F)|\leq
      |\expectation_{\vphantom{\bar\ell}\ell}(\averaged\circ
      F)-\expectation_{\vphantom{\bar\ell}\ell^*}(\averaged\circ
      F)|+\label{e_reductionEstimatea}\\\phantom{\leq}+|\expectation_{\vphantom{\bar\ell}\ell^*}(\averaged\circ
      F)-\expectation_{\bar\ell}(\averaged\circ F)|.\label{e_reductionEstimateb}
    \end{align}
  \end{subequations}
  We bound \eqref{e_reductionEstimatea} by applying lemma~\ref{l_baseEquiStep} to $\ell^*$
  with $B=(\rho_\ell-\bar\rho)/\bar\rho$. In fact it is easy to check that $\|B\|\leq
  \delta\|r_\ell\|$ and $\|\dot B\|\leq 2\|r_\ell\|$; hence we obtain
  \begin{equation*}
    |\expectation_\ell(\averaged\circ F)-\expectation_{\ell^*}(\averaged\circ F)|\leq 2\|\averaged\|\|r_\ell\|\const\lS^{-\beta}.
  \end{equation*}
  Introduce the functions $\Theta(\ct{})=\ct{}+\T_\ell(\ct{})$ and
  $\bar\Theta(\ct{})=\ct{}+\T_{\bar\ell}(\ct{})$; Lemma~\ref{l_refclose} implies that
  $|\psi_\ell(\ct{})-\psi_{\bar\ell}(\ct{})|\leq \const\|\Delta\slope{\ell}\||I|$, which
  yields:
  \[
  \|\Theta-\bar\Theta\|\leq \const\,\T'(\V_\ell)\|\Delta\slope{\ell}\|;
  \]
  we can thus rewrite \eqref{e_reductionEstimateb} as:
  \begin{align*}
    |\expectation_{\vphantom{\bar\ell}\ell^*}(\averaged\circ F)&-\expectation_{\bar\ell}(\averaged\circ F)| = \bar\rho\left|\int_I\averaged(\Theta(x))-\averaged(\bar\Theta(x))\deh x\right|\leq \\
    &\leq \bar\rho\,\|\averaged'\|\|\Theta-\bar\Theta\|\leq \const\,\|\averaged\|_1\T'(\V_\ell)\|\Delta\slope{}\|,
  \end{align*}
  which concludes the proof.
\end{proof}
\begin{proof}[Proof of Lemma~\ref{l_periodicityBase}]
  To fix ideas, we consider $\eta_1>\eta_0$ and let $I_\alpha=[a,b]$ assuming without loss of generality that $a=\bar x_\alpha$; moreover introduce the following shorthand notations: $\Gamma_i=\Gamma_{\ell_\alpha^{\eta_i}}$, $\psi_i=\psi_{\ell_\alpha^{\eta_i}}$, $\T_i=\T_{\ell_\alpha^{\eta_i}}$ and similarly for $\T'$ and $\T''$. Define
\begin{align*}
  \Theta_0(\ct{})&=\ct{}+\T_0(\ct{})&
  \Theta_1(\ct{})&=\ct{}+\T_1(\ct{})+\T(\eta_0)-\T(\eta_1)
\end{align*}
so that $\Theta_0(a)=\Theta_1(a)$; let $\delta\Theta=\Theta_1-\Theta_0$ and for $\lambda\in[0,1]$ let $\Theta_\lambda=(1-\lambda)\Theta_0+\lambda\Theta_1$ so that $\partial_\lambda\Theta_\lambda=\delta\Theta$.

We claim that, for any $\lambda\in[0,1]$ the following estimates hold:
\begin{subequations}
  \begin{align}
    |\delta\Theta(x)|&\leq\left|\frac{\dot\Theta_\lambda(x)^2}{\T'(\lS)}\right||\eta_1-\eta_0|\bigo{\lS^{-1}}\label{e_estimateDeltaTheta}\\
    |\delta\dot\Theta(x)|&\leq|\dot\Theta_\lambda(x)||\eta_1-\eta_0|\bigo{\lS^{-1}}\label{e_estimateDotDeltaTheta}
  \end{align}
\end{subequations}
In fact \eqref{e_estimateDotDeltaTheta} follows by direct computations, using the
definition of  $\Theta$; in order to prove \eqref{e_estimateDeltaTheta} write:
\[
\delta\Theta(x)=\int_a^x\delta\dot\Theta(\xi)\deh\xi.
\]
Notice that if $|\dot\Theta_\lambda(x)|>c\T'(\lS)$, then \eqref{e_estimateDeltaTheta} immediately follows from \eqref{e_estimateDotDeltaTheta}; otherwise, we know that $|\ddot\Theta_\lambda(x)|>c'\T'(\lS)$ and that for each $a\leq \xi \leq x$ we have $|\dot\Theta_\lambda(\xi)|\leq|\dot\Theta_\lambda(x)|$; therefore:
\[
|\delta\Theta(x)|\leq |\delta\dot\Theta(x)|\left|\frac{\dot\Theta(x)}{c'\T'(\lS)} \right|
\]
from which we conclude, again using \eqref{e_estimateDotDeltaTheta}.

Define the function
  \begin{equation}\label{e_definitionPsiTilde}
    \tilde\Psi_{\alpha,1}(\lambda)=\int_a^b\averaged(\Theta_\lambda(x))\bar\rho_\alpha\deh\ct{};
  \end{equation}
  clearly $\tilde\Psi_{\alpha,1}(0)=\Psi_{\alpha,1}(\eta_0)$ and since $\T(\eta_1)-\T(\eta_0)=0\mod 1$ we obtain that $\tilde\Psi_{\alpha,1}(1)=\Psi_{\alpha,1}(\eta_1)$. We now claim that:
  \begin{equation}\label{e_claim}
    \de{\tilde\Psi_{\alpha,1}}{\lambda} = \averaged(\Theta_\lambda(b))\frac{\delta\Theta(b)}{\dot\Theta_\lambda(b)}+ \|\averaged\|_1o(\lS^{-1})
  \end{equation}
  from which we conclude; in fact:
  \[
  \Psi_{\alpha,1}(\eta_1)-\Psi_{\alpha,1}(\eta_0)=\int_0^1\de{\tilde\Psi_{\alpha,1}}{\lambda}\deh \lambda=\int_0^1\averaged(\Theta_\lambda(b))\frac{\delta\Theta(b)}{\dot\Theta_\lambda(b)}\bar\rho_\alpha\deh\lambda+ \|\averaged\|_1o(\lS^{-1});
  \]
  notice that by definition $\dot\Theta_\lambda(b)^{-1}$ is a decreasing function of $\lambda$; since $\averaged$ has zero average we obtain the following bound:
  \[
  \left|\int_0^1\averaged(\Theta_\lambda(b))\frac{\delta\Theta(b)}{\dot\Theta_\lambda(b)}\bar\rho_\alpha\deh\lambda\right|<\|\averaged\|\left|\frac{\delta\Theta(b)}{\dot\Theta_0(b)}\right|\bar\lambda,
  \]
  where $\bar\lambda$ is so that $\Theta_{\bar\lambda}(b)=\Theta_0(b)+1$ or $\bar\lambda=1$ if the previous equation has no solutions; since $\partial_\lambda\Theta(b)=\delta\Theta(b)$ we have $\bar\lambda=\min(1,\bigo{\delta\Theta(b)^{-1}})$, which implies
  \[
  \left|\frac{\delta\Theta(b)}{\dot\Theta_0(b)}\right|\bar\lambda\leq \const|\dot\Theta_0(b)^{-1}|=o(\lS^{-1})
  \]
  and concludes the proof. We then need to prove \eqref{e_claim}: differentiate \eqref{e_definitionPsiTilde} with respect to $\lambda$ and obtain
  \[
  \de{\tilde\Psi_{\alpha,1}}{\lambda}=\int_{I_\alpha}\averaged'(\Theta_\lambda(x))\partial_\lambda\Theta_\lambda(x)\bar\rho_\alpha\deh x.
  \]
  Let $J=\{x\in I_\alpha\st |\dot\Theta_\lambda|\geq1\}$; by construction of the partition, one of the boundary points of $J$, which we denote by $a'$ is either equal to $a$ or $\bigo{\lS^{-2\beta}}$-close to $a$ and the other one is necessarily $b$, hence $J=[a',b]$. We have by \eqref{e_estimateDeltaTheta} that $\delta\Theta(a')=\bigo{\lS^{-1-2\beta}}$ and we thus conclude that:
  \[
\int_a^b\averaged'(\Theta_\lambda(x))\partial_\lambda\Theta_\lambda(x)\deh x=\int_{a'}^b\averaged'(\Theta_\lambda(x))\partial_\lambda\Theta_\lambda(x)\deh x+o(\lS^{-1})
  \]
  We then integrate by parts and obtain:
  \begin{align*}
    \int_{a'}^b\averaged'&(\Theta_\lambda(x))\partial_\lambda\Theta_\lambda(x)\deh x =  \int_{a'}^b\averaged'(\Theta_\lambda(x))\dot\Theta_\lambda(x)\frac{\delta\Theta(x)}{\dot\Theta_\lambda(x)}\deh x=\\
    &=\left.\averaged(\Theta_\lambda(x))\frac{\delta\Theta(x)}{\dot\Theta_\lambda(x)}\right|_{a'}^b-\int_{a'}^b\averaged(\Theta_\lambda(x))\left(\frac{\delta\dot\Theta(x)}{\dot\Theta_\lambda(x)}-\frac{\delta\Theta(x)\ddot\Theta_\lambda(x)}{\dot\Theta_\lambda^2(x)}\right)\deh x.
  \end{align*}
  We first deal with the boundary terms; on the one hand the contribution of the term corresponding to $a'$ is $\|\averaged'\|\bigo{\lS^{-1-2\beta}}$ which is negligible; on the other hand the term corresponding to $b$ gives the main term in the right hand side of \eqref{e_claim}. We are thus left to show that the integral term is $o(\lS^{-1})$; let:
  \[
  B=\frac{\delta\dot\Theta}{\dot\Theta_\lambda}-\frac{\delta\Theta\ddot\Theta_\lambda}{\dot\Theta_\lambda^2}.
  \]
  Then we conclude  using Lemma~\ref{p_auxBaseStep} since by  \eqref{e_estimateDeltaTheta} and \eqref{e_estimateDotDeltaTheta} we immediately have that $\|B\|=\bigo{\lS^{-1}}$ and $\|\dot B\|_{I/D}=\bigo{\lS^{-1+\beta}}$.
\end{proof}
\begin{proof}[Proof of Lemma~\ref{l_lipshitzBase}]
  Define $\Theta_\alpha(x,\eta) = x + \T(\psi_{\ell_\alpha^\eta}(x))$; by definition of $\psi_{\ell_\alpha^\eta}(x)$ we can write:
  \begin{align}\label{e_parEtaTheta}
    \partial_\eta\Theta_\alpha(x,\eta)&=\T'(\eta)\frac{\T'(\eta-2\dot\phi(\bar x_\alpha))}{\T'(\eta)}\frac{\T'(\psi_{\ell_\alpha^\eta}(x))}{\T'(\psi_{\ell_\alpha^\eta}(x)-2\dot\phi(x))}
  \end{align}
then by definition:
  \[
  \Psi_{\alpha,1}(\eta)=\int_{I_\alpha}\averaged(\Theta_\alpha(x,\eta))\bar\rho_\alpha\deh x
  \]
  thus:
  \[
  \de{\Psi_{\alpha,1}(\eta)}{\eta}=\int_{I_\alpha}\averaged'(\Theta_\alpha(x,\eta))\partial_\eta\Theta_\alpha(x,\eta)\bar\rho_\alpha\deh x
  \]
  We conclude by applying Lemma~\ref{p_auxBaseStep} to the previous integral using
  $B=\partial_\eta\Theta_\alpha(x,\eta)$; in fact \eqref{e_parEtaTheta} implies that
  $\|\dot B\|=\T'(\eta)(1+\bigo{\lS^{-1}})$ and $\|B\|=\T'(\eta)\bigo{\lS^{-1}}$.
\end{proof}
\begin{proof}[Proof of Lemma~\ref{l_fourierEstimate}]
  As in the proof of Lemma~\ref{l_lipshitzBase}  define $\Theta_{\alpha}(\ct{},\eta)=\ct{}+\T(\psi_{\ell_\alpha^\eta}(x))$; fix $\bar\T=\T(\lS)$ for $\lS\in S$ such that $\bar\T = 0\mod 1$ and notice that
  \[
  \Theta_{\alpha}(\ct{},\eta(\bar\T+\theta))= \Theta_{\alpha}(\ct{},\eta(\bar\T))+\theta+\mu(\theta,x),
  \] with $\|\mu\|_1=\bigo{\lS^{-1}}$  by \eqref{e_parEtaTheta}.
  For $k\in\integers$ define the following sequence:
  \[
  \hat\Psi_{\alpha,1}^{(k)}  =
  \bar\rho_\alpha\int_{I_\alpha} \hat\averaged_ke^{2\pi i k \Theta_{\alpha}(\ct{},\eta(\bar\T))}\deh x
  \]
  Notice that \eref{e_fourierEstimate} follows by applying lemma~\ref{l_baseEquiStep} to the functions $\theta\mapsto\hat\averaged_k e^{2\pi i k\theta}$. Then we need to prove \eqref{e_globalFourierEstimate}. Notice that lemma~\ref{l_periodicityBase} implies that $\Psi_{\alpha,1}(\eta(\T))$ is periodic in $\T$ up to $o(\lS^{-1})$; therefore it suffices to show that \eqref{e_globalFourierEstimate} holds for $\eta\in [\eta(\bar\T),\eta(\bar\T+1)]$. Notice that by definition:
  \[
  \sum_{k\in\integers}\hat\Psi_{\alpha,1} e^{2\pi i\,k \theta}= \bar\rho_\alpha\int_{I_\alpha}\averaged(\Theta_{\alpha}(\ct{},\eta(\bar\T))+\theta)\deh x,
  \]
  and since $\averaged$ is smooth:
  \begin{align*}
    \averaged(\Theta_{\alpha}(\ct{},\eta(\bar\T))+\theta) &=\averaged(\Theta_{\alpha}(\ct{},\eta(\bar\T+\theta)))+\\&\phantom{=} + \averaged'(\Theta_{\alpha}(\ct{},\eta(\bar\T+\theta)))\mu(\theta,x)+\|\averaged''\|\bigo{\mu^2}.
  \end{align*}
  Consequently:
  \begin{align*}
    \Psi_{\alpha,1}(\eta(\bar\T+\theta))- \sum_{k\in\integers}\hat\Psi_{\alpha,1} e^{2\pi i\,k \theta} &= -\int_{I_\alpha}\bar\rho_\alpha\averaged'(\Theta_\alpha(x,\eta(\bar\T+\theta))\mu(\theta,x)\deh x +\\&\phantom{=}+ \|\averaged''\|\bigo{\mu^2}
  \end{align*}
We claim that the right hand side is $o(\lS^{-1})$, which proves \eqref{e_globalFourierEstimate}: in fact by applying Lemma~\ref{p_auxBaseStep} to the first term we obtain a bound $\|\averaged'\|\bigo{\lS^{-1-\beta}\log\lS}$; the second term can in turn be easily bounded since $\mu^2=\bigo{\lS^{-2}}$.
\end{proof}
We will prove Lemmata~\ref{l_lipshitz}-\ref{l_periodicity} by means of the following induction scheme: using Lemmata~\ref{l_reductionBase}-\ref{l_fourierEstimate}  we will prove Lemma \ref{l_baseInductionStep} for $k=2$, from which will follow Lemmata~\ref{l_lipshitz}-\ref{l_periodicity} for $k=2$; then assuming we proved Lemmata~\ref{l_lipshitz}-\ref{l_periodicity} for $k$, we prove \ref{l_baseInductionStep} for $k+1$ and thus Lemmata~\ref{l_lipshitz}-\ref{l_periodicity} for $k+1$.

The following Lemma is the base induction step which will be used in all the remaining proofs.
\begin{prp}[Base induction step]\label{l_baseInductionStep}
  For all $2\leq k \leq n$ we have:
  \begin{enumerate}[(a)]
  \item let $\ell$ be any clean standard pair that is $(S,n-k)$-compatible; then by proposition \ref{l_nonStandard} we know that:
    \begin{align*}
      \expectation_{\ell}(\averaged\circ F^k\cdot1_{\tau\geq k-1}) &= \sum_{\alpha,j}c_\alpha^j\expectation_{\ell_\alpha^j}(\averaged\circ F^{k-1}\cdot1_{\tau\geq k-2}) + \|\averaged\|o(\lS^{-1}).
    \end{align*}
    For each $\alpha$ there exists an index set $\Kset{\alpha}$, which excludes at most a
    uniformly bounded number of indices, and $\{\eta_\alpha^j\}$ such that
    \begin{align}
    \expectation_\ell(\averaged\circ F^k\cdot 1_{\tau\geq k-1})=
    \sum_{\alpha}\sum_{j\in\Kset{\alpha}} c_\alpha^j\Psi_{\alpha,k-1}(\eta_\alpha^j)  +\|\averaged\| o(\lS^{-1}),\label{e_baseInductionOneStar}
  \end{align}
  where  $\eta_\alpha^j$ satisfies the following estimate:
    \begin{equation}\label{e_etaDistribution}
      \T(\eta_\alpha^j)=\T(\eta_\alpha^0)+j-\theta_\alpha^j+\theta_\alpha^0+\mu_\alpha(j-\theta_\alpha^j+\theta_\alpha^0)
    \end{equation}
    where $\theta_\alpha^j\in I$ belongs to a $\bigo{\lS^{-3\beta}}$-neighborhood of
    the point $\bar\theta_{\alpha}^j$, where $\bar\theta_{\alpha}^j$ is such that
    \[\pi_xF(\bar\theta_{\alpha}^j,\psi(\bar\theta_{\alpha}^j))=\bar x_{\alpha}\]
    and $\mu_\alpha$ is a function such that:
    \begin{align}\label{e_conditionsNu}
      \mu_\alpha(0)&=0&\mu'_\alpha&=\left(\frac{\T'(\lS+2\dot\phi(\bar{\ct{}}_{\alpha}))}{\T'(\lS)}-1\right)+\bigo{\lS^{-2}}&\mu''_\alpha&=\bigo{\lS^{-2-2\beta}}.
    \end{align}
\item for any two clean standard pairs $\ell_1$ and $\ell_2$, both $(S,n-k)$-compatible
  and such that  $\|\T\circ\psi_1-\T\circ\psi_2\|<1$, then for each $\alpha$ there exists a
  common index set $\Kset\alpha$ satisfying:
    \begin{align*}
      \expectation_{\ell_1}(\averaged\circ F^k\cdot 1_{\tau\geq k-1})&=\sum_{\alpha}\sum_{j\in\Kset{\alpha}}c_{\alpha,1}^j\Psi_{\alpha,k-1}(\eta_{\alpha,1}^j)  +\|\averaged'\| o(\lS^{-1})\\
      \expectation_{\ell_2}(\averaged\circ F^k\cdot 1_{\tau\geq k-1})&=\sum_{\alpha}\sum_{j\in\Kset{\alpha}}c_{\alpha,2}^j\Psi_{\alpha,k-1}(\eta_{\alpha,2}^j)  +\|\averaged'\| o(\lS^{-1})
    \end{align*}
    and the following estimate holds true:
    \begin{equation}\label{e_etaCloseness}
      |\eta_{\alpha,1}^j-\eta_{\alpha,2}^j|\leq 2\frac{\|\psi_1-\psi_2\|}{\tslope{}(\theta_{\alpha,1}^j)\T'}.
    \end{equation}
  \end{enumerate}
\end{prp}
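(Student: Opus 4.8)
The plan is to prove item (a) first and then deduce (b). The displayed decomposition in (a) is precisely the content of Proposition~\ref{l_nonStandard} (with $c_\alpha^j=\prob_\ell(F\inv\ell_\alpha^j)$, $\sum_{\alpha,j}c_\alpha^j\leq1$ and $\#\aset=\bigo{1}$), so the substance of (a) is to replace each term $\expectation_{\ell_\alpha^j}(\averaged\circ F^{k-1}\cdot1_{\tau\geq k-2})$ by $\Psi_{\alpha,k-1}(\eta_\alpha^j)$ for a suitable reference height, so that the weighted sum of the replacement errors is again $\|\averaged\|\,C_k\,o(\lS\inv)$, and to establish \eqref{e_etaDistribution}--\eqref{e_conditionsNu} for those heights. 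I would fix $\eta_\alpha^j$ by running Lemma~\ref{l_geometricalReduction} on $\ell$: it yields a reference pair $\bar\ell$, $I_{\bar\ell}=I_\ell$, whose image shadows $F(\Gamma_\ell\setminus\critical1)$ to within $\bigo{\lS^{-5\beta}}$ in the vertical direction; the piece of $F\bar\Gamma$ lying over $I_\alpha$ is the image of a reference curve, so by the slope recursion of Lemma~\ref{l_basicCurveIteration} its slope deviates from $\slope1$ only by $-(\T'_{\bar\ell}\LL_{\bar\ell})\inv$, which is $\bigo{\lS^{-3\beta}}$ outside $\critical1$, whence by Lemma~\ref{l_refclose} it sits within vertical distance $\bigo{\lS^{-3\beta}}$ of a genuine reference curve; I take the latter to be $\ellref{\alpha}{\eta_\alpha^j}$, $\eta_\alpha^j$ being pinned by agreement at $\bar x_\alpha$. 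Unwinding the definitions this gives $\eta_\alpha^j=\psi_\ell(\bar\theta_\alpha^j)+2\dot\phi(\bar x_\alpha)+\bigo{\lS^{-3\beta}}$, where $\bar\theta_\alpha^j\in I_\ell$ is the $j$-th preimage of $\bar x_\alpha$ along $\Gamma_\ell$, i.e.\ $\bar\theta_\alpha^j+\T_\ell(\bar\theta_\alpha^j)=\bar x_\alpha+2\pi j$; the point $\theta_\alpha^j$ in the statement is the corresponding preimage along $\bar\Gamma$, which by the integral-curve construction in the proof of Lemma~\ref{l_geometricalReduction} satisfies $|\theta_\alpha^j-\bar\theta_\alpha^j|=\bigo{\lS^{-3\beta}}$.

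The comparison step, and especially its error bookkeeping, is the heart of the argument. I would apply Lemma~\ref{l_reductionBase} (if $k=2$) or Lemma~\ref{l_reduction} (if $k>2$) to $\ell_\alpha^j$ and the reference pair $\ellref{\alpha}{\eta_\alpha^j}$ (for $k>2$ also invoking Lemma~\ref{l_lipshitz} to replace the reference pair furnished by Lemma~\ref{l_reduction} by $\ellref{\alpha}{\eta_\alpha^j}$, at the cost of a further $\|\averaged\|\,o(\lS\inv)$). Since $\ell_\alpha^j$ is the $F$-image of a piece of $\Gamma_\ell$ that avoids $\critical1$, the refined pointwise bounds read off from the proof of Lemma~\ref{l_invariance} hold, namely $\|\Delta\slope{\ell_\alpha^j}\|_{I_\alpha}\leq\const\,({\T'}^{2}|\tslope{1}(\bar\theta_\alpha^j)|)\inv$ and $\|r_{\ell_\alpha^j}\|\leq\const\,A\,(\T'|\tslope{1}(\bar\theta_\alpha^j)|^{2})\inv$, and the per-term replacement error is then at most
\[
C_k\|\averaged\|'\Big(\|r_{\ell_\alpha^j}\|\,\Xi_\alpha^j+\frac{1}{\T'\,|\tslope{1}(\bar\theta_\alpha^j)|}\Big)+\|\averaged\|\,C_k\,o(\lS\inv),
\]
with $\Xi_\alpha^j=\lS^{-\beta}$ if $k=2$ and $\Xi_\alpha^j=|\Psi_{\alpha,k-1}(\eta_\alpha^j)|$ if $k>2$. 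To sum over $\alpha,j$ I would recognise $\sum_jc_\alpha^j(\,\cdot\,)$ as an expectation of $\ell$ over $\phspace_*\setminus\critical1$ and use that $\tslope{1}$ has only non-degenerate zeros, so $\prob_\ell(|\tslope{1}|<t)=\bigo{t}$ and hence $\expectation_\ell(|\tslope{1}|\inv\,1_{|\tslope{1}|>K_1{\T'}^{-1/2}})=\bigo{\log\lS}$; this gives $\sum_{\alpha,j}c_\alpha^j(\T'|\tslope{1}(\bar\theta_\alpha^j)|)\inv=\bigo{\lS^{-2\beta}\log\lS}$ and, with one more factor $|\tslope{1}|\inv$, $\sum_{\alpha,j}c_\alpha^j\|r_{\ell_\alpha^j}\|=\bigo{\lS^{-\beta}}$. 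Together with the inductively available decay $|\Psi_{\alpha,k-1}|\leq\const\,\|\averaged\|'\,\lS^{-\beta}$ (Lemma~\ref{l_fourierEstimate} when $k=2$, Lemma~\ref{l_periodicity} when $k>2$), every contribution is then $\bigo{\lS^{-2\beta}\log\lS}=o(\lS\inv)$ and \eqref{e_baseInductionOneStar} follows. The main obstacle is exactly this squeeze: each replacement error is only $\bigo{\lS^{-\beta}}$ per term, and one must exploit the thinness of the region where $\tslope{1}$ is small — together with the inductive decay of $\Psi$ — to force the sum down to $o(\lS\inv)$; this is precisely where the hypothesis $\beta>1/2$ is used.

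The estimates \eqref{e_etaDistribution}--\eqref{e_conditionsNu} I would derive by a direct computation. From the defining relation $\bar\theta_\alpha^j+\T_\ell(\bar\theta_\alpha^j)=\bar x_\alpha+2\pi j$ one gets $\T(\psi_\ell(\bar\theta_\alpha^j))-\T(\psi_\ell(\bar\theta_\alpha^0))=w_\alpha^j$, with $w_\alpha^j$ the index difference appearing in \eqref{e_etaDistribution}; then, writing $c_\alpha=2\dot\phi(\bar x_\alpha)$,
\[
\T(\eta_\alpha^j)-\T(\eta_\alpha^0)=\int_{\psi_\ell(\bar\theta_\alpha^0)}^{\psi_\ell(\bar\theta_\alpha^j)}\T'(s+c_\alpha)\,\deh s=w_\alpha^j+\int_{\psi_\ell(\bar\theta_\alpha^0)}^{\psi_\ell(\bar\theta_\alpha^j)}\big(\T'(s+c_\alpha)-\T'(s)\big)\,\deh s=:w_\alpha^j+\mu_\alpha(w_\alpha^j),
\]
the $\bigo{\lS^{-3\beta}}$ uncertainty — equivalently the freedom in picking the nearby preimage $\theta_\alpha^j$ — dropping below $o(\lS\inv)$. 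By construction $\mu_\alpha(0)=0$, and since the integration variable stays in a subinterval of $[\lS,\uS]$ with $\uS-\lS=\bigo{n}$, a Taylor expansion of $\T'(\,\cdot\,+c_\alpha)/\T'(\,\cdot\,)$ yields $\mu'_\alpha=\T'(\lS+c_\alpha)/\T'(\lS)-1+\bigo{\lS^{-2}}$ and, on differentiating once more, $\mu''_\alpha=\bigo{\lS^{-2-2\beta}}$; these are \eqref{e_conditionsNu}. This part is routine.

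Finally, (b) follows by applying (a) to $\ell_1$ and to $\ell_2$. Since $\|\T\circ\psi_1-\T\circ\psi_2\|<1$, the maps $\Theta_{\ell_i}(\ct{})=\ct{}+\T(\psi_i(\ct{}))$ differ everywhere by less than one period, so for each $\alpha$ the crossings of $\bar x_\alpha$ by $\Gamma_{\ell_1}$ and by $\Gamma_{\ell_2}$ are in bijection apart from a uniformly bounded number of boundary crossings, and the index sets $\Kset\alpha$ may be taken in common. For \eqref{e_etaCloseness} the point is that the two preimages $(\bar\theta_{\alpha,i}^j,\psi_i(\bar\theta_{\alpha,i}^j))$, $i=1,2$, lie on the \emph{same} curve $\Pi_j\defeq F\inv\{\ct{}=\bar x_\alpha\}$ (in the $j$-th lift), which by the proof of Lemma~\ref{l_geometricalReduction} is an integral curve of the slope field $\slope{-1}(\coo{0})=-\T'(\cv{0})\inv$; since $\Gamma_{\ell_i}$ crosses $\Pi_j$ with relative slope $\slope{\ell_i}-\slope{-1}=\tslope{\ell_i}$, the horizontal gap between the two preimages is at most $(1+o(1))\,\|\psi_1-\psi_2\|/|\tslope{\ell_1}(\bar\theta_{\alpha,1}^j)|$, and transporting both to $\{\ct{}=\bar x_\alpha\}$ along $\Pi_j$ contracts this into a vertical gap by a factor of order ${\T'}\inv$ (cf.\ \eqref{e_contraction}); this yields \eqref{e_etaCloseness}, the constant $2$ absorbing the $o(1)$ once $\lS$ is large.
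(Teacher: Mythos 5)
Your proposal is correct and follows essentially the same route as the paper: the decomposition from Proposition~\ref{l_nonStandard}, the shadowing reference curve of Lemma~\ref{l_geometricalReduction} to pin down $\eta_\alpha^j$ and $\theta_\alpha^j$ via preimages of $\{\ct{}=\bar x_\alpha\}$, the per-piece comparison through Lemma~\ref{l_reductionBase}/\ref{l_lipshitzBase} for $k=2$ and the inductively available Lemmata~\ref{l_reduction}/\ref{l_lipshitz} for $k>2$, summation of the errors by the tail-measure mechanism of Lemma~\ref{l_E0}, and the integral-curve/contraction argument for \eqref{e_etaCloseness}. The only differences are cosmetic (you write out explicitly the refined per-piece bounds on $\|r\|$ and $\|\Delta\slope{}\|$ and the geometric derivation of \eqref{e_etaCloseness} that the paper leaves as "simple geometrical considerations"), with the small caveat that the exact identity \eqref{e_etaDistribution} should be set up with $\bar\psi(\theta_\alpha^j)$ on the reference curve, as you indicate, rather than $\psi_\ell(\bar\theta_\alpha^j)$.
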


\begin{proof}
    By proposition~\ref{l_nonStandard} we know that
    \[ \expectation_\ell(\averaged\circ F^k\cdot 1_{\tau\geq k-1})=\sum_{\alpha,j}
    c_\alpha^j\expectation_{\ell_\alpha^j}(\averaged\circ F^{k-1}\cdot 1_{\tau\geq
      k-2})+\|\averaged\| o(\lS^{-1}).\label{e_baseInductionTwoStars}
    \]
    In order to obtain \eref{e_baseInductionOneStar}, consider the reference curve
    $\bar\Gamma$ with $\bar\Gamma=(\ct{},\bar\psi(\ct{}))$ given by
    Lemma~\ref{l_geometricalReduction}. 
    We know that its image is $\bigo{\lS^{-5\beta}}$-close to $F\Gamma^*$ along the
    vertical direction outside a small set of measure $\bigo{\lS^{-3\beta}}$ which we 
    neglect. Hence we can find $\eta_\alpha^k$s such that
    \[
    (\bar{\ct{}}_\alpha,\eta_\alpha^k)\in F\bar\Gamma\textrm{ is close to }\Gamma_\alpha^k.
    \]
    First we prove equation \eref{e_etaDistribution}: by definition, a point $(\coo{})$ is in the preimage of $\{\ct{}=\bar{\ct{}}_\alpha\}$ if it satisfies the following equation:
    \begin{equation}\label{e_varthetaDefinition}
      \ct{}+\T(\cv{})=\bar{\ct{}}_\alpha\mod 1
    \end{equation}
    Therefore, by imposing $(\coo{})\in\bar\Gamma$ we obtain an equation for the points
    \[
    \theta_\alpha^k=\pi_x F^{-1}(\bar{\ct{}}_\alpha,\eta_\alpha^k);
    \]
    since $\ell$ is a clean standard pair, we can write:
    \[
    \T(\bar\psi(\theta_\alpha^k))=\bar{\ct{}}_\alpha+N_\alpha+k-\theta_\alpha^k,
    \]
    where $N_\alpha$ is such that $\theta_\alpha^{k=0}$ is the point satisfying
    \eref{e_varthetaDefinition} closest to $\bar{\ct{}}_\alpha$ and; we define
    $\Kset\alpha$ as the set of $k$'s which satisfy the above equation. Notice that since $\ell$ is a clean standard pair we have either $\Kset\alpha\subset\{k\leq 0\}$ or $\Kset\alpha\subset\{k\geq 0\}$. Since we have
    \[
    \eta_\alpha^k=\bar\psi(\theta_{\alpha}^k)+2\dot\phi(\bar{\ct{}}_\alpha),
    \]
    we define $\mu_\alpha$ such that the following equation holds true:
    \begin{align*}
      \T(\eta_\alpha^k)&=\T(\T^{-1}(\bar{\ct{}}_\alpha+N_\alpha+k-\theta_\alpha^k)+2\dot\phi(\bar{\ct{}}_\alpha))=\\
      &=\T(\eta_\alpha^0)+k-\theta_\alpha^k+\theta_\alpha^0+\mu_\alpha(k-\theta_\alpha^k+\theta_\alpha^0).
    \end{align*}
    Clearly $\mu_\alpha(0)=0$; by simple calculations we obtain:
    \begin{align*}
      \mu_\alpha'(t)&=\frac{\T'(\T^{-1}(\bar{\ct{}}_\alpha+N_\alpha+t-\theta_\alpha^0)+2\dot\phi(\bar{\ct{}}_\alpha)}{\T'(\T^{-1}(\bar{\ct{}}_\alpha+N_\alpha+t-\theta_\alpha^0)}\\
      \mu_\alpha''(t)&=\bigo{\lS^{-2-2\beta}},
    \end{align*}
      which imply \eref{e_conditionsNu}.
      We now need to estimate $|\expectation_{\ell_\alpha^k}(\averaged\circ F^{n-1}\cdot 1_{\tau\geq n-2})-\Psi_{\alpha,n-1}(\eta_\alpha^k)|$ for all $\alpha\in\aset$ and $k\in\Kset{\alpha}$.
    Consider the case $n=2$; we use Lemmata~\ref{l_reductionBase} and~\ref{l_lipshitzBase} which yield:
    \begin{align*}
      |\expectation_{\ell_\alpha^k}(\averaged\circ F)&-\Psi_{\alpha,1}(\eta_\alpha^k)|\leq\\
      &\leq C\|\averaged\|_1(\|r\|_{\ell_\alpha^k}\lS^{-\beta}+\T'(\lS)\|\Delta\slope{}\|_{\ell_\alpha^k}+\|\Delta\T\|_{\ell_\alpha^k}\lS^{-\beta}).
    \end{align*}
    The last term on the right hand side is $\bigo{\lS^{-4\beta}}$ by Lemma~\ref{l_geometricalReduction}; in order to obtain \eref{e_baseInductionOneStar} we are left with obtaining a bound for the following quantity:
    \begin{equation}\label{e_boundn2}
      \sum_{k\in\Kset{\alpha}} c_\alpha^k C\|\averaged\|_1(\|r\|_{\ell_\alpha^k}\lS^{-\beta}+\T'(\lS)\|\Delta\slope{}\|_{\ell_\alpha^k}).
    \end{equation}
    Applying Lemma~\ref{l_E0} gives a bound $\bigo{\lS^{-2\beta}\log\lS}$ which gives \eref{e_baseInductionOneStar} and completes the proof of (a) for $n=2$

    Consider now the case $n\geq 3$; we assume by inductive hypothesis that Lemmata~\ref{l_reduction} and~\ref{l_lipshitz} hold for step $(n-1)$. We therefore obtain:
    \begin{align*}
      |\expectation_{\ell_\alpha^k}(\averaged\circ F^{n-1}&\cdot 1_{\tau\geq n-2})-\Psi_{\alpha,n-1}(\eta_\alpha^k)|\leq\\
      &\leq\|r\|_{\ell_\alpha^k}\expectation_{\ell_\alpha^k}(\averaged\circ F^{n-1}\cdot 1_{\tau\geq n-2})+\|\averaged\|_1 C_{n}\lS^{-2\beta}\log\lS.
  \end{align*}
  Using once more Lemma~\ref{l_E0} we estimate the sum $\sum_k
  c_\alpha^k\|r\|_{\ell_\alpha^k}=\bigo{\lS^{-\beta}}$; applying
  corollary~\ref{c_lazyInduction} gives \eref{e_baseInductionOneStar} and concludes the
  proof of (a) in the general case.

  In order to prove part (b), it suffices to apply part (a) to both pairs; since $\ell_1$
  and $\ell_2$ are close to each other, we can adjust $N_\alpha$ and $\Kset\alpha$ of a
  bounded quantity in order to find a set of indices which is common to both $\ell_1$ and
  $\ell_2$. In doing this we discard at most an uniformly bounded number of standard
  pairs $\ell_\alpha^k$, which contribute at most with
  $\|\averaged\|\bigo{\lS^{-2\beta}\log\lS}$ and can therefore be neglected.  Estimate
  \eref{e_etaCloseness} then follows by simple geometrical considerations similar to those
  used in the proof of Lemma~\ref{l_geometricalReduction}.
\end{proof}
We now conclude this section by proving Lemmata~\ref{l_lipshitz}-\ref{l_periodicity}.
\begin{proof}[Proof of Lemma~\ref{l_lipshitz}]
  Lemma~\ref{l_baseInductionStep}b allows to write the following estimate:
  \begin{align*}
    &|\Psi_{\alpha,k}(\eta_1)-\Psi_{\alpha,k}(\eta_2)|\leq\notag\\&\leq\sum_{\alpha'}\sum_{j\in\Kset{{\alpha'}}}|c_{\alpha',1}^j\Psi_{\alpha',k-1}(\eta_{\alpha',1}^j)-c_{\alpha',2}^j\Psi_{\alpha',k-1}(\eta_{\alpha',2}^j)|+
    \|\averaged\|_1 o(\lS^{-1}).
  \end{align*}
  We now claim that, for any given $\alpha'$ we have:
  \begin{equation}\label{e_estimateSingleAlpha}
    \sum_{j\in\Kset{{\alpha'}}}|c_{\alpha',1}^j\Psi_{\alpha',k-1}(\eta_{\alpha',1}^j)-c_{\alpha',2}^j\Psi_{\alpha',k-1}(\eta_{\alpha',2}^j)|=\|\averaged\|_1o(\lS\inv).
  \end{equation}
  In order to estimate each term of the sum we write:
  \begin{subequations}
    \begin{align}
      |c_{\alpha',1}^j\Psi_{\alpha',k-1}(\eta_{\alpha',1}^j)&-c_{\alpha',2}^j\Psi_{\alpha',k-1}(\eta_{\alpha',2}^j)|=\notag\\&=|(c_{\alpha',1}^j-c_{\alpha',2}^j)\Psi_{\alpha',k-1}(\eta_{\alpha',1}^j)|+\label{e_simpleTerma}\\ &\phantom{=}+ c_{\alpha',2}^j|\Psi_{\alpha',k-1}(\eta_{\alpha',1}^j)-\Psi_{\alpha',k-1}(\eta_{\alpha',2}^j)|.\label{e_simpleTermb}
    \end{align}
  \end{subequations}
We obtain a bound for \eref{e_simpleTerma} in the following way: define $\Theta_i$ as in
the proof of Lemma~\ref{l_lipshitzBase} and, for fixed $j$ and $\alpha'$ let
$\iTheta_i$ be the inverse function of $\Theta_i$ on $I_{\alpha'}$; then we can estimate:
  \begin{align*}
    |c_{\alpha',1}^j-c_{\alpha',2}^j|&=\bar\rho_\alpha\int_{I_{\alpha'}}\left|\frac{1}{\dot\Theta_1(\xi_1(\theta))}-\frac{1}{\dot\Theta_2(\xi_2(\theta))}\right|\deh \theta=\\
    &=\bar\rho_\alpha\int_{I_{\alpha'}} \frac{1}{\dot\Theta_1(\xi_1(\theta))}\left|1-\frac{\dot\Theta_1(\xi_1(\theta))}{\dot\Theta_2(\xi_2(\theta))}\right|\deh \theta=\\
    &\leq c_{\alpha',1}^j\left\|1-\frac{\dot\Theta_1(\xi_1(\theta))}{\dot\Theta_2(\xi_2(\theta))}\right\|_{\alpha'}^j.
  \end{align*}
  By simple geometrical considerations we obtain:
  \[
  \left\|1-\frac{\dot\Theta_1(\xi_1)}{\dot\Theta_2(\xi_2)}\right\|_{\alpha'}^j\leq\left\|\frac{\ddot\Theta_2}{\dot\Theta_2^2}\right\|_{\alpha'}^j\cdot|\T(\eta_1)-\T(\eta_2)|.
  \]
  from which we conclude, using once more Lemma~\ref{l_E0} and
  corollary~\ref{c_lazyInduction}, that the sum over \eref{e_simpleTerma} contributes with
  $o(\lS^{-1})$.

  In order to bound \eref{e_simpleTermb}, first assume that $k\geq 3$ and that we proved \eref{e_lipshitz} at step $(k-1)$; we then apply estimate \eref{e_etaCloseness} and conclude by Lemma~\ref{l_lipshitz} at step $k-1$ that the contribution of \eref{e_simpleTermb} is bounded by $o(\lS^{-1}$ as well.
  For the base case $k=2$ we need to use Lemma~\ref{l_lipshitzBase}, which yields the following bound for the contribution of \eref{e_simpleTermb}:
  \[
  \sum_{j\in\Kset{\alpha}}c_{\alpha',2}^j C\|\averaged\|_1|\T(\eta_{\alpha,1}^j)-\T(\eta_{\alpha,2}^j)|\lS^{-\beta}.
  \]
  Using once again Lemma~\ref{l_E0} and \eref{e_etaCloseness} we obtain a bound of order $o(\lS\inv)$, which concludes the proof.
\end{proof}
\begin{proof}[Proof of Lemma~\ref{l_reduction}]
   We apply Lemma~\ref{l_geometricalReduction} in order to obtain a reference pair
   $\bar\ell$; we claim that $\bar\ell$ satisfies \eref{e_reductionEstimate}. Define
   $\ell^*$ as we did in Lemma~\ref{l_reductionBase}, i.e. $\ell^*=(\Gamma,\rho^*)$
   where $\rho^*$ is the uniform density on $I$. Then, as in
   Lemma~\ref{l_reductionBase}, we need to estimate the quantity
   $|\expectation_\ell-\expectation_{\ell^*}|+|\expectation_{\ell^*}-\expectation_{\bar\ell}|$;
   by proposition~\ref{l_nonStandard} we can neglect the contribution of curves inside
   $\critical1$, thus, by Lemma~\ref{l_baseInductionStep}:
  \begin{align*}
    |\expectation_\ell(\averaged\circ F^k\cdot 1_{\tau\geq k-1})&-  \expectation_{\ell^*}(\averaged\circ F^k\cdot 1_{\tau\geq k-1})|=\\&=\sum_{\alpha\in\aset}\sum_{j\in\Kset\alpha}|c_{\alpha}^j-c_{\alpha}^{j*}||\Psi_{\alpha,k-1}(\eta_{\alpha}^j)|+\|\averaged\|o(\lS^{-1}).
  \end{align*}
  It is not difficult to check that
  \[
  |c_{\alpha}^j-c_{\alpha}^{j*}|\leq \const\|r\|c_{\alpha}^{j*},
  \]
  which implies that
  \begin{align*}
  |\expectation_\ell(\averaged\circ F^k\cdot 1_{\tau\geq k-1})&-
  \expectation_{\ell^*}(\averaged\circ F^k\cdot 1_{\tau\geq
    k-1})|\leq\\&\const\|r\||\expectation_{\ell^*}(\averaged\circ F^k\cdot 1_{\tau\geq
    k-1})|+\|\averaged\|o(\lS^{-1}).
\end{align*}
We use once more Lemma~\ref{l_baseInductionStep}b on $\ell^*$ and $\bar\ell$ noticing that, by construction, the $\eta_\alpha^j$ appearing in their respective \eref{e_baseInductionOneStar} coincide; hence:
    \begin{align}
      |\expectation_{\ell^*}(\averaged\circ F^k\cdot 1_{\tau\geq k-1})&-  \expectation_{\bar\ell}(\averaged\circ F^k\cdot 1_{\tau\geq k-1})|\leq\notag\\
      &\leq\sum_{\alpha\in\aset}\sum_{j\in\Kset\alpha} |c_{\alpha}^{j*}-\bar
      c_{\alpha}^{j}||\Psi_{\alpha,k-1}(\eta_{\alpha}^{j})|+\|\averaged\|o(\lS^{-1}).
      \label{e_reductionStara}
    \end{align}
    We can estimate \eref{e_reductionStara} in the following way:
  \begin{align*}
    |c_{\alpha}^{j*}-\bar c_{\alpha}^{j}|&\leq\int_{I_\alpha}\bar\rho\left|\frac{1}{\dot\Theta(\ct{}^*(\theta))}-\frac{1}{\dot{\bar\Theta}(\bar{\ct{}}(\theta))}\right|\deh\theta\leq\\
    &\leq c_\alpha^{j*}\left\|1-\frac{\dot\Theta(\ct{}^*(\theta))}{\dot{\bar\Theta}(\bar{\ct{}}(\theta))}\right\|.
  \end{align*}
  By construction of $\bar\ell$, we know that $|\bar{\ct{}}-\ct{}^*|=\bigo{\lS^{-3\beta}}$, hence from the definition of standard curve, and from the fact that we consider points outside $\critical{1}$ we have:
  \[
  \left\|1-\frac{\dot\Theta(\ct{}^*(\theta))}{\dot{\bar\Theta}(\bar{\ct{}}(\theta))}\right\|\leq\bigo{\lS^{-2\beta}},
  \]
  which concludes the proof of \eref{e_reductionEstimate}.
\end{proof}
\begin{proof}[Proof of Lemma~\ref{l_periodicity}]
  First of all we use Lemma~\ref{l_baseInductionStep} to obtain:
  \[
  \Psi_{\alpha,k}(\eta)=\sum_{\alpha'\in\aset}\sum_{j\in\Kset{{\alpha'}}}c_{\alpha'}^j(\eta)\Psi_{\alpha',k-1}(\eta_{\alpha'}^j(\eta))+\|\averaged\| o(\lS\inv);
  \]
  for each $\alpha'\in\aset$, let $\eta^*(\eta,\alpha,\alpha')$ such that
  $|\T(\eta^*)-\T(\eta)|<1$ and $\pi_x
  F(\bar{\ct{}}_\alpha,\eta^*)=\bar{\ct{}}_{\alpha'}$. By \eqref{e_estimateSingleAlpha} we
  conclude that:
  \begin{align*}
    \sum_{j\in\Kset{{\alpha'}}}c_{\alpha'}^j(\eta)&\Psi_{\alpha',k-1}(\eta_{\alpha'}^j(\eta))=\\
    &=\sum_{j\in\Kset{{\alpha'}}}c_{\alpha'}^j(\eta^*)\Psi_{\alpha',k-1}(\eta_{\alpha'}^j(\eta^*))+\|\averaged\|_1 o(\lS\inv).
  \end{align*}
We claim that we can neglect the dependency of $c_{\alpha'}^j$ on $\eta^*$; in fact, if $\eta^*_1,\eta^*_2$ are such that $\pi_x(F(\bar{\ct{}}_\alpha,\eta_i^*))=\bar{\ct{}}_{\alpha'}$, then necessarily $\T(\eta^*_1)\equiv\T(\eta^*_2)\mod 1$. Consequently, following the proof of Lemma~\ref{l_periodicityBase} we obtain:
  \[
  |c_{\alpha'}^j(\eta^*_1)- c_{\alpha'}^j(\eta^*_2)|\leq C|\eta^*_1-\eta^*_2|\lS^{-1}\cdot c_{\alpha'}^j(\eta_1^*).
  \]
  Since $\Psi_{\alpha',k-1}=\|\averaged\|\bigo{\lS^{-\beta}}$ by corollary~\ref{c_lazyInduction}, we conclude that we can find a sequence $c_{\alpha'}^j$ such that:
  \[
  \sum_{j\in\Kset{{\alpha'}}}c_{\alpha'}^j(\eta^*)\Psi_{\alpha',k-1}(\eta_{\alpha'}^j(\eta^*))=
  \sum_{j\in\Kset{{\alpha'}}}c_{\alpha'}^j\Psi_{\alpha',k-1}(\eta_{\alpha'}^j(\eta^*))+\|\averaged\|_1 o(\lS\inv),
  \]
  that is:
  \[
  \Psi_{\alpha,k}(\eta)=  \sum_{j\in\Kset{{\alpha'}}}c_{\alpha'}^j\Psi_{\alpha',k-1}(\eta_{\alpha'}^j(\eta^*))+\|\averaged\|_1 o(\lS\inv).
  \]
  We now consider separately the cases $k=2$ and $k\geq3$; we first assume that $k=2$: in
  this case, Lemma~\ref{e_baseEquiStep} implies that
  $\Psi_{\alpha',1}=\|\averaged\|o(\lS\inv)$ for $\alpha'$ corresponding to curves which
  do not intersect the critical set $\hcritical1$; we can therefore neglect the
  contribution of such curves. We let $\asets$ be the subset of $\aset$ given by indices
  associated to the remaining curves.  We use Lemma~\ref{l_fourierEstimate} and
  \eref{e_etaDistribution} to obtain:
  \begin{align*}
    &\sum_{j\in\Kset{{\alpha'}}}c_{\alpha'}^j\Psi_{\alpha',k-1}(\eta_{\alpha'}^j(\eta^*))=\\
    &=\sum_{j\in\Kset{{\alpha'}}}c_{\alpha'}^j{\sum_l}'\hat\Psi_{\alpha',1}^{(l)}e^{-2\pi i\,l\T(\eta_{\alpha'}^j(\eta^*))}=\\
    &={\sum_l}\hat\Psi_{\alpha',1}^{(l)}e^{-2\pi i\,l\T(\eta_{\alpha'}^0(\eta^{*}))}\underbrace{\sum_{j\in\Kset{{\alpha'}}}c_{\alpha'}^je^{-2\pi i\,l(\mu_{\alpha'}(j-\theta_{\alpha'}^j+\theta_{\alpha'}^0)-\theta_{\alpha'}^j+\theta_{\alpha'}^0)}}_{\Upsilon^{(l)}_{\alpha'}}.
  \end{align*}
  We can now understand the cancellation mechanism: we claim that
\begin{equation}\label{e_estimateUpsilon1}
  |\Upsilon_{\alpha'}^{(l)}|\leq C\lS^{1/2-\beta}+\min(C\,|l|\lS^{1-2\beta}\log\lS+C\,|l|\lS^{-1},3);
\end{equation}
In fact $\Upsilon_{\alpha'}^{(l)}$ is given by a sum of oscillating terms; the phase of
each term differs from the phase of the previous one by $\bigo{\lS^{-1}}$, and we have
$\bigo{\lS^{2\beta}}$ such terms. We will collect together the phases belonging to the
same period: in an ideal (unrealistic) situation, standard pairs would have uniform
weights, and therefore summing over each complete collection would give us a contribution
of order $\lS^{-1}$, by comparison with a Riemann sum.  In reality standard pairs have
non-uniform weights and we need more involved estimates in order to deal with the lack of
uniformity of weights.

If $\baseSlope{\bar{\ct{}}_{\alpha}}=0$ it is necessary to avoid a
portion of curve where $|\tslope{1}|$ is too small; namely, define
$\Theta(\ct{})=\ct{}+\T(\psi_{\alpha,\eta}(\ct{}))$ and let $\ct{}^*(\alpha,\alpha')\in
I_\alpha$, such that
$|\mu_{\alpha'}(\Theta(\ct{}^*))-\mu_{\alpha'}(\Theta(\bar{\ct{}}_{\alpha}))|=2$; let
$\slope{}^*=|\baseSlope{\ct{}^*}|$ and define $\kset{\alpha'}\subset \Kset{\alpha'}$ such
that $\fa j\in\kset{\alpha'}$ we have $|\tslope{}(\theta^j_{\alpha'})|\geq \slope{}^*$. It
is easy to prove, by definition of $\mu_\alpha'$ and $\slope{}$, that
$|\bar{\ct{}}_\alpha-\ct{}^*|\leq\const\cdot\lS^{\onehalf-\beta}$ and
$\slope{}\geq\const\lS^{\onehalf-\beta}$, which immediately implies that
$\sum_{j\in\Kset{\alpha'}\setminus\kset{\alpha'}}c_{\alpha'}^j\leq\const\cdot\lS^{\onehalf-\beta}$. On
the other hand, if $\baseSlope{\bar{\ct{}}_\alpha}\not=0$ we can simply take
$\kset{\alpha'}=\Kset{\alpha'}$. Consider a partition $\kset{\alpha'}$ in subsets
$\prt{\kset{\alpha'}}{i}$ such that $j,j'\in\prt{\kset{\alpha'}}{i}$ if and only if:
\[
\lfloor\mu_{\alpha'}(j-\theta_{\alpha'}^j+\theta_{\alpha'}^0)\rfloor = \lfloor\mu_{\alpha'}(j'-\theta_{\alpha'}^{j'}+\theta_{\alpha'}^0)\rfloor.\]
Denote $\prt{\dot\Theta}{i}=\min_{j\in\prt{\kset{\alpha'}}{i}}|\baseSlope{\theta_{\alpha'}^j}\T'(\lS)|$; \eref{e_conditionsNu} then implies:
\begin{equation}\label{e_maxTheta}
\max_{j,j'\in\prt{\kset{\alpha'}}{i}}(\theta_{\alpha'}^j-\theta_{\alpha'}^{j'})\leq \const\cdot\lS\prt{\dot\Theta}{i}^{-1}=\bigo{\lS^{\onehalf-\beta}}
\end{equation}
Define $\prt{c_{\alpha'}}{i}=\sum_{j\in\prt{\kset{\alpha'}}{i}}c_{\alpha'}^j$; notice that \eref{e_maxTheta} implies
\begin{equation}\label{e_maxC}
  \prt{c_{\alpha'}^j}{i}\leq \const\,\lS \prt{\dot\Theta}{i}^{-1},
\end{equation}
  and that, for any $j,j'\in\prt{\kset{\alpha'}}{i}$:
\begin{align}\label{e_variationTheta}
  |e^{-2\pi i\,l\mu_{\alpha'}(j-\theta_{\alpha'}^j+\theta_{\alpha'}^0)-\theta_{\alpha'}^j+\theta_{\alpha'}^0}&-e^{-2\pi i\,l\mu_{\alpha'}(j-\theta_{\alpha'}^{j'}+\theta_{\alpha'}^0)-\theta_{\alpha'}^{j'}+\theta_{\alpha'}^0}|\notag\\&\leq \min(\const\,|l|\lS\prt{\dot\Theta}{i}^{-1},2).
\end{align}
We want to keep those $\prt{\kset{\alpha'}}{i}$ for which $\{\mu_{\alpha'}(j-\theta_{\alpha'}^j+\theta_{\alpha'}^0)\mod 1\}_{j\in\prt{\kset{\alpha'}}{i}}$ samples $\sone$ with an error bounded by $\bigo{\lS^{-1}}$. It is sufficient to discard the \emph{first} and \emph{last} (with respect to the natural ordering given by $j$) of the $\prt{\kset{\alpha'}}{i}$; in fact, by \eref{e_maxC}, their contribution is bounded by $\const\,\lS^{\onehalf-\beta}$. Define $\bar j_i=\min\prt{\kset{\alpha'}}{i}$ and compute the following sum:
\[
\sum_{j\in\prt{\kset{\alpha'}}{i}}\frac{1}{|\prt{\kset{\alpha'}}{i}|}e^{-2\pi i\,l\mu_{\alpha'}(j-\theta_{\alpha'}^{\bar j_i}+\theta_{\alpha'}^0)};
\]
by \eref{e_conditionsNu}, $\mu_{\alpha'}$ is almost a linear function in each $\prt{\kset{\alpha'}}{i}$; in particular, $\fa j\in\prt{\kset{\alpha'}}{i}$ the following bound holds true:
\[
\mu_{\alpha'}(j-\theta_{\alpha'}^{\bar j_i}+\theta_{\alpha'}^0)=\mu_{\alpha'}(\bar j_i-\theta_{\alpha'}^{\bar j_i}+\theta_{\alpha'}^0)+\mu'_{\alpha'}(\bar j_i-\theta_{\alpha'}^{\bar j_i}+\theta_{\alpha'}^0)(j-\bar j_i)+\bigo{\lS^{-1-2\beta}}.
\]
where $\mu'_{\alpha'}$ is the derivative of $\mu_{\alpha'}$. By hypothesis $\hat\Psi_{\alpha,1}^{(0)}=0$, hence we can assume $l\not =0$; comparison with the Riemann sum of $\int e^{2\pi i l \theta}\deh\theta$ implies:
\begin{equation}\label{e_RiemannSum}
\sum_{j\in\prt{\kset{\alpha'}}{i}}\frac{1}{|\prt{\kset{\alpha'}}{i}|}e^{-2\pi i\,l\mu_{\alpha'}(j-\theta_{\alpha'}^{\bar j_i}+\theta_{\alpha'}^0)}\leq \min(\const\ |l|\lS^{-1},1).
\end{equation}
Finally, by definition of $c_{\alpha'}^j$ we have the following estimate:
\[
\left|\frac{c_{\alpha'}^j}{\prt{c_{\alpha'}}{i}}-\frac{1}{|\prt{\kset{\alpha'}}{i}|}\right|\leq\const\cdot\lS^{2\beta}\prt{\dot\Theta}{i}^{-2}.
\]
We can therefore estimate $|\Upsilon_{\alpha'}^{(l)}|$ as follows:
\begin{subequations}
\begin{align}
  |\Upsilon_{\alpha'}^{(l)}|\leq\sum_i\prt{c_{\alpha'}}{i}&\sum_{j\in\prt{\kset{\alpha'}}{i}}\left|\frac{c_{\alpha'}^j}{\prt{c_{\alpha'}}{i}}-\frac{1}{|\prt{\kset{\alpha'}}{i}|}\right|+\label{e_estimateUpsilona}\\
  +&\left|\sum_{j\in\prt{\kset{\alpha'}}{i}}\frac{1}{|\prt{\kset{\alpha'}}{i}|}(e^{-2\pi i\,l\mu_{\alpha'}(j-\theta_{\alpha'}^j+\theta_{\alpha'}^0)-\theta_{\alpha'}^j+\theta_{\alpha'}^0}+\right.\notag\\&\left.\phantom{\sum_{j\in\prt{\kset{\alpha'}}{i}}\frac{1}{|\prt{\kset{\alpha'}}{i}|}} -e^{-2\pi i\,l\mu_{\alpha'}(j-\theta_{\alpha'}^{\bar{j}_i}+\theta_{\alpha'}^0)-\theta_{\alpha'}^{\bar{j}_i}+\theta_{\alpha'}^0})\right|+\label{e_estimateUpsilonb}\\
  +&\left|\sum_{j\in\prt{\kset{\alpha'}}{i}}\frac{1}{|\prt{\kset{\alpha'}}{i}|}e^{-2\pi i\,l\mu_{\alpha'}(j-\theta_{\alpha'}^{\bar j_i}+\theta_{\alpha'}^0)-\theta_{\alpha'}^{\bar j_i}+\theta_{\alpha'}^0}\right|.\label{e_estimateUpsilonc}
\end{align}
\end{subequations}
The sum in \eref{e_estimateUpsilona} can be bounded using Lemma~\ref{l_E0}; consider the
finite measure space whose elements are the subsets $\prt{\kset{\alpha'}}{i}$ with measure
$\prt{c_{\alpha'}}{i}$; we let
\[
f_i=\sum_{j\in\prt{\kset{\alpha'}}{i}}\left|\frac{c_{\alpha'}^j}{\prt{c_{\alpha'}}{i}}-\frac{1}{|\prt{\kset{\alpha'}}{i}|}\right|\leq\const\cdot\lS^{1+2\beta}\prt{\dot\Theta}{i}^{-2}
\]
and
\begin{align*}
\lambda&=\const\cdot\lS^{1-2\beta}& \alpha&=1/2.
\end{align*}
This gives a bound $\bigo{\lS^{\onehalf-\beta}}$; the sum in
\eref{e_estimateUpsilonb} can be bounded again by Lemma~\ref{l_E0}; this time let
\begin{align*}
  X_i=\left|\sum_{j\in\prt{\kset{\alpha'}}{i}}\right.&\frac{1}{|\prt{\kset{\alpha'}}{i}|}(e^{-2\pi i\,l\mu_{\alpha'}(j-\theta_{\alpha'}^j+\theta_{\alpha'}^0)-\theta_{\alpha'}^j+\theta_{\alpha'}^0}+\\-&\left.\vphantom{\sum_{j\in\prt{\kset{\alpha'}}{i}}\frac{1}{|\prt{\kset{\alpha'}}{i}|}}e^{-2\pi i\,l\mu_{\alpha'}(j-\theta_{\alpha'}^{\bar{j}_i}+\theta_{\alpha'}^0)-\theta_{\alpha'}^{\bar{j}_i}+\theta_{\alpha'}^0})\right| \leq \min(\const\,|l|\lS\prt{\dot\Theta}{i}^{-1},2)
\end{align*}
and $f_i=\const\lS^{\beta-1/2}X_i$:
\begin{align*}
\lambda&=\const\cdot\lS^{\onehalf-\beta}& \alpha&=1.
\end{align*}
which yields a bound of $\min(|l|\,C\lS^{1-2\beta}\log\lS,2)$. The remaining term
\eref{e_estimateUpsilonc} can be bounded directly using \eref{e_RiemannSum}, which gives a
bound of $\min(\const\,|l|\lS^{-1},1)$ and concludes the proof of the estimate
\eqref{e_estimateUpsilon1}.  Define
\[
\hat\Psi_{\alpha,2}^{(\alpha',l)}=\hat\Psi_{\alpha',1}^{(l)}\Upsilon_{\alpha'}^{(l)}e^{2\pi
  i l\Phi_{\alpha'}},\] where $\Phi_{\alpha'}$ is a phase to be fixed later. Notice that
by \eref{e_estimateUpsilon1} and since $\beta>\onehalf$ we obtain
\eref{e_mainDecaya}. Summarizing we have:
\[
\Psi_{\alpha,2}(\eta)=\sum_{\alpha'\in
  \asets}{\sum_{l}}\hat\Psi_{\alpha,2}^{(\alpha',l)}e^{-2\pi i l
  (\T(\eta_{\alpha'}^0(\eta^{*}))-\Phi_{\alpha'})}+\|\averaged\|_1 o(\lS\inv).
\]
We now study the exponential term. By definition of $\eta^*$:
\[
\eta_{\alpha'}^0(\eta^*)=\eta^*+2\dot\phi(\bar{\ct{}}_{\alpha'}),
\]
therefore we have
\begin{align*}
\T(\eta_{\alpha'}^0(\eta^*))&=\T(\lS+2\dot\phi(\bar{\ct{}}_{\alpha'}))+(\T(\eta^*)-\T(\lS))\\&+\int_{\T(\lS)}^{\T(\eta^*)}\left(\frac{\T'(\cv{}(\T)+2\dot\phi(\bar{\ct{}}_{\alpha'}))}{\T'(\cv{}(\T))}-1\right)\deh\T.
\end{align*}
By \eref{e_conditionsNu} we conclude that:
\[
\frac{\T'(\cv{}(\T)+2\dot\phi(\bar{\ct{}}_{\alpha'}))}{\T'(\cv{}(\T))}-1=\frac{\T'(\lS+2\dot\phi(\bar{\ct{}}_{\alpha'}))}{\T'(\lS)}-1+\bigo{\lS^{-2}};
\]
define
\[
\we_{\alpha'}=\frac{\T'(\lS+2\dot\phi(\bar{\ct{}}_{\alpha'}))}{\T'(\lS)}-1=\bigo{\lS^{-1}};
\]
notice that $\we_{\alpha'}$ satisfies the bound \eqref{e_upperlowerWe} because of our
choice of $\asets$.
Then by letting
$\dot\phi_{\alpha'}=\T(\lS+2\dot\phi(\bar{\ct{}}_{\alpha'}))+\T(\eta^*)-\T(\lS)-\we_{\alpha'}\cdot\T(\lS)$
(recall that the fractional part of $\T(\eta^*)$ does not depend on $\eta$), we have:
\[
|e^{-2\pi i l (\T(\eta_{\alpha'}^0(\eta^{*}))-\Phi)}-e^{-2\pi i l\we_{\alpha'}\T(\eta)}|
\leq \min(\const\, |l|\lS^{-2+2\beta},2).
\]
By definition of $\hat\Psi_{\alpha,1}^{(l)}$ and $\hat\Psi_{\alpha,2}^{(l)}$ and by estimate \eref{e_estimateUpsilon1} we then have:
\begin{align*}
  &\|\Psi_{\alpha,2}(\eta)-\sum_{\as\in \asets}{\sum_{l}}\hat\Psi_{\alpha,2}^{(\as,l)}e^{-2\pi i l\we_{\as}\T(\eta)}\|\leq \\
  &\leq {\sum_{l}}|\hat\averaged_l|\lS^{-\beta}\cdot (C\lS^{\onehalf-\beta}+\min(|l| C\lS^{1-2\beta}\log\lS+|l| C\lS^{-1},3))\cdot\\
  &\hphantom{\leq \sum_{\as\in \asets}{\sum_{l}}|\hat\averaged_l|\lS^{-\beta}\log\lS}\ \cdot\min(\const\, |l|\lS^{-2+2\beta},2)\leq\\
  &\leq {\sum_{l}}|l|^2|\hat\averaged_l|o(\lS^{-1}).
\end{align*}

Assume now $k\geq 3$; define $\T_{j,\alpha'}(\eta^*)=\T(\eta_\alpha^0(\eta^*))+j+\mu_{\alpha'}(j)$; by Lemma~\ref{l_baseInductionStep} we have $|\T_{j,\alpha'}-\T(\eta_{\alpha'}^j(\eta^*))|<1$, so that first by Lemma~\ref{l_lipshitz} and then by inductive hypothesis we can write:
\begin{align*}
  &\sum_{j\in\Kset{{\alpha'}}}c_{\alpha'}^j\Psi_{\alpha',k-1}(\eta_{\alpha'}^j(\eta^{*}))=\\
  &=\sum_{j\in\Kset{{\alpha'}}}c_{\alpha'}^j\Psi_{\alpha',k-1}(\eta(\T_j))+\|\averaged\|o(\lS^{-1})=\\
  &=\sum_{j\in\Kset{{\alpha'}}}c_{\alpha'}^j\sum_{\as,l}\hat\Psi_{\alpha',k-1}^{(\as,l)}e^{2\pi i\,l\we_\as\T_j}+\|\averaged\|o(\lS^{-1})=\\
  &=\sum_{\as,l}^{\phantom{K_{\alpha'}-1}}\hat\Psi_{\alpha',k-1}^{(\as,l)}e^{-2\pi i l \we_\as Y(\eta_{\alpha'}^0)}\underbrace{\sum_{j\in\Kset{{\alpha'}}}c_{\alpha'}^j e^{-2\pi i\,l\we_\as(j+\mu_{\alpha'}(j))}}_{\Upsilon_{\as,\alpha'}^{(l)}}+
  \|\averaged\|o(\lS^{-1}).
\end{align*}
We claim that
\begin{equation}\label{e_claimUpsilon2}
  |\Upsilon_{\as,\alpha'}^{(l)}|\leq C\lS^{\onehalf-\beta}+\min(C\,|l|\lS^{-1},1).
\end{equation}
Define in the same way as for the case $k=2$ the index set $\kset{\alpha'}\subset
\Kset{\alpha'}$ and fix a partition in subsets $\prt{\kset{\alpha'}}{i}$ such that
$j,j'\in\prt{\kset{\alpha'}}{i}$ if and only if:
\[
\lfloor\we_\as\cdot(j+\mu_{\alpha'}(j)\rfloor = \lfloor\we_\as\cdot(j'+\mu_{\alpha'}(j'))\rfloor.
\]
Define $\prt{\dot\Theta}{i}$, $\prt{c_{\alpha'}}{i}$ and $\bar j_i$ as before. Notice that \eref{e_maxTheta} still holds true, hence so do all estimates on $\prt{c_{\alpha'}}{i}$ and $\prt{\dot\Theta}{i}$. We discard once more the first and last sets $\prt{\kset{\alpha'}}{i}$; for the remaining ones the following estimate holds true:
\[
\we_\as(j+\mu_{\alpha'(j)})=\we_\as(\bar j_i+\mu_{\alpha'}(\bar j_i))+\we_\as(j-\bar j_i+\bigo{1}),
\]
hence, by comparison with a Riemann sum of $\int e^{2\pi i l\theta}\deh\theta$ we obtain the following estimate:
\begin{equation}\label{e_RiemannSum2}
  \sum_j\frac{1}{|\prt{\kset{\alpha'}}i|}e^{-2\pi i l\mu_q\cdot(j+\mu_{\alpha'}(j))}\leq \min(C\,|l|\lS^{-1},1).
\end{equation}
So that now we can estimate:
\begin{subequations}
\begin{align}
  |\Upsilon_{\as,\alpha'}^{(l)}|\leq\sum_i\prt{c_{\alpha'}}{i}&\sum_{j\in\prt{\kset{\alpha'}}{i}}\left|\frac{c_{\alpha'}^j}{\prt{c_{\alpha'}}{i}}-\frac{1}{|\prt{\kset{\alpha'}}{i}|}\right|+\label{e_estimateUpsilon2a}\\+&\left|\sum_{j\in\prt{\kset{\alpha'}}{i}}\frac{1}{|\prt{\kset{\alpha'}}i|}e^{-2\pi i l\we_\as\cdot(j+\mu_{\alpha'}(j))}\right|.\label{e_estimateUpsilon2b}
\end{align}
\end{subequations}
The estimate for \eref{e_estimateUpsilon2a} is the same as for \eref{e_estimateUpsilona}; the estimate for \eref{e_estimateUpsilon2b} is given by \eref{e_RiemannSum2} from which we conclude the proof of \eref{e_claimUpsilon2}.
Define
\[
\hat\Psi_{\alpha,k}^{(\as,l,\alpha')}=\hat\Psi_{\alpha',k-1}^{(\as,l)}\Upsilon_{\as,\alpha'}^{(l)}e^{2\pi i l\Phi_{\alpha'}}
\]
where $\Phi_{\alpha'}$ is a phase to be determined later. Hence we can write:
\[
\Psi_{\alpha,k}(\eta)=\sum_{\as\in \aset}\sum_{\alpha'\in\aset}{\sum_{l\in\integers}}\hat\Psi_{\alpha,k}^{(\as,l,\alpha')}e^{-2\pi i l \we_\as(\T(\eta_{\alpha'}^0(\eta^*))-\Phi_{\alpha'})}
\]
We now need to study the oscillating term $e^{-2\pi i
  l\we_\as\T(\eta_{\alpha'}^{0}(\eta^*))}$; we now let
$\Phi_{\alpha'}=\T(\lS+2\dot\phi(\bar x_{\alpha'}))-\T(\lS)-\we_\alpha'\cdot\T(\lS)$ so
that $\T(\eta_{\alpha'}^0(\eta^*))=\Phi_{\alpha'}+\T(\eta^*)+\bigo{\lS^{-1+2\beta}}$,
which implies:
\[
e^{-2\pi i l\we_\as\T(\eta_{\alpha'}^{0}(\eta^*))}=e^{-2\pi i l\we_\as\T(\eta^*)}+\min(|l|\bigo{\lS^{-2+2\beta}},1),
\]
whose main term does not depend on $\alpha'$.  We can thus define
$\hat\Psi_{\alpha,k}^{(\as,l)}=\sum_{\alpha'\in\aset}\hat\Psi_{\alpha,k}^{(\as,l,\alpha')}$
which, by \eref{e_claimUpsilon2}, implies \eref{e_mainDecayb} and therefore
\eref{e_explicitDecay} . We then have:
\begin{align*}
  &\|\Psi_{\alpha,k}(\eta)-\sum_{\as\in \asets}{\sum_{l}}\hat\Psi_{\alpha,k}^{(\as,l)}e^{-2\pi i l\we_\as\T(\eta)}\|\leq \\
  &\leq {\sum_{l}}|l||\hat\averaged_l| (C\lS^{-\beta-(k-1)(\beta-\onehalf)}\log\lS+\min(|l|\lS^{-1},1))\cdot\\
  &\hphantom{\leq \sum_{q\in Q}{\sum_{l}}|\hat\averaged_l|\lS^{-\beta}\log\lS}\ \cdot\min(\const\, |l|\lS^{-2+2\beta},2)\leq\\
  &\leq {\sum_{l}}|l|^2|\hat\averaged_l|o(\lS^{-1})
\end{align*}
\end{proof}
\begin{rmk}
The proof of Lemma~\ref{l_periodicity} clarifies why the cancellation
argument fails for $\gamma\leq2$; consider for instance the case $\gamma=2$, then, in the
formula for $\Upsilon$ we have a sum of $\bigo{\lS}$ terms whose phases differ by
$\bigo{\lS^{-1}}$; hence, incomplete boundary collections of indices will contribute with
some fixed proportion, and we cannot expect to iterate efficiently the cancellation
scheme.

As we mentioned in the introductory section, we performed simple numerical computations
for the functions $\Psi_{\alpha,k}$ for various values of $\gamma$; if $\gamma$ is far
enough away from $2$, the outcome of such computations well agree with \eqref{e_equi}
apart from the $o(\lS^{-1})$ term, which appears to be a technical byproduct of our
techniques and, in principle, could be avoided by improving some of the estimates. As $\gamma\to
2^{+}$, the asymptotic behavior \eqref{e_equi} appears to dominate only for larger and
larger values of $\lS$; it is thus increasingly delicate to obtain sensible quantitative
results in this region, nevertheless the asymptotics \eqref{e_equi} still appears to be
the best fit.
\end{rmk}

\input{definitions.p}
\newcommand{\smallo}[1]{o(#1)}
\section{Comparison with a biased random walk}\label{s_reduction}
In this section we describe a procedure which allows to compare the dynamics on a standard
pair with a one-dimensional biased random walk; the comparison argument is the crucial
ingredient for the proof of Lemma~\ref{l_criticalTimeEstimate}. All arguments given in
this section are adapted from the analogous ones explained in \cite{Dima}; the only
possibly non-trivial adaptation is the proof of proposition \ref{l_final}.

Let us denote by $\bar\ell$ the standard pair appearing in the statement of
Lemma~\ref{l_criticalTimeEstimate}: we will call $\bar\ell$ the \emph{master} standard
pair. For $k\in\integers$ define $R_k=2^k\V_{\bar\ell}$; we say that a standard pair
$\ell$ is \emph{close} to $R_k$ if
\[
\Gamma_{\ell}\subset\torus^1\times[R_k-2A\nu,R_k+2A\nu],
\]
where $\nu$ is the one given by Lemma~\ref{l_equidistribution}.  We say that a standard pair
$\ell$ is \emph{compatible} with $R_k$ if
\[
\Gamma_{\ell}\subset\torus^1\times[R_{k-1},R_{k+1}].
\]
\begin{definition}
Let $\ell$ be a standard pair; following definition \ref{d_criticalTime} we introduce the function
\begin{align*}
\tau_\ell^{[k]}&:\Gamma_\ell\to\naturals\cup\{\infty\}
\end{align*}
given by the following recursive definition:
if $\ell$ is not compatible with $R_k$ we let $\tau_\ell^{[k]}\equiv 0$. Otherwise let $p\in\Gamma_\ell$, then by item (b) of lemma \ref{l_invariance} we have three possibilities:
\begin{itemize}
\item $Fp$ belongs to a standard pair $\ell'=(\Gamma_{\ell'},\rho_{\ell'})$: we then let $\tau_\ell^{[k]}(p)=\tau_{\ell'}^{[k]}(Fp)+1$;
\item $Fp$ belongs to a stand-by pair, hence $F^2p$ belongs to a standard pair $\ell''$: we then let $\tau_\ell^{[k]}(p)=\tau_{\ell''}^{[k]}(F^2p)+2$;
\item otherwise we let $\tau_\ell^{[k]}(p)=0$.
\end{itemize}
Notice that by definition we have $\tau_\ell^{[k]}\leq\tau_\ell$.
\end{definition}
\begin{definition}
  Let $\ell$ be a standard pair close to $R_k$; we then define a function $\xi_\ell^{[k]}:\Gamma_\ell\to\{-1,+1\}$ in the following way:
\[
\xi_\ell^{[k]}(p)=
\begin{cases}
+1&\textrm{if }\tau_\ell^{[k]}(p)<\tau_\ell(p)\textrm{ and }F^{\tau_\ell^{[k]}(p)}(p)\textrm{ belongs to $\ell'$ close to }R_{k+1};\\
-1&\textrm{ otherwise.}
\end{cases}
\]
\end{definition}
The main technical result of this section, which will be used to prove lemma \ref{l_criticalTimeEstimate} is given by the following
\begin{prp}\label{l_final}
Let $\ell$ be a standard pair; then if $\gamma>2$:
\begin{enumerate}[(a)]
\item there exists $0<\vartheta<1$ such that $\prob_\ell(\tau_\ell^{[k]}\geq s)\leq \const\vartheta^{s\V^{-2}_\ell}$;
\item if $\ell$ is close to $R_k$, then we have $\prob_\ell\left(\xi_\ell^{[k]}=-1\right) \geq 0.6$;
\end{enumerate}
\end{prp}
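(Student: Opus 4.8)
The plan is to realise $\{\cv{n}\}$, run up to the truncated critical time $T:=\tau_\ell^{[k]}$, as a process with bounded increments, uniformly nondegenerate conditional variance, and — decisively — cumulative drift negligible on the time scale $\asymp\V_\ell^{2}$ over which it can cross a dyadic band; parts (a) and (b) then follow from the usual stopped‑random‑walk estimates together with the dyadic geometry $R_{k\pm1}=2^{\pm1}R_k$. Fix $\ell$ compatible with $R_k$ and write $\V=\V_\ell$ (so $\V\asymp R_k$); unwinding its recursive definition, $T$ is the first step at which the orbit either enters $Z$ or lands in a standard pair not compatible with $R_k$, so on $\{T\ge n\}$ the point $F^np$ lies in a standard pair $\ell_n$ still compatible with $R_k$, whence $\V_{\ell_n}\asymp\V$ and one may condition through the decomposition of Lemma~\ref{l_invariance}. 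Since $\cv{n+1}-\cv{n}=2\dot\phi_A(\ct{n+1})$ the increments are $\le 2A$, and from $(\cv{n+1}-\cv{n})^2=2A^2\bigl(1+\cos 2\ct{n+1}\bigr)$ together with Remark~\ref{r_clearEqui} the conditional variance is $\ge A^2/2$ for $\V$ large.

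The heart of the matter is the drift. The bound from Remark~\ref{r_clearEqui} is only $\bigo{\V^{-\beta}}$ per step, which over $\asymp\V^2$ steps accumulates to $\bigo{\V^{2-\beta}}$ and is worthless for $\beta<1$; instead I would invoke the refined Equidistribution Lemma~\ref{l_equidistribution}. Conditioning a step $j$ on $\mathcal F_{j-1-\nu}$ instead of on $\mathcal F_{j-1}$ and applying Lemma~\ref{l_equidistribution} with $n=\nu+1$ to the standard pair $\ell_{j-1-\nu}$ — which, being compatible with $R_k$, has $\V_{\ell_{j-1-\nu}}\asymp\V$, so that the constant $C_{\nu+1}$ is \emph{fixed} — produces a \emph{lagged} compensator $\tilde A_n$ for $\cv{n\wedge T}-\cv{0}$ whose increments are $o(\V^{-1})$, except on the first $\nu$ steps (total contribution $\bigo1$) and except for the error of replacing the survival event by $\{\tau\ge\nu\}$. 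That error is handled by Lemma~\ref{l_lebesgueCritical}: its part due to entering $Z$ within $\nu+1$ steps sums to $\bigo{\V^{2}\,\prob_\ell(\hcritical2)}=\bigo{\V^{2-2\beta}}=\bigo{\V^{3-\gamma}}=o(\V)$ \emph{precisely because $\gamma>2$}, while its part due to exiting the band within $\nu+1$ steps sums to $\bigo1$ since the orbit crosses the band boundary at most once. Hence $\expectation_\ell|\tilde A_n|=o(\V)$ over any window of $\bigo{\V^2}$ steps — and, once part (a) is known, up to $n=\infty$ — and the same estimates carried out realisation‑wise give $|\tilde A_n|=o(\V)$ pointwise.

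For part (a), set $M'_n:=\cv{n\wedge T}-\cv{0}-\tilde A_n$, a martingale for the lagged filtration; its predictable quadratic variation picks up $\ge A^2/2$ at each surviving step, so $\expectation_\ell\bigl((M'_n)^2\bigr)\ge\tfrac{A^2}2\expectation_\ell(n\wedge T)-o(1)$ for $n\asymp\V^2$. Since $\cv{n\wedge T}\in[R_{k-1}-2A,\,R_{k+1}+2A]$ we have $\expectation_\ell(\cv{n\wedge T}^2)\le 5R_k^2$, and expanding $\cv{n\wedge T}=\cv{0}+M'_n+\tilde A_n$, using $\expectation_\ell(M'_n)=0$ and the bounds $\expectation_\ell|\tilde A_n|=o(\V)$, $|\tilde A_n|=o(\V)$ to discard all cross terms as $o(\V^2)$, one gets $A^2\,\expectation_\ell(s\wedge T)\le 6R_k^2+o(\V^2)$. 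Choosing $s=C'\V^2$ with $C'$ large enough — adjusted to account for $\V_{\ell'}\asymp\V$ for the standard pairs $\ell'$ met later — yields $\expectation_\ell(s\wedge T)\le s/2$, hence $\prob_\ell(T\ge s)\le\tfrac12$ by Markov; applying this at the ($R_k$‑compatible, standard) pair reached at time $s$ and iterating gives $\prob_\ell(\tau_\ell^{[k]}\ge s)\le\const\,\vartheta^{\,s\V_\ell^{-2}}$ with $\vartheta=2^{-1/C'}<1$ (vacuous for $s\lesssim\V_\ell^2$ after enlarging $\const$).

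For part (b), assume now that $\ell$ is close to $R_k$, so $\cv{0}=R_k(1+o(1))$. By (a), $T<\infty$ a.s.\ and $\expectation_\ell(T)=\bigo{\V^2}$; feeding this into the compensator estimate gives $\expectation_\ell(\tilde A_T)=o(\V)$, hence $\expectation_\ell(\cv{T})=\cv{0}+o(\V)=R_k(1+o(1))$. Reading off $\cv{T}$ from the band geometry: if $\xi_\ell^{[k]}=+1$ the orbit leaves the band above, and since a standard pair has vertical extent $\bigo1$ and each step moves $\cv{}$ by $\le2A$, the first $R_k$‑incompatible pair on that side lies within $\bigo1$ of $R_{k+1}$, so $\cv{T}=2R_k(1+o(1))$; if $\xi_\ell^{[k]}=-1$, either the orbit enters $Z$ while still in the band, so $\cv{T}\in[R_{k-1},R_{k+1}]$, or it exits below, so $\cv{T}=R_{k-1}(1+o(1))$ — in every $\xi=-1$ case $\cv{T}\ge\tfrac12R_k(1-o(1))$. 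With $q:=\prob_\ell(\xi_\ell^{[k]}=+1)$, taking expectations gives $R_k(1+o(1))=\expectation_\ell(\cv{T})\ge 2R_k\,q(1-o(1))+\tfrac12R_k(1-q)(1-o(1))$, i.e.\ $1+o(1)\ge(\tfrac32q+\tfrac12)(1-o(1))$, forcing $q\le\tfrac13+o(1)<0.4$ for $\V_\ell$ large; thus $\prob_\ell(\xi_\ell^{[k]}=-1)\ge0.6$. The main obstacle throughout is exactly the drift/compensator estimate — showing the cumulative drift of $\cv{\cdot\wedge T}$ is $o(\V)=o(R_k)$ over $\asymp\V^2$ steps, which is out of reach of the elementary $\bigo{\V^{-\beta}}$ equidistribution and forces both the lagged‑conditioning device (so every far‑enough step inherits the $o(\V^{-1})$ accuracy of Lemma~\ref{l_equidistribution} with a fixed constant) and the careful use of Lemma~\ref{l_lebesgueCritical} to absorb the indicator errors; it is also the only place where $\gamma>2$ is genuinely used, through $\V^2\cdot\prob_\ell(\hcritical2)=o(\V)$. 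Everything downstream — the quadratic‑variation exit‑time bound, Markov, the iteration and the gambler's‑ruin computation — is routine and essentially as in \cite{Dima}.
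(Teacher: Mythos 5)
Your overall strategy is the same as the paper's: kill the per-step drift by conditioning with a lag of $\nu+1$ steps so that Lemma~\ref{l_equidistribution} applies with a fixed constant (the paper encodes this in the functions $\zeta_n(\ct{0})=\dot\phi(\ct{n+\nu+1})1_{\tau_\ell^{[k]}\geq n}1_{\tau_\ell\geq n+\nu}$), prove (a) by a second-moment/exit-time bound on the scale $N\asymp\V_\ell^2$ iterated over blocks of $R_k$-compatible standard pairs, and prove (b) by comparing the expected exit height with the dyadic geometry, which is exactly the paper's computation yielding $\prob_\ell(\xi_\ell^{[k]}=-1)=\frac{1}{1-\lambda}(1+o(1))\geq\frac{2}{3}>0.6$.

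The genuine gap is in your second-moment step. The process $M'_n=\cv{n\wedge T}-\cv{0}-\tilde A_n$, whose increments are centered by conditional expectations taken $\nu+1$ steps in the past, is \emph{not} a martingale: increments at time separation $1,\dots,\nu$ are not orthogonal (and the survival indicator $1_{T\geq j}$ is not measurable with respect to the lagged $\sigma$-algebra), so the bound $\expec{\ell}{(M'_n)^2}\geq\tfrac{A^2}{2}\expec{\ell}{n\wedge T}-o(1)$ does not follow from any ``predictable quadratic variation'' identity. A priori each of the $\asymp\nu N$ cross terms between nearby centered increments could be of order $1$, i.e.\ of the same order as the main term $2A^2N$, and could even have the wrong sign. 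Showing that these correlations are in fact negligible is precisely the content of the paper's estimates \eqref{e_crucialEstimateSum} and \eqref{e4_final}: one pushes the measure forward, splits the accumulated past as $B=B_1+B_2$ into a distant part (with derivative $\bigo{\V^{-(p-\nu-1)\beta}}$ along the current pair, handled by Lemma~\ref{l_equidistribution}) and a recent part (bounded, with $\bigo{1}$ derivative, handled by Corollary~\ref{c_lazyInduction}), and this is the only genuinely nontrivial computation in the whole proposition; it is absent from your write-up. A secondary inaccuracy: $\gamma>2$ is not used ``only'' through $\V^2\prob_\ell(\hcritical2)=o(\V)$ --- it is what makes Lemma~\ref{l_equidistribution} available at all, since $\nu(\beta)$ exists only for $\beta>1/2$; in fact the paper needs no such bookkeeping, because the indicators $1_{\tau_\ell^{[k]}\geq n}1_{\tau_\ell\geq n+\nu}$ are built into $\zeta_n$ and the terms they kill contribute only $\bigo{\nu}$ after telescoping, so your window-error estimate via Lemma~\ref{l_lebesgueCritical} is an admissible but unnecessarily lossy substitute.
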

We now show how proposition \ref{l_final} implies lemma \ref{l_criticalTimeEstimate} and
postpone its proof to the end of the current section.  Define two sequence of functions on
the master standard pair $\bar\ell$:
\begin{align*}
  \tau_k&:\Gamma_{\bar\ell}\to\naturals\cup\{\infty\}&\chi_k&:\Gamma_{\bar\ell}\to\integers,
\end{align*}
such that if $\tau_k(p)<\tau(p)$, then $F^{\tau_k(p)}p$ belongs to a standard pair $\ell'$ which is close to $R_{\chi_n(p)}$. We proceed by induction: let $\tau_0\equiv 0$ and $\chi_0\equiv 0$; assume we already defined $\tau_k$ and $\chi_k$: then if $\tau_k(p)=\tau(p)$ we set:
\begin{align*}
\tau_{k+1}(p)&\defeq\tau_{k}(p)&\chi_{k+1}(p)&\defeq\chi_{k}(p)-1.
\end{align*}
Otherwise, by definition $F^{\tau_{k}(p)}(p)=p'$ belongs to some standard pair $\ell'$ close to $R_{\chi_k(p)}$; we then define:
\begin{align*}
  \tau_{k+1}(p)&\defeq \tau_{k}(p)+\tau_{\ell'}^{[\chi_k(p)]}(p')&
  \chi_{k+1}(p)&\defeq \chi_{k}(p)+\xi_{\ell'}^{[\chi_k(p)]}(p').
\end{align*}
The proof of lemma \ref{l_criticalTimeEstimate} now follows from the same argument which
has been used in \cite{Dima} to prove the corresponding estimate (23); we sketch the
argument here and refer the reader to the said reference for the detailed proofs, which
could be repeated verbatim in our situation. The crucial observation is that item (b) of
proposition \ref{l_final} implies that we can compare the dynamics of our system outside
$\critical2$ with a biased random walk moving up with probability $0.4$ and moving down
with probability $0.6$; by this comparison, and by item (a) of proposition \ref{l_final}
we obtain that, almost every point on a standard pair will visit the $\critical2$ (which
includes the region $\{y\leq y_*\}$) in finite time, that is the statement of lemma
\ref{l_criticalTimeEstimate}.
We are now left concluding with the
\begin{proof}[Proof of proposition \ref{l_final}]
  First of all notice that if $\ell$ is not compatible with $R_k$, then item (a) is trivially satisfied; we therefore assume that $\ell$ is compatible with $R_k$. Define the following function on $\ell$:
  \[
  \zeta_n(\ct{0})\defeq\dot\phi(\ct{n+\nu+1})1_{\tau_\ell^{[k]}\geq n}1_{\tau_\ell\geq n+\nu},
  \]
  We claim that the following expressions hold
  \begin{subequations}\label{e_crucialEstimates}
    \begin{align}
      \expec\ell{\zeta_n}&=\prob_\ell(\tau_\ell^{[k]}\geq n)\smallo{\V_\ell^{-1}}\label{e_crucialEstimateSingle}\\
      \expec\ell{\left(\sum_{n=1}^N\zeta_n\right)^2}&\geq N\cdot2A^2\prob_\ell(\tau_\ell^{[k]}\geq N)+\smallo{N}.\label{e_crucialEstimateSum}
    \end{align}
  \end{subequations}
  We now show that equations~\eqref{e_crucialEstimates} imply proposition~\ref{l_final}: in fact by definition of $\zeta_n$ we have:
  \[
  \left\|\sum_{n=0}^N\zeta_n\right\|<3\V_\ell+2 A(\nu+1),
  \]
  hence
  \[
  \expec\ell{\left(\sum_{n=1}^N\zeta_n\right)^2}\leq 9\V_\ell^2+\smallo{\V_\ell^2}
  \]
  which, by \eqref{e_crucialEstimateSum} implies:
  \[
  N\cdot2A^2\prob_\ell(\tau_\ell^{[k]}\geq N) \leq 9\V_\ell^2+\smallo{\V_\ell^2}.
  \]
  Taking $N=L\V_\ell^2$ for large enough $L$ and dividing the previous expression by $2A^2L\V_\ell^2$ we thus obtain:
  \begin{equation}\label{e_thetaNoniterated}
    \prob_\ell(\tau_\ell^{[k]}\geq L\V_\ell^2) \leq \vartheta
  \end{equation}
for some $0<\vartheta<1$. The previous expression implies item (a) by the following argument:
for $m\in\naturals$ let
  \[
  \mathcal{N}_m=[\lceil m\cdot L\V_\ell^2\rceil,\lfloor (m+1)\cdot L\V_\ell^2\rfloor].
  \]
  thus $s\in\mathcal{N}_m$ for some $m\in\naturals$. Consequently, for any $m'\leq m$ we have that $F^{m' L\V_\ell^2}\{\tau_\ell^{[k]}\geq s\}$ can be decomposed in standard pairs and on each one we can apply \eqref{e_thetaNoniterated} since by definition such standard pairs are still compatible with $R_k$. Hence, by induction we obtain
  \[
  \prob_\ell(\tau_\ell^{[k]}\geq s) \leq \vartheta^m
  \]
  which implies item (a). To prove item (b), use \eqref{e_crucialEstimateSingle} and write:
\begin{align*}
  \expec{\ell}{\sum_{n=1}^\infty\zeta_n}&\leq\sum_n \prob_\ell\left(\tau_\ell^{[k]}\geq n\right)\smallo{\V^{-1}}\\&\leq \expec{\ell}{\tau_\ell^{[k]}}\smallo{\V^{-1}}.
\end{align*}
By item (a) we know that $\expec{\ell}{\tau_\ell^{[k]}}=\bigo{\V_\ell^2}$; 
on the other hand:
\[
\expec{\ell}{\sum_{n=1}^\infty\zeta_n} = \V\cdot \prob_\ell\left(\xi_\ell^{[k]}=+1\right) +\lambda \V\cdot  \prob_\ell\left(\xi_\ell^{[k]}=-1\right) +\bigo{1},
\]
where $\lambda\in(-1/2,1)$; dividing by $\V$ we obtain:
\[
\prob_\ell\left(\xi_\ell^{[k]}=+1\right) +\lambda \prob_\ell\left(\xi_\ell^{[k]}=-1\right)=o(1)
\]
which implies:
\[
\prob_\ell\left(\xi_\ell^{[k]}=-1\right) = \frac{1}{1-\lambda}(1+o(1))>0.6
\]
that is item (b).

We now only need to prove equations \eqref{e_crucialEstimates}: By applying $n$ times lemma \ref{l_invariance} and discarding those pairs that do not satisfy $\tau_\ell^{[k]}\geq n$ we obtain the following decomposition:
\begin{equation}\label{e_decompositionForN}
F^n\ell=\bigcup_j\ell'_j\cup\bigcup_n\tilde\ell_n\cup\{\tau_\ell^{[k]}<n\}
\end{equation}
and
\[
F\bigcup_l\tilde\ell_l=\bigcup_j\ell''_j.
\]
where $\ell'_j$ and $\ell''_j$ are standard pairs. Let $c'_j=\prob_\ell(F^{-n}\Gamma_{\ell'_j})$ and $c''_j=\prob_\ell(F^{-n-1}\Gamma_{\ell''_j})$; then, by definition we have
\[
\sum_{j}c'_j + \sum_{j}c''_j=\prob_\ell(\tau_\ell^{[k]}\geq n).
\]
Thus, using lemma \ref{l_equidistribution} we have:
\begin{align*}
  \expec{\ell}{\zeta_{n}}&
  =\sum_j c'_j \expec{\ell'_j} {\dot\phi\circ F^{\nu+1}  1_{\tau_\ell\geq \nu}}
  +\sum_j c''_j\expec{\ell''_j}{\dot\phi\circ F^{\nu}1_{\tau_\ell\geq \nu-1}}\\
  &\leq \sum_j c'_j\smallo{\V_\ell^{-1}} +\sum_j c''_{j}\smallo{\V_\ell^{-1}}
\end{align*}
which implies \eqref{e_crucialEstimateSingle}. We now apply the same argument to the functions $\zeta_n^2$ and obtain:
\begin{align}
  \expec{\ell}{\zeta_{n}^2}&
  =\sum_j c'_j \expec{\ell'_j} {\dot\phi^2\circ F^{\nu+1}  1_{\tau_\ell\geq \nu}}
  +\sum_j c''_j\expec{\ell''_j}{\dot\phi^2\circ F^{\nu}1_{\tau_\ell\geq \nu-1}};\notag
  \intertext{whence, extracting the average value $\dot\phi^2=(\dot\phi^2-2A^2)+2A^2$ we obtain:}
  \expec{\ell}{\zeta_{n}^2}&= (2A^2+\bigo{\V^{-\beta}_\ell})\prob_\ell(\tau_\ell^{[k]}\geq n)+\smallo{\V_\ell^{-1}}\label{e2_final}
\end{align}
Next, we claim that, for $m\in\naturals$ we have:
\begin{equation}\label{e4_final}
\expec{\ell}{\zeta_m\sum_{i=0}^{m-1}\zeta_i}=\smallo{1}.
\end{equation}
In fact, applying decomposition \eqref{e_decompositionForN} to $F^m$, we obtain:
\begin{align*}
\expec{\ell}{\zeta_m\sum_{i=0}^{m-1}\zeta_i}&=\sum_jc'_j\expec{\ell'_j}{\zeta_{0}\sum_{i=-m}^{-1}\zeta_{i}}+\\&\quad+\sum_jc''_{j}\expec{\ell''_{j}}{\zeta_{-1}\sum_{i=-m-1}^{-2}\zeta_i}.
\end{align*}
We estimate separately the contribution of each standard pair in each of the two terms on the right hand side: fix $\ell'_j$ and define on $\Gamma_{\ell'_j}$ the function $B=\sum_{i=-m}^{-1}\zeta_i$; we want to prove that $\expec{\ell}{B\zeta_0}=\smallo{1}$. Fix $p>\nu+1$ to be determined later and decompose $B=B_1+B_2$ as follows:
\begin{align*}
B_1 &= \sum_{i=-m}^{-p}\zeta_i&
B_2 &= \sum_{i=-p+1}^{-1}\zeta_i.
\end{align*}
where  if $m <p$ we assume conventionally that $B_1=0$ and $B_2=B$. By definition of $\tau^{[k]}_\ell$ we have $\|B_1\|_{\infty}\leq 3\V+2(\nu+1) A\leq 4\V$; moreover $B_1$ depends only on $\ct{i}$ with $i<-(p-\nu-1)$, hence, by item (a) of lemma \ref{l_invariance}:
\[
\|\dot B_1\|_{\infty} = \bigo{\V^{-(p-\nu-1)\beta}}
\]
To estimate the contribution of $B_1$, we write $B_1=\bar B_1+\tilde B_1$, where $\bar B_1$ is the constant part of $B_1$; then $\|\tilde B_1\|_{\infty} = \bigo{\V^{-(p-\nu-1)\beta}}$ and we can write, using corollary~\ref{c_lazyInduction} and theorem \ref{l_equidistribution} and requiring $p$ to be large enough, that:
\begin{align*}
\expec{\ell'_{j}}{B_1 \zeta_0}& = \smallo{1}.
\end{align*}
Consider now the remaining term $B_2$; by definition we have $\|B_2\|_{\infty}\leq 2A(p-1)$; moreover, if $\ct{\nu}$ belongs to a standard pair we have:
\[
\left\|\de{B_2}{\ct{\nu}}\right\|=\bigo{1},
\]
so that we obtain by corollary~\ref{c_lazyInduction}:
\begin{align*}
  \expec{\ell'_{j}}{B_2 \zeta_0}&=\smallo{\V^{-1}}.
\end{align*}
The terms involving $\ell''_j$ can be treated analogously and, by linearity of the expectation, we can conclude that \ref{e4_final} holds. Finally, using \eref{e2_final} and \eref{e4_final} we obtain:
\begin{align*}
\expec{\ell}{\left(\sum_{i=0}^N\zeta_i\right)^2}&=\sum_{i=0}^N \left(2A^2\prob_\ell\left(\tau_\ell\geq i\right)+ o(1)\right)\\
&\geq N\cdot 2A^2\prob_\ell\left(\tau_\ell\geq N\right) + N\cdot o(1).
\end{align*}
which concludes the proof.
\end{proof}

\appendix
\input{definitions.p}
\section{Proof of Lemma~\ref{l_lebesgueCritical}}\label{s_proofLemmaLebesgue}
In order to prove items ($a_1$) and ($a_2$), it suffices to check that, for any standard
curve $\Gamma$, the estimates
\begin{align}\label{e_estimateDerivativeSlope}
  \left.\frac{\deh}{\deh x}\tslope{1}\right|_{\Gamma}&=\bigo{1}&
  \left.\frac{\deh}{\deh x}\tslope{1}(\tslope{1}\circ F)\right|_{\Gamma}&=\bigo{1}
\end{align}
hold in the specified neighborhood; then the proof trivially follows from the definition
of $\hcritical1$ and $\hcritical2$.  In turn \eqref{e_estimateDerivativeSlope} easily
follows from the definition of $\tslope1$; in fact recall that
\begin{align*}
\partial_{\ct{}}\tslope{1}&=\bigo{1}& \partial_{\cv{}}\tslope{1}&=\bigo{\cv{}\inv};
\end{align*}
thus
\[
  \left.\frac{\deh}{\deh x}\tslope{1}\right|_{\Gamma}=\partial_{\ct{}}\tslope1+\T'\slope\Gamma\cdot\partial_{\cv{}}\tslope1=\bigo1;
  \]
  which concludes the proof of item ($a_1$).  Similarly, notice that:
\[
\left.\frac{\deh}{\deh x}\tslope{1}(\tslope{1}\circ F)\right|_{\Gamma}=
\left.\frac{\deh}{\deh x}\tslope{1}\right|_{\Gamma}\cdot \tslope{1}\circ F+
  \tslope{1}\cdot\left.\frac{\deh}{\deh x}\tslope{1}\circ F\right|_{\Gamma};
\]
the first term of the right hand side is $\bigo1$ by the previous argument, on the other
hand the second term can be bounded as follows:
\[
\tslope{1}\cdot\left.\frac{\deh}{\deh x}\tslope{1}\circ F\right|_{\Gamma}=
\tslope{1}\cdot\left(\partial_{\ct{}}\tslope1\left.\frac{\deh\ct1}{\deh\ct{}}\right|_{\Gamma}+
\partial_{\cv{}}\tslope1\left.\frac{\deh\cv1}{\deh\ct{}}\right|_{\Gamma}\right).
\]
By definition
$\left.\frac{\deh\ct1}{\deh\ct{}}\right|_{\Gamma}=\T'\tslope{\Gamma}=\bigo{\cv{}^{\beta}}$,
which implies that the first term is $\bigo1$; moreover, by \eqref{e_firstIteration} we
have that
$\left.\frac{\deh\cv1}{\deh\ct{}}\right|_{\Gamma} =
\bigo{\T'}\left.\frac{\deh\ct1}{\deh\ct{}}\right|_{\Gamma}$,
from which we conclude the proof of item ($a_2$).  The proof of item $(b_1)$ is simple,
since, we have that $\left.\frac{\deh}{\deh x}\tslope{1}\right|_{\Gamma}$ is bounded away
from zero in $\Gamma\cap\hcritical1$.  Concerning item $(b_2)$, it is not difficult to
see, by direct inspection (see Figure~\ref{f_criticals}) that the number of connected
components of $\Gamma\cap\hcritical2$ is bounded by $N_\Gamma+4$, where $N_\Gamma$ is the
number of intersections of $F(\Gamma\cap\hcritical1)$ with the vertical line
$\{\ct{}=0\}$; it is thus sufficient to prove that the number of such intersections is
uniformly bounded in $\V_\Gamma$.  This, however, is simple to achieve since, by
definition, $\Gamma$ has a quadratic critical point inside $\hcritical1$, and its
curvature is bounded from above by $4A\T'(\V_\Gamma)$, therefore its image will intersect
the said vertical line in at most $2\cdot 4A\T'(\V_\Gamma)\hat
K_1^2\V_\Gamma^{-2\beta}=\bigo1$ points, which proves item ($b_2$).  To prove item
($c_1$), notice that, by definition:
\[
|\tslope1(\coo{0})|=|2\ddot\phi(\ct{0})+1/\T'(\cv{-1})+1/\T'(\cv{0})|;
\]
using \eqref{e_confuse} we can write:
\begin{align*}
\hcritical1&\subset\{(\coo{})\st|2\ddot\phi(\ct{})|<2K_1\T'(\cv{})^{-1/2} \}\\
&\subset\{|\ct{}|<{\rm Const}\cdot \T'(\cv{})^{-1/2}\}\cup\{|\ct{}-1/2|<{\rm Const}\cdot \T'(\cv{})^{-1/2}\}.
\end{align*}
Denote the two sets that appear in the last expression by $\critical{1}^{(0)}$ and
$\critical{1}^{(1)}$ respectively; the Lebesgue measure of $\critical{1}^{(i)}$ is finite
if the function $\T'^{-1/2}$ is integrable at $\infty$, i.e.\ if $\beta>1$, that is, if
$\gamma>3$.  In the same way we can obtain a lower bound, so that if $\gamma\leq3$ then
$\leb{\critical{1}}=\infty$.  Similarly, in order to prove item ($c_2$), define, for
$i\in\{0,1\}$ and $n\in\naturals$:
\[
\hcritical{2}^{(i,n)}= \hcritical{2} \cap \hcritical{1}^{(i)} \cap \{(\coo{})\st \ct{}+\T(\cv{})\in[n/2,(n+1)/2]\};
\]
also let $\V_n=\inf_{(\coo{})\in\hcritical{2}^{(i,n)}} \cv{}\sim n^{1/\gamma}$.  Then, for
each $\hcritical{2}^{(i,n)}$, consider the following decomposition (see also figure
\ref{f_C2}):
\begin{align*}
\hcritical{2}'^{(i,n)}&=\{(\coo{0})\in \hcritical{2}^{(i,n)}\st |\tilde
h_1(\coo{1})|<(\hat K_2/\hat K_1)\T'(\cv{1})^{-1/2}\}\\
\hcritical{2}''^{(i,n)}&=\{(\coo{0})\in \hcritical{2}^{(i,n)}\st |\tilde h_1(\coo{1})|<A\}\setminus \hcritical{2}'^{(i,n)}\\
\hcritical{2}'''^{(i,n)}&= \hcritical{2}^{(i,n)}\setminus (\hcritical{2}'^{(i,n)} \cup \hcritical{2}''^{(i,n)}).
\end{align*}
\begin{figure}[!b]
  \center
  \def\svgwidth{4cm}
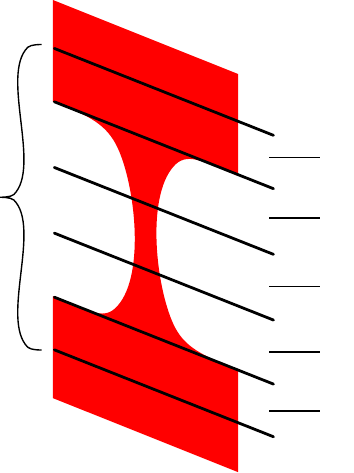
\caption{Decomposition of $\hcritical{2}^{(i,n)}=\hcritical{2}'^{(i,n)}\cup\hcritical{2}''^{(i,n)}\cup\hcritical{2}'''^{(i,n)}$.}
\label{f_C2}
\end{figure}
First consider $(\coo{})\in \hcritical{2}'''^{(i,n)}$; by definition we have:
\[
|\tilde h_1(\coo{})|<\frac{2\hat K_2}{A\T'(\cv{})}
\]
which is a bound for $\ct{}$ of order $\bigo{\cv{}^{-2\beta}}$, so that:
\[
\leb{\hcritical{2}'''^{(i,n)}}\leq\const \V_n^{-4\beta}.
\]
The measure of $\hcritical{2}'^{(i,n)}$ and $\hcritical{2}''^{(i,n)}$ can be estimated using the following change of variables:
\[
(\coo{0})\mapsto(\xi,\eta)=(\tilde h_1(\coo{0}),\tilde h_1(\coo{1}));
\]
this map is an invertible diffeomorphism and its Jacobian determinant is of order
$\T'(\V_n)$; for convenience denote $\T'_n=\T'(\V_n)$.  Therefore, for
$\hcritical{2}'^{(i,n)}$ we obtain:
\[
\leb{\hcritical{2}'^{(i,n)}}\leq\frac{2}{\T'_n}\int_{-2(\hat K_2/\hat
  K_1){\T'_n}^{-1/2}}^{+2(\hat K_2/\hat K_1){\T'_n}^{-1/2}}\int_{-2\hat K_1
  {\T'_n}^{-1/2}}^{+2\hat K_1 {\T'_n}^{-1/2}}\deh\xi\deh\eta=\bigo{\V_n^{-4\beta}}
\]
and for $\hcritical{2}''^{(i,n)}$:
\[
\leb{\hcritical{2}''^{(i,n)}}\leq\frac{1}{\T'_n}\int_{\frac{1}{2}(\hat K_2/\hat K_1) {\T'_n}^{-1/2}}^{A}\int_{-2\hat K_2/\eta \T'_n}^{+2\hat K_2/\eta \T'_n}\deh\xi\deh\eta=\bigo{\V_n^{-4\beta}\log \V_n}.
\]
Therefore we finally have:
\[
\leb{\hcritical{2}^{(i,n)}}\leq\const\V_n^{-4\beta}\log \V_n
\]
and summing over $i$ and $n$ we obtain 
\[
\leb{\hcritical{2}}<\infty\ {\rm if}\ \sum_nn^{-\frac{4\beta}{\gamma}}\log n<\infty,
\]
where the series converges if $\beta>1/2$, that is, $\gamma>2$; to conclude, notice that,
by the argument used to prove item ($c_1$), if $\gamma\leq 2$, then $\corecritical{2}$ has
infinite measure, which implies that the same is true for $\hcritical2$ and concludes the
proof of the lemma.  \qed

\bibliographystyle{abbrv}
\bibliography{twoiterates}

\end{document}